\newtheorem{thm}{Theorem}[section]
\newtheorem{prop}[thm]{Proposition}
\newtheorem{lem}[thm]{Lemma}
\newtheorem{cor}[thm]{Corollary}
\newtheorem*{cor*}{Corollary}
\newtheorem*{thm*}{Theorem}
\theoremstyle{definition}
\newtheorem{definition}[thm]{Definition}
\newtheorem{example}[thm]{Example}
\theoremstyle{plain} 
\newcommand{\thistheoremname}{}
\newtheorem{genericthm}[thm]{\thistheoremname}
\newtheorem*{genericthm*}{\thistheoremname}
\newenvironment{namedthm*}[1]
{\renewcommand{\thistheoremname}{#1}%
	\begin{genericthm*}}
	{\end{genericthm*}}
\theoremstyle{remark}
\newtheorem{remark}[thm]{Remark}
\numberwithin{equation}{section}
\newcommand{\R}{\mathbb{R}}
\newcommand{\C}{\mathbb{C}}
\newcommand{\V}{\mathsf{V}}
\newcommand{\SO}{\mathrm{SO}}
\newcommand{\SU}{\mathrm{SU}}
\newcommand{\G}{\mathrm{G}}
\newcommand{\U}{\mathrm{U}}
\newcommand{\eps}{\varepsilon}
\newcommand{\LamW}{\operatorname{W}}
\renewcommand{\Re}{\operatorname{Re}}
\renewcommand{\Im}{\operatorname{Im}}
\begin{document}

\title{Quadratic closed $\G_2$-structures}

\author{Gavin Ball}
\address{\textsc{Universit\'{e} du Qu\'{e}bec \`{a} Montr\'{e}al}, 	\textsc{D\'{e}partement de math\'{e}matiques}, \textsc{Case postale 8888, succursale centre-ville, Montr\'{e}al (Qu\'{e}bec), H3C 3P8, Canada}}
\email{gavin.ball@cirget.ca}
\urladdr{https://www.gavincfball.com/} 

\begin{abstract}
This article studies closed $\G_2$-structures satisfying the quadratic condition, a second-order PDE system introduced by Bryant involving a parameter $\lambda.$ For certain special values of $\lambda$ the quadratic condition is equivalent to the Einstein condition for the metric induced by the closed $\G_2$-structure (for $\lambda = 1/2$), the extremally Ricci-pinched (ERP) condition (for $\lambda=1/6$), and the condition that the closed $\G_2$-structure be an eigenform for the Laplace operator (for $\lambda = 0$). Prior to the work in this article, solutions to the quadratic system were known only for $\lambda = 1/6,$ $-1/8,$ and $2/5,$ and for these values only a handful of solutions were known.

In this article, we produce infinitely many new examples of ERP $\G_2$-structures, including the first example of a complete inhomogeneous ERP $\G_2$-structure and a new example of a compact ERP $\G_2$-structure. We also give a classification of homogeneous ERP $\G_2$-structures. We provide the first examples of quadratic closed $\G_2$-structures for $\lambda = -1,$ $1/3,$ and $3/4,$ as well as infinitely many new examples for $\lambda = -1/8$ and $2/5.$ Our constructions involve the notion of special torsion for closed $\G_2$-structures, a new concept that is likely to have wider applicability.

In the final section of the article, we provide two large families of inhomogeneous complete steady gradient solitons for the Laplacian flow, the first known such examples.
\end{abstract}

\maketitle

\tableofcontents

\section{Introduction}

A $\G_2$-structure on a 7-manifold $M$ is given by a 3-form $\varphi \in \Omega^3 (M)$ satisfying a non-degeneracy condition. Such a 3-form gives rise to a metric $g_{\varphi},$ a volume form $\mathrm{vol}_\varphi,$ and a Hodge star operator ${*}_{\varphi}$ on $M.$ If the $\G_2$-structure satisfies the equations
\begin{align}\label{eq:IntroTorFree}
d \varphi = 0, \:\:\: d *_{\varphi} {\varphi} = 0,
\end{align}
then the metric $g_{\varphi}$ has holonomy contained in $\G_2$ \cite{BryExcept}, which implies that $g_{\varphi}$ is Ricci-flat and has a parallel spinor field. Conversely, if $\left( M, g \right)$ is a Riemannian 7-manifold with holonomy group contained in $\G_2,$ then $M$ carries a $g$-parallel $\G_2$-structure $\varphi$ satisfying equations (\ref{eq:IntroTorFree}) and such that $g=g_{\varphi}.$

A $\G_2$-structure satisfying the less restrictive condition
\begin{equation}
d \varphi = 0
\end{equation}
is called \emph{closed}. Closed $\G_2$-structures appear in a natural way in the study of manifolds with holonomy $\G_2.$ Currently the only known method for producing $\G_2$-structures satisfying equations (\ref{eq:IntroTorFree}) is to begin with a closed $\G_2$-structure $\varphi$ such that the 5-form $d \, {*}_{\varphi} {\varphi}$ is small in a suitable sense, then use a result of Joyce \cite{Joyce96} to perturb $\varphi$ to a nearby $\G_2$-structure satisfying equations (\ref{eq:IntroTorFree}).

Closed $\G_2$-structures also play a pivotal role in attempts to use a geometric flow to understand the solutions of (\ref{eq:IntroTorFree}). The \emph{Laplacian flow} \cite{Bry05} is the geometric flow for $\G_2$-structures defined by the equation
\begin{equation}\label{eq:IntroLapFlow}
\tfrac{d}{dt} \varphi = \Delta_{\varphi} \varphi,
\end{equation}
where $\Delta_{\varphi}$ is the Laplacian induced by $\varphi$.  The closed condition $d \varphi = 0$ is preserved along the Laplacian flow, and so closed $\G_2$-structures are thought to provide suitable initial conditions for the flow. Geometric and analytic properties of the Laplacian flow have been studied in several works \cites{LotWeiAna,LotWeiLapShi,LotWeiStab,FiYaHype18,HuWa18,BryXuLapFlow11}, but the long term behaviour of solutions remains largely mysterious.

In this article we study closed $\G_2$-structures satisfying a particular second-order equation, the quadratic condition. When $\varphi$ is closed, the exterior derivative of the 4-form ${*}_{\varphi} \varphi$ is given by $d {*}_{\varphi} \varphi = \tau \wedge \varphi,$ for a unique 2-form $\tau$ called the \emph{torsion form} of $\varphi.$ The torsion form $\tau$ is a fundamental invariant of a closed $\G_2$-structure, and the quadratic condition is formulated in terms of its exterior derivative $d \tau.$

\begin{definition}
	Let $\lambda$ be a constant. A closed $\G_2$-structure $\varphi$ is called \emph{$\lambda$-quadratic} if it satisfies
	\begin{equation}\label{eq:QuadClosed}
	d \tau = \tfrac{1}{7} | \tau |^2 \varphi + \lambda \left( \tfrac{1}{7} | \tau |^2 \varphi + {*}_{\varphi} \left( \tau \wedge \tau \right) \right) .
	\end{equation}
\end{definition}

The $\lambda$-quadratic condition was introduced by Bryant \cite{Bry05} as the most general way in which the 3-form $d \tau$ can be written quadratically in terms of the components of $\tau.$ There are several special values of $\lambda$ of particular interest:
\begin{itemize}
	\item The $1/2$-quadratic condition is equivalent to the Einstein condition for the induced metric $g_{\varphi}.$
	\item The induced metric $g_{\varphi}$ of a closed $\G_2$-structure on a compact manifold $M$ always satisfies the inequality 
	\begin{equation}
	\int_M | \, \mathrm{Ric}^{0} (g_{\varphi}) |^2 \, \mathrm{vol}_{\varphi} \geq \frac{4}{21} \int_M \, \mathrm{Scal}(g_{\varphi})^2 \, \mathrm{vol}_\varphi,
	\end{equation}
	due to Bryant \cite{Bry05}, and equality holds if and only if $\varphi$ is $1/6$-quadratic. Hence $1/6$-quadratic closed $\G_2$-structures are given the name \emph{extremally Ricci-pinched}, or \emph{ERP} for short.
	\item A closed $\G_2$-structure is $0$-quadratic if and only if $\varphi$ is an \emph{eigenform}, i.e. an eigenfunction for $\Delta_{\varphi}.$ These structures are to the Laplacian flow as Einstein metrics are to the Ricci flow.
	\item A closed $\G_2$-structures $\varphi$ inducing a conformally flat metric $g_{\varphi}$ is always $-1/8$-quadratic. Closed $\G_2$-structures satisfying this condition have been classified by the author \cite{BallConfFlat20}.
\end{itemize}

There are very few examples of $\lambda$-quadratic closed $\G_2$-structures appearing in the literature. Bryant \cite{Bry05} gives one example of a homogeneous ERP structure, Lauret \cite{LauretLap} another, and Lauret--Nicolini \cites{LauNicERPLI20,LauNicERPLICAG20} have classified left-invariant ERP structures on Lie groups, providing three other distinct examples. The only known inhomogeneous examples of $\lambda$-quadratic closed $\G_2$-structures in the literature are two examples each for $\lambda = -1/8$ and $\lambda = 2/5$ given by the author \cite{BallConfFlat20} in a previous article.

Bryant \cite{Bry05} has shown that the only possibility for $\lambda$ on a compact manifold $M$ is $\lambda = 1/6.$ Both Bryant's homogeneous ERP structure \cite{Bry05} and Lauret's first example \cites{LauretLap,KaLaERP20} admit compact quotients.

\subsection{Results and methods} We approach the study of $\lambda$-quadratic closed $\G_2$-structures in this article using the techniques of exterior differential systems and the moving frame.

After providing necessary background in \S\ref{sect:Prelim}, we begin in \S\ref{sect:QuadG2} by studying $\lambda$-quadratic closed $\G_2$-structures from the point of view of exterior differential systems. This is a natural approach to use, because the system (\ref{eq:QuadClosed}), viewed as a non-linear second order PDE system for a closed section $\varphi$ of $\Omega^3(M),$ is highly overdetermined. We show in \S\ref{sssect:CharVar} that this PDE system is elliptic modulo diffeomorphism. However, it turns out that the system is not involutive, even after a prolongation. In particular, there are several non-trivial conditions that the 2-jet of a quadratic closed $\G_2$-structure must satisfy that are not algebraic consequences of (\ref{eq:QuadClosed}), as we show in \S\ref{sssect:NonInvol}.

The non-involutivity of the system (\ref{eq:QuadClosed}) is of a sufficiently complicated nature that the existence of any examples becomes an interesting question. Motivated by the form of the conditions derived in \S\ref{sssect:NonInvol}, in \S\ref{sect:U2pl} and \S\ref{sect:U2mi} we study quadratic closed $\G_2$-structures satisfying an additional first order condition, which we describe now.

\subsubsection{Special torsion}
The torsion 2-form $\tau$ of a closed $\G_2$-structure $\varphi$ takes values in a 14-dimensional subbundle of $\Lambda^2 T^* M$ that is modeled on the Lie algebra $\mathfrak{g}_2.$ Under the adjoint action of $\G_2,$ the generic element of $\mathfrak{g}_2$ is stabilised by a maximal torus, but there are two exceptional $\G_2$-orbits in $\mathfrak{g}_2$ consisting of elements stabilised by subgroups of $\G_2$ isomorphic to $\U(2).$ The elements $\beta$ of one of these $\G_2$-orbits, thought of as 2-forms $\beta,$ satisfy $\beta^3 = 0$ and we call these elements \emph{positive}, while the elements of the other $\G_2$-orbit satisfy $|\beta^3|^2 = \tfrac{2}{3} | \beta |^6$ and we call these elements \emph{negative}. The stabiliser of a positive element is not conjugate in $\G_2$ to the stabiliser of a negative element, despite the fact that both groups are isomorphic to $\U(2).$

\begin{definition}[\S\ref{ssect:SpecTors}]
	A closed $\G_2$-structure $\varphi$ said to have \emph{special torsion} of \emph{positive (resp. negative) type} if the torsion 2-form $\tau$ of $\varphi$ is positive (resp. negative).
\end{definition}

In \S\ref{sect:U2pl} and \S\ref{sect:U2mi}, we study the $\lambda$-quadratic condition under the assumption that $\varphi$ has special torsion. This assumption simplifies considerably the conditions derived in \S\ref{sssect:NonInvol} and allows us to prove powerful existence and classification results under certain natural conditions.

\subsubsection{Positive type}

In \S\ref{sect:U2pl}, we study $\lambda$-quadratic closed $\G_2$-structures with special torsion of positive type. An identity due to Bryant (\ref{eq:Bry466}) implies that the only possibility for $\lambda$ in this case is $1/6,$ so every structure of this type is ERP. We also note that it follows from Bryant's \cite{Bry05} work that an ERP $\G_2$-structure on a compact manifold $M$ has special torsion of positive type.

When a closed $\G_2$-structure $\varphi$ on $M$ has special torsion of positive type, the torsion form $\tau$ defines a $\U(2)$-structure on $M$ canonically associated to $\varphi.$ The ERP condition places strong restrictions on the geometry of this $\U(2)$-structure, as we show in \S\ref{ssect:U2plInduced}. It turns out that when $\varphi$ is ERP the torsion of the induced $\U(2)$-structure is forced to take values in a vector space of real dimension 53, whereas the torsion of a generic $\U(2)$-structure in dimension 7 takes values in a vector space of real dimension 119. The $\U(2)$-irreducible decomposition of the torsion space of an ERP $\U(2)$-structure allows us to define three natural tensors on such a 7-manifold $M,$ which we denote by $A,$ $N,$ and $S,$ taking values in vector bundles of ranks 12, 4, and 10 respectively.

In \S\ref{ssect:TypeA}-\ref{ssect:TypeS}, we study the cases where only one of the tensors $A,$ $N,$ or $S$ are nonvanishing (when all three vanish the structure is locally equivalent to Bryant's example, as we show in \S\ref{sssect:BryEx}). We say that an ERP $\U(2)$-structure is of \emph{type $A$ (resp. $N$, $S$)} if $A$ (resp. $N,$ $S$) is the only nonvanishing tensor out of $A,$ $N,$ and $S.$

For structures of type $A$ we have the following result, the details of which are given in Theorem \ref{thm:TypeAWeier} and \S\ref{ssect:TypeA}.

\begin{thm*}\label{thm:ERPTypeA}
	There exists a Weierstrass-type formula for ERP $\U(2)$-structures of type $A$. Given a holomorphic function of one complex variable, one can construct an ERP closed $G_2$-structure of type $A$---and conversely, every such structure can be locally described in this way.
\end{thm*}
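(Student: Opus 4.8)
The plan is to prove the theorem by the method of the moving frame, reducing the ERP type-$A$ condition to a Frobenius system governed by a single holomorphic function. First I would work on the $\U(2)$-adapted coframe bundle $\pi \colon F \to M$, writing the tautological $1$-forms $\omega$ together with the $\mathfrak{u}(2)$-valued connection $1$-forms $\theta$, so that the first structure equation reads $d\omega = -\,\theta \wedge \omega + T(\omega \wedge \omega)$, where the torsion $T$ records the torsion $\tau$ of the closed $\G_2$-structure together with the intrinsic torsion of the induced $\U(2)$-structure. Imposing that $\varphi$ is ERP and of type $A$ forces $N = S = 0$, so that $T$ is expressed entirely through the components of the rank-$12$ tensor $A$; this is where the earlier analysis of the $\U(2)$-irreducible decomposition of the ERP torsion space is used.

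Next I would differentiate the structure equations and impose $d^2 = 0$. The second structure equation then expresses the curvature, while the Bianchi-type identities produce first-order equations for the covariant derivative $\nabla A$. The central claim---and the heart of the theorem---is that, after decomposing these identities against the $\U(2)$-representation structure, every component of $\nabla A$ is determined except for one scalar, and that scalar is constrained to be holomorphic with respect to the complex structure $J$ carried by the $\U(2)$-structure. In effect the type-$A$ equations exhibit a codimension-two foliation of $M$ (equivalently of $F$) whose leaf space is a Riemann surface $\Sigma$, and $A$ is the pullback of a holomorphic object on $\Sigma$. This reduction from the a priori overdetermined second-order system to a Frobenius system in one complex variable is the key structural fact behind the Weierstrass description.

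Given this, the existence direction proceeds as follows. Starting from a holomorphic function $f$ of one complex variable, I would write down candidate coframe and connection $1$-forms whose coefficients are explicit algebraic expressions in $f$ and its derivatives, and verify that $d^2 = 0$ holds as an identity once the Cauchy--Riemann equations for $f$ are used. Because the reduced system is Frobenius, the Frobenius theorem then integrates these data to a local ERP closed $\G_2$-structure of type $A$, unique up to the residual $\U(2)$-gauge and a choice of base point. For the converse, an arbitrary ERP type-$A$ structure comes equipped, by the holomorphicity established above, with a preferred holomorphic coordinate on its leaf space $\Sigma$; expressing the distinguished scalar component of $A$ in this coordinate recovers the holomorphic function $f$, and uniqueness of the Frobenius integration shows the two constructions are mutually inverse.

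The main obstacle is the second step. The subtle point is not the length of the $d^2 = 0$ computation---which, while substantial, is mechanical---but rather proving that the effective data really collapse to one complex variable. Concretely, one must track how each irreducible piece of $\nabla A$ is pinned down by the Bianchi identities and confirm that no genuinely two- or higher-dimensional moduli survive, i.e. that the foliation with Riemann-surface leaf space exists and that the surviving degree of freedom satisfies the Cauchy--Riemann equations rather than a more rigid or a less determined equation. Identifying the correct complex gauge in which the system linearizes, and checking that the reconstructed forms close up, is where the real work lies.
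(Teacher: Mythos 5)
The central gap is in your second step, and it changes the shape of the whole argument. The decisive reduction in the type-$A$ system is not, as you claim, that the Bianchi identities determine all but one scalar component of $\nabla A$: it is a pointwise \emph{algebraic} constraint on $A$ itself. Differentiating the structure equations with $S=N=0$ forces the quadratic identities $\overline{\eps_{bg}\eps_{ch}\eps_{di}\eps_{ej}}A_{abcde}A_{fghij}=0$ (equation (\ref{eq:TypeARestrict})), which say that the binary quintic $\mathsf{A}$ has a quadruple linear factor at each point; a further pass through $d^2\kappa=0$ and $d^2A=0$ kills the remaining component $A_2$, so $\mathsf{A}$ has a root of multiplicity five and $A$ collapses to a single complex function $A_1$ on an adapted $\mathrm{T}^2$-subbundle. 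Without this zeroth-order normal form there is no frame reduction and no closed system; since $A$ has 12 real components a priori, your count of $\nabla A$ cannot even begin. Moreover, the surviving scalar does \emph{not} satisfy the Cauchy--Riemann equations with respect to the complex structure $J$: in the paper's notation $W=h(z_1)/(1-|z_1|^2)^{3/2}$, so holomorphicity emerges only after twisting by a real-analytic factor coming from the hyperbolic metric of curvature $-4$ induced on the 2-dimensional leaves of the integrable distribution $\nu_1=\theta_2=0$. Your foliation picture is also inverted: the Riemann surfaces are the \emph{leaves} of a foliation of the level sets of the function $r$ (obtained from $d\nu_{1\bar 1}=0$, with $\nu_{1\bar 1}=i\,dr$), not the 2-dimensional leaf space of a codimension-two foliation of $M$, and the holomorphic function lives leafwise rather than as a pullback from a quotient surface.

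The Frobenius claim is likewise off. Even after the reduction, the structure equations (\ref{eq:TypeAStructPre}) retain a free derivative $B_1$ in the equation for $dA_1$, so the system is not Frobenius; its solutions genuinely depend on 2 functions of 1 variable (Proposition \ref{prop:typeAexist}), which is precisely the freedom of one holomorphic function. Accordingly the paper integrates explicitly rather than invoking Frobenius: closedness of $\nu_{1\bar 1}$ and $\kappa_{2\bar 2}$ yields coordinates $r,s$; the leafwise hyperbolic geometry yields $z_1$ and a $\bar\partial$-type equation whose solution produces the holomorphic $h(z_1)$; further exact forms produce coordinates $z_2,z_3$; and the final 3-form (\ref{eq:TypeAG2struct}) is written in terms of a holomorphic $g$ with $g''=1/h$. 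Your existence direction---write explicit coframe data from a holomorphic $f$ and verify closure---is sound and is essentially the converse half of Theorem \ref{thm:TypeAWeier}, but your converse direction leans on holomorphicity of the distinguished scalar and on Frobenius uniqueness, neither of which holds in the form you use them; the actual uniqueness comes from the explicit normalizations (constant-curvature coframe on the leaves, choice of antiderivatives) in the integration.
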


Structures of type $N$ are studied in \S\ref{ssect:TypeN}. It is shown that every such structure arises as the bundle of compatible metrics over a 4-manifold $X$ carrying an $\mathrm{S}^1 \cdot \mathrm{SL}(2, \C)$-structure of a special type. The fibres of this bundle are all isometric to hyperbolic 3-space. Unfortunately, we are not able to fully understand the 4-manifolds $X$ carrying these $\mathrm{S}^1 \cdot \mathrm{SL}(2, \C)$-structures. However, in \S\ref{sssect:TypeNwzero} we identify a subclass of these structures that admit a Weierstrass formula similar to the type $A$ case. This is recorded in Theorem \ref{thm:TypeNwzero}.

In \S\ref{ssect:TypeS}, we study structures of type $S.$ We show that these structures are all foliated by flat coassociative submanifolds. Baraglia \cite{BaragSemi} has studied general closed $\G_2$-structures $\varphi$ fibred by flat coassociative tori and shown that such structures are equivalent to 3-dimensional space-like submanifolds of the pseudo-Euclidean space $\R^{3,3}.$ The condition that $\varphi$ be torsion-free is equivalent to the 3-submanifold being maximal (i.e. having vanishing mean curvature vector). In the ERP case we prove the following analogous result, the details of which are given in Theorem \ref{thm:TypeSCons}.

\begin{thm*}
	An ERP $\U(2)$-structure of type $S$ is equivalent to a maximal space-like submanifold of the 5-dimensional quadric of vectors of norm $-1$ in $\R^{3,3}.$
\end{thm*}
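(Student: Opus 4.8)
The plan is to construct, from an ERP $\U(2)$-structure $\varphi$ of type $S$, a \emph{developing map} into $\R^{3,3}$ whose image is the claimed submanifold, adapting the strategy of Baraglia \cite{BaragSemi} to the ERP normalization, and then to reverse the construction. First I would use the conclusion of the type $S$ analysis of \S\ref{ssect:TypeS} that $M$ is foliated by flat coassociative $4$-folds $L$, with a $3$-dimensional transverse structure modeled on a leaf space $\Sigma$. On each leaf, the space of translation-invariant (parallel) $2$-forms is canonically a $6$-dimensional real vector space carrying the wedge-product pairing of signature $(3,3)$. Flatness of the leaves promotes this to a flat bundle over $\Sigma$, which trivializes on the universal cover $\widetilde{\Sigma}$ and yields an identification with $\R^{3,3}$ (equivariant with respect to a holonomy representation into $\SO(3,3)$). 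Evaluating the closed forms $\varphi$ and ${*}_{\varphi}\varphi$ along the leaves then produces, via the structure equations, a map $f \colon \widetilde{\Sigma} \to \R^{3,3}$.

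Next I would translate the ERP equation into geometric conditions on $f$. The torsion $2$-form $\tau$ provides a canonical scale, and I expect the type $S$ structure equations to force $\langle f, f \rangle$ to be constant and, after normalizing $\tau$, equal to $-1$, placing the image in the stated quadric. The space-like condition should follow from the positive-definiteness of $g_{\varphi}$ restricted to the transverse directions, so that $f$ is an immersion of a $3$-dimensional space-like submanifold of the $5$-dimensional quadric (whose induced metric has signature $(3,2)$). The crucial step is to show that the quadratic second-order condition (\ref{eq:QuadClosed}) with $\lambda = 1/6$ is equivalent to the vanishing of the mean curvature vector of $f$ inside the quadric. This is where the computation is most delicate: one must express the second fundamental form of $f$ in terms of $d\tau$ and the torsion of the induced $\U(2)$-structure, and verify that the trace condition defining maximality coincides \emph{exactly} with the ERP equation rather than with a merely constant-mean-curvature condition.

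Finally I would establish the converse. Given a maximal space-like $3$-submanifold of the norm $-1$ quadric in $\R^{3,3}$, I would reverse the dictionary: use the position vector and an adapted tangent frame to reconstruct the flat coassociative leaves, and reassemble $\varphi$ and ${*}_{\varphi}\varphi$ on the total space of the resulting bundle. I would then check that the $\G_2$-structure so produced is closed, has special torsion of positive type, and satisfies the ERP equation, by running the computation of the second step backwards. The main obstacle throughout is the precise matching in that second step: confirming that the ERP normalization produces exactly the quadric of vectors of norm $-1$, and that \emph{maximality} --- and not merely constant mean curvature --- is the correct integrated form of the second-order condition. The flatness of the leaves and the explicit structure equations of the type $S$ reduction should make this tractable, but the bookkeeping of the normalizing constants is the computational heart of the argument.
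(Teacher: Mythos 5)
Your overall strategy---develop the flat bundle of leaf-parallel $2$-forms on the coassociative fibres and rework Baraglia's dictionary---is the right picture in outline, but as written it has a genuine gap at precisely the step you flag as the ``computational heart,'' together with a structural issue you do not address. First, the developing map is only determined up to translation in $\R^{3,3}$: only $du$ is prescribed by $\varphi$, so the assertion that ``the type $S$ structure equations force $\langle f, f\rangle$ to be constant and equal to $-1$'' is not even well-posed until you produce a canonical center for the quadric, and nothing in your sketch supplies one. Second, you defer both central equivalences (the constant-norm statement and ``ERP $\iff$ maximal in the quadric'') to an unexecuted second-order computation. In fact no fresh second-order computation is needed at this stage: by the time one has the type $S$ reduction, the ERP condition has been completely absorbed into the \emph{first-order} structure equations of \S\ref{ssect:U2plInduced} and \S\ref{ssect:TypeS}, where the torsion function $S_{abcd}$ is forced to be totally symmetric; in the real notation this gives $S_{aii}=0$ identically, which is exactly the trace-free (maximality) condition. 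Your worry about distinguishing maximality from a mere constant-mean-curvature condition is therefore already settled before the theorem's proof begins.

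The paper resolves both issues at one stroke: equations (\ref{eq:TypeSdomo}) and (\ref{eq:TypeScII}) say precisely that the matrix $\gamma$ of (\ref{eq:TypeSgam}), assembled from $\left(\omega_i, \psi_{ij}, \sigma_{ai}, \rho_{ab}\right)$, is an $\mathfrak{so}(3,3)$-valued $1$-form satisfying the Maurer--Cartan equation---values in the \emph{linear} group $\SO(3,3)$, with no translational block. Cartan's Theorem \ref{thm:MaurerCartan} then yields $f : \mathcal{Q}(B) \to \SO(3,3)$ with $f^{-1}df = \gamma$, and the immersion $u = f \cdot e^0$ lies in the quadric $\langle u, u \rangle = -1$ by construction, is space-like because the induced metric is $\omega_i \circ \omega_i$, and has second fundamental form identified with $S_{aij}$, which is trace-free, hence maximal. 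If you want to salvage your route, this is what your missing ``canonical center'' must be: you would need to show that the flat structure on the bundle of leaf-parallel $2$-forms reduces to $\SO(3,3)$ with the position section built in (equivalently, that the connection has no affine part), which is exactly the content of (\ref{eq:TypeScII}). Note also that reconstructing the flat coassociative coordinates---needed both to match Baraglia's semi-flat normal form and for your converse---requires the exceptional isogeny $\mathrm{SL}(4,\R) \to \SO(3,3)$: one lifts $f$ to $\tilde{f}$ with $\tilde{f}^{-1} d\tilde{f} = \mu$ and integrates (\ref{eq:TypeSdomfo}) to obtain coordinates $x_1, \ldots, x_4$ on the fibres; your sketch omits this step, which is where the $\R^4$-directions are actually trivialized.
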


Every previously known example of an ERP closed $\G_2$-structure turns out to be of type $S.$ In \S\ref{sssect:TypeSEgs} we study the known examples from this point of view and give new examples. In particular, in Example \ref{eg:M3} we demonstrate the existence of a complete inhomogeneous ERP closed $\G_2$-structure, the first such example known.

\begin{thm*}
	There exists a complete ERP closed $\G_2$-structure which is not homogeneous.
\end{thm*}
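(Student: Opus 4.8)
The plan is to prove the theorem by constructing an explicit example, exploiting the correspondence for type $S$ structures. The key reduction is Theorem \ref{thm:TypeSCons}: an ERP $\U(2)$-structure of type $S$ is precisely the data of a maximal space-like $3$-submanifold $\Sigma$ of the quadric $Q = \{v \in \R^{3,3} : \langle v, v \rangle = -1\}$. Since the metric induced on $Q$ has signature $(3,2)$, requiring $\Sigma$ to be space-like means exactly that $\Sigma$ carries a Riemannian metric, and maximality is the vanishing of its mean curvature vector in $Q$. The problem thereby becomes one of submanifold geometry: it suffices to exhibit a single maximal space-like $\Sigma$ whose associated closed $\G_2$-structure $\varphi$ is both complete and inhomogeneous. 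As every ERP structure known prior to this work is of type $S$ and homogeneous, and as the homogeneous examples correspond to orbit-type (in particular totally geodesic or otherwise highly symmetric) submanifolds, the decisive point is to arrange that $\Sigma$ be strictly less symmetric than these.

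To produce such a $\Sigma$, I would impose a symmetry group $\mathrm{K} \subset \SO(3,3)$ preserving $Q$ and acting on the sought submanifold with generic orbits of codimension one. This reduces the maximality equation from a PDE to a system of ODEs for a profile describing how $\Sigma$ meets the $\mathrm{K}$-orbits, and one then solves this system to obtain a one-parameter family of maximal space-like submanifolds. A solution that is not totally geodesic is the candidate. The corresponding $\G_2$-manifold $M^7$ is assembled, following the coassociative correspondence of Baraglia recalled above, as the total space of the flat coassociative foliation over $\Sigma$, with fibers taken in a complete flat model such as $\R^4$ or a flat torus $T^4$. By Theorem \ref{thm:TypeSCons} the resulting $\varphi$ is automatically closed and $1/6$-quadratic, so the only properties left to verify are completeness and inhomogeneity.

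The main obstacle is completeness of the induced metric $g_\varphi$. This metric has the structure of a bundle over $\Sigma$ with flat coassociative fibers, so its completeness is governed by (i) completeness of the fibers, which is immediate for the flat models chosen, (ii) completeness of the base metric on $\Sigma$ determined by the submanifold data, which amounts to showing that the profile ODE solution is defined for all values of its parameter, and (iii) the behavior of the functions coupling the base and fiber directions. I would establish (ii) and (iii) through a careful asymptotic analysis of the profile ODE, verifying that the coupling functions stay bounded away from degeneration so that no unit-speed geodesic escapes in finite time. This step also requires checking that $\varphi$ extends smoothly and remains non-degenerate over all of $M$, in particular across any special orbits of the cohomogeneity-one action; controlling both the long-time existence of the ODE and the smooth closing-up at singular orbits is where I expect the real difficulty to lie.

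Finally, to rule out homogeneity I would compute a scalar differential invariant of $g_\varphi$---for instance $|\tau|^2$ or a suitable contraction of the curvature---and show that the non-constant profile obtained from the ODE forces this invariant to be non-constant on $M$. Since every scalar invariant of a homogeneous structure is necessarily constant, exhibiting one non-constant invariant proves that $\varphi$ is not homogeneous and completes the argument.
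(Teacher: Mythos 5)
Your strategy coincides in outline with the paper's actual proof, which is Example \ref{eg:M3}: there the reduction of Theorem \ref{thm:TypeSCons} is applied to the cohomogeneity-one family of maximal space-like $3$-submanifolds of $Q$ whose quartic $\mathsf{S}$ (see (\ref{eq:U2plBaseTens})) has antipodal double roots, maximality reduces to the ODE system $dr = rs\,\omega_1$, $ds = (2r^6+s^2-1)\,\omega_1$, and the $7$-manifold is assembled with flat coassociative $\R^4$-fibres exactly as in Baraglia's correspondence. However, two of your steps contain genuine gaps. First, your primary candidate invariant $|\tau|^2$ cannot detect inhomogeneity: any ERP structure with special torsion of positive type has $\tau^3=0$, and Bryant's identity (\ref{eq:Bry469}) then forces $|\tau|^2$ to be constant --- this is noted at the start of \S\ref{ssect:U2plInduced}. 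One must instead use an invariant built from the second fundamental form; in the paper $\mathsf{S} = 3r^3\,\tilde{\eta}_1^2\tilde{\eta}_2^2$ with $dr = rs\,\omega_1 \not\equiv 0$, so $|\mathsf{S}|$ is non-constant and inhomogeneity follows.

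Second, the completeness step, which you defer to an unspecified ``careful asymptotic analysis,'' is precisely where the content of the proof lies, and it does not hold generically across the ODE family. The profile system admits the first integral $s^2 = r^6 - cr^2 + 1$, and along the base one has $\omega_1 = dr/\bigl(r\sqrt{r^6-cr^2+1}\,\bigr)$; for generic $c$ the interval of definition terminates at a simple root of this sextic, where the integral converges, so that end lies at \emph{finite} distance and the structure as constructed is incomplete there. The paper's example works only at the special value $c = 2^{-2/3}\cdot 3$, where the sextic factors as $\left(r^2-2^{-1/3}\right)^2\left(r^2+2^{2/3}\right)$: the double root makes $\int \omega_1$ diverge at both ends of $(0,2^{-1/6})$, with one end asymptotic to the homogeneous model on $\G_{M3}$. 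Note also that no smooth closing-up across singular orbits is needed --- both ends are at infinite distance --- so the difficulty you anticipated there does not arise, while completeness of the total space is unproblematic because the fibres are complete flat $\R^4$'s twisted by the flat $\mathrm{SL}(4,\R)$ development. Without identifying this double-root mechanism (or a substitute for it), your plan either produces incomplete examples or leaves completeness unestablished.
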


The structures of type $S$ are also interesting from the point of view of the Laplacian flow (\ref{eq:IntroLapFlow}). We prove the following result in \S\ref{sssect:LapFlow}, where it appears as Theorem \ref{thm:TypeSLapSol}.
\begin{thm*}
	The universal cover of an ERP $\U(2)$-structure of type $S$ is a steady soliton for the Laplace flow.
\end{thm*}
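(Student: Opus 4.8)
The plan is to verify directly the defining equation of a steady Laplacian soliton, namely that on the universal cover there is a vector field $X$ with $\Delta_\varphi\varphi = \mathcal{L}_X\varphi$. Two preliminary reductions make this tractable. First, since $\varphi$ is closed we have $\Delta_\varphi\varphi = dd^*\varphi = d\tau$, using $d^*\varphi = \tau$, which follows from $d*_\varphi\varphi = \tau\wedge\varphi$ together with the identity $\beta\wedge\varphi = -*_\varphi\beta$ valid for $\beta$ in the $\Omega^2_{14}$-component. Second, again because $d\varphi=0$, we have $\mathcal{L}_X\varphi = d(\iota_X\varphi) + \iota_X\,d\varphi = d(\iota_X\varphi)$. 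Thus the soliton equation is equivalent to
\begin{equation}
d\big(\tau - \iota_X\varphi\big) = 0,
\end{equation}
and, since $\varphi$ is ERP, the datum $d\tau = \tfrac16\big(|\tau|^2\varphi + *_\varphi(\tau\wedge\tau)\big)$ is the explicit $3$-form furnished by the $1/6$-quadratic condition. The problem is therefore to produce $X$, equivalently an element $\sigma = \iota_X\varphi$ lying in the $\Omega^2_7$-component, whose exterior derivative reproduces this particular $3$-form.

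First I would pass to the universal cover, which is where the construction of $X$ naturally lives: the primitive involved (and the developing data identifying the structure with a submanifold of $\R^{3,3}$ from Theorem \ref{thm:TypeSCons}) need not be invariant under the fundamental group. Next I would use the explicit type $S$ structure equations established in \S\ref{ssect:TypeS}, together with the foliation by flat coassociative leaves, to write down a candidate $X$: the natural choice is the field dual, through $\iota_\bullet\varphi$, to a distinguished $\Omega^2_7$-form built from the adapted $\U(2)$-coframe, which in the homogeneous examples should reduce to the field generating Lauret's soliton derivation. With this $X$ in hand the verification is an identity in the moving frame: one expands $d(\iota_X\varphi)$ via the structure equations, expands $d\tau$ via the ERP and type $S$ relations, and matches the $\Omega^3_1$, $\Omega^3_7$, and $\Omega^3_{27}$ components — matching the $\Omega^3_1$-part amounting to checking that $\tfrac16|\tau|^2$ is reproduced by the trace part of $d(\iota_X\varphi)$.

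The main obstacle is precisely this construction and verification of $X$, and it is where the type $S$ hypothesis is essential: the ERP equation alone fixes $d\tau$ but not that it admits an $\Omega^2_7$-primitive, so one must exploit the special torsion and the vanishing of the tensors $A$ and $N$ to close up the argument. I expect the computation to hinge on the first-order consequences of type $S$ derived in \S\ref{ssect:TypeS}, which in effect confine the nonvanishing torsion of the induced $\U(2)$-structure to a single transverse symmetry direction along the coassociative leaves; this is what forces $\tau - \iota_X\varphi$ to be genuinely closed rather than merely exact of the right leading order. As a sanity check and geometric interpretation, the vector field $X$ should correspond under Theorem \ref{thm:TypeSCons} to moving the maximal spacelike submanifold within the quadric by an ambient isometry of $\R^{3,3}$; the vanishing of its mean curvature is consistent with the absence of a scaling term, and hence with the soliton being steady rather than expanding or shrinking.
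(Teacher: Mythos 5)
Your reductions are sound and consistent with the paper ($\Delta_\varphi\varphi=d\tau$ for closed $\varphi$, and $\mathcal{L}_X\varphi=d(\iota_X\varphi)$), but the proof has a genuine gap at exactly the point you defer: the vector field $X$ is never constructed, and the construction you sketch cannot work. A field ``dual through $\iota_\bullet\varphi$ to a distinguished $\Omega^2_7$-form built from the adapted $\U(2)$-coframe'' would be canonically attached to the structure, hence invariant under every automorphism and under the deck group of the universal cover; if such an $X$ satisfied $d\tau=d(\iota_X\varphi)$, then $M$ itself would be a steady soliton and the passage to the universal cover in the statement would be vacuous. The correct $X$ is of a different nature: it is the Euler (radial dilation) vector field of the flat coassociative $\R^4$-fibres, which depends on a choice of flat trivialisation --- a choice of origin in each fibre --- and is therefore not algebraic in the coframe and in general only locally defined on $M$. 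This is precisely why your (correct) instinct that ``the primitive need not be invariant under the fundamental group'' is incompatible with your candidate for $X$. Your closing sanity check is also off: the fibre dilation acts on $\Lambda^2\R^4\cong\R^{3,3}$ as a homothety, not an ambient isometry moving the maximal submanifold of Theorem \ref{thm:Barag}.

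The paper's argument avoids any component-matching in $\Omega^3_1\oplus\Omega^3_7\oplus\Omega^3_{27}$. It starts from the explicit solution of the Laplacian flow for ERP $\U(2)^+$-structures already computed in \S\ref{sssect:LapFlow}: the flow fixes $\nu$ and rescales $\eta(t)=e^{6kt}\eta(0)$, i.e.\ it moves $\varphi$ along the rescaling of Remark \ref{rmk:rescale} as in (\ref{eq:U2plLapFlow}). For a structure of type $S$, Theorem \ref{thm:TypeSCons} exhibits $M$ locally as an $\R^4$-bundle over a $3$-manifold $B$ whose fibres are the flat coassociative leaves of $\nu=0$; the fibrewise dilation field $X$ then realises the rescaling by diffeomorphisms, giving $d\tau=\mathcal{L}_X\varphi$ with no scaling term, hence a steady soliton. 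The type $S$ hypothesis and the universal cover enter through globality, not through closing up a moving-frame identity: the connection $\mu$ is a flat $\mathrm{SL}(4,\R)$-connection, so the $\R^4$-bundle trivialises over the universal cover of $B$, and only there does $X$ exist globally. If you want to salvage your direct-verification strategy, you should take $X$ to be this dilation field in a flat trivialisation $\omega_4,\ldots,\omega_7=\tilde f^{-1}(dx_1,\ldots,dx_4)$ from the proof of Theorem \ref{thm:TypeSCons}, at which point the identity $d(\iota_X\varphi)=d\tau$ follows from the structure equations --- but as written, the central object of your proof is missing and the proposed substitute is structurally ruled out.
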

This result provides an explanation for the observation of Lauret--Nicolini that all of the left-invariant ERP closed $\G_2$-structures appearing in their classification (which are all of type $S$) are steady Laplace solitons.

In \S\ref{ssect:ERPHomog}, we give a classification result for homogeneous ERP closed $\G_2$-structures. 
\begin{thm*}\label{thm:ERPHomogClassIntro}
	An ERP closed $\G_2$-structure $\varphi$ admitting a transitive action of diffeomorphisms preserving $\varphi$ is equivalent, up to rescaling, to one of the five examples listed in Theorem \ref{thm:ERPHomogClass}.
\end{thm*}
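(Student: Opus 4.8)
The plan is to classify homogeneous ERP closed $\G_2$-structures by leveraging the structural results established earlier in the paper. First I would recall that any ERP closed $\G_2$-structure has special torsion of positive type (noted in \S\ref{sect:U2pl}), so the torsion form $\tau$ canonically induces a $\U(2)$-structure on $M$. The homogeneity hypothesis---a transitive action of a group $\Gamma$ of diffeomorphisms preserving $\varphi$---means $\Gamma$ also preserves $\tau$ (since $\tau$ is determined by $\varphi$ via $d{*}_\varphi\varphi=\tau\wedge\varphi$), and hence preserves the induced $\U(2)$-structure and all its torsion tensors. In particular, the three tensors $A$, $N$, $S$ are $\Gamma$-invariant, so their pointwise norms $|A|$, $|N|$, $|S|$ are constant functions on the homogeneous space $M$.

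The key reduction is then to analyze how homogeneity constrains these tensors. I would argue that a homogeneous ERP structure cannot simultaneously have several of $A$, $N$, $S$ nonvanishing in an uncontrolled way; the cleanest route is to show that the structure equations, under the constancy forced by homogeneity, decouple into the pure types studied in \S\ref{ssect:TypeA}--\ref{ssect:TypeS}. Here the main input is the Weierstrass-type description of type $A$ (our theorem on type $A$) and the submanifold descriptions of type $N$ and type $S$. For type $A$, the Weierstrass data is a holomorphic function of one variable, and I expect that the homogeneity condition forces this function to be essentially affine/exponential, yielding at most finitely many structures up to rescaling. For type $S$, since these correspond to maximal space-like submanifolds of the quadric in $\R^{3,3}$, homogeneity of the $\G_2$-structure should translate into a homogeneity/symmetry condition on the submanifold, again cutting down to finitely many cases---and this matches the known left-invariant examples of Lauret--Nicolini, which are all of type $S$.

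The hard part will be handling the \emph{mixed} cases, i.e. showing that a homogeneous ERP structure must in fact be of a pure type (or that the handful of genuinely mixed homogeneous possibilities are accounted for among the five listed examples). The decomposition into types $A$, $N$, $S$ in \S\ref{sect:U2pl} was introduced for the situation where only one tensor is nonvanishing; in the homogeneous setting one must rule out, or explicitly enumerate, the configurations with two or three tensors simultaneously nonzero. I would approach this by writing out the full prolonged structure equations for the induced $\U(2)$-structure with all three tensors present, then imposing that every $\G_2$-invariant scalar (norms of $A$, $N$, $S$ and their contractions) be constant. Differentiating these constancy relations using the structure equations should produce algebraic syzygies among $A$, $N$, $S$ that, combined with the ERP integrability conditions from \S\ref{sssect:NonInvol}, force most components to vanish. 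This is essentially a finite-dimensional reduction: homogeneity collapses the infinite-dimensional moduli of the exterior differential system down to a finite set of orbits of $\G_2$-invariant tensors satisfying a closed system of polynomial equations.

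Finally, with the tensorial configurations pinned down, each surviving case corresponds to a Lie-theoretic model. I would identify each resulting homogeneous space by reconstructing the Lie algebra of $\Gamma$ from the (now constant-coefficient) structure equations---this is the standard procedure of integrating a homogeneous $G$-structure from its constant torsion---and match it against the five examples catalogued in Theorem \ref{thm:ERPHomogClass}, checking that Bryant's example, Lauret's example, and the three Lauret--Nicolini examples exhaust the list up to rescaling. The rescaling freedom accounts for the overall scale of $|\tau|$, which is the only continuous parameter the ERP condition leaves undetermined.
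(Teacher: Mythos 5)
There is a genuine gap at precisely the point you flag as ``the hard part.'' Your plan for ruling out mixed types---differentiating the constancy of the invariant scalars to produce syzygies among $A$, $N$, $S$---is a hope rather than an argument, and it is not how the proof goes. The paper's proof in \S\ref{ssect:ERPHomog} splits on the dimension of the symmetry group. If that dimension is exactly $7$ (discrete isotropy), the structure is left-invariant on a Lie group and the Lauret--Nicolini classification is invoked as an \emph{input}, which is what accounts for $\G_J$ and $\G_{M1}$; your proposal never makes this reduction, so your direct computation would have to reproduce their entire classification, and you give no mechanism for doing so. If the dimension exceeds $7$, the isotropy group $\mathrm{H}$ is a positive-dimensional subgroup of $\U(2)^+$ fixing the triple $\left(\mathsf{A}_p, \mathsf{S}_p, \mathsf{N}_p\right)$ in the $\U(2)$-module
\begin{equation*}
W = \left( \mathrm{Sym}^5_{\C} \C^2 \otimes \left( \Lambda^2_{\C} \C^2 \right)^{1/2} \right) \oplus \left( \mathrm{Sym}^4_{\C} \C^2 \otimes \Lambda^2_{\C} \C^2 \right) \oplus \left( \C^2 \otimes \left( \Lambda^2_{\C} \C^2 \right)^{3/2} \right),
\end{equation*}
and the mixed case is eliminated by a purely pointwise representation-theoretic fact, not by prolongation: the only element of $W$ fixed by all of $\U(2)^+$ is $0$ (Bryant's example), no element has stabiliser $\SU(2)$ or $\mathrm{T}^2$ (so dimensions $9$ and $10$ are impossible), and any element fixed by an $\mathrm{S}^1$-subgroup lies in exactly \emph{one} irreducible summand of $W$---hence the structure is automatically of pure type $A$, $N$, or $S$. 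This isotropy argument is the missing idea; without it your finite-dimensional reduction has no concrete engine.

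Two further inaccuracies. First, your opening claim that \emph{every} ERP closed $\G_2$-structure has special torsion of positive type is not established in the paper (Bryant's Theorem \ref{thm:BryERPCons} gives it only for compact $M$); in the homogeneous setting it follows from the identity $d\left(|\tau|^2\right) = \tfrac{2}{9}\,{*}_{\varphi}\tau^3$ (the $\lambda = 1/6$ case of (\ref{eq:Bry469})) together with constancy of $|\tau|^2$, and this derivation is actually the first step of the proof. Second, your expectations for types $A$ and $N$ are off: homogeneity forces $A=0$ (the structure equations (\ref{eq:TypeAStructPre}) show the norm of $\mathsf{A}$ varies along the $\nu_{1\bar{1}}$-direction) and likewise $N=0$, so these types contribute nothing beyond Bryant's example---there is no ``finitely many nontrivial type $A$'' list to extract from the Weierstrass data. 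All nontrivial homogeneous examples with continuous isotropy are of type $S$, and they are found by enumerating the $\mathrm{S}^1$-stabilised root configurations of $\mathsf{S}$ in $\mathrm{Sym}^4_{\C}\C^2$: a quadruple root forces $S=0$, a triple root antipodal to a single root gives Example \ref{eg:M2}, and antipodal double roots give Example \ref{eg:M3}. Your instinct that type $S$ homogeneity translates into symmetry of the maximal submanifold is sound, but the actual classification runs through these algebraic normal forms rather than through a general symmetry analysis of submanifolds of the quadric.
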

Each of the examples presented in Theorem \ref{thm:ERPHomogClass} is equivalent to an example in the Lauret--Nicolini classification of left-invariant ERP closed $\G_2$-structures on Lie groups, so there are no essentially new examples appearing in the classification. On the other hand, for some of their examples our classification reveals the existence of additional automorphisms, showing that these $\G_2$-structures are better presented as homogeneous spaces with non-trivial isotropy rather than as Lie groups. One consequence of this point of view is the existence of a compact manifold admitting an ERP $\G_2$-structure locally equivalent to the Lauret--Nicolini example $\G_{M3},$ see \S\ref{sssect:Compact}. This yields a new example of a compact ERP $\G_2$-structure not locally equivalent to the examples of Bryant \cite{Bry05} and Kath--Lauret \cite{KaLaERP20}.

Taken together, the results of \S\ref{sect:U2pl} provide many new examples of ERP closed $\G_2$-structures and shed fresh light on the existing examples.

\subsubsection{Negative type}

In \S\ref{sect:U2mi}, we study $\lambda$-quadratic closed $\G_2$-structures with special torsion of negative type. We follow the same approach as in \S\ref{sect:U2pl}, studying the geometry of the $\U(2)$-structure on $M$ induced by the torsion form $\tau$ of $\varphi.$ The subgroup $\U(2)$ appearing in this section is not conjugate to the subgroup $\U(2)$ of \S\ref{sect:U2pl}, and this lends the constructions of this section a different flavour.

Our first results, Theorems \ref{thm:U2miLamVals} and \ref{thm:U2MiIncomp}, restrict the possible values of $\lambda$ in this situation to a finite set and show that complete examples of this type do not exist.

\begin{thm*}
	Let $\left(M, \varphi \right)$ be a $\lambda$-quadratic closed $\G_2$-structure with special torsion of negative type. Then $\lambda$ is equal to $-1,$ $-1/8,$ $2/5,$ or $3/4.$
\end{thm*}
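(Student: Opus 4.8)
The plan is to run the same moving-frame argument used for the positive type, adapted to the negative copy of $\U(2) \subset \G_2$. First I would use the negative special-torsion hypothesis to reduce the frame bundle of $M$ to a $\U(2)$-structure: since $\tau$ lies pointwise in the single negative $\G_2$-orbit, the pair $(\varphi,\tau)$ pins down an adapted coframe $(e^1,\dots,e^7)$ in which $\tau = f\,\tau_0$ for a positive function $f$ (a function of $|\tau|$) and a fixed negative element $\tau_0$, and in which $\varphi$, $*_{\varphi}\varphi$ and $*_{\varphi}(\tau_0\wedge\tau_0)$ all have constant coefficients. I would then record the structure equations $de^i = -\theta^i{}_j\wedge e^j + (\text{torsion})$ for this reduction, where $\theta$ is a $\U(2)$-connection and the intrinsic torsion takes values in the $\U(2)$-module complementary to $\mathfrak{u}(2)$. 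The representation theory must be redone from scratch here: because the two embeddings of $\U(2)$ are non-conjugate in $\G_2$, the decomposition of $\R^7$ and of the torsion space differs from the positive case.

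Next I would compute $d\tau$ directly from these equations. Since $\tau_0$ is $\U(2)$-invariant (hence $\theta$-parallel), the connection terms cancel and $d\tau$ is expressed purely through $df$ and the intrinsic-torsion components. Substituting into the $\lambda$-quadratic equation \eqref{eq:QuadClosed} and decomposing both sides into $\G_2$-irreducibles, the $\Omega^3_1$ part of $d\tau$ is automatically $\tfrac17|\tau|^2\varphi$ for any closed structure and so carries no information about $\lambda$; the content sits entirely in the $\Omega^3_{27}$ part, which I would then break further into its $\U(2)$-irreducible constituents. The key structural feature is that the right-hand side of \eqref{eq:QuadClosed} is algebraic in $\tau$: each $\U(2)$-component of $\tfrac17(1+\lambda)|\tau|^2\varphi + \lambda\,*_{\varphi}(\tau\wedge\tau)$ is an explicit polynomial in $\lambda$ times $f^2$, whereas the matching components of $d\tau$ are constrained by which torsion pieces actually exist in the reduction.

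A first pass of matching produces only linear-in-$\lambda$ relations (the right-hand side is linear in $\lambda$), which determine certain torsion components as multiples of $f^2$ with $\lambda$-dependent coefficients. The quadratic constraints---and hence the finitely many admissible values---should emerge upon \emph{prolonging}, i.e. imposing $d^2=0$ (equivalently the Bianchi identities for the $\U(2)$-structure) on these relations; this reintroduces $\lambda$ multiplied against the already-$\lambda$-dependent torsion, yielding genuine quadratics in $\lambda$. This is consistent with the non-involutivity noted in \S\ref{sssect:NonInvol}. I would expect the resulting conditions to organize into two quadratic factors, plausibly $(\lambda+1)(4\lambda-3)$ and $(8\lambda+1)(5\lambda-2)$, whose roots are exactly $-1,3/4$ and $-1/8,2/5$; since $\tau\neq 0$ (and $\tau^3\neq0$, in contrast to the positive case where $\tau^3=0$ collapsed everything to $\lambda=1/6$), not all relevant components can vanish simultaneously, and $\lambda$ is forced into the union $\{-1,-1/8,2/5,3/4\}$.

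The main obstacle I anticipate is the explicit computation of $d\tau$ in the adapted coframe together with the correct bookkeeping of which $\U(2)$-components of the intrinsic torsion are present and how they feed into $\Omega^3_{27}$. The delicate point is producing the quadratic---rather than merely linear---dependence on $\lambda$: it hinges on carrying out the prolongation carefully enough to see the $\lambda^2$ terms appear with the right coefficients, and then verifying that the constraint is \emph{sharp}, i.e. that each of the four values genuinely survives as an internally consistent torsion configuration rather than being only a necessary condition.
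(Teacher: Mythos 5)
Your proposal is correct and takes essentially the same route as the paper: reduce to the adapted $\U(2)^-$-structure, use the $\lambda$-quadratic equation (together with its consequences (\ref{eq:Bry466}) and (\ref{eq:Bry469})) to solve for the intrinsic torsion with $\lambda$-dependent coefficients, then prolong --- in the paper $d^2\eta=0$ first kills one torsion component and $d^2\nu=0$ yields $\left( 8\lambda+1 \right)\left( 5\lambda-2 \right)\left( 4\lambda-3 \right)\left( \lambda+1 \right) f^2=0$, exactly the quartic factorization you predicted, whence $\lambda \in \{-1,-1/8,2/5,3/4\}$ since $f \not\equiv 0$. Your final worry about sharpness is not needed for this statement, which is only a necessary condition; existence for each of the four values is handled separately in \S\ref{ssect:LamMin1}--\ref{ssect:Lam3by4}.
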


\begin{thm*}\label{thm:U2MiIncompIntro}
	Let $\left(M, \varphi \right)$ be a $\lambda$-quadratic closed $\G_2$-structure with special torsion of negative type. Then the induced metric $g_{\varphi}$ is incomplete.
\end{thm*}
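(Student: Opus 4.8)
The plan is to derive from the structure equations of the negative-type $\U(2)$-structure an ordinary differential equation for the torsion norm $u := |\tau|^2$ and to show that this ODE forces $u$ to escape to the boundary of its range in finite arc length, which is incompatible with completeness.

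First I would fix the canonical $\U(2)$-structure determined by the negative-type torsion form $\tau$ and work on the adapted coframe bundle, writing the structure equations and the components of $d\tau$ in an adapted coframe, as set up in the negative-type analysis of this section. Because $\tau$ is $\U(2)$-fixed, its norm $u$ and the trace-type invariants built from $\tau$ are genuine functions on $M$, and the quadratic condition (\ref{eq:QuadClosed}) together with $d(d\tau)=0$ (using $d\varphi=0$) pins down $du$ in terms of the reduced torsion. The key claim to establish is that
\[
du = f(u)\,\theta,
\]
where $\theta$ is a distinguished $g_\varphi$-unit $1$-form singled out by the $\U(2)$-structure and $f$ is an explicit, nowhere-vanishing function of $u$ alone. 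The nonvanishing of $f$---in contrast to the positive-type (ERP) case, where the analogous quantity can vanish and complete examples exist---is precisely where the negative-type hypothesis and the restriction $\lambda\in\{-1,-1/8,2/5,3/4\}$ from Theorem \ref{thm:U2miLamVals} enter.

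Granting this, $|\nabla u|_{g_\varphi}=|f(u)|$ depends only on $u$. Setting $r$ to be an antiderivative of $ds/|f(s)|$ evaluated at $u$ therefore produces a function with $|\nabla r|_{g_\varphi}\equiv 1$; such a function is a local Riemannian distance function whose gradient flow lines are unit-speed geodesics crossing the level sets of $u$ orthogonally. Along such a geodesic $r$ is arc length, so the arc length separating a fixed level set from the boundary value of $u$ equals the improper integral $\int ds/|f(s)|$. I would then show that this integral converges at the relevant endpoint: either $u\to\infty$, where a curvature invariant built from $\tau$ is unbounded, or $u\to 0$, where $\tau\to 0$ and the negative-type structure degenerates. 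A geodesic of finite length along which a curvature scalar blows up (or along which the $\G_2$-structure degenerates) cannot terminate at an interior point of a complete manifold, so $g_\varphi$ must be incomplete.

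The main obstacle is the derivation and uniform analysis of the relation $du=f(u)\,\theta$. Extracting $f$ requires carefully combining the reduced structure equations of the negative-type $\U(2)$-structure with the quadratic condition for each of the four admissible values $\lambda\in\{-1,-1/8,2/5,3/4\}$, and then verifying two things: that $f$ is strictly nonvanishing on the interior of the range of $u$ (the decisive contrast with the positive-type case, where complete ERP examples exist), and that $\int ds/|f(s)|$ converges at the endpoint that is reached. Establishing the precise form of $f$, and in particular its growth as $u$ approaches the boundary of its range, is the computational heart of the argument.
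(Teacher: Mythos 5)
Your plan is essentially the paper's proof: the identity $(3\lambda-4)\,d\left(|\tau|^2\right)=7\lambda(2\lambda-1)\,{*}_{\varphi}\tau^3$ (equation (\ref{eq:Bry469})), specialized to negative-type torsion, yields exactly your relation $du=f(u)\,\theta$ with $\theta=\alpha=\omega_1$ the distinguished unit $1$-form and $f(u)$ a nonzero multiple of $u^{3/2}$ (nonvanishing precisely because $\lambda\in\{-1,-1/8,2/5,3/4\}$ by Theorem \ref{thm:U2miLamVals} forces $\lambda(2\lambda-1)(3\lambda-4)\neq0$), and the paper then integrates this using $d\alpha=0$ to write $\alpha=dr$, obtaining $|\tau|^2\propto r^{-2}$, so that $\mathrm{Scal}(g_{\varphi})=-\tfrac{1}{2}|\tau|^2$ blows up at $r=0$, which lies at finite distance since $\alpha$ has unit length. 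The one point your sketch leaves open---which endpoint of the range of $u$ is reached in finite arc length---resolves as you anticipate: $\int du/u^{3/2}$ converges at $u=\infty$ (the $r\to0$ end, where the scalar curvature blows up) and diverges at $u=0$, so your argument, once the computation of $f$ is carried out, coincides with the paper's.
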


It turns out (see Proposition \ref{prop:U2miRedToSix}) that the geometry of $M$ is determined completely by the geometry of any of the 6-manifolds $N$ defined by $\left\lbrace | \tau |^2 = \mathrm{const} \right\rbrace.$ In \S\ref{ssect:LamMin1}-\ref{ssect:Lam3by4} we study the geometry of $N$ in each of the four possible cases $\lambda = -1,$ $-1/8,$ $2/5,$ or $3/4.$ Existence of $\lambda$-quadratic closed $\G_2$-structures for each of these $\lambda$-values follows from the results in these sections.

\begin{thm*}
	There exist $\lambda$-quadratic closed $\G_2$-structures with special torsion of negative type for each of the possible $\lambda$ values $-1,$ $-1/8,$ $2/5,$ or $3/4.$ Furthermore, there exist infinitely many local examples of these structures for each of the possible values of $\lambda$.
\end{thm*}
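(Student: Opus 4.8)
The plan is to reduce the existence problem from dimension seven to dimension six, treat the four admissible values of $\lambda$ separately, and in each case produce local solutions by a Cartan--K\"{a}hler argument. By Theorem \ref{thm:U2miLamVals} no value other than $\lambda = -1$, $-1/8$, $2/5$, $3/4$ can occur, so it suffices to exhibit, for each of these four values, not merely one but an infinite family of local $\lambda$-quadratic closed $\G_2$-structures with negative special torsion.

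First I would invoke Proposition \ref{prop:U2miRedToSix}, which asserts that such a structure on $M^7$ is reconstructed canonically from the geometric structure induced on any level set $N = \left\lbrace |\tau|^2 = \mathrm{const} \right\rbrace$. This converts the seven-dimensional overdetermined PDE system into an equivalent six-dimensional one: to build an example it is enough to build a suitable $\U(2)$-structure on a six-manifold $N$. Working in an adapted coframe, I would write out the structure equations of this induced $\U(2)$-structure and impose the $\lambda$-quadratic condition, which after the reduction becomes a first-order system on $N$. The reduced structures obtained in this way are exactly those analyzed in \S\ref{ssect:LamMin1}--\ref{ssect:Lam3by4}.

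With the reduced structure equations in hand, I would set up the associated exterior differential system on the relevant coframe bundle and apply Cartan's test. In each of the four cases I expect the system to become involutive---if necessary after prolonging once, or after a further reduction of the structure group forced by the torsion identities---so that the Cartan--K\"{a}hler theorem yields local real-analytic solutions. The Cartan characters computed in the test then measure the size of the solution space; since in each case this generality exceeds the dimension of the finite-dimensional group of local symmetries, structures arising from distinct Cauchy data are pairwise inequivalent, giving infinitely many local examples for each $\lambda$. Incompleteness (Theorem \ref{thm:U2MiIncompIntro}) is no obstruction here, since only local existence is claimed.

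The main difficulty is that the four values of $\lambda$ yield genuinely different reduced six-dimensional geometries, so there is no uniform argument: involutivity, together with the choice of the correct prolongation, must be verified case by case, and the algebraic torsion identities that single out each admissible $\lambda$ must be carried correctly through the structure equations. A secondary but essential point is to confirm that the parameter count coming from the Cartan characters genuinely exceeds the dimension of the symmetry group, so that \emph{infinitely many} holds up to local equivalence and not merely up to reparametrization.
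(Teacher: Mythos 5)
Your opening moves coincide with the paper's: Theorem \ref{thm:U2miLamVals} pins $\lambda$ to the four values, and Proposition \ref{prop:U2miRedToSix} converts the problem into constructing a $\U(2)$-structure on a $6$-manifold $N$ satisfying the structure equations (\ref{eq:U2miStructSix}). The gap is in the engine you propose for existence: Cartan's test and the Cartan--K\"ahler theorem applied to the full reduced system, with involutivity ``expected'' after a prolongation or a structure-group reduction. That is precisely the step the paper never carries out, and its own computations warn against expecting it to go through uniformly. Already in dimension seven the quadratic system is non-involutive, with obstructions surviving prolongation (\S\ref{sssect:NonInvol}); and in the reduced picture the torsion identities are destructive in a $\lambda$-dependent way: for $\lambda=-1/8$ the condition $H_{ab}=0$ forces $A_a=0$, and for $\lambda=3/4$ each of $A_a=0$, $H_{ab}=0$ forces the other, so strata you might hope to populate by Cartan--K\"ahler are simply empty. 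In particular, for $\lambda=3/4$ the paper exhibits no solutions outside the locus $A=H=0$, and neither the paper nor your sketch shows that the general system with both $A$ and $H$ active is involutive at any prolongation level. Deferring involutivity to an unperformed case-by-case check therefore leaves the existence claim unproved for at least this value of $\lambda$ --- you have a plan whose hardest step is exactly where it could fail.

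What the paper does instead is impose the ans\"atze $A=0$ and/or $H=0$ by hand and identify the resulting reduced structures with classical geometries, from which existence and infinitude are immediate: for $\lambda=-1$, products $\R^2\times X$ with $X$ Ricci-flat K\"ahler, a type $A$ family existing locally with generality $3$ functions of $2$ variables (Proposition \ref{prop:Lammin1TypeAExist}), and type $H$ structures equivalent to holomorphic curves in $\Sp(2,\R)/\U(2)$; for $\lambda=-1/8$, twistor spaces of Ricci-flat K\"ahler $4$-manifolds and maximal timelike surfaces in $\SO(3,3)/\SO(3,2)$; for $\lambda=2/5$, $\mathrm{T}^2$-bundles over hyperk\"ahler $4$-manifolds together with a type $A$ family (Proposition \ref{prop:Lam2by5TypeAExist}); for $\lambda=3/4$, twistor spaces of anti-self-dual Einstein $4$-manifolds with scalar curvature $80/9$. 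The ``infinitely many'' clause then follows from the infinite-dimensional local moduli of these underlying $4$-dimensional geometries and from the quoted generality counts on the strata --- not from a Cartan character computation for the full reduced system, and not from your proposed comparison of characters against the symmetry group of the general structure. To repair your argument, keep the reduction, but restrict to the torsion-type strata and either invoke the classical existence theory there or run the prescribed-coframing generality argument on the stratum, which is exactly what Propositions \ref{prop:Lammin1TypeAExist} and \ref{prop:Lam2by5TypeAExist} do; for $\lambda=3/4$ you must argue on the locus $A=H=0$ via the twistor correspondence, since no other stratum is available.
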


The results of \S\ref{sect:U2mi} expand considerably the set of known values of $\lambda$ for which $\lambda$-quadratic closed $\G_2$ structures can exist and show that for these $\lambda$ values there are relatively many local solutions to the system (\ref{eq:QuadClosed}). We note that in \S\ref{ssect:NonSpecTors} we construct a single example of a $1/3$-quadratic closed $\G_2$-structure that does not have special torsion. This is noteworthy because, by the results of \S\ref{sect:U2pl} and \S\ref{sect:U2mi}, the value $\lambda=1/3$ cannot occur in the special torsion cases.

\subsubsection{Laplace solitons}

A closed $\G_2$-structure $\varphi$ on a 7-manifold $M$ is said to be a \emph{Laplace soliton} if there exists a constant $c \in \R$ and a vector field $V \in \mathcal{X}(M)$ such that
\begin{equation}
\Delta_{\varphi} \, \varphi = c \, \varphi + \mathcal{L}_V \, \varphi.
\end{equation}
If $c$ is positive, zero, or negative, the soliton is called \emph{expanding}, \emph{steady}, or \emph{shrinking} respectively. When the vector field $V$ is the gradient of a function $f$ on $M$ the soliton is called a \emph{gradient soliton}.

Laplace solitons are exactly the solutions of the Laplacian flow equation (\ref{eq:IntroLapFlow}) that evolve under the flow by diffeomorphisms and scaling, and it is expected that they will play an important role in understanding the long-term behaviour of general solutions.

In \S\ref{sect:LapSols}, we give two families of examples of complete steady gradient Laplace solitons. These examples are inhomogeneous, unlike all of the other previously known examples of Laplace solitons \cites{FFMNilSols16,FiRa217,FiRa119,LauFirSol17,LauretLap,NicShrinkSol20}, and to our knowledge they are the first known examples of gradient solitons. Our examples are topologically products $\R \times N,$ where in the first family of examples $N$ is the twistor space over an anti-self-dual Ricci-flat 4-manifold, and in the second family $N$ is a certain $\mathrm{T}^2$-bundle over a suitable hyperk\"ahler 4-manifold. Both families of examples consist of infinitely many distinct structures, even when $N$ is assumed compact. The form of the $\G_2$-structure on $\R \times N$ for these examples is motivated by the results of \S\ref{sect:U2mi}.

The first family of examples is discussed in \S\ref{ssect:LapSolEg1} and the second family is discussed in \S\ref{ssect:LapSolEg2}. In both cases, the $\G_2$-structure is given explicitly in terms of the geometry of $N$ and functions of the coordinate in the $\R$-direction. We also give a description of the asymptotics of the examples as we move towards negative and positive infinity in the $\R$-direction.

\subsection{Acknowledgments} The majority of the work in this article appeared in my 2019 Duke University PhD thesis \cite{Ball19}. I thank my advisor Robert Bryant for his encouragement and for many helpful discussions. I would also like to thank the Simons Foundation for funding as a graduate student member of the Simons Collaboration on Special Holonomy in Geometry, Analysis and Physics during my graduate studies. I thank Jorge Lauret for helpful comments on a previous version of this article, and in particular for raising the possibility (fulfilled in \S\ref{sssect:Compact}) that the Lauret--Nicolini example on $\G_{M3}$ may admit a compact quotient.

\section{Preliminaries}\label{sect:Prelim}

This section provides background on the theory of $\G$-structures and the group $\G_2$ that will be used in the rest of the article.

\subsection{$\G$-structures}

We give here a brief description of the concept of a $\G$-structure, as most of the calculations and results in this article will be phrased in this language.

\begin{definition}
	Let $M$ be an $n$-dimensional manifold and $W$ a vector space of dimension $n.$ A \emph{coframe} at $p \in M$ is a linear isomorphism $u_p : T_p M \to W.$ The set of all coframes on $M,$ endowed with the group action $u \cdot g = g^{-1} \circ u,$ is a principal right $\mathrm{GL}(W)$-bundle over $M$, called the \emph{coframe bundle} of $M$ and denoted by $\mathcal{F} \left( M \right).$
\end{definition}

\begin{definition}\label{def:GStruct}
	Let $\G$ be a subgroup of $\mathrm{GL}(W).$ A \emph{$\G$-structure} on a manifold $M$ is a principal $\G$-subbundle of the coframe bundle $\mathcal{F} \left( M \right).$
\end{definition}

An important feature of the coframe bundle $\mathcal{F}(M)$ that is inherited by any $\G$-structure over $M$ is the existence of a canonical $V$-valued 1-form $\omega.$

\begin{definition}
	For any $\G$-structure $\mathcal{B} \subset \mathcal{F}(M),$ the \emph{tautological} 1-form $\omega$ is defined by
	\begin{align}
	\omega \left( v \right) = u \left( \pi_{*} \left( v \right) \right) \:\:\:\: \text{for all} \:\:\:\: v \in T_u \mathcal{B}, 
	\end{align}
	where $\pi : \mathcal{B} \to M$ is the bundle projection.
\end{definition}

The tautological form $\omega$ on $\mathcal{B}$ may be thought of as the pullback of the tautological form on the coframe bundle $\mathcal{F}(M)$ via the inclusion $\mathcal{B} \subset \mathcal{F}(M).$ The form $\omega$ is $\pi$-semibasic and has the reproducing property $\eta^{*} (\omega ) = \eta$ for any local section $\eta$ of $\mathcal{B}.$ The most important property of $\omega$ is that it detects diffeomorphisms of the base manifold $M$ respecting the $\G$-structure $\mathcal{B}.$

\begin{definition}
	If $M$ and $N$ are smooth $n$-manifolds and $f : M \to N$ is a local diffeomorphism, then a smooth bundle map $\tilde{f} : \mathcal{F} (M) \to \mathcal{F} (N)$ covering $f$ is defined by the rule
	\begin{align}
	\tilde{f} \left( u \right) = u \circ f^{-1}_*.
	\end{align}
	The map $f \mapsto \tilde{f}$ is functorial and covariant, and $\tilde{f}$ is a diffeomorphism.
\end{definition}

\begin{thm}\label{thm:MoE}
	If $f : M \to N$ is a diffeomorphism and $\mathcal{B}$ and $\mathcal{C}$ are $G$-structures over $M$ and $N$ respectively satisfying $\tilde{f} ( \mathcal{B} ) = \mathcal{C},$ then $\tilde{f}^{*} \omega_{\mathcal{B}} = \omega_{\mathcal{C}}.$
	
	Conversely, if $\mathcal{U} \subset \mathcal{B}$ is an open subset of a $G$-structure on $M$ with the property that its $\pi$-fibres are connected and $\phi : \mathcal{U} \to \mathcal{C}$ is any smooth mapping satisfying $\phi^{*} \omega_{\mathcal{C}} = \omega_{\mathcal{B}},$ then there exists a unique smooth mapping $f : \pi \left( \mathcal{U} \right) \to N$ that satisfies $f \circ \pi_{\mathcal{B}} = \pi_{\mathcal{C}} \circ g.$ Moreover, $f$ is a local diffeomorphism and $g$ is the restriction to $\mathcal{U}$ of $\tilde{f}.$
\end{thm}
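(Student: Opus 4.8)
The plan is to prove the two directions separately: the forward implication is a direct unwinding of definitions, while the converse rests on a descent argument that exploits two properties of the tautological form, namely that it is $\pi$-semibasic and that, pointwise, it is an isomorphism. (I will establish the pullback identity $\tilde{f}^{*} \omega_{\mathcal{C}} = \omega_{\mathcal{B}}$, which is the statement that typechecks given $\tilde{f} \colon \mathcal{B} \to \mathcal{C}$.)

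For the forward direction, fix $u \in \mathcal{B}$ and $v \in T_u \mathcal{B}$. Because $\tilde{f}$ covers $f$ and $\tilde{f}(\mathcal{B}) = \mathcal{C}$, we have $\pi_{\mathcal{C}} \circ \tilde{f} = f \circ \pi_{\mathcal{B}}$ on $\mathcal{B}$, hence $\pi_{\mathcal{C}*}(\tilde{f}_{*} v) = f_{*}(\pi_{\mathcal{B}*} v)$. Combining this with the defining relation $\tilde{f}(u) = u \circ (f_{*})^{-1}$ and the definition of the tautological form gives
\[
(\tilde{f}^{*} \omega_{\mathcal{C}})(v) = \tilde{f}(u)\big( \pi_{\mathcal{C}*}(\tilde{f}_{*} v) \big) = u\big( (f_{*})^{-1} f_{*} \, \pi_{\mathcal{B}*} v \big) = u(\pi_{\mathcal{B}*} v) = \omega_{\mathcal{B}}(v),
\]
so $\tilde{f}^{*} \omega_{\mathcal{C}} = \omega_{\mathcal{B}}$.

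The substance lies in the converse. The key observation, used repeatedly, is that for any coframe $w$ regarded as a linear isomorphism, $\omega(\xi) = w(\pi_{*} \xi)$ vanishes if and only if $\xi$ is $\pi$-vertical; thus the tautological form detects verticality \emph{exactly}. I would first produce $f$ as follows. If $v \in T_u \mathcal{U}$ is $\pi_{\mathcal{B}}$-vertical, then $\omega_{\mathcal{B}}(v) = 0$, so $\omega_{\mathcal{C}}(\phi_{*} v) = (\phi^{*} \omega_{\mathcal{C}})(v) = 0$, which forces $\phi_{*} v$ to be $\pi_{\mathcal{C}}$-vertical, i.e. $(\pi_{\mathcal{C}} \circ \phi)_{*} v = 0$. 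Since the $\pi_{\mathcal{B}}$-fibres of $\mathcal{U}$ are connected, $\pi_{\mathcal{C}} \circ \phi$ is constant on each fibre and hence descends to a well-defined map $f$ on $\pi_{\mathcal{B}}(\mathcal{U})$ satisfying $f \circ \pi_{\mathcal{B}} = \pi_{\mathcal{C}} \circ \phi$; smoothness follows by precomposing locally with a section of $\pi_{\mathcal{B}}$, and uniqueness is immediate from the surjectivity of $\pi_{\mathcal{B}}$ onto $\pi_{\mathcal{B}}(\mathcal{U})$. This descent step is exactly where the connectedness hypothesis on the fibres is needed, and I expect it to be the main obstacle---once $f$ exists, the rest is algebra.

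To finish, I would show that $f_{*}$ is a pointwise isomorphism and then recover $\phi = \tilde{f}|_{\mathcal{U}}$. Fix $u$, set $p = \pi_{\mathcal{B}}(u)$, take $X \in T_p M$, and lift it to some $v \in T_u \mathcal{B}$ with $\pi_{\mathcal{B}*} v = X$. Chasing the identity $\phi^{*} \omega_{\mathcal{C}} = \omega_{\mathcal{B}}$ through the definitions and using $f \circ \pi_{\mathcal{B}} = \pi_{\mathcal{C}} \circ \phi$ yields $\phi(u)(f_{*} X) = u(X)$ for all $X$, that is, $f_{*} = \phi(u)^{-1} \circ u$ as a map $T_p M \to T_{f(p)} N$. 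Since $u$ and $\phi(u)$ are isomorphisms, so is $f_{*}$, whence $f$ is a local diffeomorphism by the inverse function theorem. Finally, rearranging $f_{*} = \phi(u)^{-1} \circ u$ gives $u \circ (f_{*})^{-1} = \phi(u)$, which is precisely $\tilde{f}(u) = \phi(u)$; thus $\phi$ is the restriction of $\tilde{f}$ to $\mathcal{U}$, completing the argument.
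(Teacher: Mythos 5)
Your proof is correct, and note that the paper itself states this theorem as background (it is the classical foundation of Cartan's equivalence method, quoted from the literature) without supplying a proof, so there is nothing internal to compare against; your argument is the standard one. Both directions are complete: the forward direction is the right definition-chase, and in the converse you correctly isolate the two essential mechanisms --- that $\omega$ detects verticality exactly (because each coframe is a linear isomorphism), which together with connected fibres gives the descent of $\pi_{\mathcal{C}} \circ \phi$ to $f$, and the pointwise identity $f_{*} = \phi(u)^{-1} \circ u$, which yields both the local-diffeomorphism property and $\phi = \tilde{f}|_{\mathcal{U}}$. You also rightly repaired the statement's typographical slips (reading $\tilde{f}^{*}\omega_{\mathcal{C}} = \omega_{\mathcal{B}}$ rather than the type-incorrect $\tilde{f}^{*}\omega_{\mathcal{B}} = \omega_{\mathcal{C}}$, and identifying the stray $g$ with $\phi$).
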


Theorem \ref{thm:MoE} is the foundation of Cartan's method of equivalence. It is often used in conjunction with the following classical result, known as \textit{Cartan's Theorem on Maps into Lie Groups}:

\begin{thm} \label{thm:MaurerCartan} \cites{IvLaSecond, EDSBook} Let $\G$ be a Lie group with Lie algebra $\mathfrak{g}$, and let $\omega \in \Omega^1(\G; \mathfrak{g})$ be the Maurer-Cartan form on $\G$.  Let $P$ be a connected, simply-connected manifold admitting a $\mathfrak{g}$-valued $1$-form $\mu \in \Omega^1(P; \mathfrak{g})$ satisfying $d\mu = -\mu \wedge \mu$.  Then there exists a map $F \colon P \to \G$, unique up to composition with left-translation in $\G$, such that $F^*\omega = \mu$.
\end{thm}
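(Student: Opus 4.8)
The plan is to prove this as a nonlinear Poincar\'{e} lemma, realising $F$ as (the graph of) a leaf of a foliation on the product $P \times \G$. Let $\pi_1, \pi_2$ denote the projections of $P \times \G$ onto $P$ and $\G$, and form the $\mathfrak{g}$-valued $1$-form $\Theta = \pi_2^{*}\omega - \pi_1^{*}\mu$. Because the Maurer--Cartan form $\omega$ restricts to a linear isomorphism $T_g\G \to \mathfrak{g}$ at each $g$, the components of $\Theta$ are everywhere independent, so $\mathcal{D} := \ker \Theta$ is a distribution of constant rank $\dim P$; explicitly $\mathcal{D}_{(p,g)} = \{(v,w) : \omega_g(w) = \mu_p(v)\}$ is the graph of $v \mapsto \omega_g^{-1}(\mu_p(v))$ over $T_p P$.

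First I would check that $\mathcal{D}$ is involutive. Writing $\alpha = \pi_2^{*}\omega$ and $\beta = \pi_1^{*}\mu$, the Maurer--Cartan equation $d\omega = -\omega \wedge \omega$ together with the hypothesis $d\mu = -\mu \wedge \mu$ gives $d\Theta = -\alpha \wedge \alpha + \beta \wedge \beta$; substituting $\alpha = \beta + \Theta$ yields $d\Theta = -\beta \wedge \Theta - \Theta \wedge \beta - \Theta \wedge \Theta$, which lies in the algebraic ideal generated by the components of $\Theta$. By the Frobenius theorem $\mathcal{D}$ integrates to a foliation of $P \times \G$ by leaves of dimension $\dim P$, and along any leaf $L$ the projection $\pi_1|_L \colon L \to P$ is a local diffeomorphism, since $d\pi_1$ maps $\mathcal{D}_{(p,g)}$ isomorphically onto $T_p P$.

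The substance of the argument is global: upgrading $\pi_1|_L$ to a diffeomorphism onto $P$. I would note that each map $\mathrm{id} \times L_h$ (left translation in the $\G$-factor) preserves $\Theta$, since $L_h^{*}\omega = \omega$, and therefore permutes the leaves; this organises the fibre $\pi_1^{-1}(p_0) \cap L$ into a discrete subgroup $\Gamma = \{h : (p_0, h) \in L\}$. The key analytic input is unique path lifting: a smooth path $c$ in $P$ lifts into $L$ by solving the time-dependent, left-invariant ODE $\dot g(t) = (L_{g(t)})_{*}\bigl(\mu(\dot c(t))\bigr)$ on $\G$, whose solutions exist on the whole parameter interval because left-invariant flows cannot escape to infinity in finite time. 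Unique path lifting makes $\pi_1|_L$ a covering map, and since $P$ is connected and simply connected while $L$ is connected, the covering is trivial, so $\pi_1|_L$ is a diffeomorphism. I expect this covering/monodromy step, and in particular its use of simple connectivity, to be the main obstacle; the involutivity computation and the local analysis are routine.

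Finally, I would set $F = \pi_2 \circ (\pi_1|_L)^{-1} \colon P \to \G$. The section $s = (\mathrm{id}, F) \colon P \to L$ satisfies $s^{*}\Theta = 0$, which unwinds to $F^{*}\omega = \mu$, as desired. For uniqueness, any $F'$ with $(F')^{*}\omega = \mu$ has graph a connected, full-dimensional integral manifold of $\mathcal{D}$ that projects diffeomorphically to $P$, hence an entire leaf; two leaves differ by some $\mathrm{id} \times L_h$, so $F' = L_h \circ F$, establishing uniqueness up to left translation.
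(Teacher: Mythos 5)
Your proposal is correct and is essentially the canonical proof of this result: the paper itself gives no proof (it quotes the theorem with citations), and the cited sources argue exactly as you do, forming $\Theta = \pi_2^{*}\omega - \pi_1^{*}\mu$ on $P \times \G$, checking involutivity of $\ker\Theta$ from the two Maurer--Cartan equations, showing $\pi_1$ restricted to a leaf is a covering via global solvability of the time-dependent left-invariant ODE, and concluding by simple connectivity. One cosmetic point: the fibre set $\{h : (p_0,h) \in L\}$ is a subgroup only after normalising so that $(p_0,e) \in L$ (in general it is a right coset of the subgroup of those $h$ with $(\mathrm{id} \times L_h)(L) = L$), but this remark plays no role in your covering and uniqueness arguments, which are sound as written.
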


\subsection{The group $\G_2$}

Let $V = \R^7,$ and let $e^1,...,e^7$ denote the canonical basis of $V^{*}$. Using the shorthand notation $e^{ijk}=e^i \wedge e^j \wedge e^k$ for wedge products, the element
$$\phi = e^{123}+e^{145}+e^{167}+e^{246}-e^{257}-e^{347}-e^{356}$$
will be called the \emph{standard 3-form on V}, and the group $\G_2$ is defined by
$$\G_2 = \{ A \in \mathrm{GL}(V) \mid A^* \phi = \phi \}.$$

The action of $\G_2$ on $V$ preserves the metric
$$g_{\phi}=\left(e^1\right)^2+\left(e^2\right)^2+\left(e^3\right)^2+\left(e^4\right)^2
+\left(e^5\right)^2+\left(e^6\right)^2+\left(e^7\right)^2$$
and volume form
$$\mathrm{vol}_{\phi} = e^1 \wedge e^2 \wedge e^3 \wedge e^4 \wedge e^5 \wedge e^6 \wedge e^7,$$
and it follows that $\G_2$ is a subgroup of $\SO(7)$. The Hodge star operator determined by $g_{\phi}$ and $\mathrm{vol}_\phi$ is denoted by $*_\phi.$ Note that $\G_2$ also fixes the 4-form
\begin{equation*}
*_\phi \phi = e^{4567} + e^{2367} + e^{2345} + e^{1357} - e^{1346} - e^{1256} - e^{1247}.
\end{equation*}

The $\mathrm{GL}(V)$-orbit of $\phi$ is open in $\Lambda^3 V^*,$ and will be denoted by $\Lambda^3_+ V^*.$

\subsubsection{Bryant's $\varepsilon$ symbol}

When working with the group $\G_2$ it is often very convenient to use an $\varepsilon$-notation introduced by Bryant \cite{Bry05}. Let $\varepsilon$ denote the unique symbol that is skew-symmetric in three or four indices and satisfies
\begin{subequations}
	\begin{align}
	\phi &= \tfrac{1}{6} \varepsilon_{ijk} e^{ijk}, \\
	{*}_{\phi}\phi &= \tfrac{1}{24} \varepsilon_{ijkl} e^{ijkl}.
	\end{align}
\end{subequations}
The $\varepsilon$ symbol satisfies various useful identities \cite{Bry05}.

\subsubsection{Representation theory of $\G_2$}\label{sect:G2reps}

The group $\G_2$ is a compact simple Lie group of rank two. Thus, each irreducible representation of $\G_2$ is indexed by a pair of integers $(p,q)$ corresponding to the highest weight of the representation with respect to a fixed maximal torus in $\G_2$ endowed with a fixed base for its root system. The irreducible representation associated to $(p,q)$ is denoted by $\mathsf{V}_{p,q}.$

The fundamental representation $\mathsf{V}_{1,0}$ is the standard representation $V = \R^7$ used to define the group $\G_2.$ In fact, the representation $\V_{p,0}$ for $p \geq 0$ is isomorphic to $\mathrm{Sym}^p_0(\V_{1,0}),$ so these irreducible $\SO(7)$-representations remain irreducible when restricted to $\G_2.$

The other fundamental representation of $\G_2$, $\V_{0,1}$, is isomorphic to the adjoint representation $\mathfrak{g}_2.$ The Lie algebra $\mathfrak{g}_2$ may be defined in components using the $\varepsilon$ symbol as
\begin{equation}
\mathfrak{g}_2 = \left\lbrace a_{ij} e_{i} \otimes e^j \mid a_{ij}=-a_{ji}, \:\: \varepsilon_{ijk}a_{jk}=0 \right\rbrace.
\end{equation}

The only other $\G_2$-representations that will appear in this work are $\V_{1,1}$ and $\V_{0,2}.$ We have
\begin{align}
\V_{0,2} &= \left\lbrace s_{ijkl} e_{i} \otimes e^j \otimes e_{k} \otimes e^l \: \vline\ \: \begin{aligned}
& s_{ijkl} = -s_{jikl}, \: s_{ijkl} = s_{klij}, \\
& \varepsilon_{ijk} s_{jklm} =0, \: s_{ijik} = 0, \: s_{ijjk}=0
\end{aligned} \right\rbrace, \\
\V_{1,1} &= \left\lbrace c_{ijk} e_i \otimes e_j \otimes e^k \mid \: c_{ijk} = - c_{ikj}, \: \eps_{mjk} c_{ijk} = 0, \: \eps_{mij} c_{ijk} = 0 \right\rbrace.
\end{align}

The exterior powers of the standard representation $V$ decompose as follows
\begin{subequations}\label{eqs:FormDecomp}
	\begin{align}
	\Lambda^2 (V^*) & = \Lambda^2_7  \oplus  \Lambda^2_{14},\\
	\Lambda^3 (V^*) & = \Lambda^3_1  \oplus  \Lambda^3_7 \oplus \Lambda^3_{27},\\
	\Lambda^4 (V^*) & = \Lambda^4_1  \oplus  \Lambda^4_7 \oplus \Lambda^4_{27},\\
	\Lambda^5 (V^*) & = \Lambda^5_7  \oplus  \Lambda^2_{14},
	\end{align}
\end{subequations}
where
\begin{subequations}
	\begin{align}
	\Lambda^2_7  &= \left\lbrace *_{\phi} \left( \alpha \wedge *_{\phi} \phi \right) \mid \alpha \in \Lambda^1 (V^*) \right\rbrace \cong V \cong \V_{1,0} \\
	\Lambda^2_{14}  &= \left\lbrace \beta \in \Lambda^1 (V^*) \mid \beta \wedge \phi = 2 *_{\phi} \beta \right\rbrace \cong \mathfrak{g}_2 \cong \V_{0,1}, \\
	\Lambda^3_1  &= \left\lbrace r \phi \mid r \in \mathbb{R} \right\rbrace \cong \mathbb{R} \cong \V_{0,0} \\
	\Lambda^3_7  &= \left\lbrace *_{\phi} \left( \alpha \wedge \phi \right) \mid \alpha \in \Lambda^1 (V^*) \right\rbrace \cong V \cong \V_{1,0}, \\
	\Lambda^3_{27}  &= \left\lbrace \gamma \in \Lambda^3 (V^*) \mid \gamma \wedge \phi = 0, \gamma \wedge *_{\phi} \phi = 0 \right\rbrace \cong \text{Sym}^2_0 (V) \cong \V_{2,0}, 
	\end{align}
\end{subequations}
and the Hodge star gives an isomorphism $\Lambda^p_j \cong \Lambda^{7-p}_j.$

\subsubsection{The adjoint representation}\label{ssect:G2adjoint}

One can easily check that
\begin{align}
\mathfrak{t} = \left\lbrace -2 f e_2 \otimes e^3 + \left( f+g \right) e_4 \otimes e^5 + \left(f-g \right) e_6 \otimes e^7 \mid f, g \in \R \right\rbrace
\end{align}
is an abelian subalgebra of $\mathfrak{g}_2,$ and is therefore a maximal torus.

Now, by Cartan's Theorem on maximal tori, every element $\beta$ of $\Lambda^2_{14} \cong \mathfrak{g}_2^{\flat}$ is conjugate to an element of $\mathfrak{t}^{\flat},$ so $\beta$ is conjugate to
\begin{align}
\tau = -2 f e_2 \wedge e_3 + \left( f+g \right) e_4 \wedge e_5 + \left(f-g \right) e_6 \wedge e_7,
\end{align}
for some constants $f, g \in \R.$ The stabiliser of an element of the maximal torus under the adjoint action of $\G_2$ depends on the value of $\left( f, g \right).$ 

When $f \left(f+g \right) \left( f-g \right) g \left( 3f +g \right) \left( 3 f -g \right) \neq 0,$ we are in the generic case, and the identity component of the stabiliser of $\tau$ is just the $\mathrm{T}^2$ subgroup of $\G_2$ obtained by exponentiating $\mathfrak{t}.$

When $f \left(f+g \right) \left( f-g \right) g \left( 3f +g \right) \left( 3 f -g \right) = 0,$ the stabiliser is isomorphic to $\U(2).$ There are two cases. We assume $\tau$ is non-vanishing.
\begin{itemize}
	\item If $f \left(f+g \right) \left( f-g \right) = 0,$ then $\tau$ is conjugate to a multiple of $e_4 \wedge e^5 - e_6 \wedge e^7.$ Denote the stabiliser of this element by $\U(2)^{+},$ and its $\G_2$-orbit by $\mathcal{O}^{-}.$
	\item If $g \left( 3 f + g \right) \left( 3 f -g \right) = 0,$ then $\tau$ is conjugate to a multiple of the 2-form $-2 \, e_2 \wedge e_3 + e_4 \wedge e_5 + e_6 \wedge e_7.$ Denote the stabiliser of this element by $\U(2)^{-},$ and its $\G_2$-orbit by $\mathcal{O}^{+}.$
\end{itemize}

The groups $\U(2)^+$ and $\U(2)^-$ are not conjugate in $\G_2.$ For example, $\U(2)^{-}$ preserves the vector $e_1,$ while the action of $\U(2)^{+}$ on $V \cong \R^7$ does not preserve any 1-dimensional subspace.

The maximal torus $\mathfrak{t}$ may be identified with the root space of $\mathfrak{g}_2.$ Figure \ref{G2Root} is a root diagram for $\mathfrak{g}_2,$ with stabilisers marked.

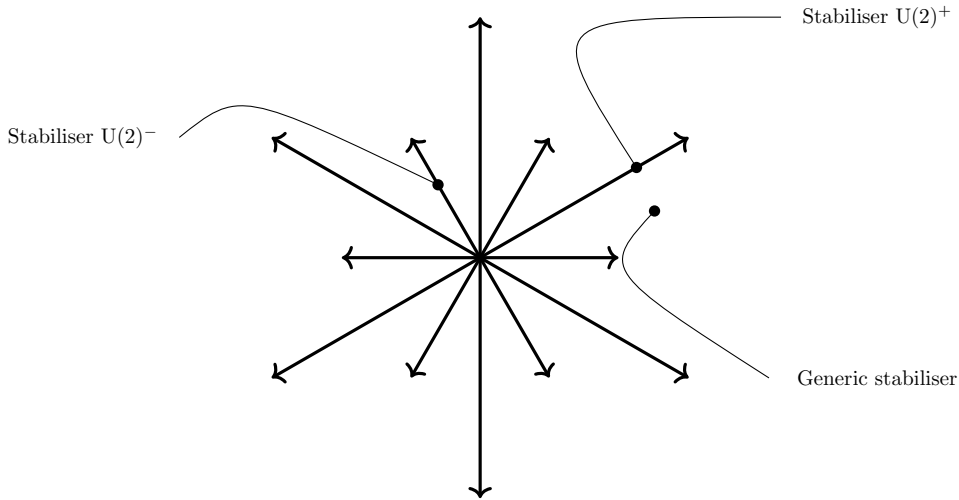
\begin{figure}
	\begin{tikzpicture}[scale=1.6]
	\draw[very thick,<->] (210:2) -- (30:2) node[anchor=north west,black]{};
	\draw[very thick,<-] (150:2) -- (0,0);
	\draw[very thick,->] (0,0) -- (0,2) node[black,anchor=south]{};
	
	\draw[very thick,->] (0,0) -- (-30:2);
	\draw[very thick,->] (0,0) -- (0,-2);
	\draw[very thick,<->] (-1.15,0) -- (1.15,0);
	\draw[very thick,<->] (-120:1.15) -- (60:1.15);
	\draw[very thick,<->] (120:1.15) -- (-60:1.15);
	\node[scale=.8] at (3.3,2)
	{Stabiliser $\U(2)^{+}$
	};
	\node[scale=.8] at (-3.3,1)
	{Stabiliser $\U(2)^-$
	};
	\node[scale=.8] at (3.3,-1)
	{Generic stabiliser
	};
	\draw[thin] (30:1.5)node[circle,fill,minimum size=0.15cm,inner sep=0cm]{} .. controls (0.5,2) .. (2.5,2);
	\draw[thin] (15:1.5)node[circle,fill,minimum size=0.15cm,inner sep=0cm]{} .. controls (1,-0.1) .. (2.4,-1);
	\draw[thin] (120:0.7)node[circle,fill,minimum size=0.15cm,inner sep=0cm]{} .. controls (-2,1.4) .. (-2.5,1);
	\end{tikzpicture}
	\caption{A root diagram for $\mathfrak{g}_2$}
	\label{G2Root}
\end{figure}

\section{Closed $\G_2$-structures and the quadratic condition}\label{sect:QuadG2}

\subsection{Structure equations} Let $\pi: \mathcal{B} \to M$ be a $\G_2$-structure on a 7-manifold $M.$ A $\G_2$-structure may equivalently be thought of as a 3-form $\varphi$ on $M$ that is everywhere linearly equivalent to the standard 3-form $\phi$ on $V \cong \R^7.$ We then have
\begin{equation}
\mathcal{B}=\left\lbrace u : T_xM \xrightarrow{\sim} V \mid x \in M, u^* \phi = \varphi_x \right\rbrace.
\end{equation}
In components, the tautological $V$-valued 1-form $\omega$ on $\mathcal{B}$ may be written $\omega = \omega_i e^i,$ where $e^i$ is the canonical basis of $V^* = \R^7.$ Then the pullback of $\varphi$ to $\mathcal{B}$ is given by
\begin{align}\label{eq:phionB}
\varphi = \tfrac{1}{6} \eps_{ijk} \omega_i \wedge \omega_j \wedge \omega_k.
\end{align}

Since $\G_2$ is a subgroup of $\SO(7),$ a $\G_2$-structure $\varphi$ induces on $M$ a metric $g_{\varphi},$ volume form $\text{vol}_{\varphi},$ and Hodge star operator $*_{\varphi}.$ We have
\begin{align}
{*}_{\varphi}\varphi &= \tfrac{1}{24} \varepsilon_{ijkl} \omega_{i} \wedge \omega_j \wedge \omega_k \wedge \omega_{l}, \\
g_{\varphi} &= \omega_1^2 + \omega_2^2 + \omega_3^2 + \omega_4^2 + \omega_5^2 + \omega_6^2 + \omega_7^2, \\
\textrm{vol}_{\varphi} &= \omega_1 \wedge \omega_2 \wedge \omega_3 \wedge \omega_4 \wedge \omega_5 \wedge \omega_6 \wedge \omega_7.
\end{align}

A $\G_2$-structure also induces decompositions of the exterior powers $\Lambda^p(T^*M)$, corresponding to the decompositions (\ref{eqs:FormDecomp}) given above.

Let $\mathcal{F}_{\SO(7)}$ denote the oriented orthonormal coframe bundle of the metric $g_{\varphi},$ and $\SO(7)$-structure on $M.$ There is an inclusion $\mathcal{B} \subset \mathcal{F}_{\SO(7)}$. By the Fundamental Lemma of Riemannian Geometry, there exists a unique $\mathfrak{so}(7)$-valued 1-form $\psi = \psi_{ij} e_i \otimes e^j,$ the Levi-Civita connection form of $g_{\varphi},$ so that the equation
\begin{equation}\label{eq:CISO7}
d \omega_i = - \psi_{ij} \wedge \omega_j
\end{equation}
holds on $\mathcal{F}_{\SO(7)}.$

Restricted to $\mathcal{B} \subset \mathcal{F}_{\text{SO}(7)}$, the Levi-Civita $1$-form $\psi$ is no longer a connection $1$-form in general.  According to the splitting $\mathfrak{so}(7) = \mathfrak{g}_2 \oplus V$, there is a decomposition
\begin{equation}
\psi_{ij} = \theta_{ij} + \eps_{ijk} \gamma_k.
\end{equation}
Here, $\theta = \theta_{ij} e_i \otimes e^j$ is a $\mathfrak{g}_2$-valued connection form on $\mathcal{B}$ (the \emph{natural connection} on $\mathcal{B}$ in the parlance of $\G$-structures), and $\gamma = \gamma_i e_i$ is a $V$-valued $\pi$-semibasic 1-form on $\mathcal{B}.$ Since $\gamma$ is $\pi$-semibasic,
\begin{equation}\label{eq:gamdecomp}
\gamma_i = T_{ij}\omega^j
\end{equation}
for some $\mathrm{End}(V)$-valued function $T = T_{ij} e_i \otimes e^j \colon \mathcal{B} \to \mathrm{End}(V)$.  The $1$-form $\gamma$, and hence the functions $T_{ij}$, encodes the \emph{torsion} of the $\text{G}_2$-structure $\varphi.$

\begin{definition}
	The $\G_2$-structure $\mathcal{B}$ is called \emph{closed} if the 3-form $\varphi$ is closed, i.e. if $d \varphi = 0.$
\end{definition}

From differentiating equation (\ref{eq:phionB}), closure of $\varphi,$ i.e. the equation $d \varphi = 0$, is equivalent to $T$ taking values in $\mathfrak{g}_2 \subset \mathrm{End}(V),$ i.e. the equation $\varepsilon_{ijk} T_{ij} = 0.$ The function $T$ is known as the \emph{torsion tensor} of the closed $\G_2$-structure $\varphi.$ It follows from the general theory of equivalence that the function $T$ is a complete first-order diffeomorphism invariant of $\varphi.$

\begin{definition}
	The 2-form $\tilde{\tau}$ on $\mathcal{B}$ defined by
	\begin{equation}
	\tilde{\tau} = 3 \, T_{ij} \omega_i \wedge \omega_j
	\end{equation}
	is invariant under the $\G_2$-action on $\mathcal{B}$ and is thus the pullback to $\mathcal{B}$ of a well-defined 2-form on $M,$ which will be denoted by $\tau.$ This 2-form $\tau$ is an element of $\Omega^2_{14}(M),$ and is called the \emph{torsion 2-form} of the closed $\G_2$-structure $\varphi.$
\end{definition}

The torsion 2-form $\tau$ of a closed $\G_2$-structure satisfies
\begin{equation}\label{eq:dstrphitau}
d \, {*}_{\varphi} \varphi = \tau \wedge \varphi.
\end{equation}
In fact, equation (\ref{eq:dstrphitau}) is often taken to be the definition of $\tau.$

Substitution of equation (\ref{eq:gamdecomp}) into equation (\ref{eq:CISO7}) gives \emph{Cartan's first structure equation} for closed $\G_2$-structures,
\begin{align}\label{eq:CartanIG2}
d\omega_i = -\theta_{ij} \wedge \omega + \eps_{ijk}T_{kl} \omega_j \wedge \omega_l.
\end{align}

The structure equations for closed $\G_2$-structures can be developed further  \cite{BallConfFlat20}. There exist functions $H: \mathcal{B} \to \V_{2,0},$ $C: \mathcal{B} \to \V_{1,1},$ and $S: \mathcal{B} \to \V_{0,2}$ such that \emph{Cartan's second structure equations} for closed $\G_2$-structures are satisfied:
\begin{align}
dT_{jk} &= C_{ijk}\omega_i + \left(\varepsilon_{jkl}H_{li}+ 3 \varepsilon_{kil}H_{lj}+3\varepsilon_{lij}H_{lk}\right)\omega_i + T_{jl}\theta_{lk}-T_{kl}\theta_{lj}, \label{eq:dTcomp} \\
d\theta_{ij} &= -\theta_{ik} \wedge \theta_{kj} + \left(S_{ijkl} + J(H,T)_{ijkl} +r(T)_{ijkl} + \eps_{mkl} C_{ijm} + \eps_{mkl} L(H,T)_{ijm} \right) \omega_k \wedge \omega_l,
\end{align} 
where $J(H,T)_{ijkl},$ $r(T)_{ijkl},$ and $L(H,T)_{ijm}$ are explicit functions linear in the components $H$ and quadratic in the components of $T$ given by formulas (3.23) and (3.24) of \cite{BallConfFlat20}.

The tensors $H$ and $C$ are irreducible constituents of the covariant derivative of $T$ with respect to the natural connection defined by $\theta,$ while $S$ is a part of the Weyl tensor of $g_{\varphi}.$ 

The tensors $H,$ $C,$ and $S$ form a complete set of second-order invariants for a closed $\G_2$-structure. In particular, it is possible to express the full Riemann curvature tensor of $g_{\varphi}$ in terms of $H, C, S,$ and $T$v\cite{BallConfFlat20}. The Ricci tensor of $g_{\varphi}$ depends only on $T$ and $H,$ and it is thus possible to express it in terms of $\tau$ and its exterior derivative $d \tau$ alone. This has been done by Bryant \cite{Bry05}, who gives the following formulas,
\begin{equation}\label{eq:G2ScalRic}
\begin{aligned}
\text{Scal}(g_{\varphi}) &= -\tfrac{1}{2} | \tau |^2, \\
\text{Ric}(g_{\varphi}) &= \tfrac{1}{4} | \tau |^2 - \tfrac{1}{4} \mathsf{j}_{\varphi} \left( d \tau - \tfrac{1}{2} *_{\varphi} \left( \tau \wedge \tau \right) \right),
\end{aligned}
\end{equation}
where $\mathsf{j}_{\varphi}$ is a certain linear map $\mathsf{j}_{\varphi} : \Omega^3 \left(M \right) \to \Gamma \left( \mathrm{Sym}^2 T^*M \right)$.

\subsection{The Laplacian flow}

The Laplacian flow is the nonlinear flow equation for a $\G_2$-structure $\varphi$ given by
\begin{equation}\label{eq:LapFlow}
\tfrac{d}{dt} \varphi = \Delta_{\varphi} \varphi,
\end{equation}
where $\Delta_{\varphi}$ is the Laplacian induced by $\varphi$. When $\varphi$ is closed we have $\Delta_{\varphi} \varphi = d \tau,$ and thus the closed condition is preserved along the flow. On a compact manifold, the fixed points of the Laplacian flow are exactly the torsion-free $\G_2$-structures, and one might na\"ively expect to be able to use the flow to deform an initial closed $\G_2$-structure into a torsion-free one. Of course, the situation is likely to be more complicated than this, and while progress is being made (see \cites{LotWeiLapShi,LotWeiStab,LotWeiAna} for example), much work remains to be done.

A \emph{Laplace soliton} is a closed $\G_2$-structure $\varphi$ satisfying the equation
\begin{align}\label{eq:Ch2LapSol}
\Delta_{\varphi} \varphi = c \, \varphi + \mathcal{L}_X \, \varphi
\end{align}
for some constant $c \in \R$ and vector field $X \in \mathcal{X}(M).$ Laplace solitons are the solutions to (\ref{eq:LapFlow}) that evolve along the flow by scalings and diffeomorphisms, and it is expected that they will play an important role in modeling the possible singularities of the flow. In \S\ref{sect:U2pl} and \S\ref{sect:LapSols} we will construct several examples of Laplace solitons.

\subsection{The quadratic condition}\label{ssect:QuadCond}

In \cite{Bry05}, Bryant introduced a 1-parameter family of equations for closed $\G_2$-structures,
\begin{equation}\label{eq:LamQuad}
d\tau= \textstyle\frac{1}{7}|\tau|^2\varphi + \lambda \left( \textstyle\frac{1}{7}|\tau|^2 \varphi + *_{\varphi} \left(\tau \wedge \tau \right) \right),
\end{equation}
where $\lambda$ is a real constant.
\begin{definition}
	We shall say a closed $\G_2$-structure $(M, \varphi)$ is \emph{$\lambda$-quadratic} if it satisfies (\ref{eq:LamQuad}).
\end{definition}
\begin{remark}
	Bryant explains \cite{Bry05} that (\ref{eq:LamQuad}) is the most general way in which $d\tau$ can be prescribed by a natural expression quadratic in $\tau.$
\end{remark}

The 3-form $d \tau$ pulls back to $\mathcal{B}$ to be given by
\begin{equation}\label{eq:dtauH}
\pi^* d \tau = \eps_{ikl} \left( 21 H_{ij} - \tfrac{1}{7} T_{im}T_{mj} \right) \omega_{j} \wedge \omega_k \wedge \omega_l,
\end{equation}
and using this expression, the $\lambda$-quadratic equation (\ref{eq:LamQuad}) is seen to be equivalent to the following equation on the tensor $H,$
\begin{equation}\label{eq:LamQuadH}
H_{ij}=\frac{1-6\lambda}{7}\left(T_{ik}T_{kj}+\frac{1}{7}\delta_{ij}T_{kl}T_{kl} \right).
\end{equation}

Much of the interest in $\lambda$-quadratic closed $\G_2$-structures arises from the fact that several other interesting equations are equivalent to the $\lambda$-quadratic equation for special values of $\lambda$:
\begin{enumerate}
	\item The metric $g_{\varphi}$ is Einstein if and only if $\varphi$ is $1/2$-quadratic.
	\item The 3-form $\varphi$ is an eigenform for the Laplace operator induced by $g_{\varphi}$ if and only if $\varphi$ is 0-quadratic. These eigenform closed $\G_2$-structures are a special type of soliton for the Laplace flow, analogous to Einstein metrics for the Ricci flow.
	\item The $1/6$-quadratic case is the so-called \emph{extremally Ricci-pinched} case, discussed below.
	\item Closed $\G_2$-structures with $g_{\varphi}$ conformally flat are $-1/8$-quadratic. These structures have been classified by the author \cite{BallConfFlat20}.
\end{enumerate}

\subsubsection{Extremally Ricci-pinched closed $\G_2$-structures}

In \cite{Bry05}, Bryant proves the following two theorems, which single out the case $\lambda={1}/{6}$ for special attention.

\begin{thm}[\cite{Bry05}]
	Let $(M, \varphi)$ be a closed $\G_2$-structure on a compact manifold. If $M$ is $\lambda$-quadratic and $\varphi$ is not torsion-free, then $\lambda=\frac{1}{6}$.
\end{thm}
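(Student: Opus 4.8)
The plan is to collapse the entire $\lambda$-quadratic equation into a single scalar integral identity via Stokes' theorem, so that compactness of $M$ enters only once. First I would rewrite the quadratic condition \eqref{eq:LamQuad} in the consolidated form $d\tau = \tfrac{1+\lambda}{7}|\tau|^2\varphi + \lambda\,{*}_\varphi(\tau\wedge\tau)$, combining the two copies of $|\tau|^2\varphi$. Everything then rests on two pointwise identities for the torsion $\tau$, which takes values in the subbundle modeled on $\Lambda^2_{14}\cong\mathfrak{g}_2$: namely (i) $\tau\wedge\tau\wedge\varphi = -|\tau|^2\,\mathrm{vol}_\varphi$, and (ii) $|\tau\wedge\tau|^2 = |\tau|^4$.

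Both are consequences of Schur's lemma. The assignments $\tau\mapsto\langle{*}_\varphi\varphi,\tau\wedge\tau\rangle$ and $\tau\mapsto|\tau\wedge\tau|^2$ are $\G_2$-invariant polynomials of degrees $2$ and $4$ on the irreducible representation $\Lambda^2_{14}$. Since the basic invariants of $\G_2$ have degrees $2$ and $6$, the invariant quadratic (resp. quartic) forms on $\mathfrak{g}_2$ are one-dimensional, spanned by $|\cdot|^2$ (resp. $|\cdot|^4$); hence each quantity is a universal constant multiple of the corresponding power of $|\tau|^2$. I would pin down the two constants by evaluating on the explicit orbit representatives from \S\ref{ssect:G2adjoint}: on $\tau = e^{45}-e^{67}$ one has $|\tau|^2 = 2$ and $\tau\wedge\tau = -2\,e^{4567}$, so only the $e^{123}$ term of $\varphi$ survives in $\tau\wedge\tau\wedge\varphi = -2\,\mathrm{vol}_\varphi = -|\tau|^2\mathrm{vol}_\varphi$, while $|\tau\wedge\tau|^2 = 4 = |\tau|^4$. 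This fixes the constants as $-1$ and $1$, giving (i) and (ii).

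The compactness step is then immediate. The $6$-form $\tau\wedge\tau\wedge\tau$ is globally defined on $M$, and since $\tau$ has even degree $d(\tau\wedge\tau\wedge\tau) = 3\,d\tau\wedge\tau\wedge\tau$, so Stokes' theorem on the closed manifold $M$ gives $\int_M d\tau\wedge\tau\wedge\tau = 0$. Substituting the consolidated quadratic equation and applying (i) to the $\varphi$-term and (ii) to the ${*}_\varphi(\tau\wedge\tau)$-term yields, pointwise, $d\tau\wedge\tau\wedge\tau = \bigl(-\tfrac{1+\lambda}{7}+\lambda\bigr)|\tau|^4\,\mathrm{vol}_\varphi = \tfrac{6\lambda-1}{7}|\tau|^4\,\mathrm{vol}_\varphi$. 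Integrating and combining with the vanishing from Stokes gives $\tfrac{6\lambda-1}{7}\int_M|\tau|^4\,\mathrm{vol}_\varphi = 0$. Because $\varphi$ is not torsion-free, $\tau$ is not identically zero, so $\int_M|\tau|^4\,\mathrm{vol}_\varphi > 0$, forcing $\lambda = 1/6$.

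The only non-formal input is the pair of identities (i)--(ii), and I expect confirming the universal constants — in particular the clean value $|\tau\wedge\tau|^2 = |\tau|^4$ — to be the one step requiring genuine care, although it reduces to a finite representation-theoretic computation rather than any analysis. It is worth emphasizing what this argument avoids: no Weitzenböck formula, no curvature estimate, and no appeal to the Ricci-pinching inequality is needed, since the role of compactness is isolated entirely in the single application of Stokes' theorem to $\tau^{\wedge 3}$.
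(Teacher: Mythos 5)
Your proof is correct and follows essentially the same route as the paper: your pointwise computation of $d\tau\wedge\tau\wedge\tau$ via the identities $\tau\wedge\tau\wedge\varphi=-|\tau|^2\,\mathrm{vol}_\varphi$ and $|\tau\wedge\tau|^2=|\tau|^4$ is exactly a re-derivation of Bryant's identity (\ref{eq:Bry466}), $d\tau^3=\tfrac{3(6\lambda-1)}{7}|\tau|^4\,\mathrm{vol}_\varphi$, which the paper names as the key ingredient, and your Stokes argument on the compact $M$ is the same final step. The invariant-theory derivation of the two pointwise identities (degree-$2$ and degree-$6$ generators for the invariants of $\mathfrak{g}_2$, plus evaluation on $e^{45}-e^{67}$) is a clean packaging of what is usually checked directly from the eigenvalue relation $\beta\wedge\varphi=-{*}_\varphi\beta$ on $\Lambda^2_{14}$, but it is not a genuinely different method.
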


\begin{thm}[\cite{Bry05}]\label{thm:PinchedResult}
	Suppose $(M,\varphi)$ is a closed $\G_2$-structure on a compact manifold that satisfies
	\begin{equation}\label{eq:PinchedEqns}
	\int_M \, |\mathrm{Ric}^0\left(g_\varphi\right)|^2 \, \mathrm{vol}_{\varphi} \leq \frac{4k}{21} \int_M \, |\mathrm{Scal}\left(g_{\varphi}\right)|^2 \, \mathrm{vol}_M
	\end{equation}
	for some constant $k \leq 1.$
	\begin{itemize}
		\item If $k < 1$ then $\tau = 0$, so $\varphi$ is torsion-free.
		\item If $k=1$ then equality holds in (\ref{eq:PinchedEqns}) everywhere on $M$, and moreover $\varphi$ is $\frac{1}{6}$-quadratic.
	\end{itemize}
\end{thm}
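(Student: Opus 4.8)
The plan is to reduce both conclusions to a single sharp integral identity, after which everything follows from a sign analysis. The key observation is that, by (\ref{eq:LamQuadH}) with $\lambda = 1/6$, a closed $\G_2$-structure is $\tfrac16$-quadratic (i.e.\ ERP) precisely when the second-order torsion tensor $H$ vanishes identically. I therefore aim to prove an identity of the form
\begin{equation*}
\int_M |\mathrm{Ric}^0(g_\varphi)|^2\,\mathrm{vol}_\varphi - \tfrac{4}{21}\int_M \mathrm{Scal}(g_\varphi)^2\,\mathrm{vol}_\varphi = A\int_M |H|^2\,\mathrm{vol}_\varphi,
\end{equation*}
with $A > 0$ a universal constant. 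Granting this, the inequality (\ref{eq:PinchedEqns}) is immediate, and both bullet points of the theorem will drop out of the resulting relation $A\int_M|H|^2 \leq \tfrac{4(k-1)}{21}\int_M\mathrm{Scal}^2$.

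First I would express the integrand pointwise. Using $\mathrm{Scal}(g_\varphi) = -\tfrac12|\tau|^2$ from (\ref{eq:G2ScalRic}) we have $\mathrm{Scal}^2 = \tfrac14|\tau|^4$, a multiple of $|T|^4$. Since $\mathrm{Ric}(g_\varphi)$ is linear in $H$ and quadratic in $T$, its trace-free part $\mathrm{Ric}^0$ is a fixed linear combination of $H$ and the trace-free part $(T^2)^0$ of $T^2$ (both lying in $\V_{2,0}$), so pointwise
\begin{equation*}
|\mathrm{Ric}^0|^2 - \tfrac{4}{21}\mathrm{Scal}^2 = A\,|H|^2 + B\,\langle H, (T^2)^0\rangle + G\,|T|^4
\end{equation*}
for universal constants $A, B, G$. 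Here the crucial input is representation-theoretic: because $\G_2$ has rank two with fundamental invariants of degrees $2$ and $6$, there is no independent quartic invariant of $T \in \mathfrak{g}_2$, so $|(T^2)^0|^2$ is forced to be a constant multiple of $|T|^4$. Evaluating on the ERP locus $H = 0$, where the difference must vanish since ERP saturates the pinching, then forces $G = 0$, eliminating the pure quartic term pointwise.

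It remains to handle the cross term $\int_M\langle H,(T^2)^0\rangle\,\mathrm{vol}_\varphi$, and this is where I expect the real work to lie. Using the second structure equation (\ref{eq:dTcomp}) to write $\nabla T$ in terms of $C \in \V_{1,1}$ and $H$, one can recover $H$ from $\nabla T$ by an $\varepsilon$-contraction that annihilates $C$ (by the defining relation $\varepsilon_{mjk}C_{ijk}=0$ of $\V_{1,1}$); substituting, the cross term becomes a specific contraction of $\nabla T$ against $T\otimes T$. Integrating by parts on the compact manifold $M$ and using the closedness relations for $\tau$ (in particular $\varepsilon_{ijk}T_{jk}=0$, together with $d\tau\wedge\varphi=0$, which is forced by $d^2{*}_\varphi\varphi=0$) should convert this into a divergence with no boundary contribution plus lower-order terms, collapsing the cross term so that only a positive multiple of $\int_M|H|^2$ survives. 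Verifying that the surviving constant $A$ is strictly positive — equivalently, checking that the representation-theoretic and integration-by-parts bookkeeping do not conspire to cancel it — is the main obstacle; the clean characterization of equality as $H \equiv 0$ strongly constrains, and ultimately pins down, the sign.

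With the identity in hand the theorem follows. The hypothesis (\ref{eq:PinchedEqns}) gives $A\int_M|H|^2 \leq \tfrac{4(k-1)}{21}\int_M\mathrm{Scal}^2 \leq 0$ since $k \leq 1$ and $A > 0$, forcing $H \equiv 0$ and $\tfrac{4(k-1)}{21}\int_M\mathrm{Scal}^2 \geq 0$. If $k < 1$ the latter forces $\int_M\mathrm{Scal}^2 = 0$, so $|\tau|^2 \equiv 0$ and $\varphi$ is torsion-free. If $k = 1$ then $H \equiv 0$ means $\varphi$ is $\tfrac16$-quadratic by (\ref{eq:LamQuadH}); moreover $H \equiv 0$ makes both the $A|H|^2$ and $B\langle H,(T^2)^0\rangle$ terms vanish pointwise, so the integrand inequality holds with equality everywhere on $M$, as claimed.
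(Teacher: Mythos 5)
You should first note that the paper does not prove this theorem at all; it is quoted from Bryant \cite{Bry05}, so your proposal has to be measured against Bryant's argument. Your skeleton in fact shadows it closely: Bryant writes $d\tau = \tfrac17|\tau|^2\varphi + \gamma$ with $\gamma \in \Omega^3_{27}$, observes via (\ref{eq:G2ScalRic}) that $\mathrm{Ric}^0$ is a universal combination of $\gamma$ and $\pi_{27}\,{*}_\varphi(\tau\wedge\tau)$, uses exactly your invariant-theoretic point (the exponents of $\G_2$ are $1$ and $5$, so the invariant ring of $\mathfrak{g}_2$ is generated in degrees $2$ and $6$, forcing every quartic invariant of $\tau$ to be a multiple of $|\tau|^4$), and disposes of the cross term by a Stokes identity. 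Your $H$ and $(T^2)^0$ are the images of $\gamma$ and $\pi_{27}{*}(\tau\wedge\tau)$ under (\ref{eq:dtauH}), so your master identity $\int_M\bigl(|\mathrm{Ric}^0|^2 - \tfrac{4}{21}\mathrm{Scal}^2\bigr)\mathrm{vol}_\varphi = A\int_M|H|^2\,\mathrm{vol}_\varphi$ with $A>0$ is correct and is Bryant's identity in the notation of this paper, and your final deduction of both bullets from it is sound.

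However, two load-bearing steps are genuinely gapped. First, your elimination of the quartic coefficient $G$ is circular: you set $G=0$ because ``ERP saturates the pinching,'' but that saturation is precisely the $k=1$ equality assertion of the theorem under proof (indeed a pointwise strengthening of it), so it cannot serve as input. The fact is true, but it must be verified by computing the universal constants; for instance, with Bryant's normalization $\mathsf{j}_\varphi(\varphi) = 6\,g_\varphi$ one finds $|\mathsf{j}_\varphi(\alpha)|^2 = 8|\alpha|^2$ for $\alpha \in \Lambda^3_{27}$ and $|\pi_{27}{*}(\tau\wedge\tau)|^2 = \tfrac{6}{7}|\tau|^4$, so that on the locus $H=0$ one gets $|\mathrm{Ric}^0|^2 = \tfrac{1}{144}\cdot 8\cdot\tfrac{6}{7}|\tau|^4 = \tfrac{1}{21}|\tau|^4 = \tfrac{4}{21}\mathrm{Scal}^2$ pointwise, whence $G=0$. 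Without such a computation (or an evaluation on a single explicit ERP structure with $\tau\neq 0$), the pointwise reduction to $A|H|^2 + B\langle H,(T^2)^0\rangle$ is unproven. Second, your cross-term step is only conjectured, and with the wrong expected outcome: the identity that does the work is simply $\int_M d\tau\wedge\tau\wedge\tau = \tfrac13\int_M d(\tau^3) = 0$, i.e.\ $\int_M\langle d\tau, {*}(\tau\wedge\tau)\rangle\,\mathrm{vol}_\varphi = 0$ (the global counterpart of (\ref{eq:Bry466})), which together with $|(T^2)^0|^2 \propto |T|^4$ yields $\int_M\langle H,(T^2)^0\rangle\,\mathrm{vol}_\varphi = 0$. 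Thus the cross term vanishes outright; it does not ``collapse so that only a positive multiple of $\int|H|^2$ survives,'' no new $|H|^2$ is generated, and the positivity of $A$ is automatic, being the square of the (visibly nonzero) $H$-coefficient of $\mathrm{Ric}^0$ times a norm constant. Your proposed route through (\ref{eq:dTcomp}) --- $\varepsilon$-contracting $\nabla T$ to isolate $H$ and integrating by parts against $T\otimes T$ --- is this same Stokes identity written in indices, but as it stands it is a plan, not a proof; until it is carried out, the master identity, and with it the theorem, is not established.
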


Theorem \ref{thm:PinchedResult} justifies the name \emph{extremally Ricci-pinched} (or \emph{ERP} for short) for the $1/6$-quadratic case. Bryant also proves the following theorem which shows that the compact ERP closed $\G_2$-structures have a rich geometric structure.

\begin{thm}\label{thm:BryERPCons}
	Let $(M,\varphi)$ be an ERP closed $\G_2$-structure on a compact manifold $M,$ and suppose that $\varphi$ is not coclosed. Then
	\begin{itemize}
		\item $|\tau|^2$ is constant,
		\item $\tau \wedge \tau \wedge \tau =0,$
		\item The simple 4-form $\tau \wedge \tau$ is closed and coclosed,
		\item The tangent bundle $TM$ splits as an orthogonal direct sum of two integrable subbundles $P$ and $Q$, where $P = \left\lbrace v \in TM \mid v \lrcorner \left( \tau \wedge \tau \right) = 0 \right\rbrace$ has rank 3 and $Q = \left\lbrace v \in TM \mid v \lrcorner *_{\varphi} \left( \tau \wedge \tau \right) = 0 \right\rbrace$ is of rank 4.
	\end{itemize}
\end{thm}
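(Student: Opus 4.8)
The plan is to exploit the fact that the ERP condition singles out the one value $\lambda = \tfrac16$ at which the coefficient $\tfrac{1-6\lambda}{7}$ in (\ref{eq:LamQuadH}) vanishes. So the first step is the observation that $\varphi$ being $\tfrac16$-quadratic is equivalent to $H \equiv 0$. With $H=0$ the torsion structure equation (\ref{eq:dTcomp}) collapses to $\nabla_i T_{jk} = C_{ijk}$, so that the entire covariant derivative of the torsion is the single $\V_{1,1}$-valued tensor $C$, and (\ref{eq:dtauH}) shows that $d\tau$ reduces to a pointwise quadratic expression in $\tau$. Writing $\tau\wedge\tau = -\tfrac17|\tau|^2\,{*}_{\varphi}\varphi + \sigma$ with $\sigma\in\Omega^4_{27}$ — the $\Lambda^4_7$-part being absent because $\mathrm{Sym}^2\mathfrak{g}_2 = \V_{0,0}\oplus\V_{2,0}\oplus\V_{0,2}$ contains no copy of $\V_{1,0}$ — the ERP equation rewrites as $d\tau = \tfrac17|\tau|^2\varphi + \tfrac16\,{*}_{\varphi}\sigma \in \Omega^3_1\oplus\Omega^3_{27}$, with no $\Omega^3_7$-component. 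Since $\varphi$ is not coclosed, $\tau$ is not identically zero by (\ref{eq:dstrphitau}), and unique continuation makes $\{\tau\neq 0\}$ dense.

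The heart of the argument, and the step I expect to be the main obstacle, is to prove on the \emph{compact} manifold $M$ that $|\tau|^2$ is constant and that $\tau$ takes values pointwise in the positive orbit $\mathcal{O}^{-}$. Neither fact is a consequence of pointwise algebra: integrating $d(\tau\wedge\tau\wedge\tau) = 3\,d\tau\wedge\tau\wedge\tau$ over $M$ and substituting the ERP equation yields only $\int_M |\tau\wedge\tau|^2 = \int_M |\tau|^4$, and since $|\tau\wedge\tau|^2 = |\tau|^4$ holds identically on $\mathfrak{g}_2$, this relation is vacuous. Instead I would combine the Bochner identity $\tfrac12\Delta|\tau|^2 = \langle\nabla^*\nabla\tau,\tau\rangle - |\nabla\tau|^2$ with a Weitzenb\"ock formula for $\nabla^*\nabla T$ — obtained by differentiating (\ref{eq:dTcomp}) once more and feeding in the curvature through (\ref{eq:G2ScalRic}) — to produce a differential (in)equality for $|\tau|^2$ to which the maximum principle applies; this is precisely the equality case of the integral estimate underlying Theorem \ref{thm:PinchedResult}. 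The orbit rigidity is the more delicate companion: I would show that $d\tau\wedge\tau$ is a scalar multiple of ${*}_{\varphi}\tau$ whose coefficient is the difference between $\tfrac67|\tau|^2$ and an orbit invariant of $\tau$, a coefficient that vanishes \emph{exactly} on $\mathcal{O}^{-}$, and then use the harmonicity obtained below to force $\tau$ into $\mathcal{O}^{-}$ everywhere.

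Granting constancy of $|\tau|^2$ and positive type, the four stated conclusions follow essentially by linear algebra and Frobenius. Differentiating the ERP equation gives $d|\tau|^2\wedge\varphi = -\,d\,{*}_{\varphi}(\tau\wedge\tau)$, so constancy yields $d\,{*}_{\varphi}(\tau\wedge\tau) = 0$, i.e.\ $\tau\wedge\tau$ is coclosed; positive type makes the coefficient above vanish, so $d(\tau\wedge\tau) = 2\,d\tau\wedge\tau = 0$ as well. Thus $\tau\wedge\tau$ is harmonic, and being in $\mathcal{O}^{-}$ it satisfies $\tau\wedge\tau\wedge\tau = 0$ and is simple. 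In the $\U(2)^{+}$-frame adapted to $\mathcal{O}^{-}$ one computes $\tau\wedge\tau = c\,\omega_4\wedge\omega_5\wedge\omega_6\wedge\omega_7$ and ${*}_{\varphi}(\tau\wedge\tau) = c\,\omega_1\wedge\omega_2\wedge\omega_3$; these are decomposable closed forms, so the Frobenius criterion (a closed decomposable form has integrable kernel) shows that $P = \{v : v\lrcorner(\tau\wedge\tau)=0\}$ and $Q = \{v : v\lrcorner {*}_{\varphi}(\tau\wedge\tau)=0\}$ are integrable distributions of ranks $3$ and $4$ respectively, and in the adapted frame they are manifestly orthogonal complements. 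The only genuinely hard input remains the constancy of $|\tau|^2$ and the concomitant orbit rigidity, which is where the compactness of $M$ enters in an essential way.
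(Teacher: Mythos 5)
Your framing and your endgame are sound: ERP is indeed equivalent to $H\equiv 0$, the decomposition $\tau\wedge\tau = -\tfrac{1}{7}|\tau|^2\,{*}_{\varphi}\varphi + \sigma$ with $\sigma\in\Omega^4_{27}$ is correct (and your representation-theoretic reason for the absence of the $\Lambda^4_7$-part is right), and, \emph{granting} constancy of $|\tau|^2$ and pointwise positivity of $\tau$, your last paragraph — coclosedness from $d\,{*}_{\varphi}(\tau\wedge\tau) = -\,d(|\tau|^2)\wedge\varphi$, closedness from the positive-orbit algebra, simplicity, and integrability of $P$ and $Q$ via the closed-decomposable-form criterion — completes the theorem exactly as intended.

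The gap is precisely the part you flag as "the main obstacle," which you do not prove, and your proposed substitute is partly false. The Bochner/Weitzenb\"ock route is only announced ("I would combine\dots"): no Weitzenb\"ock formula is derived and no differential inequality is exhibited, so constancy of $|\tau|^2$ remains unestablished. Worse, your orbit-rigidity mechanism rests on the claim that $d\tau\wedge\tau$ is pointwise a scalar multiple of ${*}_{\varphi}\tau$, and this is false for generic $\tau\in\Lambda^2_{14}$: by the ERP equation, $6\,d\tau\wedge\tau = |\tau|^2\,\varphi\wedge\tau + {*}_{\varphi}(\tau\wedge\tau)\wedge\tau$, and while $\varphi\wedge\tau = -{*}_{\varphi}\tau$, the term ${*}_{\varphi}(\tau\wedge\tau)\wedge\tau$ is proportional to ${*}_{\varphi}\tau$ only on the special orbits. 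Concretely, for the torus element with $f=1$, $g=2$ in the notation of \S\ref{ssect:G2adjoint}, i.e.\ $\tau = -2\,e^{23}+3\,e^{45}-e^{67}$, the $e^{14567}$- and $e^{12367}$-components of ${*}_{\varphi}(\tau\wedge\tau)\wedge\tau$ stand in ratios $20$ and $10$, respectively, to the corresponding components of ${*}_{\varphi}\tau$, so no scalar function exists. Your observation that integrating $d\,\tau^3$ is vacuous (because $|\tau\wedge\tau|^2 = |\tau|^4$ holds identically on $\mathfrak{g}_2$) is correct, but you drew the wrong lesson from it: the identity (\ref{eq:Bry466}) is \emph{pointwise}, and at $\lambda=\tfrac16$ it says $d\,\tau^3 = 0$ on the nose.

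What you are missing is the second identity (\ref{eq:Bry469}), which the paper explicitly names, together with (\ref{eq:Bry466}), as the key ingredient of Bryant's proof. At $\lambda = \tfrac16$ it reads $d\left(|\tau|^2\right) = \tfrac{2}{9}\,{*}_{\varphi}\tau^3$, whence $d\,{*}_{\varphi}\,d\left(|\tau|^2\right) = \tfrac{2}{9}\,d\,\tau^3 = 0$ by (\ref{eq:Bry466}). Thus $|\tau|^2$ is a harmonic function on the compact manifold $M$, hence constant; and then $0 = d\left(|\tau|^2\right) = \tfrac{2}{9}\,{*}_{\varphi}\tau^3$ gives $\tau\wedge\tau\wedge\tau \equiv 0$, which, since $|\tau|^2$ is a nonzero constant (as $\varphi$ is not coclosed), places $\tau$ in the positive orbit everywhere. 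This two-line argument delivers both your "main obstacle" and your "delicate companion" simultaneously, with no Weitzenb\"ock formula, no maximum principle, and no appeal to the equality case of Theorem \ref{thm:PinchedResult}; your closing paragraph then finishes the proof as written.
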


The key ingredients to Bryant's proofs of these theorems are the identities
\begin{align}
d \tau^3 &= \frac{3\left(6 \lambda -1 \right)}{7} |\tau|^4 \mathrm{vol}_{\varphi}, \label{eq:Bry466} \\
\left(3 \lambda - 4 \right) d \left(|\tau|^2 \right) &= 7 \lambda \left(2 \lambda -1 \right) {*}_{\varphi} \tau^3, \label{eq:Bry469}
\end{align}
valid for any $\lambda$-quadratic closed $\G_2$-structure.

\subsubsection{The characteristic variety}\label{sssect:CharVar}

We now use the structure equations for closed $\G_2$-structures to further investigate equation (\ref{eq:LamQuad}) from the point of view of exterior differential systems. Our first result concerns the characteristic variety associated to the system.

\begin{prop}\label{prop:RCharVar}
	The real characteristic variety associated to the system (\ref{eq:LamQuad}) is empty.
\end{prop}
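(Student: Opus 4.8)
The plan is to show that for every nonzero real covector $\xi$ the principal symbol of the linearisation of (\ref{eq:LamQuad}) is injective on the space of admissible symbols modulo gauge; emptiness of the real characteristic variety is exactly this statement. The first observation is that the right-hand side of (\ref{eq:LamQuad}) is algebraic and quadratic in $\tau$, hence of first order in $\varphi$, and so contributes nothing to the second-order symbol. By the reformulation (\ref{eq:LamQuadH}) the operator is therefore symbol-equivalent to the second-order operator $\varphi \mapsto H$, where $H$ is the $\V_{2,0}$-component of $\nabla T$ recorded in (\ref{eq:dTcomp}). Concretely I would linearise: writing $\dot\varphi$ for a variation, the torsion $\tau$ enters at first order through $d\,{*}_\varphi\varphi = \tau \wedge \varphi$ (the dependence on the algebraic Hodge star being of order zero), and one more derivative produces $H$. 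Thus the symbol at $\xi$ sends $\hat\varphi$ first to $\sigma_\xi(\dot\tau)$, determined by $\xi \wedge \Theta(\hat\varphi) = \sigma_\xi(\dot\tau) \wedge \varphi$ with $\Theta$ the zeroth-order linearisation of $\varphi \mapsto {*}_\varphi\varphi$, and then to the $\Lambda^3_{27}$-part of $\xi \wedge \sigma_\xi(\dot\tau)$, using that the $\Lambda^3_1$-part of $d\tau$ is automatically $\tfrac17|\tau|^2\varphi$ and its $\Lambda^3_7$-part vanishes.

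Two sources of gauge must then be quotiented. Closedness $d\dot\varphi = 0$ has symbol $\xi \wedge \hat\varphi = 0$, forcing $\hat\varphi = \xi \wedge \psi$ for a $2$-form $\psi$ well defined modulo $\xi \wedge V^*$; and diffeomorphism invariance contributes the directions $\hat\varphi = \xi \wedge (X \lrcorner \varphi)$, $X \in V$, since $\mathcal{L}_X \varphi = d(X \lrcorner \varphi)$ for closed $\varphi$. A covector $\xi$ lies in the real characteristic variety precisely when some $\hat\varphi = \xi \wedge \psi$ that is not of the form $\xi \wedge (X \lrcorner \varphi)$ is annihilated by the symbol above.

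To verify injectivity modulo gauge I would exploit the homogeneity of the problem: $\G_2$ acts transitively on the unit sphere of $V^*$, so after rescaling it suffices to treat a single covector, say $\xi = e^1$, whose $\G_2$-stabiliser is $\SU(3)$. The symbol map at $e^1$ is then $\SU(3)$-equivariant, so I would decompose its source $\{\xi \wedge \psi\}/(\text{gauge})$ and its target $\Lambda^3_{27} \cong \V_{2,0}$ into $\SU(3)$-irreducibles and check, component by component, that the equivariant map is nonzero. This reduces the whole question to the nonvanishing of finitely many scalars, which can be evaluated directly in the splitting $V = \mathbb{R}\,e_1 \oplus \mathbb{C}^3$ using the $\eps$-symbol identities.

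The main obstacle is precisely this last injectivity check: because $H$ is only a projection of the full second derivative and $\tau$ enters through the nonlinear Hodge star, one must track the two derivatives and the intervening $\G_2$-projections carefully to be sure that no irreducible $\SU(3)$-summand is accidentally annihilated once the closed and diffeomorphism directions are removed. As a conceptual safeguard I note that this symbol coincides with that of the map $\varphi \mapsto \Delta_\varphi \varphi = d\tau$ on closed $\G_2$-structures, which is known to be elliptic modulo diffeomorphism from the analysis of the Laplacian flow; emptiness of the real characteristic variety is the infinitesimal form of that ellipticity.
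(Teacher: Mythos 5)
Your symbol-theoretic framework is set up correctly (the closedness gauge $\hat\varphi = \xi \wedge \psi$ with $\psi$ defined modulo $\xi \wedge V^*$, the diffeomorphism directions $\xi \wedge (X \lrcorner \varphi)$, the reduction to $\xi = e^1$ with stabiliser $\SU(3)$, and the observation that only the $\Lambda^3_{27}$-component $H$ of $d\tau$ carries second-order content). But the proof has a genuine gap: the injectivity of the reduced symbol map is precisely the content of the proposition, and you defer it entirely --- ``the nonvanishing of finitely many scalars, which can be evaluated directly'' is asserted, never evaluated, and you yourself flag this check as the main obstacle. The quotient of $\{\xi \wedge \psi\}$ by both gauges is an $8$-dimensional $\SU(3)$-module mapping into the $27$-dimensional space $\V_{2,0}$, and until the relevant equivariant components are shown nonzero nothing has been proved. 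Your fallback --- that the symbol agrees with that of $\varphi \mapsto \Delta_\varphi \varphi$, ``known to be elliptic modulo diffeomorphism'' from the Laplacian flow literature --- is not an argument but a citation of essentially the statement being established; as written, the proposal proves nothing that was not already assumed.

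It is worth contrasting this with the paper's proof, which avoids the entire linearisation-and-decomposition apparatus by working at the level of the first-order tableau. Since the quadratic condition amounts to prescribing $H$ via (\ref{eq:LamQuadH}), the free derivatives remaining in (\ref{eq:dTcomp}) contain the tableau $\V_{1,1} \subset \operatorname{Hom}(\V_{1,0}, \V_{0,1})$, and a covector $\xi$ is characteristic only if some nonzero rank-one element $\beta \otimes \xi$, $\beta \in \V_{0,1} \cong \mathfrak{g}_2$, lies in $\V_{1,1}$. Because the wedge product $\Lambda^1_7 \otimes \Lambda^2_{14} \to \Lambda^3_7 \oplus \Lambda^3_{27}$ projects $\V_{0,1} \otimes \V_{1,0}^*$ onto the complementary summand $\V_{2,0} \oplus \V_{1,0}$, the condition $\beta \otimes \xi \in \V_{1,1}$ is equivalent to $\beta \wedge \xi = 0$, i.e.\ to $\beta$ being a decomposable $2$-form; and no nonzero element of $\mathfrak{g}_2 \cong \Lambda^2_{14}$ is decomposable (conjugate $\beta$ into the maximal torus $\mathfrak{t}$ to see this). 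This two-step algebraic observation replaces the $\SU(3)$-irreducible bookkeeping your route would require; if you want to complete your approach, the decomposability criterion $\beta \wedge \xi = 0$ is also the cleanest way to organise your missing injectivity computation.
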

\begin{proof}
	The system (\ref{eq:LamQuad}) is equivalent to the condition (\ref{eq:LamQuadH}) on the tensor $H$. Inspecting (\ref{eq:dTcomp}), the tableau $\V_{1,1} \subset \text{Hom}\left(\V_{0,1}, \V_{1,0}\right)$ is seen to be a part of the tableau of free derivatives for this system. Let $\Xi_{\R}$ denote the real characteristic variety associated to $\V_{1,1} \subset \text{Hom}\left(\V_{1,0}, \V_{0,1}\right).$ Then $\Xi_{\R}$ provides an upper bound for the characteristic variety of (\ref{eq:LamQuad}). We have
	\begin{equation}
	\Xi_{\R} = \left\lbrace [\xi] \in \mathbb{P}\,\V_{1,0}^* \mid \exists 0 \neq \beta \in \V_{0,1} \:\: \text{s.t.} \:\: \beta \otimes \xi \in \V_{1,1} \subset \V_{0,1} \otimes \V_{1,0}^* \right\rbrace.
	\end{equation}
	The decomposition of $\V_{0,1} \otimes \V_{1,0}^*$ into $\G_2$ irreducibles is $\V_{0,1} \otimes \V_{1,0}^* \cong \V_{1,1} \oplus \V_{2,0} \oplus \V_{1,0}$. The map $\Lambda^1_7 \otimes \Lambda^2_{14} \to \Lambda^3_1 \oplus \Lambda^3_7 \oplus \Lambda^3_{27}$ induced by the wedge product has image $\Lambda^3_7 \oplus \Lambda^3_{27}$ and gives a projection of $\V_{0,1} \otimes \V_{1,0}^*$ onto $\V_{2,0} \oplus \V_{1,0}.$ It follows that $\beta \otimes \xi \in \V_{1,1} \subset \V_{0,1} \otimes \V_{1,0}^*$ if and only if $\beta \wedge \xi = 0,$ where we think of $\beta$ as a 2-form and $\xi$ as a 1-form. This immediately gives that $\Xi_{\R}$ is empty, since there are no elements $\beta$ of $\V_{0,1} \cong \mathfrak{g}_2$ that are decomposable as 2-forms (this can been seen by conjugating so that $\beta$ lies in a maximal torus).
\end{proof}

Proposition \ref{prop:RCharVar} essentially states that the system (\ref{eq:LamQuad}) is elliptic (modulo diffeomorphisms).

\begin{cor}\label{cor:RealAnal}
	Closed $\G_2$-structures $\varphi$ satisfying the $\lambda$-quadratic condition (\ref{eq:LamQuad}) are real analytic in $g_{\varphi}$-harmonic coordinates.
\end{cor}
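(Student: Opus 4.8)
The plan is to reduce the statement to the analytic regularity theory for determined elliptic systems with real-analytic coefficients, in the spirit of DeTurck--Kazdan's treatment of Einstein metrics. The $\lambda$-quadratic equation (\ref{eq:LamQuad}), together with the constraint $d\varphi = 0$, is a second-order quasilinear PDE system for $\varphi$; Proposition \ref{prop:RCharVar} tells us it is elliptic modulo the action of the diffeomorphism group, in the sense that its real characteristic variety is empty once the reparametrization directions are quotiented out. To obtain a genuinely determined elliptic system I would fix the diffeomorphism gauge by passing to $g_\varphi$-harmonic coordinates, that is, local coordinates $x^i$ satisfying $\Delta_{g_\varphi} x^i = 0$. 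Adjoining this harmonic gauge condition to (\ref{eq:LamQuad}) produces a determined second-order system $\tilde{P}(\varphi) = 0$ whose principal symbol is, by design, an isomorphism for every nonzero covector.

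The second ingredient is that the nonlinearity of this system is real-analytic. All of the tensors attached to $\varphi$---the metric $g_\varphi$, its inverse, the Hodge star ${*}_\varphi$, the norm $|\tau|^2$, and the operator $\mathsf{j}_\varphi$ of (\ref{eq:G2ScalRic})---are built by polynomial and rational (through $\det$) operations from $\varphi$, and hence depend real-analytically on $\varphi$ on the open orbit $\Lambda^3_+ \subset \Lambda^3 T^*M$. The torsion $\tau$ is first order in $\varphi$, so the right-hand side of (\ref{eq:LamQuad}) is a real-analytic function of $\varphi$ and its first derivatives; equivalently, after substituting (\ref{eq:LamQuadH}), the Ricci tensor (\ref{eq:G2ScalRic}) becomes a real-analytic, purely algebraic expression in $\varphi$ and $\partial\varphi$. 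Thus $\tilde{P}$ is an elliptic operator with real-analytic coefficients.

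With these two facts in hand, the conclusion follows from Morrey's theorem on the analyticity of solutions of analytic nonlinear elliptic systems: a $C^\infty$ solution $\varphi$ of the determined elliptic system $\tilde{P}(\varphi) = 0$ is real-analytic in the chosen coordinates. (A~priori smoothness is not an issue, since standard elliptic regularity first upgrades a weak solution to a smooth one before the analyticity step is applied.) Because $g_\varphi$ is an algebraic, real-analytic function of $\varphi$, it follows that $g_\varphi$ is real-analytic in $g_\varphi$-harmonic coordinates as well, which is the assertion of the corollary.

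The step I expect to be the main obstacle is the verification that adjoining the harmonic-coordinate condition yields an \emph{elliptic} determined system, rather than merely a well-posed one. This requires matching the seven-dimensional degeneracy of the symbol of (\ref{eq:LamQuad})---coming from the reparametrization action of $\mathrm{Diff}(M)$, and reflected in the kernel $\Lambda^3_7$ of the linearization of the map $\varphi \mapsto g_\varphi$---against the seven directions pinned down by the harmonic gauge, and checking that these are transverse so that the combined principal symbol is invertible. Proposition \ref{prop:RCharVar} already supplies the essential ellipticity of the quadratic operator transverse to the gauge directions, so this final point is a symbol computation rather than a genuinely new difficulty; once it is in place, Morrey's theorem delivers the result.
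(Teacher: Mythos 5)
Your proposal is correct and is essentially the argument the paper intends: the paper gives no separate proof of Corollary~\ref{cor:RealAnal}, deriving it directly from Proposition~\ref{prop:RCharVar} (``elliptic modulo diffeomorphisms'') exactly as you do, via the DeTurck--Kazdan-style harmonic gauge fixing and Morrey's analyticity theorem for elliptic systems with real-analytic nonlinearity. The transversality of the harmonic-gauge conditions to the $\mathrm{Diff}$-kernel of the symbol, which you correctly flag as the only point needing verification, is precisely the content left implicit in the paper's remark following Proposition~\ref{prop:RCharVar}.
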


\begin{remark}
	The proof of Proposition \ref{prop:RCharVar} implies that any system of equations for a closed $\G_2$-structure that amounts to prescribing the tensor $H$ has empty real characteristic variety.
\end{remark}

\subsubsection{Non-involutivity}\label{sssect:NonInvol}

To continue the study of the $\lambda$-quadratic condition (\ref{eq:LamQuad}) from the point of view of exterior differential systems, we need to develop the structure equations to third order. Define a $\V_{2,0}$-valued function $\widetilde{H}$ on $\mathcal{B}$ by
\begin{equation}
\widetilde{H}_{ij} = \frac{1-6\lambda}{7}\left(T_{ik}T_{kj}+\frac{1}{7}\delta_{ij}T_{kl}T_{kl} \right) - H_{ij},
\end{equation}
so that $\widetilde{H}=0$ precisely when $\varphi$ is $\lambda$-quadratic. Let $\nabla$ denote the covariant derivative associated to the natural connection $\theta.$ Now,
\begin{equation}
\begin{split}
\nabla\widetilde{H} \:\: \text{takes values in} \:\: \V_{2,0} \otimes \V_{1,0} & \cong  \V_{3,0} \oplus \V_{1,1} \oplus \V_{2,0} \oplus \V_{0,1} \oplus \V_{1,0}, \\
\nabla C \:\: \text{takes values in} \:\: \V_{1,1} \otimes \V_{1,0} & \cong  \V_{2,1} \oplus \V_{0,2} \oplus \V_{3,0} \oplus \V_{1,1} \oplus \V_{2,0} \\
& \oplus \V_{0,1}, \\
\nabla {S} \:\: \text{takes values in} \:\: \V_{0,2} \otimes \V_{1,0} & \cong  \V_{1,2} \oplus \V_{2,1} \oplus \V_{1,1}.
\end{split}
\end{equation}

Solving the equations $d^2 T=0, d^2 \theta = 0$ yields various relations between the components of these tensors. When $\widetilde{H}=0,$ we have $\nabla \widetilde{H} = 0$ as well, and the equations for $[\nabla \widetilde{H}]^{1,0}$ and $[\nabla \widetilde{H}]^{2,0}$ yield the following relations,
\begin{subequations}\label{eq:2JetCond}
	\begin{align}
	\left( 3 \lambda -4 \right) C_{ijk} T_{jk} &+ \tfrac{12}{7} \left( 8 \lambda +1 \right) \left( 5 \lambda -2 \right) \eps_{ijk} T_{jl} T_{lm} T_{mk} = 0, \\
	\lambda & \left( \eps_{ilm} C_{ljr} T_{mr} + \eps_{jlm} C_{lir} T_{mr} \right) = 0.
	\end{align}
\end{subequations}
Equations equivalent to (\ref{eq:2JetCond}) can also be obtained by applying $d^2 = 0$ to (\ref{eq:LamQuad}), and they were obtained by Bryant \cite{Bry05} in this fashion. The advantage of our approach is that it shows that (\ref{eq:2JetCond}) are the \emph{only} second-order conditions obtained by taking three derivatives of the $\G_2$-structure.

The equations (\ref{eq:2JetCond}) show that the system (\ref{eq:LamQuadH}) is not involutive, as explained by Bryant \cite{Bry05}. One can ask if the system obtained by adjoining the relation (\ref{eq:2JetCond}) to (\ref{eq:LamQuad}) is involutive. Unfortunately, this is not the case: differentiating (\ref{eq:2JetCond}) yields other second order conditions which are not algebraic consequences of (\ref{eq:LamQuad}) or (\ref{eq:2JetCond}). For instance, if we let $Z$ denote the $\V_{1,0}$ component of $\nabla \widetilde{H}$, then $\nabla Z$ must vanish for a solution of (\ref{eq:LamQuad}). Now, $\nabla Z$ takes values in $\V_{1,0} \otimes \V_{1,0} \cong \V_{2,0} \oplus \V_{0,1} \oplus \V_{1,0} \oplus \V_{0,0}.$ Taking the $\V_{0,0}$ part of $\nabla Z$ gives the following second-order relation:
\begin{align}\label{eq:nZ00}
\left( 3 \lambda -4 \right) S_{ijkl} T_{ij}T_{kl} &= -\tfrac{3}{392} \left( 27648 \lambda^3 - 2484 \lambda^2 -5151 \lambda + 628 \right) \left( T_{ij} T_{ij}\right)^2 \\ &+ \left( 3 \lambda -4 \right) C_{ijk} C_{ijk}. \nonumber
\end{align}
If we let $M$ denote the $\V_{2,0}$ component of $\nabla \widetilde{H},$ then a particular combination of the $\V_{2,0}$ components of $\nabla Z$ and $\nabla M$ yields another second-order relation, with terms of the form $C^2,$ $T^4,$ $T^2 \cdot C,$ and $T^2 \cdot S.$ Other components of $\nabla Z$ and $\nabla M$ place certain restrictions on $\left[\nabla C \right]^{3,0},$ but we will not list them here. Further differentiation may yield more restrictions, but at this level the algebra involved becomes too cumbersome to continue.

\subsection{Special torsion}\label{ssect:SpecTors}

The non-involutivity of the $\lambda$-quadratic condition makes the study of these structures difficult, and the local existence of any examples becomes an interesting question. Bryant \cite{Bry05} gives one example of a homogeneous ERP structure, Lauret \cite{LauretLap} another, and Lauret--Nicolini \cites{LauNicERPLI20,LauNicERPLICAG20} have classified left-invariant ERP structures on Lie groups. Examples with $\lambda = -1/8$ and $\lambda=2/5$ have been given by the author \cite{BallConfFlat20}. In \S\ref{sect:U2pl} and \S\ref{sect:U2mi} we will give new examples of $\lambda$-quadratic closed $\G_2$-structures for $\lambda = -1,$ $-1/8,$ $1/6,$ $2/5,$ and $3/4.$

We now describe the idea that allows us to find the examples of \S\ref{sect:U2pl} and \S\ref{sect:U2mi}. As mentioned before, the torsion tensor $T$ of a closed $\G_2$-structure takes values in the Lie algebra $\mathfrak{g}_2$. Recall that the stabilisers of elements of $\mathfrak{g}_2$ were classified in \S\ref{ssect:G2adjoint}, and there were two exceptional $\G_2$-orbits in $\mathfrak{g}_2$: $\mathcal{O}^-$ and $\mathcal{O}^{+},$ with stabilisers $\U(2)^+$ and $\U(2)^-$ respectively.

\begin{definition}
	A closed $\G_2$-structure $\varphi$ said to have \emph{special torsion} of \emph{positive (resp. negative) type} if the torsion 2-form $\tau$ of $\varphi$ has pointwise stabiliser everywhere conjugate to $\U(2)^{+}$ (resp. $\U(2)^{-}$).
\end{definition}

The special torsion conditions are nonlinear first order conditions on the $\G_2$-structure. We have
\begin{equation*}
\begin{aligned}
&\varphi \:\: \text{has special torsion of positive type} \iff |\tau^3|^2 = 0, \\
&\varphi \:\: \text{has special torsion of negative type} \iff |\tau^3|^2 = \tfrac{2}{3} | \tau|^6.
\end{aligned}
\end{equation*}

The special torsion assumption simplifies considerably the identities (\ref{eq:2JetCond}) and (\ref{eq:nZ00}) which allows for the structure equations to be developed further in this setting and enables the classification theorems and examples of \S\ref{sect:U2pl} and \S\ref{sect:U2mi}.

More generally, any question that can be asked of closed $\G_2$-structures can be restricted to the setting of special torsion, and may be easier to answer there.

\subsection{An example with non-special torsion}\label{ssect:NonSpecTors}

In this section we construct an example of a $1/3$-quadratic closed $\G_2$-structure. This example is notable because it will be shown in \S\ref{sect:U2pl} and \S\ref{sect:U2mi} that $\lambda = 1/3$ cannot occur in the special torsion setting.

Let $\G$ be the group $SL(2, \R ) \ltimes \R^4,$ where $SL(2, \R)$ acts on $\R^4$ via the irreducible 4-dimensional representation $\mathrm{Sym}^3 \R^2$. Write the left-invariant Maurer-Cartan form on $\G$ as
\begin{small}
	\begin{align}
	\left( \begin {array}{ccccc} 0&0&0&0&0\\
	\nu_{{0}}&3\,\alpha_{{1}}&\alpha_{{2}}&0&0\\
	\nu_{{1}}&3\,\alpha_{{3}}&\alpha_{{1}}&2\,\alpha_{{2}}&0\\
	\nu_{{2}}&0&2\,\alpha_{{3}}&-\alpha_{{1}}&3\,\alpha_{{2}}\\
	\nu_{{3}}&0&0&\alpha_{{3}}&-3\,\alpha_{{1}}
	\end {array} \right),
	\end{align}
\end{small}
and define forms $\beta_2, ..., \beta_7,$ and $\kappa$ by
\begin{align}
\beta_2 &= - \nu_0 + \nu_2, \:\:\: \beta_3 = \nu_1 - \nu_3, \nonumber \\
\beta_4 &= 2 \alpha_1, \:\:\: \beta_5 = \alpha_2 + \alpha_3, \nonumber \\
\beta_6 &= -\tfrac{1}{15} \nu_1 - \tfrac{1}{5} \nu_3, \:\:\: \beta_7 = \tfrac{1}{5} \nu_0 + \tfrac{1}{15} \nu_2, \nonumber \\
\kappa &= \alpha_2 - \alpha_3. \nonumber
\end{align}
The vector dual to the form $\kappa$ generates an $S^1$-action on $\G.$ On $\R_+ \times {\G},$ with coordinate $r$ in the $\R$ direction, define a 3-form $\varphi$ by
\begin{equation}
\begin{aligned}
\varphi =& d r \wedge \left( \frac{1}{r^3} \beta_2 \wedge \beta_3 + 6 r^2 \beta_4 \wedge \beta_5 + \frac{15}{r^2} \beta_6 \wedge \beta_7 \right) \\
& + \frac{3}{r^2} \left( \beta_2 \wedge \beta_4 \wedge \beta_6 - \beta_2 \wedge \beta_5 \wedge \beta_7 - \beta_3 \wedge \beta_5 \wedge \beta_6 -\beta_3 \wedge \beta_4 \wedge \beta_7 \right).
\end{aligned}
\end{equation}
Then $\varphi$ descends to the space $M = \R_+ \times {\G}/{\mathrm{S}^1}$ to define a $1/3$-quadratic closed $\G_2$-structure on $M.$ A simple calculation using the structure equations of $\G$ gives that $\tau$ has pointwise stabiliser $\mathrm{T}^2$.

\section{Quadratic closed $\G_2$-structures with special torsion of positive type}\label{sect:U2pl}

This section is devoted to the study of $\lambda$-quadratic closed $\G_2$-structures with special torsion of positive type. The elements $\beta \in \Lambda^2_{14}$ with this stabiliser type are characterised by the condition $\beta^3 = 0.$ Thus, by equation (\ref{eq:Bry466}), $\lambda={1}/{6}$ and this section deals exclusively with ERP closed $\G_2$-structures. We also remark that by Bryant's Theorem \ref{thm:BryERPCons}, an ERP closed $\G_2$-structure on a compact 7-manifold has special torsion of positive type. 

\subsection{The induced $\U(2)^+$-structure}\label{ssect:U2plInduced}

Let $(M,\varphi)$ be an ERP closed $\G_2$-structure such that $\tau$ has stabiliser everywhere conjugate to $\U(2)^+.$ By equation (\ref{eq:Bry469}), $| \tau|^2$ is constant. In particular, if the $\G_2$-structure is not torsion-free then the torsion 2-form is non-zero everywhere. We may then adapt frames and define a $\U(2)^+$-subbundle $\mathcal{Q} \subset \mathcal{B}$ by
\begin{equation}
\mathcal{Q} = \left\lbrace u : T_x M \to V \mid u \in \mathcal{B}_x,  u^* k \left( e_4 \wedge e_5 - e_6 \wedge e_7 \right) = \tau \right\rbrace,
\end{equation}
where $k$ is a non-zero constant. We emphasise that the construction of $\mathcal{Q}$ depends only on the $\G_2$-structure $\varphi$, and so Theorem \ref{thm:MoE} can be used to test for equivalence of the $\G_2$-structures.

The action of $\U(2)^{+}$ on the standard representation $V \cong \R^7$ of $\G_2$ is reducible: we have $V \cong \langle e_1, e_2, e_3 \rangle \oplus \langle e_4, e_5, e_6, e_7 \rangle \cong \R^3 \oplus \C^2.$ We split the $V$-valued tautological 1-form $\omega$ accordingly as $\omega = \nu + \eta$. Explicitly, define complex-valued 1-forms $\eta_a$ and $\nu_{a \bar{b}} = - \overline{\nu_{b \bar{a}}}$ on $\mathcal{Q}$ by
\begin{equation*}
\begin{aligned}
\nu_{1\bar{1}} &= - i \omega_1, & \eta_1 &= \omega_4 + i \omega_5,  \\
\nu_{1\bar{2}} &= - \omega_2 + i \omega_3, & \eta_2 &= -\omega_6 + i \omega_7, \\
\nu_{2\bar{2}} &= i \omega_1.
\end{aligned}
\end{equation*}

Throughout the rest of this section, $1 \leq a, b, c, \ldots \leq 2$ will be indices obeying the complex Einstein summation convention, meaning that any pair of barred and unbarred indices is implicitly summed over. We will also make use of an $\eps$-symbol with two such indices, defined to be the unique skew-symmetric symbol satisfying $\eps_{12} = 1/2.$

The natural $\mathfrak{g}_2$-valued connection form $\theta$ also splits. There is a $\U(2)^+$-invariant decomposition
\begin{equation}
\mathfrak{g}_2 = \mathfrak{u}(2)^+ \oplus \C \oplus \textrm{Sym}^3 \C^2,
\end{equation}
and $\theta$ decomposes accordingly as $\theta = \kappa + \xi + \sigma.$ Explicitly, define complex-valued 1-forms $\kappa_{a \overline{b}} = - \overline{\kappa_{b \bar{a}}},$ $\xi,$ and $\sigma_{abc} = \sigma_{bac} = \sigma_{acb}$ by
\begin{equation*}
\begin{aligned}
\kappa_{1\bar{1}} &= -i \theta_{45} , & \sigma_{111} & = \tfrac{1}{2} \left(\theta_{35}-\theta_{24} \right) - \tfrac{i}{2} \left(\theta_{25} - \theta_{34} \right), \\
\kappa_{1\bar{2}} &= \tfrac{1}{2} \left(\theta_{57} - \theta_{46} \right) - \tfrac{i}{2} \left(\theta_{47} + \theta_{56} \right) , & \sigma_{112} & = \tfrac{1}{2} \left(\theta_{26} - \theta_{37} \right) - \tfrac{i}{2} \left(\theta_{27} + \theta_{36} \right), \\
\kappa_{2\bar{2}} &= i \theta_{67} , & \sigma_{122} & = \tfrac{1}{2} \left(\theta_{35} -\theta_{24} \right) - \tfrac{i}{2} \left(\theta_{25} + \theta_{34} \right), \\
\xi & =  \tfrac{1}{2} \left(\theta_{46} + \theta_{57} \right) - \tfrac{i}{2} \left(\theta_{47} - \theta_{56} \right) , & \sigma_{222} & = \tfrac{1}{2} \left(\theta_{26} + \theta_{37} \right) - \tfrac{i}{2} \left(\theta_{27} - \theta_{36} \right).
\end{aligned}
\end{equation*}
The $\mathfrak{u}(2)$-valued 1-form $\kappa$ is the connection form associated to the natural connection on the $\U(2)^+$-structure $\mathcal{Q},$ while the 1-forms $\xi$ and $\sigma$ are are semibasic for the projection $\mathcal{Q} \to M.$

\subsubsection{Structure equations}

The first structure equation on $\mathcal{B}$ (\ref{eq:CartanIG2}) restricted to $\mathcal{Q}$ reads
\begin{equation}\label{eq:CIU2plPre}
\begin{aligned}
d \eta_a &= - \kappa_{a \bar{b}} \wedge \eta_b + i k \nu_{a \bar{b}} \wedge \eta_b + \eps_{ab} \xi \wedge \overline{\eta_b} - 2 \overline{\eps_{be}} \sigma_{abc} \wedge \nu_{e \bar{c}}, \\
d \nu_{a \bar{b}} &= - \kappa_{a\bar{c}} \wedge \nu_{c\bar{b}} + \kappa_{c\bar{b}} \wedge \nu_{a\bar{c}} - 2 \overline{\eps_{cb}} \sigma_{acd} \wedge \overline{\eta_d} + 2 \eps_{ca} \overline{\sigma_{bcd}} \wedge \eta_d.
\end{aligned}
\end{equation}

On $\mathcal{Q},$ the $\G_2$ 3-form $\varphi,$ 4-form $*_{\varphi} \varphi,$ and torsion 2-form $\tau$ are given by
\begin{equation}
\begin{aligned}
\varphi &= -\tfrac{1}{12} \nu_{a\bar{b}} \wedge \nu_{b\bar{c}} \wedge \nu_{c\bar{a}} + \tfrac{1}{2} \nu_{a\bar{b}} \wedge \overline{\eta_a} \wedge \eta_b, \\
*_{\varphi} \varphi &= -\tfrac{1}{4} \overline{\eps_{ab}} \eps_{cd} \eta_a \wedge \eta_b \wedge \overline{\eta_c} \wedge \overline{\eta_d} + \tfrac{1}{4} \nu_{a\bar{b}} \wedge \nu_{b\bar{c}} \wedge \overline{\eta_a} \wedge \eta_c, \\
\tau &= 3 i k \, \eta_a \wedge \overline{\eta_a}.
\end{aligned}
\end{equation}
The ERP condition (\ref{eq:LamQuad}) implies that
\begin{equation}\label{eq:ERPU2plcond}
d \left( i \, \eta_a \wedge \overline{\eta_a} \right) = 2 k \, \nu_{a\bar{b}} \wedge \overline{\eta_a} \wedge \eta_b.
\end{equation}
Equation (\ref{eq:ERPU2plcond}) together with its exterior derivative
\begin{equation}
d \left(  \nu_{a\bar{b}} \wedge \overline{\eta_a} \wedge \eta_b \right) = 0
\end{equation}
implies that there exist complex-valued functions $A_{abcde}$, $S_{abcd},$ and $N_a$ on $\mathcal{Q},$ each fully symmetric in their indices, such that
\begin{equation}
\begin{aligned}
\sigma_{abc} &= S_{abcd} \overline{\eta_d} - 2 \overline{\eps_{df}}A_{abcde} \nu_{f \bar{e}},\\
 \xi &= N_a \overline{\eta_a}.
\end{aligned}
\end{equation}
Substituting these equations into (\ref{eq:CIU2plPre}) yields Cartan's first structure equation for $\mathcal{Q}:$
\begin{equation}\label{eq:StructEqU2pl}
\begin{aligned}
d \eta_a &= - \kappa_{a \overline{b}} \wedge \eta_b + i k \nu_{a \overline{b}} \wedge \eta_b + \eps_{ab} N_c \overline{\eta_c \wedge \eta_b} - 2 \overline{\eps_{be}} S_{abcd} \overline{\eta_d} \wedge \nu_{e \overline{c}}, \\
d \nu_{a \bar{b}} &= - \kappa_{a\bar{c}} \wedge \nu_{c\bar{b}} + \kappa_{c\bar{b}} \wedge \nu_{a\bar{c}} + 4 \overline{\eps_{cb}} \overline{\eps_{eg}} A_{acdef} \nu_{g\bar{f}} \wedge \overline{\eta_d} \\
& +4 \eps_{ca} \eps_{eg} \overline{A_{bcdef}} \nu_{f\bar{g}} \wedge \eta_d.
\end{aligned}
\end{equation}

An immediate consequence of (\ref{eq:StructEqU2pl}) is that the orthogonal distributions defined by $\nu = 0$ and $\eta=0$ are integrable. In fact, these distributions are exactly the integrable distributions $P$ and $Q$ defined in Theorem \ref{thm:BryERPCons}. The torsion 2-form $\tau$ restricts to a symplectic 2-form on each 4-dimensional integral manifold of $\nu=0,$ and together with the restriction of the metric $g_{\varphi}$ this defines an almost K\"ahler structure on these integral manifolds.

\begin{remark}
	The torsion of a general $\U(2)^{+}$-structure on a 7-manifold is a section of a vector bundle of dimension
	\begin{equation}
	\operatorname{dim} \left(\frac{\mathfrak{so}(7)}{\mathfrak{u}(2)}  \otimes \R^7 \right) =119.
	\end{equation}
	The calculations above show that the torsion of the $\U(2)^{+}$-structure $\mathcal{Q}$ associated to an ERP $\G_2$-structure takes values in a 53-dimensional subspace. Conversely, any $\mathrm{U}(2)^+$-structure with torsion taking values in this 53-dimensional subspace gives rise to a unique ERP closed $\G_2$-structure.
\end{remark}

\begin{remark}\label{rmk:rescale}
	The ERP condition is invariant under rescaling the $\G_2$-structure $\varphi$ by a non-zero constant. Under the condition $\tau^3 = 0,$ we see that the ERP condition is also invariant under the two-parameter family of rescalings $\tilde{\nu} = a \, \nu,$ $\tilde{\eta} = b \, \eta$ for non-zero constants $a$ and $b.$ Under such a rescaling, the torsion functions $A,$ $S,$ and $N,$ and the constant $k$ scale as
	\begin{equation}
	\begin{aligned}
	\tilde{A} =& \frac{1}{b} A, & \tilde{S} =& \frac{1}{a} S, & \tilde{N} =& \frac{1}{b} N, & \tilde{k} =& \frac{1}{a} k.
	\end{aligned}
	\end{equation}
\end{remark}

The group $\U(2)^{+}$ does not appear in the list of possible holonomy groups of a 7-dimensional manifold, so a torsion-free $\U(2)^{+}$-structure is flat. It follows that the Bianchi identities $d^2 \eta = d^2 \nu = 0$ express the curvature of the natural connection $\kappa$ as a function of the torsion terms $A, S, N,$ their derivatives, and $k.$ We do not record the result of this calculation here as we will not use it in this generality.

At this point, the natural way forward is to apply the techniques of \cite{BryEDSNotes} to try to determine the existence and generality of these structures. However, a calculation shows that the Jacobi manifold defined by (\ref{eq:StructEqU2pl}) and the equation for $d \kappa$ is not involutive. Prolonging once does not yield any new integrability conditions, but the tableau of free derivatives is not involutive. Prolonging once more does give new integrability conditions, so the exterior differential system associated to this type of structure is not involutive. Unfortunately, the integrability conditions are of an algebraically complicated nature and this precludes further investigation of the general ERP structure with special torsion. Instead, we will restrict to special cases defined by first order conditions.

\subsubsection{Invariants on the base}

Each of the functions $A,$ $S$, and $N$ take values in an irreducible representation of $\U(2),$ and the following tensors are well-defined on the manifold $M$:
\begin{equation}\label{eq:U2plBaseTens}
\begin{aligned}
\mathsf{A} = \overline{A_{abcde}} \eta_a\eta_b\eta_c\eta_d\eta_e \otimes \left(\eta_1 \wedge \eta_2 \right)^{1/2} &\in \mathrm{Sym}^5_{\C} \C^2 \otimes  \left( \Lambda^2_{\C} \C^2 \right)^{1/2}, \\
\mathsf{S} = \overline{S_{abcd}} \eta_a\eta_b\eta_c\eta_d \otimes \left(\eta_1 \wedge \eta_2 \right) &\in \mathrm{Sym}^4_{\C} \C^2 \otimes  \Lambda^2_{\C} \C^2 , \\
\mathsf{N} = \overline{N_a} \eta_a \otimes \left(\eta_1 \wedge \eta_2 \right)^{3/2} &\in \C^2 \otimes  \left( \Lambda^2_{\C} \C^2 \right)^{3/2}.
\end{aligned}
\end{equation}

The tensor $A$ at a point $p \in M$ can be identified with the second fundamental form at $p$ of the leaf of the distribution $\omega=0$ passing through $p,$ while the tensor $S$ at $p$ can be identified with the second fundamental form at $p$ of the leaf of $\nu=0$ through $p.$ The tensor $N$ can be identified with the Nijenhuis tensor of the almost complex structure induced on the leaves of $\nu=0.$

\subsubsection{Bryant's example}\label{sssect:BryEx}

The first example of an ERP closed $\G_2$-structure was given by Bryant \cite{Bry05}. Let $\G$ be the group of affine transformations of $\mathbb{C}^2$ that preserve the complex volume form up to phase. Bryant's example is an ERP closed $\G_2$-structure on the homogeneous space ${\G}/{\U(2)}$. We can give an interpretation of this example in the context of $\U(2)^{+}$-structures.

\begin{thm}
	A $\U(2)^{+}$-structure is locally equivalent to Bryant's example (up to rescaling) if and only if the torsion functions $A, S,$ and $N$ vanish identically.
\end{thm}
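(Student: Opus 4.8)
The plan is to prove the two implications separately, dispatching the forward direction by representation theory and the converse by recognising a flat structure. For the forward direction, note first that the tensors $\mathsf{A},$ $\mathsf{S},$ and $\mathsf{N}$ of (\ref{eq:U2plBaseTens}) are canonically associated to $\varphi$: the bundle $\mathcal{Q}$ and all the structure functions living on it are built from $\varphi$ alone, so $\mathsf{A},$ $\mathsf{S},$ $\mathsf{N}$ are genuine diffeomorphism invariants. Bryant's example is acted on transitively by $\G$ through automorphisms of $\varphi,$ so these tensors are invariant under the action and thus define, at each point, $\U(2)$-invariant vectors in the fibres $\mathrm{Sym}^5_\C\C^2\otimes(\Lambda^2_\C\C^2)^{1/2},$ $\mathrm{Sym}^4_\C\C^2\otimes\Lambda^2_\C\C^2,$ and $\C^2\otimes(\Lambda^2_\C\C^2)^{3/2}.$ The density factors $(\Lambda^2_\C\C^2)^s$ are one-dimensional and carry the trivial $\SU(2)$-action, while $\mathrm{Sym}^5_\C\C^2,$ $\mathrm{Sym}^4_\C\C^2,$ and $\C^2$ are irreducible $\SU(2)$-representations of positive spin, which admit no nonzero invariant vector. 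Since $\SU(2)\subset\U(2),$ it follows that $\mathsf{A},$ $\mathsf{S},$ $\mathsf{N}$ vanish identically on Bryant's example, hence on any structure locally equivalent to it, so $A=S=N=0$.

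For the converse, suppose $A=S=N=0.$ Then the first structure equations (\ref{eq:StructEqU2pl}) collapse to
\begin{equation*}
d\eta_a = -\kappa_{a\bar b}\wedge\eta_b + ik\,\nu_{a\bar b}\wedge\eta_b, \qquad d\nu_{a\bar b} = -\kappa_{a\bar c}\wedge\nu_{c\bar b} + \kappa_{c\bar b}\wedge\nu_{a\bar c},
\end{equation*}
with $k$ a nonzero constant. To close the system I would compute $d\kappa$ from the Bianchi identities $d^2\eta=d^2\nu=0.$ As noted in \S\ref{ssect:U2plInduced}, these express the curvature $d\kappa+\kappa\wedge\kappa$ of the natural connection as a universal expression in $A,$ $S,$ $N,$ their covariant derivatives, and $k;$ setting the torsion to zero leaves a formula in $k$ alone, so $d\kappa$ has constant coefficients. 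Consequently every exterior derivative among the eleven linearly independent $1$-forms $(\eta_a,\nu_{a\bar b},\kappa_{a\bar b})$ on $\mathcal{Q}$ is a constant-coefficient quadratic expression in those same forms.

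These constant structure equations are precisely the Maurer--Cartan equations of a Lie group, and I would identify the group as $\G.$ Concretely, set $\mu_{a\bar b} = \kappa_{a\bar b} - ik\,\nu_{a\bar b};$ since $\kappa$ is $\mathfrak{u}(2)$-valued and $\nu$ is skew-Hermitian and trace-free, $\mu$ takes values in $\mathfrak{a}=\{X\in\mathfrak{gl}(2,\C)\mid\Re\operatorname{tr}X=0\},$ with $\kappa$ its skew-Hermitian part and $-ik\nu$ its trace-free Hermitian part. The first displayed equation becomes $d\eta=-\mu\wedge\eta,$ and the computed $d\kappa$ together with the $d\nu$ equation should assemble into $d\mu=-\mu\wedge\mu.$ Thus $(\mu,\eta)$ is a Maurer--Cartan form for $\mathfrak{g}=\mathfrak{a}\ltimes\C^2,$ the Lie algebra of the affine group $\G.$ Passing to the simply connected cover and applying Theorem \ref{thm:MaurerCartan} yields a local diffeomorphism onto $\G$ pulling back the Maurer--Cartan form to $(\mu,\eta);$ because $\kappa$ is exactly the $\mathfrak{u}(2)$-component and $\U(2)\subset\G$ is the isotropy, the induced $\G_2$-structure descends to the left-invariant one on $\G/\U(2),$ namely Bryant's example. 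Since the value of $k$ can be normalised by the rescaling of Remark \ref{rmk:rescale}, any two such structures share identical structure equations, and Theorem \ref{thm:MoE} upgrades this to a local equivalence of $\G_2$-structures.

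I expect the main obstacle to be the explicit determination of $d\kappa$: one must run the Bianchi identities carefully to confirm that, once $A=S=N=0,$ no $\eta\wedge\overline{\eta}$ or $\nu\wedge\nu$ curvature terms survive beyond those already encoded in $-\mu\wedge\mu,$ so that $(\mu,\eta)$ is genuinely flat. The bookkeeping in the complexified $\U(2)^+$-frame---tracking the $\eps$-symbol conventions and the skew-Hermitian versus trace-free Hermitian splitting of $\mathfrak{gl}(2,\C)$---is where errors are most likely, and checking that the resulting $11$-dimensional algebra is exactly $\mathfrak{a}\ltimes\C^2$ is the decisive step that pins the local model down as Bryant's example.
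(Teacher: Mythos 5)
Your proposal is correct, and its two halves relate to the paper's proof differently. The converse direction is essentially the paper's argument: the paper likewise obtains $d\kappa_{a\bar{b}} = -\kappa_{a\bar{c}}\wedge\kappa_{c\bar{b}} + k^2\,\nu_{a\bar{c}}\wedge\nu_{c\bar{b}}$ from $d^2\eta = d^2\nu = 0$ (this is (\ref{eq:StructEq2U2plBry}), so the one step you deferred---that no extraneous curvature terms survive---does check out exactly as you predicted), and it identifies the group via the same combination you call $\mu_{a\bar{b}} = \kappa_{a\bar{b}} - ik\,\nu_{a\bar{b}}$, assembled into the Maurer--Cartan form of the $3\times 3$ affine matrix group with $|ad-bc|=1$, which is your $\mathfrak{a}\ltimes\C^2$ with $\mathfrak{a} = \{X \in \mathfrak{gl}(2,\C) \mid \Re\operatorname{tr}X = 0\}$; one can verify directly that $d\mu_{a\bar{b}} + \mu_{a\bar{c}}\wedge\mu_{c\bar{b}} = 0$ is equivalent to the $d\nu$ and $d\kappa$ equations, so your packaging is sound. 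Where you genuinely diverge is the forward direction: the paper disposes of it in one line (``the converse statement follows by reversing the above steps,'' i.e.\ reading the vanishing of $A$, $S$, $N$ off the Maurer--Cartan structure equations of Bryant's example), whereas you give an a priori representation-theoretic argument---homogeneity forces $\mathsf{A}_p$, $\mathsf{S}_p$, $\mathsf{N}_p$ to be isotropy-invariant, and $\mathrm{Sym}^5_\C\C^2$, $\mathrm{Sym}^4_\C\C^2$, $\C^2$ are positive-spin $\SU(2)$-irreducibles with no invariant vectors, the density factors being $\SU(2)$-trivial. Your argument is computation-free and proves something stronger: any ERP structure with full $\U(2)^+$ isotropy at each point has $A=S=N=0$. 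This is in fact the same mechanism the paper deploys later in the proof of Theorem \ref{thm:ERPHomogClass}, where it observes that the only element of the torsion module $W$ with stabiliser $\U(2)^+$ is $(0,0,0)$; so your forward direction anticipates that classification step rather than following the paper's local computation. The only point worth tightening is the identification of the isotropy representation: you should note that the stabiliser of a point of $\G/\U(2)$ preserves $\varphi_p$ and $\tau_p$, hence embeds in $\mathrm{Stab}_{\G_2}(\tau_p) = \U(2)^+$, and being $4$-dimensional it fills out (at least the identity component of) $\U(2)^+$, which is all your $\SU(2)$-invariance argument needs.
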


\begin{proof}
	Suppose that $\mathcal{Q} \to M$ is a $\U(2)^{+}$-structure with $A=P=Z=0$. Then the first structure equation (\ref{eq:StructEqU2pl}) becomes
	\begin{equation}\label{eq:StructEqU2plBry}
	\begin{aligned}
	d \eta_a &= - \kappa_{a \overline{b}} \wedge \eta_b + i k \, \nu_{a \overline{b}} \wedge \eta_b, \\
	d \nu_{a \bar{b}} &= - \kappa_{a\bar{c}} \wedge \nu_{c\bar{b}} + \kappa_{c\bar{b}} \wedge \nu_{a\bar{c}}.
	\end{aligned}
	\end{equation}
	The identities $d^2 \nu = d^2 \eta = 0$ imply
	\begin{equation}\label{eq:StructEq2U2plBry}
	d \kappa_{a \bar{b}} = - \kappa_{a \bar{c}} \wedge \kappa_{c \bar{b}} + k^2 \, \nu_{a \bar{c}} \wedge \nu_{c \bar{b}}.
	\end{equation}
	
	Equations (\ref{eq:StructEqU2plBry}) and (\ref{eq:StructEq2U2plBry}) are the structure equations for an 11-dimensional Lie group. To identify this group as the group $G$ of affine transformations of $\mathbb{C}^2$ that preserve the standard complex volume form up the phase, we write $G$ as the group of 3-by-3 complex matrices of the form
	\begin{align}
	\left( \begin{array}{ccc}
	a & b & x \\
	c & d & y \\
	0 & 0 & 0
	\end{array} \right)
	\end{align}
	with $|ad-bc|=1.$ The left-invariant Maurer-Cartan form may be written as
	\begin{align}
	\mu = \left( \begin{array}{ccc}
	\kappa_{1\bar{1}} - ik \, \nu_{1\bar{1}} & \kappa_{1\bar{2}} - ik \, \nu_{1\bar{2}} & \eta_1 \\
	\kappa_{2\bar{1}} - ik \, \nu_{2\bar{1}} & \kappa_{2\bar{2}} - ik \, \nu_{2\bar{2}} & \eta_2 \\
	0 & 0 & 0
	\end{array} \right),
	\end{align}
	and Maurer-Cartan equation $d\mu + \mu \wedge \mu =0$ matches up with the structure equations (\ref{eq:StructEqU2plBry}) and (\ref{eq:StructEq2U2plBry}), showing that the $\U(2)^{+}$-structure $\mathcal{Q} \to M$ is locally equivalent to $\G \to {\G}/{\U(2)}.$
	
	The converse statement follows by reversing the above steps.
\end{proof}

\begin{remark}
	This theorem can be compared to Theorem 1.2 in \cite{CleyIv08}, which says that if $(M, \varphi)$ is a manifold with closed $\G_2$-structure such that the torsion $\tau$ is parallel under the natural $\G_2$ connection, then $M$ is locally equivalent to Bryant's example. If $\tau$ is invariant under $\nabla$ then, using the notation of the previous section, ${H}=0$ and $C=0.$ The conditions $A= S =N = 0$ are equivalent to the vanishing of $C,$ and ${H}=0$ is simply the ERP condition.
\end{remark}

\subsubsection{Evolution under Laplacian flow}\label{sssect:LapFlow}

The evolution of ERP $\U(2)^{+}$-structures under the Laplacian flow (\ref{eq:LapFlow}) is now simple to compute. From the form (\ref{eq:ERPU2plcond}) of $d\tau,$ the subbundles defined by the vanishing of $\nu$ and $\eta$ are preserved. In fact, we have $\nu(t) = \nu(0)$, $\eta(t) = \exp(6ct) \eta(0)$, and the 1-parameter family of closed $\G_2$-structures
\begin{equation}
\begin{aligned}\label{eq:U2plLapFlow}
\varphi(t) &= -\tfrac{1}{12} \nu_{a\bar{b}} \wedge \nu_{b\bar{c}} \wedge \nu_{c\bar{a}} + \tfrac{1}{2} \nu_{a\bar{b}} \wedge \overline{\eta_a} \wedge \eta_b, \\
&= \omega_{123} + e^{12kt} \left( \omega_{145} + \omega_{167} + \omega_{246} - \omega_{257} - \omega_{347} - \omega_{356} \right)
\end{aligned}
\end{equation}
solves the Laplacian flow equation. Thus, the Laplacian flow simply moves the $\U(2)^+$-structure along the rescaling of Remark \ref{rmk:rescale}. In particular, the ERP condition is preserved. This result has been found independently by Fino--Raffero \cite{FiRafLap18}.

\subsection{Type $A$}\label{ssect:TypeA}

We shall say that the $\U(2)^+$-structure $\mathcal{Q}$ is \emph{type $A$} if $A$ is the only non-vanishing torsion function, i.e. if $S=N=0$ on $\mathcal{Q}.$

If $\mathcal{Q}$ is type $A$, then the structure equations (\ref{eq:StructEqU2pl}) become
\begin{equation}\label{eq:StructEqtypeA}
\begin{aligned}
d \theta_a &= - \kappa_{a \overline{b}} \wedge \theta_b + i k \nu_{a \overline{b}} \wedge \theta_b, \\
d \nu_{a \bar{b}} &= - \kappa_{a\bar{c}} \wedge \nu_{c\bar{b}} + \kappa_{c\bar{b}} \wedge \nu_{a\bar{c}} + 4 \overline{\eps_{cb}} \overline{\eps_{eg}} A_{acdef} \nu_{g\bar{f}} \wedge \overline{\eta_d} \\
& +4 \eps_{ca} \eps_{eg} \overline{A_{bcdef}} \nu_{f\bar{g}} \wedge \eta_d.
\end{aligned}
\end{equation}

The equations $d^2 \theta = d^2 \nu = 0$ then imply that the tensor $A$ must satisfy
\begin{equation}\label{eq:TypeARestrict}
\overline{\eps_{bg}\eps_{ch}\eps_{di}\eps_{ej}}A_{abcde}A_{fghij} = 0, \:\:\: a,f = 1, 2.
\end{equation}
This equation forces $A$ to take a special algebraic form.
\begin{lem}
	Equations (\ref{eq:TypeARestrict}) hold if and only if the tensor $\mathsf{A}$ (\ref{eq:U2plBaseTens}) has a quadruple linear factor at each point of $M$.	
\end{lem}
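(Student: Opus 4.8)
The plan is to translate the tensorial identity (\ref{eq:TypeARestrict}) into a classical statement about binary quintics and then prove that statement. Regarding $A_{abcde}$ as a symmetric element of $\mathrm{Sym}^5 \C^2$, the left-hand side of (\ref{eq:TypeARestrict}) contracts two copies of $A$ along four index pairs using the symplectic form $\eps$; since each $\eps$ is skew and four of them appear, the result is symmetric in the free indices $a,f$ and so defines an element of $\mathrm{Sym}^2 \C^2$, i.e. three scalar equations. In the language of binary forms this is exactly the fourth transvectant $(\mathsf{A},\mathsf{A})^{(4)}$, a quadratic covariant of the quintic $\mathsf A$ (working with $A$ rather than its conjugate $\mathsf A$ is harmless, since possessing a quadruple factor is preserved under conjugation). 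Writing $\mathsf A = \sum_{k} \binom{5}{k} a_k\, \eta_1^{5-k}\eta_2^{k}$, I would record the three explicit equations in $a_0,\dots,a_5$ equivalent to $(\mathsf A,\mathsf A)^{(4)}=0$; a short computation with the transvectant formula produces them (up to scale, $a_0a_4-4a_1a_3+3a_2^2=0$, $a_0a_5-3a_1a_4+2a_2a_3=0$, $a_1a_5-4a_2a_4+3a_3^2=0$). The lemma then reads: this covariant vanishes identically if and only if $\mathsf A$ has a root of multiplicity at least four.

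For the ``if'' direction I would use that the fourth transvectant is equivariant under the $\U(2)$-action on the $\C^2$-index (indeed under all of $\mathrm{GL}(2,\C)$). Both the hypothesis ``quadruple factor'' and the conclusion ``the covariant vanishes'' are invariant under this action, so it suffices to verify the claim on the normal forms $\mathsf A=\eta_1^4\eta_2$ and the degenerate $\mathsf A=\eta_1^5$. For these the fourth-order partial derivatives are immediate, and the alternating sum defining the transvectant collapses term by term to zero.

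The substance of the lemma is the converse, and this is where the main work lies: one must rule out every root configuration other than a quadruple (or quintuple) factor. Here I would again exploit equivariance to move a root of maximal multiplicity $\mu$ to $[1:0]$, so that $a_0=\cdots=a_{\mu-1}=0$ and $a_\mu\neq 0$. Substituting into the three coefficient equations, the cases $\mu=2$ and $\mu=3$ collapse at once, since one of the equations reduces to a nonzero multiple of $a_\mu^2$, forcing $a_\mu=0$, a contradiction. The delicate case is $\mu=1$ (all five roots simple), where a single normalization gives no immediate obstruction; I would handle it by additionally moving a second, necessarily distinct, root to $[0:1]$, so that $a_0=a_5=0$ and $a_1,a_4\neq 0$. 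The three equations then form a small system in $a_1,a_2,a_3,a_4$ whose compatibility forces $a_2a_3=0$ and thence $a_4=0$, contradicting simplicity. This yields $\mu\geq 4$, which is exactly the existence of a quadruple factor.

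The main obstacle is therefore the converse direction, and specifically the all-simple case $\mu=1$: one normalization does not suffice, and a second root must be used to extract a contradiction. An alternative to this normal-form analysis would be to expand $(\mathsf A,\mathsf A)^{(4)}$ directly in the $a_i$ and verify by elimination that the ideal generated by its three coefficients coincides with the ideal of the quadruple-factor locus; the normal-form argument above is the more transparent route and avoids the heavier algebra.
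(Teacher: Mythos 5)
Your proposal is correct, and it rests on the same basic mechanism as the paper's proof---the observation that the system (\ref{eq:TypeARestrict}) is invariant under $\mathrm{SL}(2,\C)$ acting on $\mathrm{Sym}^5\,\C^2$, not merely under $\U(2)$, so that everything can be checked on normal forms---but your execution of the hard direction is genuinely different. The paper normalizes three distinct linear factors to $\eta_1,$ $\eta_2,$ $\eta_1-\eta_2$, writes $\mathsf{A}=\eta_1\eta_2(\eta_1-\eta_2)\mathsf{Q}$, and shows the equations force $\mathsf{Q}=0$; this kills every configuration with three or more distinct factors in one stroke but, as written, leaves the configuration $L_1^3L_2^2$ (only two distinct factors, yet no quadruple one) to an unstated check. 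You instead identify (\ref{eq:TypeARestrict}) as the fourth transvectant of the quintic with itself---your symmetry count in the free indices $a,f$ is right, and your three coefficient equations $a_0a_4-4a_1a_3+3a_2^2=0$, $a_0a_5-3a_1a_4+2a_2a_3=0$, $a_1a_5-4a_2a_4+3a_3^2=0$ are correct up to overall scale---and then stratify by the maximal root multiplicity $\mu$: for $\mu=2$ the first equation degenerates to $3a_2^2=0$, for $\mu=3$ the third to $3a_3^2=0$, and for $\mu=1$ your double normalization $a_0=a_5=0$, $a_1a_4\neq 0$ indeed forces a contradiction (eliminating $a_1$ and $a_4$ from the outer two equations gives $a_1a_4=\tfrac{9}{16}a_2a_3$ against $a_1a_4=\tfrac{2}{3}a_2a_3$ from the middle one, whence $a_2a_3=0$ and then $a_4=0$). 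Your route is more granular but also more complete than the paper's: the $\mu=3$ stratum is exactly the $L_1^3L_2^2$ case that the paper's three-distinct-factors normalization does not reach. The trade-off is the usual one: the paper's argument is shorter and avoids coefficient computations, while yours makes the classical statement explicit (the quadratic covariant of a binary quintic vanishes precisely on the quadruple-root locus) and verifies every root configuration.
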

\begin{proof}
	Equations (\ref{eq:TypeARestrict}) are invariant under the action of $\mathrm{SL}_2 \C$ on $\mathrm{Sym}^5 \C^2,$ not just the group $\U(2).$ If $\mathsf{A}$ has three distinct linear factors at some point $p \in M,$ then we may act by an element of $\mathrm{SL}_2 \C$ so that they are $\eta_1=0,$ $\eta_2 = 0,$ and $\eta_1 - \eta_2 = 0,$ and $\mathsf{A}$ is of the form $\eta_1 \eta_2 \left(\eta_1 - \eta_2\right) \mathsf{Q}$ for some quadratic polynomial in $\eta_1$ and $\eta_2.$ A simple calculation shows that equations (\ref{eq:TypeARestrict}) imply that $\mathsf{Q} = 0.$
	
	Conversely, if $\mathsf{A}$ has a quadruple linear factor, we may act by an element of $\mathrm{SL}_2 \C$ so that $\mathsf{A} = \eta_1^4 \mathsf{L}$ for some linear factor $\mathsf{L}.$ A simple calculation shows that equations (\ref{eq:TypeARestrict}) are satisfied.
\end{proof}

We now suppose that $A$ is non-vanishing on $\mathcal{Q},$ and restrict to the $\mathrm{T}^2$-bundle $\mathcal{Q}_1 \subset \mathcal{Q}$ where $\mathsf{A} = \eta_1^4 \mathsf{L}$ for some linear factor $\mathsf{L}.$ On $\mathcal{Q}_1,$
\begin{equation}
A_{11122} = A_{11222} = A_{12222} = A_{22222} = 0.
\end{equation}
Let us write $A_{11111} = A_1, A_{11112} = A_2$ on $\mathcal{Q}_1.$ The equations $d^2 \nu = 0,$ $d^2 \eta = 0$ imply that
\begin{equation}
d \kappa_{i \bar{i}} = \left(|A_1|^2 + 4 |A_2|^2 \right) \theta_1 \wedge \overline{\theta_1} + \overline{A_1}A_2 \theta_1 \wedge \overline{\theta_2} + A_1 \overline{A_2} \theta_2 \wedge \overline{\theta_1} + |A_2|^2 \theta_2 \wedge \overline{\theta_2},
\end{equation}
and the equation $d^2 \kappa_{i\bar{i}} = 0$ implies that $A_1 = 0$ or $A_2 = 0$.

If $A_1 = 0,$ then the equations $d^2 \nu = 0,$ $d^2 \eta = 0, d^2 A_2 = 0$ imply that $A_2 =0$ as well, so we always have $A_2=0.$ Thus, $\mathsf{A}$ has a linear factor of multiplicity 5.

Now, solving the equations $d^2 \nu = 0,$ $d^2 \eta = 0,$ $d^2 \kappa = 0,$ we find that 
\begin{equation}
\kappa_{1\bar{2}} = i k \, \nu_{1\bar{2}},
\end{equation}
and that there exists a complex valued function $B_1$ on $\mathcal{Q}_1$ so that the following equations hold
\begin{equation}\label{eq:TypeAStructPre}
\begin{aligned}
d \nu_{1\bar{1}} &= 0, \\
d \nu_{1\bar{2}} &= \left(\kappa_{2\bar{2}} - \kappa_{1\bar{1}}\right) \wedge \nu_{1\bar{2}} - 2 i k \, \nu_{1\bar{1}} \wedge \nu_{1\bar{2}} - A_1 \, \overline{\nu_{1\bar{2}} \wedge \eta_1}, \\
d \eta_1 &= -\kappa_{1\bar{1}} \wedge \eta_1 + i k \, \nu_{1\bar{1}} \wedge \eta_1, \\
d \eta_2 &= - \kappa_{2\bar{2}} \wedge \eta_2 - 2 i k \overline{\nu_{1\bar{2}}} \wedge \eta_1 - i k \, \nu_{1\bar{1}} \wedge \eta_2, \\
d \kappa_{1\bar{1}} &= |A_1|^2 \eta_1 \wedge \overline{\eta_1}, \\
d \kappa_{2\bar{2}} &= 0, \\
d A_1 &= -3 A_1 \, \kappa_{1\bar{1}} + 2 A_1 \, \kappa_{2\bar{2}} - ik A_1 \, \nu_{1\bar{1}} + B_1 \overline{\eta_1}.
\end{aligned}
\end{equation}

\begin{prop}\label{prop:typeAexist}
	The $\U(2)^+$-structures of type $A$ exist locally and depend on 2 functions of 1 variable.
\end{prop}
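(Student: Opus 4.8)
The plan is to analyze the exterior differential system defined by the structure equations (\ref{eq:TypeAStructPre}) by Cartan's method (following \cite{BryEDSNotes}), treating the $9$-dimensional manifold $\mathcal{Q}_1$ together with the torsion function $A_1$ as the initial data and prolonging until the system is in involution. I would first observe that (\ref{eq:TypeAStructPre}) already expresses $d$ of every form in the coframing $\{\eta_a, \nu_{1\bar1}, \nu_{1\bar2}, \kappa_{1\bar1}, \kappa_{2\bar2}\}$ in terms of that coframing and $A_1$, so the only equation left to close is the one for $dA_1$, which introduces the complex function $B_1$. Differentiating that equation and imposing $d^2 A_1 = 0$, one checks that the contributions involving $\kappa_{1\bar1}\wedge\kappa_{2\bar2}$, $\nu_{1\bar1}\wedge\kappa_{1\bar1}$, and $\nu_{1\bar1}\wedge\kappa_{2\bar2}$ cancel identically, leaving a $2$-form that is a multiple of $\overline{\eta_1}$. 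By Cartan's lemma this forces
\[
dB_1 = -4 B_1 \kappa_{1\bar1} + 2 B_1 \kappa_{2\bar2} - 2 i k B_1 \nu_{1\bar1} + 3 A_1 |A_1|^2 \eta_1 + C_1 \overline{\eta_1},
\]
introducing exactly one further complex function $C_1$ whose only free component again lies along $\overline{\eta_1}$.

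Iterating, each prolongation step should produce precisely one new complex torsion function, entering only through its $\overline{\eta_1}$-component, with no new integrability obstruction. This is the signature of an involutive tableau whose reduced Cartan characters are $s_1 = 2$ (the two real components of the single free complex function at each stage) and $s_2 = s_3 = \cdots = 0$. I would then verify Cartan's test at the relevant prolongation level, confirming that the dimension of the space of admissible integral elements equals $\sum_k k\,s_k = s_1 = 2$, so that the system is genuinely in involution. The Cartan--Kähler theorem then supplies real-analytic integral manifolds on which the coframing is independent; these are exactly the type-$A$ structures, and by Corollary \ref{cor:RealAnal} working in the real-analytic category costs no generality. Since the last nonzero character is $s_1 = 2$, the general solution depends on $2$ functions of $1$ variable. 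The explicit identification of this freedom with a single holomorphic function of one complex variable (equivalently, $2$ real functions of $1$ real variable) is the content of the Weierstrass formula of Theorem \ref{thm:TypeAWeier}.

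The main obstacle is the involutivity verification. One must confirm that the pattern ``$d^2 = 0 \Rightarrow$ one new free function along $\overline{\eta_1}$'' really does persist at every prolongation, i.e. that the Bianchi identities never degenerate into a constraint that either determines the new torsion function algebraically or forces the torsion to vanish---precisely the kind of hidden second-order condition that obstructs involutivity for the general quadratic system in \S\ref{sssect:NonInvol}. Ruling these obstructions out here, so that the character sequence is exactly $(2,0,0,\dots)$ rather than something smaller, is the delicate point; once it is established, both the local existence and the generality count follow formally from Cartan--Kähler.
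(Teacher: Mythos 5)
Your proposal is correct and takes exactly the paper's route: the paper's proof is a one-line appeal to Cartan's theory of prescribed coframings (citing \cite{BryEDSNotes}), and you have simply supplied the omitted details---your formula for $dB_1$ checks out, the tableau of free derivatives $B_1 \mapsto B_1\overline{\eta_1}$ is the Cauchy--Riemann-type tableau with characters $s_1=2$, $s_2=0$, and the count of $2$ functions of $1$ variable follows. Note only that once Cartan's test passes at the first step ($d^2A_1=0$ absorbs all torsion and introduces exactly the one new function $C_1$ along $\overline{\eta_1}$, so the prolongation has dimension $s_1=2$), involutivity together with Cartan--K\"ahler already rules out obstructions at all higher levels, so the iteration worry in your final paragraph is unnecessary.
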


\begin{proof}
	This is a simple application of Cartan's work on prescribed coframing problems (see \cite{BryEDSNotes} for a modern treatment), so we omit the details.
\end{proof}

\subsubsection{Integrating the structure equations}

The characteristic variety of the prescribed coframing problem associated to the structure equations (\ref{eq:TypeAStructPre}) is given by the pair of complex conjugate points $\eta_1,$ $\overline{\eta_1}.$ This suggests that is may be possible to find an description of the structures of type $A$ in terms of holomorphic data.

The conditions $d \nu_{1\bar{1}} = d \kappa_{2\bar{2}} = 0$ imply that locally there exist real-valued functions $r$ and $s$ on $M$ such that $\nu_{1\bar{1}} = i r,$ $\kappa_{2\bar{2}} = i s.$ Defining complex-valued 1-forms $\nu_1, \theta_1, \theta_2$ and a complex-valued function $W$ on $\mathcal{Q}_1$ by
\begin{equation}
\begin{aligned}
\nu_1 &= e^{-2kr-is} \nu_{1\bar{2}}, & \theta_1 &= e^{kr} \overline{\eta_1}, \\
W &= e^{-kr-2is} A_1, & \theta_2 & = e^{-kr+is} \eta_2, \\
X &= e^{-2kr-2is} B_1, & &
\end{aligned}
\end{equation}
equations (\ref{eq:TypeAStructPre}) imply
\begin{equation}\label{eq:TypeAcplx}
\begin{aligned}
d \nu_1 &= - \kappa_{1\bar{1}} \wedge \nu_1 - W \overline{\nu_1} \wedge \theta_1, \\
d \theta_1 &= \kappa_{1\bar{1}} \wedge \theta_1, \\
d \theta_2 & = 2ik \, \overline{\theta_1 \wedge \nu_1}, \\
d W &= -3W \, \kappa_{1\bar{1}} + X \theta_1.
\end{aligned}
\end{equation}
These are the structure equations for an $\mathrm{S}^1$-structure on the six-dimensional level sets of the function $r$ on $M.$ The forms $\nu_1,$ $\theta_1,$ and $\theta_2$ are semi-basic and are the components of the tautological 1-form for this structure, and the form $\kappa_{1\bar{1}}$ is a connection form. Let $N_c$ denote the level set $\left\lbrace r = c \right\rbrace \subset M.$

On $N_c,$ the distribution $\nu_1 = \theta_2 = 0$ is integrable. Each leaf of the resulting foliation has a metric given by the restriction of $|W|^2 \theta_1 \circ \overline{\theta_1},$ and the equations
\begin{equation}
\begin{aligned}
d \left( W \theta_1 \right) &= - 2 \kappa_{1\bar{1}} \wedge \left( W \theta_1 \right), \\
d \kappa_{1\bar{1}} &= - \left(W \theta_1 \right) \wedge \left(\overline{W \theta_1} \right),
\end{aligned}
\end{equation}
imply that this metric has constant curvature $-4$. This implies that locally there exists a complex coordinate $z_1$ on $N_c$ and a distinguished section of the $\mathrm{S}^1$-structure such that
\begin{equation}
W \theta_1 = \frac{dz_1}{1-|z_1|^2}, \:\:\:\: \kappa_{1\bar{1}} = \frac{1}{2} \frac{\overline{z_1} d z_1 - z_1 d \overline{z_1}}{1-|z_1|^2}.
\end{equation}
The equation
\begin{equation}
d \left( W dz_1 \right) = \frac{3}{2} \frac{Wz_1}{1-|z_1|^2} d \overline{z_1} \wedge dz_1
\end{equation}
implies that
\begin{equation}
W = \frac{h(z_1)}{\left( 1 - |z_1|^2 \right)^{3/2}},
\end{equation}
for some holomorphic function $h(z_1)$. Thus,
\begin{equation}\label{eq:TypeAthet1form}
\theta_1 = \frac{1}{h(z_1)} \left(1 - |z_1|^2 \right)^{1/2} d z_1.
\end{equation}

The $d\nu_1$ equation in (\ref{eq:TypeAcplx}) implies
\begin{equation}
d \left( \frac{i}{\sqrt{1-|z_1|^2}} \left( \nu_1 - z_1 \overline{\nu_1} \right) \right) =0,
\end{equation}
so we may introduce a local complex coordinate $z_2$ with
\begin{equation}\label{eq:TypeAsigform}
\nu_1 = \frac{i}{\sqrt{1-|z_1|^2}} \left( d z_2 - z_1 d \overline{z_2} \right).
\end{equation}

Finally, if we let $g(z_1)$ be a locally defined holomorphic function satisfying $g''(z_1)={1}/{h(z_1)},$ the $d\theta_2$ equation in (\ref{eq:TypeAcplx}) implies
\begin{equation}
d \left( \overline{\theta_2} -2k \left( g'(z_1)dz_2 - \left( z_1 g'(z_1)-g(z_1) \right) d \overline{z_2} \right) \right) = 0,
\end{equation}
so we may introduce a complex coordinate $z_3$ with
\begin{equation}\label{eq:TypeAthet2form}
\overline{\chi} = d z_3 + 2k \left( g'(z_1)dz_2 + \left( z_1 g'(z_1)-g(z_1) \right) d \overline{z_2} \right).
\end{equation}

We have now proven the first part of the following theorem. The second part follows by reversing the steps above.

\begin{thm}\label{thm:TypeAWeier}
	Let $M$ be a 7-manifold with a $\U(2)^+$-structure of type $A$. Then locally there exist complex coordinates $z_1, z_2, z_3,$ a real coordinate $r$, a constant $k$, and a holomorphic function $g(z_1)$ such that the $\G_2$ 3-form $\varphi$ is given by 
	\begin{equation}\label{eq:TypeAG2struct}
	\begin{split}
	\varphi =& \tfrac{i}{2} dr \wedge \left( e^{4k r} \nu_1 \wedge \overline{\nu_1} +  e^{-2kr} \theta_1 \wedge \overline{\theta_1} +  e^{2kr} \theta_2 \wedge \overline{\theta_2} \right)  \\
	&+ \tfrac{1}{2} e^{2kr} \left( \nu_1 \wedge \theta_1 \wedge \theta_2 + \overline{\nu_1 \wedge \theta_1 \wedge \theta_2} \right),
	\end{split}
	\end{equation}
	where $\nu_1, \theta_1,$ and $\theta_2$ are defined in terms of $z_1, z_2, z_3$ and $g(z_1)$ by equations (\ref{eq:TypeAthet1form}), (\ref{eq:TypeAsigform}), and (\ref{eq:TypeAthet2form}) above, where $h(z_1)={1}/{g''(z_1)}.$
	
	Conversely, on  $\R \times \mathbb{C}^{3}$ with coordinates $r, z_1, z_2, z_3$, let $k$ be a constant and $g(z_1)$ be a meromorphic function. Define $\nu_1, \theta_1,$ and $\theta_2$ using the formulas above. Let $\Sigma \subset \mathbb{C}$ be the subset of the unit disc where $g''(z_1)$ has no zeroes or poles.  Then (\ref{eq:TypeAG2struct}) defines the $\G_2$-structure associated to a $\U(2)^+$-structure of type $A$ on $\R \times \Sigma \times \mathbb{C}^2$.
\end{thm}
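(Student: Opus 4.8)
The plan is to treat the theorem as two linked assertions: a local normal-form (forward) statement and an explicit construction (converse) statement, with the integration carried out in the preceding subsection supplying the forward direction. First I would record that for a type-$A$ structure the equations (\ref{eq:StructEqtypeA}) are in force, and that the Bianchi identities $d^2\eta = d^2\nu = 0$ produce the purely algebraic constraint (\ref{eq:TypeARestrict}) on $A$; by the preceding lemma this says that $\mathsf{A}$ has a quadruple linear factor pointwise. Passing to the $\mathrm{T}^2$-subbundle $\mathcal{Q}_1$ on which $\mathsf{A} = \eta_1^4\,\mathsf{L}$, I would run $d^2 = 0$ again, first on $\kappa$ (which forces $A_{11111} = 0$ or $A_{11112} = 0$) and then on the surviving torsion components, to sharpen the quadruple factor to a \emph{quintuple} factor, i.e. $A_{11112} = 0$. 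At this point the structure equations collapse to (\ref{eq:TypeAStructPre}), and the two closed forms $\nu_{1\bar 1}$ and $\kappa_{2\bar 2}$ give the functions $r$ and $s$; the rescaling introducing $\nu_1,\theta_1,\theta_2,W,X$ converts (\ref{eq:TypeAStructPre}) into the holomorphic-looking system (\ref{eq:TypeAcplx}).

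The geometric heart of the forward direction is the recognition of (\ref{eq:TypeAcplx}) as the structure equations of an $\mathrm{S}^1$-structure on the six-dimensional level sets $N_c = \{r = c\}$. Here I would use that the distribution $\nu_1 = \theta_2 = 0$ is integrable with leaves carrying the metric $|W|^2\,\theta_1\circ\overline{\theta_1}$ of constant curvature $-4$; this forces $W\theta_1$ and $\kappa_{1\bar 1}$ into the standard hyperbolic-disc normal form in a complex coordinate $z_1$, whereupon the $dW$ equation yields $W = h(z_1)/(1-|z_1|^2)^{3/2}$ for a holomorphic $h$ and hence (\ref{eq:TypeAthet1form}). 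Integrating the $d\nu_1$ and $d\theta_2$ equations of (\ref{eq:TypeAcplx}) in succession introduces the coordinates $z_2$ and $z_3$ and produces (\ref{eq:TypeAsigform}) and (\ref{eq:TypeAthet2form}); substituting these into the standard expression for $\varphi$ on $\mathcal{Q}$ gives (\ref{eq:TypeAG2struct}), with $g$ a holomorphic primitive satisfying $g'' = 1/h$.

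For the converse I would reverse this chain. Given $k$ and a meromorphic $g$, define $\nu_1,\theta_1,\theta_2$ on $\R\times\Sigma\times\C^2$ by the stated formulas, taking $\Sigma$ to be the locus in the unit disc where $g'' = 1/h$ is finite and nonzero; this is exactly the condition needed for $\theta_1$ to be a well-defined nonvanishing $(1,0)$-form, so that $dr,\nu_1,\theta_1,\theta_2$ and their conjugates are pointwise linearly independent and (\ref{eq:TypeAG2struct}) is a genuine nondegenerate $\G_2$-structure. A direct computation, using $g''=1/h$ together with the explicit shapes of $\theta_1,\nu_1,\theta_2$ and $\kappa_{1\bar 1}$, verifies that the system (\ref{eq:TypeAcplx}) holds identically. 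Since these equations encode the ERP type-$A$ conditions, differentiating (\ref{eq:TypeAG2struct}) with the aid of (\ref{eq:TypeAcplx}) shows $d\varphi = 0$, and reading off the induced torsion exhibits $S = N = 0$ with $A$ of quintuple-factor type; hence the structure is ERP of type $A$, with the full $\U(2)^+$-structure recovered from $(\nu_1,\theta_1,\theta_2,\kappa_{1\bar 1})$ via Theorem \ref{thm:MoE}.

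I expect the main obstacle to lie in the forward direction, specifically the step forcing the quintuple factor: the iterated applications of $d^2 = 0$ that rule out $A_{11112}\neq 0$ are where the rigidity of the type-$A$ hypothesis is actually extracted, and keeping the bookkeeping of these Bianchi identities correct is far more delicate than the subsequent coordinate integrations, which are essentially mechanical once the constant-curvature-$(-4)$ structure is identified. In the converse the only genuinely nontrivial point is pinning down $\Sigma$ so that nondegeneracy is preserved; the verification that $\varphi$ is closed and ERP is routine once (\ref{eq:TypeAcplx}) has been checked.
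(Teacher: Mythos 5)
Your proposal is correct and follows essentially the same route as the paper: the identical chain of Bianchi-identity reductions on $\mathcal{Q}_1$ (quadruple factor via (\ref{eq:TypeARestrict}), then $d^2\kappa = 0$ and the subsequent identities forcing $A_2 = 0$, hence a quintuple factor), the same collapse to (\ref{eq:TypeAStructPre}) and rescaling to (\ref{eq:TypeAcplx}), the same integration of the constant-curvature-$(-4)$ leaf geometry to produce the coordinates $z_1, z_2, z_3$ and the formulas (\ref{eq:TypeAthet1form})--(\ref{eq:TypeAthet2form}), and the same converse obtained by reversing these steps on the locus $\Sigma$ where $g''$ has no zeroes or poles. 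The paper's proof is exactly this derivation followed by the remark that the second part follows by reversal, so nothing in your outline diverges from it.
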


Theorem \ref{thm:TypeAWeier} may be thought of as a type of Weierstrass representation for the structures of type $A.$

\subsection{Type $N$}\label{ssect:TypeN}

Suppose $\mathcal{Q} \to M$ is a $\U(2)^{+}$-structure of type $N$, i.e. $A$ and $S$ vanish identically on $\mathcal{Q}$. Define complex-valued 1-forms $\pi^{a}_{b}$ and a real-valued 1-form $\rho$ by
\begin{equation}\label{eq:TypeNForms}
\pi^a_b = \kappa_{a\bar{b}} -\tfrac{1}{2} \delta_{a\bar{b}} \kappa_{c\bar{c}} - i k \nu_{a\bar{b}}, \:\:\: i \rho = \tfrac{1}{2} \kappa_{a\bar{a}},
\end{equation}
so that $\pi^a_a = 0$ (summation implied). Then, writing $\theta^a = \theta_a$ and $N^a = N_a,$ the $d\theta$ part of the structure equations (\ref{eq:StructEqU2pl}) becomes
\begin{equation}\label{eq:CartanITypeN}
d \theta^a = - \pi^a_b \wedge \theta^b - i \rho \wedge \theta^a + N^a \overline{\eps_{bc} \theta^b \wedge \theta^c}. 
\end{equation}
The $d \nu$ part of the structure equations (\ref{eq:StructEqU2pl}) together with some of the conditions arising from $d^2 \theta = 0$ imply
\begin{equation}\label{eq:CartanIITypeN}
d \pi^a_b = - \pi^a_c \wedge \pi^c_b
\end{equation}

Equations (\ref{eq:CartanITypeN}) and (\ref{eq:CartanIITypeN}) allow us to reduce the study $\U(2)^+$ structures of type $N$ to a 4-dimensional problem. Consider the action on the total space $\mathcal{Q}$ generated by the vector fields dual to the $\kappa$ and $\nu$ forms. Equation (\ref{eq:CartanIITypeN}) implies that this is a free action of the group $SL(2, \mathbb{C} ) \cdot S^1.$ Denote the 4-dimensional quotient manifold by $X.$ Then (\ref{eq:CartanITypeN}) implies that $\mathcal{Q}$ is the total space of an $\mathrm{SL}(2, \mathbb{C} ) \cdot \mathrm{S}^1$-structure over $X$. An $\mathrm{SL}(2, \mathbb{C} ) \cdot \mathrm{S}^1$-structure on a 4-manifold is equivalent to an almost complex structure and a holomorphic volume form defined up to phase, and the manifold $M$ is then the bundle of compatible metrics over $X,$ with fibers isometric to ${(\mathrm{SL}(2, \mathbb{C} ) \cdot S^1)}/{\mathrm{U}(2)} \cong \mathbb{H}^3.$ Conversely, suppose $\mathcal{Q} \to X$ is an $\mathrm{SL}(2, \mathbb{C} ) \cdot \mathrm{S}^1$-structure over a 4-manifold $X$ together with a connection $\pi, \rho$ satisfying equations (\ref{eq:CartanITypeN}) and (\ref{eq:CartanIITypeN}). Then we may define $M = {\mathcal{Q}}/{\U(2)}$ and consider $\mathcal{Q} \to M$ as a $\mathrm{U}(2)^{+}$-structure of type $N$ over $M$ by reversing the equations in (\ref{eq:TypeNForms}).

We now investigate the consequences of equations (\ref{eq:CartanITypeN}) and (\ref{eq:CartanIITypeN}). The equation $d^2 \theta=0$ implies
\begin{equation}\label{eq:TypeNdNdrho}
\begin{aligned}
d N^a &= -3iN^a \rho - N^b \pi^a_b + w \theta^a + u^a_{\bar{b}} \overline{\theta^b}, \\
d \rho &= i w \overline{\theta^1 \wedge \theta^2} - i \overline{w} \theta^1 \wedge \theta^2 - 4 i \eps_{ab} \overline{\eps_{cd}} N^a \overline{N^c} \theta^b \wedge \overline{\theta^d},
\end{aligned}
\end{equation}
for some complex-valued functions $w$ and $u^a_{\bar{b}}$ on $\mathcal{Q}.$ Next, the equations $d^2 N = 0$ and $d^2 \rho = 0$ imply
\begin{subequations}
\begin{align}
d w &= -2iw \rho + 2 \eps_{ab} u^a_{\bar{c}} \overline{N^c} \theta^b + 6 \eps_{ab} \overline{w} N^a \theta^b + \overline{v_a \theta^a}, \label{eq:TypeNdw} \\
d u^a_{\bar{b}} &= - u^c_{\bar{b}} \pi^a_c  + u^a_{\bar{c}} \overline{\pi^c_b} - 4 i u^a_{\bar{b}} \rho + \overline{v_b} \theta^a + 12 \overline{\eps_{bc}} \eps_{de} N^a N^e \overline{N^c} \theta^d \label{eq:TypeNdu} \\
& + x^{a}_{\bar{b}\bar{c}} \overline{\theta^c} - 2 w N^a \overline{\eps_{bc}} \overline{\theta^c}, \nonumber
\end{align}
\end{subequations}
for some complex-valued functions $v_a$ and $x^a_{\bar{b}\bar{c}} = x^a_{\bar{c}\bar{b}}$ on $\mathcal{Q}$.

Next, the identity $d^2 w \wedge \theta^1 \wedge \theta^2$ implies
\begin{equation}
8 |w|^2 + 3 N^av_a + 3 \overline{N^av_a} + u^a_{\bar{b}} \overline{u^b_{\bar{a}}} = 0.
\end{equation}
This is an algebraic restriction on the 3-jet of a type $N$ structure only revealed after taking four derivatives. Thus, the differential system associated to structures of type $N$ is not involutive.

Due to algebraic difficulties encountered in the prolongation process, we will not investigate the general type $N$ structure further in this article. Instead, we will consider the special case with $w=0$, where we find that the structure equations can be integrated explicitly.

\subsubsection{The case $w=0$}\label{sssect:TypeNwzero}

We now assume that $\mathcal{Q}$ is a $\mathrm{U}(2)^{+}$-structure of type $N$ satisfying the additional condition that $w=0$. Equation (\ref{eq:TypeNdw}) implies that
\begin{equation}
u^a_{\bar{b}} \overline{N^b} = 0, \:\:\:\: a = 1, 2,
\end{equation}
and it follows that there exist complex-valued functions $u^a$ on $\mathcal{Q}$ such that
\begin{equation}\label{eq:TypeNuone}
u^a_{\bar{b}} = 2 \, \overline{\eps_{bc}} u^a \overline{N^c}.
\end{equation}
Substituting equation (\ref{eq:TypeNuone}) into equation (\ref{eq:TypeNdu}) we find
\begin{equation}
\eps_{ab} N^a u^b = 0,
\end{equation}
so there exists a complex-valued function $u$ on $\mathcal{Q}$ with
\begin{equation}
u^a = u \, N^a.
\end{equation}

Equations (\ref{eq:TypeNdNdrho}) now read
\begin{equation}\label{eq:TypeNdNdrhowzero}
\begin{aligned}
d N^a &= -3iN^a \rho - N^b \pi^a_b + 2 u \overline{\eps_{bc}} N^a \overline{N^c} \overline{\theta^b}, \\
d \rho &= - 4 i \eps_{ab} \overline{\eps_{cd}} N^a \overline{N^c} \theta^b \wedge \overline{\theta^d},
\end{aligned}
\end{equation}
and the identity $d^2 N^a = 0$ implies
\begin{equation}\label{eq:TypeNduwzero}
d u = -4iu \rho -2 \left(3 - |u|^2 \right) \eps_{ab} N^a \theta^b - 2 v \overline{\eps_{ab} N^a \theta^b}.
\end{equation} 

\begin{prop}\label{prop:TypeNwzeroexistgen}
	The $\mathrm{U}(2)^+$-structures of type $N$ with $w=0$ exist locally and depend on 2 functions of 1 variable.
\end{prop}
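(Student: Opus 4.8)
The plan is to treat the structure equations (\ref{eq:CartanITypeN}), (\ref{eq:CartanIITypeN}), (\ref{eq:TypeNdNdrhowzero}) and (\ref{eq:TypeNduwzero}) as a prescribed coframing problem and to apply the Cartan--K\"ahler theorem, exactly as for the type $A$ structures in Proposition \ref{prop:typeAexist} (see \cite{BryEDSNotes} for the relevant machinery). The coframe consists of the eleven real $1$-forms packaged in $\theta^a$, the trace-free connection forms $\pi^a_b$, and $\rho$, while the torsion data is carried by the functions $N^a$, $u$, and $v$. All of these structure equations are polynomial in the torsion data, so the associated exterior differential system is real-analytic and Cartan--K\"ahler will apply once involutivity has been verified.

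First I would confirm that the Bianchi identities close up. The identities $d^2\theta^a = d^2\pi^a_b = d^2\rho = d^2 N^a = 0$ are already built into the derivation of (\ref{eq:TypeNdNdrhowzero})--(\ref{eq:TypeNduwzero}), so the only remaining integrability condition is $d^2 u = 0$, and this is where the analysis becomes delicate. Writing $\zeta = \eps_{ab} N^a \theta^b$, so that $du = -4iu\rho - 2(3-|u|^2)\zeta - 2v\,\overline{\zeta}$, the identity $d^2 u = 0$ has the shape $-2\,dv\wedge\overline{\zeta} + \Omega = 0$ for an explicit $2$-form $\Omega$ assembled from the coframe and from $N^a, u, v$. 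The crucial step is to show that $\Omega\wedge\overline{\zeta}=0$ holds identically, for then $dv$ is determined modulo its $\overline{\zeta}$-component and a single new complex torsion function is introduced at the next order; if instead $\Omega\wedge\overline{\zeta}\neq 0$ there would be a hidden algebraic relation and the system would fail to be involutive, precisely the phenomenon that ruled out the general type $N$ structures. I note that the $w=0$ condition already removes part of this obstruction: the quartic term $u^a_{\overline{b}}\overline{u^b_{\overline{a}}}$ appearing in the general-case relation vanishes once $u^a_{\overline{b}} = 2\overline{\eps_{bc}}\,u\,N^a\overline{N^c}$ is substituted, because the contraction $\eps_{ae}N^aN^e$ of an antisymmetric symbol against a symmetric tensor is zero.

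Granting that $dv$ is determined up to a single new function along $\overline{\zeta}$, the tableau of free derivatives is supported on the one complex direction dual to $\overline{\zeta}$, and the characteristic variety of the coframing problem reduces to the complex-conjugate pair of points cut out by $\zeta$ and $\overline{\zeta}$---the same qualitative picture as the pair $\{\eta_1, \overline{\eta_1}\}$ found for type $A$ in \S\ref{ssect:TypeA}. I would then compute Cartan's characters and check that, after at most one prolongation, the system is involutive with last nonvanishing character $s_1 = 2$ and $s_k = 0$ for $k\geq 2$. Cartan--K\"ahler then yields local solutions and certifies that they depend on $2$ functions of $1$ variable, as claimed.

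The main obstacle is the involutivity verification itself: one must establish that no relation beyond those already recorded is generated when the identities $d^2 u = 0$ and the subsequent $d^2 v = 0$ are expanded, in other words that the prolongation tower built from successive $\overline{\zeta}$-derivatives of $v$ closes with exactly the characters above rather than producing a fatal constraint two derivatives later, as happened for the general type $N$ system. Because this is the same kind of structured but bulky computation that was suppressed for Proposition \ref{prop:typeAexist}, I would organize it by isolating, at each order, only the $\overline{\zeta}$-component of the relevant $d^2=0$ identity, since this is the single component capable of producing either a new free function or an obstruction.
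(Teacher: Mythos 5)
Your proposal takes essentially the same route as the paper: the paper's proof of this proposition is a one-line appeal to Cartan's theory of prescribed coframing problems (citing \cite{BryEDSNotes}), with all details omitted, exactly as for Proposition \ref{prop:typeAexist}. Your outline correctly fills in the computation the paper suppresses --- in particular your identification of the characteristic variety as the conjugate pair cut out by $\eps_{ab}N^a\theta^b$ and $\overline{\eps_{ab}N^a\theta^b}$ matches the remark immediately following the proposition, your observation that the quartic obstruction term vanishes under $u^a_{\bar b}=2\overline{\eps_{bc}}\,u\,N^a\overline{N^c}$ (since $\eps_{ac}N^aN^c=0$) is sound, and the characters $s_1=2$, $s_k=0$ for $k\geq 2$ give precisely the claimed generality of $2$ functions of $1$ variable.
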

\begin{proof}
	This is a simple application of Cartan's work on prescribed coframing problems (see \cite{BryEDSNotes} for a modern treatment), so we omit the details.
\end{proof}

The characteristic variety of the system associated to structures of type $N$ with $w=0$ consists two conjugate points, $\eps_{ab} N^a \theta^b$ and $\overline{\eps_{ab} N^a \theta^b}.$ This suggests that it may be possible to describe these structures in terms of holomorphic data.

Motivated by the form of the characteristic variety, define a complex-valued 1-form $\sigma$ and a real-valued 1-form $\alpha$ by
\begin{equation}\label{eq:TypeNsigrhodef}
\begin{aligned}
\sigma &= -2 \eps_{ab} N^a \theta^b , \\
\alpha &= \rho + \tfrac{i}{4} \left( u \overline{\sigma} - \overline{u} \sigma \right).
\end{aligned}
\end{equation}

The structure equations (\ref{eq:CartanITypeN}) and (\ref{eq:TypeNdNdrhowzero}) give that
\begin{align}
d \sigma &= -4 i \alpha \wedge \sigma, \\
d \alpha &= \tfrac{i}{2} \sigma \wedge \overline{\sigma}.
\end{align}
Thus, the distribution defined by the vanishing of $\sigma$ is integrable, and the metric defined by $\sigma \circ \overline{\sigma}$ on the leaf space has constant curvature $-4$. It follows that there exists a complex-valued function $z_1$ and a real-valued function $s$ on $\mathcal{Q}$ such that
\begin{equation}
\sigma = \frac{e^{is} dz_1}{1-|z_1|^2}, \:\:\:\: \alpha = \frac{i}{4} \frac{z_1 d \overline{z_1} - \overline{z_1} d z_1}{1-|z_1|^2} + ds.
\end{equation}
We restrict to the locus where $s=0.$ This corresponds to reducing the $SL(2,\mathbb{C}) \cdot S^1$-structure $\mathcal{Q}$ over $X$ to an $SL(2,\mathbb{C})$-structure $\mathcal{Q}' \subset \mathcal{Q}$. The function $z_1$ is then a complex coordinate on $X.$

Equation (\ref{eq:TypeNduwzero}) implies
\begin{align}
d \left( \overline{u} \sigma \right) = 3 \overline{\sigma} \wedge \sigma,
\end{align}
so if we let $p = {\overline{u}}/({1-|z_1|^2}),$ we find
\begin{align}
\frac{\partial p}{\partial \overline{z_1}} = \frac{3}{\left( 1-|z_1|^2 \right)^2},
\end{align}
which implies
\begin{align}
p = \frac{3}{z_1 \left( 1 - |z_1|^2 \right)} + 4 \, h(z_1),
\end{align}
where $h(z_1)$ is a locally defined holomorphic function (the factor of 4 is included for later convenience). Thus
\begin{align}
\overline{u} = \tfrac{3}{z_1} + 4 h(z_1) \left( 1 - |z_1|^2 \right).
\end{align}
We then have
\begin{align}
\rho =  \frac{i}{4} \frac{3-|z_1|^2}{1-|z_1|^2} \left( \frac{dz_1}{z_1} - \frac{d \overline{z_1}}{\overline{z_1}} \right) + i\left( h(z_1) dz_1 - \overline{h(z_1) dz_1} \right).
\end{align}
Let $f(z_1) = \int h(z_1) dz_1$, and define $g(z_1) = z_1^{\frac{3}{4}} e^{f(z_1)},$ so that $h(z_1) = \frac{g'(z_1)}{g(z_1)} - \frac{3}{4z_1}.$ We now observe that the function $G(z_1)$ defined by
\begin{align}
G(z_1) = \log \frac{g(z_1)^2 |g(z_1)|^2}{\left(1-|z_1|^2 \right)^{\frac{3}{2}}}
\end{align}
satisfies $d(G(z_1)) = u \overline{\sigma} - 3 i \rho$. Note also that the connection $\kappa$ on $\mathcal{Q}'$ is flat. Let $\psi : \mathcal{Q}' \to \mathrm{SL}(2, \mathbb{C})$ integrate $\kappa,$ i.e. $\psi^{-1} d \psi = \kappa$. The map $\psi$ is unique up to left multiplication by an element of $\mathrm{SL}(2, \mathbb{C}).$ By acting by $\psi$ appropriately we may assume that $\kappa=0$ in our structure equations.

Thus
\begin{align}\label{eq:TypeZdzmod}
d \left( \begin{array}{c}
e^{-G(z_1)} N_1 \\
e^{-G(z_1)} N_2
\end{array} \right) = 0,
\end{align}
so there exist constants $k_1,$ and $k_2$ such that $ \left( e^{-G(z_1)} N_1 , e^{-G(z_1)} N_2 \right) = \left( k_1, k_2 \right).$ From the left-multiplication ambiguity in the definition of $\psi$ we get an $\mathrm{SL}(2, \mathbb{C})$ action on $\left( k_1, k_2 \right)$, and we may assume that $\left( k_1, k_2 \right) = \left(0,1 \right)$ (the case where $N=0$ identically having been covered in \S\ref{sssect:BryEx}).

By the definition of $\sigma$ (\ref{eq:TypeNsigrhodef}) we have
\begin{align}
\theta_1 = \frac{\sigma}{N_2} = \frac{e^{G(z_1)}}{1-|z_1|^2} dz_1= \frac{\left( 1 - |z_1|^2 \right)}{g(z_1)^2 |g(z_1)|^2} ds.
\end{align}
The first structure equation (\ref{eq:CartanITypeN}) implies
\begin{align*}
d \left( \begin{array}{c}
\theta_2 \\
\overline{\theta_2}
\end{array} \right) =& - \left( \begin{array}{cc}
-i \, \mathrm{Im} \left( \left( \frac{1}{1-|z_1|^2} + \frac{2g(z_1)}{g'(z_1)} \right) dz_1 \right) & - \frac{g(z_1)^2}{\overline{g(z_1)}^2} \frac{1}{1-|z_1|^2}d\overline{z_1} \\
- \frac{\overline{g(z_1)}^2}{g(z_1)^2} \frac{1}{1-|z_1|^2}d{z_1} & i \, \mathrm{Im} \left( \left( \frac{1}{1-|z_1|^2} + \frac{2g(z_1)}{g'(z_1)} \right) d z_1 \right)
\end{array} \right) \wedge \left( \begin{array}{c}
\theta_2 \\
\overline{\theta_2}
\end{array} \right). \nonumber
\end{align*}
Then
\begin{align}
d \left( \frac{i}{\left( 1 - |z_1|^2 \right)^{\frac{1}{2}}} \left( \frac{\overline{g(z_1)}}{g(z_1)} \theta_2 - \frac{\overline{z_1}g(z_1)}{\overline{g(z_1)}}  \overline{\theta_2} \right) \right) = 0,
\end{align}
and we may introduce a complex-coordinate $z_2$ on $X$ with
\begin{align}
\theta_2 = \left( \frac{i}{\left( 1 - |z_1|^2 \right)^{\frac{1}{2}}} \frac{g(z_1)}{\overline{g(z_1)}} \left( d z_2 - \overline{z_1 dz_2} \right) \right).
\end{align}

We have now proven the first part of the following theorem. The second part follows by reversing the steps above.

\begin{thm}\label{thm:TypeNwzero}
	Let $X$ be a 4-manifold with an $\mathrm{SL}(2, \mathbb{C} ) \cdot \mathrm{S}^1$-structure satisfying equations (\ref{eq:CartanITypeN}) and (\ref{eq:CartanIITypeN}) and the condition $w=0$. Then locally there exist complex coordinates $z_1,z_2$ and a holomorphic function $g(z_1)$ such that
	\begin{align}\label{eq:TypeNthetdefn}
	\theta_1 &= \frac{\left( 1 - |z_1|^2 \right)}{g(z_1)^2 |g(z_1)|^2} dz_1, \\
	\theta_2 &=  \left( \frac{i}{\left( 1 - |z_1|^2 \right)^{\frac{1}{2}}} \frac{g(z_1)}{\overline{g(z_1)}} \left( d z_2 - \overline{z_1 dz_2} \right) \right) \nonumber
	\end{align}
	span the space of $(1,0)$-forms, and the complex volume form is given by $\theta_1 \wedge \theta_2$.
	
	Conversely, on  $\mathbb{C}^{2}$ with coordinates $z_1, z_2$, let $g(z_1)$ be a holomorphic function. Define $\theta_1,$ and $\theta_2$ by (\ref{eq:TypeNthetdefn}), and let $\Sigma$ denote the subset of the unit disc in $\mathbb{C}$ where $g(s)$ has no zeroes or poles.  Then the forms $\theta_1$ and  $\theta_2$ define an $SL(2, \mathbb{C} ) \cdot S^1$-structure satisfying equations (\ref{eq:CartanITypeN}) and (\ref{eq:CartanIITypeN}) and $w=0$ on $\Sigma \times \mathbb{C}.$
\end{thm}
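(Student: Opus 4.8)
My plan is to organize the lengthy integration carried out in the paragraphs preceding the statement into the two halves of the theorem: the forward direction is the assembly of that computation into a normal form, and the converse is a direct verification that the normal form satisfies the structure equations. For the forward direction I would first extract the algebraic consequences of the hypothesis $w=0$: equation (\ref{eq:TypeNdw}) forces $u^a_{\bar b}\overline{N^b}=0$, hence $u^a_{\bar b}=2\overline{\eps_{bc}}u^a\overline{N^c}$, and substituting into (\ref{eq:TypeNdu}) gives $u^a=u\,N^a$ for a single scalar $u$. This collapses all the second-order torsion data to the pair $(N^a,u)$, whose differentials are governed by (\ref{eq:TypeNdNdrhowzero}) and (\ref{eq:TypeNduwzero}). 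The point of this reduction is that it makes the system small enough to integrate by hand.

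The heart of the forward direction is the successive introduction of coordinates by exploiting integrability. I would define $\sigma$ and $\alpha$ as in (\ref{eq:TypeNsigrhodef}) and establish the closed subsystem $d\sigma=-4i\,\alpha\wedge\sigma$, $d\alpha=\tfrac{i}{2}\sigma\wedge\overline{\sigma}$. Integrability of $\sigma=0$ together with the observation that $\sigma\circ\overline\sigma$ descends to a constant-curvature $-4$ metric on the leaf space produces the first complex coordinate $z_1$ via the disc model; restricting to the locus $s=0$ reduces the $\mathrm{SL}(2,\mathbb{C})\cdot\mathrm{S}^1$-structure to an $\mathrm{SL}(2,\mathbb{C})$-structure on which $z_1$ is a genuine coordinate on $X$. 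I would then read off from (\ref{eq:TypeNduwzero}) that $d(\overline u\,\sigma)=3\,\overline{\sigma}\wedge\sigma$, which is an inhomogeneous $\overline\partial$-equation for $p=\overline u/(1-|z_1|^2)$; solving it yields a holomorphic integration constant $h(z_1)$, which I repackage as $g(z_1)=z_1^{3/4}e^{f(z_1)}$ with $f'=h$. Finally, using that the $\mathrm{SL}(2,\mathbb{C})$-connection $\kappa$ is flat (a consequence of (\ref{eq:CartanIITypeN})), I would gauge $\kappa=0$, conclude $e^{-G}N^a$ is constant, and normalize it to $(0,1)$ (the degenerate case $N\equiv 0$ being Bryant's example from \S\ref{sssect:BryEx}). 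This immediately gives $\theta_1=\sigma/N_2$ in the stated form, and the first structure equation (\ref{eq:CartanITypeN}) then exhibits an explicit closed combination of $\theta_2$ and $\overline{\theta_2}$, producing the coordinate $z_2$ and the formula for $\theta_2$; the normalization of $N^a$ ensures $\theta_1\wedge\theta_2$ is the complex volume form.

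For the converse I would simply run the last steps backwards: given a holomorphic $g$ on the locus $\Sigma$ where $g$ has no zeros or poles and $|z_1|<1$, define $\theta_1,\theta_2$ by (\ref{eq:TypeNthetdefn}), recover $N^a$, $\rho$, and $u$ from the normal form, set $\kappa=0$, and verify by direct exterior differentiation that (\ref{eq:CartanITypeN}) and (\ref{eq:CartanIITypeN}) hold with $w=0$; the role of $\Sigma$ is precisely to keep the coframe $\{\theta_1,\theta_2\}$ nondegenerate (the denominators $g^2|g|^2$ and $(1-|z_1|^2)^{1/2}$ must not vanish or blow up). The main obstacle, I expect, is the middle stage of the forward direction: solving the $\overline\partial$-equation for $u$ correctly and, above all, bookkeeping the residual gauge freedom—the $\mathrm{SL}(2,\mathbb{C})$ ambiguity in the integrating map $\psi$ and the $s=0$ reduction—so that the coordinates $z_1,z_2$ are genuinely well-defined and the single holomorphic function $g$ is shown to carry \emph{all} of the remaining local moduli. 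Once this is handled, everything else is a routine, if lengthy, exterior-algebra verification.
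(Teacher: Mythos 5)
Your proposal is correct and follows essentially the same route as the paper: the reduction of the torsion data to $(N^a,u)$ via (\ref{eq:TypeNdw})--(\ref{eq:TypeNdu}), the closed subsystem for $\sigma$ and $\alpha$ with its curvature $-4$ leaf-space metric and $s=0$ reduction, the $\overline{\partial}$-equation for $\overline{u}/(1-|z_1|^2)$ producing the holomorphic datum $h$ repackaged as $g(z_1)=z_1^{3/4}e^{f(z_1)}$, the gauging $\kappa=0$ by flatness, the normalization $(k_1,k_2)=(0,1)$ with $N\equiv 0$ deferred to Bryant's example, and the converse by reversal are all exactly the paper's argument. The gauge-freedom bookkeeping you flag as the main obstacle is handled in the paper precisely as you propose, via the left-translation ambiguity in $\psi$ and the residual $\mathrm{SL}(2,\mathbb{C})$-action on the constants $(k_1,k_2)$.
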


Theorem \ref{thm:TypeNwzero} may be thought of as a type of Weierstrass representation for these structures. The 7-manifold $M$ is recovered as the total space of the bundle of compatible metrics on $X,$ as in the discussion following equation (\ref{eq:CartanIITypeN}).

\subsection{Type $S$}\label{ssect:TypeS}

Suppose $\mathcal{Q} \to M$ is a $\U(2)^+$-structure of type $S$, i.e. $A$ and $N$ vanish identically on $\mathcal{Q}.$ By Remark \ref{rmk:rescale}, we may rescale the structure so that $k = 1/2,$ and we fix this scale for the remainder of this section.

It will be useful to use real notation in this section. Let $1 \leq a, b \ldots \leq 2$ and $1 \leq i,j, \ldots \leq 3$ be indices with the specified ranges, and define functions $S_{aij} = S_{aji}$ by
\begin{small}
\begin{equation*}
\begin{aligned}
S_{111} &= -4 \Re S_{1122}, & S_{211} &= -4 \Im S_{1122}, \\
S_{112} &= -2 \Im S_{1112} -2 \Im S_{1222}, & S_{212} &= 2 \Re S_{1112} + 2 \Re S_{1222}, \\
S_{113} &= - 2 \Re S_{1112} + 2 \Re S_{1222}, & S_{213} &= - 2 \Im S_{1112} + 2 \Im S_{1222}, \\
S_{122} &= \Re S_{1111} + 2 \Re S_{1122} + \Re S_{2222}, & S_{222} &= \Im S_{1111} + 2 \Im S_{1122} + \Im S_{2222}, \\
S_{123} &= - \Im S_{1111} + \Im S_{2222}, & S_{223} &= \Re S_{1111} - \Re S_{2222}, \\
S_{133} &= - \Re S_{1111} + 2 \Re S_{1122} - \Re S_{2222}, & S_{233} &= - \Im S_{1111} + 2 \Im S_{1122} - \Im S_{2222}.
\end{aligned}
\end{equation*}
\end{small}
Note that $S_{aii} = 0$ (summation implied). Next, define 1-forms $\sigma_{ai},$ $\psi_{ij} = -\psi_{ji},$ and $\rho_{ab} = - \rho_{ba}$ by 
\begin{equation}
\begin{aligned}
\psi_{23} &=  - i \left( \kappa_{1\bar{1}} - \kappa_{2\bar{2}} \right), & \sigma_{ai} &= S_{aij} \omega_j, \\
\psi_{31} &= 2 \Re \kappa_{1\bar{2}}, & \rho_{12} &= i \left( \kappa_{1\bar{1}} + \kappa_{2\bar{2}} \right), \\
\psi_{12} &= -2 \Im \kappa_{1\bar{2}}. & &
\end{aligned}
\end{equation}

In this notation, the first structure equations (\ref{eq:StructEqU2pl}) become
\begin{subequations}
	\begin{align}
	d \omega_i &= -\psi_{ij} \wedge \omega_j, \label{eq:TypeSdomo} \\
	d \left( \begin{array}{c}
	\omega_4 \\
	\omega_5 \\
	\omega_6 \\
	\omega_7
	\end{array}
	\right) &= - \mu \wedge \left( \begin{array}{c}
	\omega_4 \\
	\omega_5 \\
	\omega_6 \\
	\omega_7
	\end{array}
	\right), \label{eq:TypeSdomfo}
	\end{align}
\end{subequations}
where $\mu$ is the $4 \times 4$ traceless matrix
\begin{small}
	\begin{equation*}
	\frac{1}{2} \left( \begin {array}{cccc} -\sigma_{{12}}-\sigma_{{23}}-\omega_{{1}} & \sigma_{{13}} - \sigma_{{22}} - \psi_{{23}} + \rho_{{12}} & \sigma_{{21}} + \psi_{{31}} - \omega_{{3}} & \sigma_{{11}} + \psi_{{12}} - \omega_{{2}} \\
	\sigma_{{13}} - \sigma_{{22}} + \psi_{{23}} - \rho_{{12}} & \sigma_{{12}} + \sigma_{{23}} - \omega_{{1}} & -\sigma_{{11}} + \psi_{{12}} - \omega_{{2}} & \sigma_{{21}} - \psi_{{31}} + \omega_{{3}} \\
	\sigma_{{21}} - \psi_{{31}} - \omega_{{3}} & -\sigma_{{11}} -\psi_{{12}} - \omega_{{2}} & \sigma_{{23}} - \sigma_{{12}} + \omega_{{1}} & \sigma_{{22}} + 
	\sigma_{{13}} - \psi_{{23}} - \rho_{{12}} \\
	\sigma_{{11}} - \psi_{{12}} - \omega_{{2}} & \sigma_{{21}} + \psi_{{31}} + \omega_{{3}} & \sigma_{{22}} + \sigma_{{13}} + \psi_{{23}} + \rho_{{12}} & -\sigma_{{23}} + \sigma_{{12}} + \omega_{{1}} \end{array}
	\right).
	\end{equation*}
\end{small}

The vanishing of $d^2 \omega,$ $d^2 \psi,$ and $d^2 \rho$ imply
\begin{equation}\label{eq:TypeScII}
\begin{aligned}
d \rho_{ab} &= - \sigma_{ai} \wedge \sigma_{bi}, \\
d \psi_{ij} &= - \psi_{ik} \wedge \psi_{kj} - \omega_i \wedge \omega_j - \sigma_{ai} \wedge \sigma_{aj}, \\
d \sigma_{ai} &= - \rho_{ab} \wedge \sigma_{bi} - \psi_{ij} \wedge \sigma_{aj}, \\
d S_{aij} &= - S_{bij} \rho_{ab} - S_{aik} \psi_{jk} - S_{ajk} \psi_{ik} + C_{aijk} \omega_{k},
\end{aligned}
\end{equation}
where $C_{aijk}$ are functions on $\mathcal{Q}$ satisfying $C_{aijk} = C_{ajik} = C_{aikj}$ and $C_{aijj} = 0.$

We are now in a position to give an existence and generality result for these structures.

\begin{prop}\label{prop:TypeSexistgen}
	The $\U(2)^+$-structures of type $S$ exist locally and depend on 4 functions of 2 variables.
\end{prop}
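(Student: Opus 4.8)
The plan is to read the first structure equations (\ref{eq:TypeSdomo}), (\ref{eq:TypeSdomfo}) together with (\ref{eq:TypeScII}) as a prescribed coframing problem and to apply the Cartan--K\"ahler machinery in exactly the form used for Propositions \ref{prop:typeAexist} and \ref{prop:TypeNwzeroexistgen} (see \cite{BryEDSNotes}). Concretely, I would build the exterior differential system whose integral manifolds are the graphs of type $S$ structures over the product of $\mathcal{Q}$ with the space of values of the torsion functions $S_{aij}$. The equations for $d\omega_i$, $d\psi_{ij}$, $d\rho_{ab}$, and $d\sigma_{ai}$ in (\ref{eq:TypeScII}) already express everything through first order, so the analysis of existence and generality reduces to studying the last equation of (\ref{eq:TypeScII}), in which the fully symmetric trace-free functions $C_{aijk}$ appear as the free (essential) derivative of the torsion $S$.

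The key structural observation is that in the last equation of (\ref{eq:TypeScII}) the semibasic part of $dS_{aij}$ is $C_{aijk}\omega_k$ with $k$ ranging only over $\{1,2,3\}$. Thus $S$ has nonzero covariant derivative only along the three directions $\omega_1,\omega_2,\omega_3$, so the tableau governing the generality is the inclusion $\mathfrak{A} = \mathrm{Sym}^3_0(\R^3)\otimes\R^2 \subset \mathrm{Hom}\!\left(\R^3, \mathrm{Sym}^2_0(\R^3)\otimes\R^2\right)$, where $\mathrm{Sym}^2_0(\R^3)\otimes\R^2$ (dimension $10$) is the space of values of $S$ and $\mathrm{Sym}^3_0(\R^3)\otimes\R^2$ (dimension $14$) is the space of values of $C$. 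With respect to a generic flag one computes the reduced Cartan characters $(s_1,s_2,s_3)=(10,4,0)$: the map $C\mapsto C(\partial_1)$ surjects onto $\mathrm{Sym}^2_0(\R^3)\otimes\R^2$, giving $s_1=10$, while $C\mapsto(C(\partial_1),C(\partial_2))$ is injective, giving $s_1+s_2=\dim\mathfrak{A}=14$ and hence $s_2=4$, $s_3=0$. I would then check Cartan's test by identifying the first prolongation $\mathfrak{A}^{(1)}=\mathrm{Sym}^4_0(\R^3)\otimes\R^2$, of dimension $18$, and noting that $18 = s_1+2s_2+3s_3$, so $\mathfrak{A}$ is involutive. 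Since the last nonvanishing character is $s_2=4$, the Cartan--K\"ahler theorem yields solutions depending on $4$ functions of $2$ variables.

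The step I expect to be the main obstacle is verifying that $d^2 S_{aij}=0$ imposes \emph{no} algebraic constraint on $C$ beyond its a priori symmetry and trace-freeness, i.e.\ that $C$ is a genuinely free torsion and that one need not pass to a proper subvariety of the torsion space or prolong before the tableau becomes involutive. This is precisely the step that fails for the general ERP structure and for the general type $N$ structure, where hidden integrability conditions emerge only after several prolongations; the content of the proposition is that the additional hypotheses $A=N=0$ make these obstructions vanish. Once this compatibility is confirmed---and since all the structure equations are polynomial, hence real-analytic, so Cartan--K\"ahler applies---the character count above gives the stated existence and generality, and the remaining verifications (surjectivity of the symbol directions and the prolongation dimension) are routine finite-dimensional linear algebra that I would relegate, as in the earlier propositions, to \cite{BryEDSNotes}.
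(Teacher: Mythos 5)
Your proposal is correct and takes essentially the same approach as the paper: the paper's proof likewise treats this as a prescribed coframing problem in the sense of \cite{BryEDSNotes}, with primary invariants $S_{aij}$ and free derivatives $C_{aijk}$, and asserts involutivity of the tableau of free derivatives with characters $s_1=10$, $s_2=4$, $s_k=0$ for $k\geq 3$ --- exactly the values you compute. Your explicit character count and verification of Cartan's test via $\dim \mathfrak{A}^{(1)} = 18 = s_1 + 2s_2$ simply supply the finite-dimensional linear algebra that the paper's one-line proof leaves to the reader.
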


\begin{proof}
	This is an application of Cartan's work on prescribed coframing problems \cite{BryEDSNotes}. The `primary invariants' are the functions $S_{aij}$, while the `free derivatives' are the functions $C_{aijk}$. The tableau of free derivatives is involutive with $s_1 =10, s_2=4,$ and $s_k = 0$ for $k \geq 3.$
\end{proof}

\subsubsection{Maximal submanifolds}

Recall that the $\U(2)^+$-structure $\mathcal{Q} \to M$ always has the property that $M$ is foliated by 4-dimensional leaves of the distribution defined by $\omega_1 = \omega_2 = \omega_3 = 0,$ and that these leaves are coassociative submanifolds of $M.$ A simple computation using equations (\ref{eq:TypeSdomfo}) and (\ref{eq:TypeScII}) shows that the metric $g_{\varphi}$ restricted to these leaves is flat when the $\U(2)^+$-structure is type $S$. Thus, the $\U(2)^+$-structures of type $S$ all carry canonically defined semi-flat coassociative fibrations. Baraglia \cite{BaragSemi} has studied semi-flat coassociative fibrations in general closed $\G_2$-structures, and proven the following result, which provides a link with the theory of space-like submanifolds in pseudo-Riemannian manifolds with indefinite signature (see also recent work of Donaldson \cite{DonaldsonAdiabat17}).

\begin{thm}[\cite{BaragSemi}]\label{thm:Barag}
	Fix a volume form on $\R^4,$ so that $\Lambda^2 \R^4$ is identified with the pseudo-Euclidean space $\R^{3,3}.$ Let $B$ be an oriented 3-manifold and $u : B \to \R^{3,3} = \Lambda^2 \R^4$ be a space-like immersion, and let $c$ be a positive constant. Let $h$ be the pullback metric on $B$ with volume form $\mathrm{vol}_h.$ Let $M = B \times \R^4,$ and define a $\G_2$-structure $\varphi$ on $M$ by
	\begin{equation}
	\varphi = c \, \mathrm{vol}_h + d u,
	\end{equation}
	where $u$ is considered as a 2-form on $M.$ Then $(M, \varphi)$ is a closed $\G_2$-structure with a semi-flat coassociative fibration. Conversely, any closed $\G_2$-structure with semi-flat coassociative fibration is locally of this form.
	
	Furthermore, the $\G_2$-structure $\varphi$ constructed in this way is torsion-free if and only if the immersion $u$ is maximal (meaning that its mean curvature vector vanishes).
\end{thm}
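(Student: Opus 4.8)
The plan is to prove the three assertions—that $\varphi = c\,\mathrm{vol}_h + du$ is a closed $\G_2$-structure, that every semi-flat coassociative closed $\G_2$-structure arises this way, and the maximality criterion—by working in a local trivialisation adapted to the fibre translations and decomposing all forms by horizontal/vertical bidegree. For the forward direction, closedness is immediate: $\mathrm{vol}_h$ is a top-degree form pulled back from the $3$-manifold $B$, hence closed, and $du$ is exact, so $d\varphi = 0$. The content is that $\varphi$ is a positive $3$-form. Writing $u = \sum_{i<j} u_{ij}(y)\,dx^i \wedge dx^j$ in fibre coordinates, one has $du = \sum_a dy^a \wedge \alpha_a$ with $\alpha_a = \partial_{y^a} u \in \Lambda^2\R^4$, these being the images $u_*(\partial_{y^a})$ of the tangent vectors of $B$. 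The space-like hypothesis says precisely that $\alpha_1,\alpha_2,\alpha_3$ span a positive-definite $3$-plane for the wedge pairing, with Gram matrix $h_{ab} = \langle \alpha_a,\alpha_b\rangle$. Choosing an $h$-orthonormal coframe $e^1,e^2,e^3$ on $B$ replaces the $\alpha_a$ by a triple $\Omega_1,\Omega_2,\Omega_3$ of fibre $2$-forms with $\Omega_i \wedge \Omega_j = \delta_{ij}\,\mathrm{vol}_{\R^4}$; this is exactly the condition that the $\Omega_i$ be a self-dual triple for a unique compatible fibre metric $g_V$, and a $g_V$-orthonormal coframe $e^4,\dots,e^7$ brings them to the standard hyperk\"ahler triple $e^{45}+e^{67}$, $e^{46}-e^{57}$, $-(e^{47}+e^{56})$. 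In the coframe $(e^1,\dots,e^7)$ (rescaled to absorb $c>0$) one then has $\varphi = c\,e^{123} + \sum_i e^i \wedge \Omega_i$, which is $\mathrm{GL}^+$-equivalent to the standard $\phi$. Hence $\varphi$ is a genuine $\G_2$-structure; since $\varphi$ has no purely-vertical component, $\varphi|_{\text{fibre}} = 0$ and the fibres $\{b\}\times\R^4$ are coassociative, and translation-invariance is manifest.

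For the converse, I would use the semi-flat structure to trivialise $M \cong B \times \R^4$ so that $\varphi$ is invariant under the fibre translations; all coefficients then depend only on $y \in B$. Coassociativity of the fibres kills the purely-vertical component, leaving $\varphi = \varphi_{3,0} + \varphi_{2,1} + \varphi_{1,2}$ in horizontal/vertical bidegree. Imposing $d\varphi = 0$ separates by bidegree: the $(2,2)$-part forces the $\Lambda^2\R^4$-valued $1$-form $\sum_a \alpha_a\,dy^a$ to be closed, so by the Poincar\'e lemma $\alpha_a = \partial_{y^a} u$ for a map $u : B \to \Lambda^2\R^4 \cong \R^{3,3}$ and $\varphi_{1,2} = du$; the $(3,1)$-part forces the coefficient $2$-forms of $\varphi_{2,1}$ to be closed. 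Positivity of $\varphi$ forces the $\alpha_a$ to span a space-like $3$-plane, so $u$ is a space-like immersion with induced metric $h = u^*\langle\,,\,\rangle$, and it pins $\varphi_{3,0}$ to a multiple of $\mathrm{vol}_h$. Finally, the freedom in the choice of adapted trivialisation (fibrewise volume-preserving affine automorphisms, together with a base reparametrisation) is used to annihilate $\varphi_{2,1}$ and to normalise $\varphi_{3,0} = c\,\mathrm{vol}_h$ with $c$ constant, producing the stated form.

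For the maximality statement, note that since $d\varphi = 0$ already holds, $\varphi$ is torsion-free if and only if it is coclosed, i.e. $d\,{*}_{\varphi}\varphi = 0$. In the adapted coframe above, ${*}_{\varphi}\varphi$ equals the standard $4$-form ${*}_\phi\phi$ with a $y$-dependent coframe, so its fibre-volume term is the $g_V$-volume and its mixed terms pair $2$-forms on $B$ with $2$-forms on the fibre. Differentiating, the surviving contribution comes from the $y$-dependence of $\alpha_a = \partial_{y^a}u$, and contracting with the inverse Gram matrix $h^{ab}$ supplied by the Hodge star produces exactly the trace $\sum_{a,b} h^{ab}\,\partial_{y^a}\partial_{y^b} u = \Delta_h u$. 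For an isometric immersion into flat $\R^{3,3}$ this tension field is the mean curvature vector, which is normal, so $d\,{*}_{\varphi}\varphi = 0$ if and only if $\Delta_h u = 0$, that is, if and only if $u$ is maximal.

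The main obstacle is the normalisation in the converse: isolating the adapted trivialisation that simultaneously removes $\varphi_{2,1}$ and forces $c$ to be genuinely constant, rather than an arbitrary positive function of $y$, is delicate, because closedness alone leaves $\varphi_{3,0}$ proportional to $\mathrm{vol}_h$ with an a priori variable factor; this is precisely where the full strength of the semi-flat hypothesis must be used. The second technical core is the Hodge-star computation identifying $d\,{*}_{\varphi}\varphi$ with the mean curvature vector, which requires carefully tracking the mixed-signature identification $\Lambda^2\R^4 \cong \R^{3,3}$ and the normalisation of the fibre metric $g_V$ relative to the fixed volume form on $\R^4$.
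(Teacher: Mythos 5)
Your forward direction is essentially right (closedness is trivial, and positivity follows from orthonormalising the Gram matrix of the $\alpha_a$ and invoking the standard fact that a triple of fibre $2$-forms with $\Omega_i \wedge \Omega_j$ proportional to $\delta_{ij}$ times a fixed volume is a self-dual triple for a unique compatible metric), and your torsion computation identifying $d \, {*}_{\varphi} \varphi = 0$ with $\Delta_h u = 0$, i.e.\ vanishing mean curvature, is correct in outline; note that the fibre volume form is $y$-independent because the wedge pairing against the fixed volume form on $\R^4$ normalises the self-dual triple, so only the $\mathrm{d}_B({*}_h du)$ term survives. Be aware, though, that the paper does not prove Theorem \ref{thm:Barag} in full: it is quoted from \cite{BaragSemi}, and the paper supplies a proof only of the relevant (type $S$) case inside the proof of Theorem \ref{thm:TypeSCons}, by an entirely different method — the $\mathfrak{so}(3,3)$-valued form (\ref{eq:TypeSgam}) satisfies the Maurer--Cartan equation, Cartan's Theorem \ref{thm:MaurerCartan} integrates it to the adapted frame bundle of a space-like immersion, $S_{aij}$ is identified with the second fundamental form (so $S_{aii}=0$ is maximality), and the exceptional isogeny $\mathrm{SL}(4,\R) \to \SO(3,3)$ produces the flat fibre coordinates via $d\bigl( \tilde{f} \, \omega \bigr) = 0$. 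In that framework the normal form $\varphi = c \, \mathrm{vol}_h + du$ emerges constructively from the structure equations (\ref{eq:TypeScII}), which contain no analogue of the terms your converse has to gauge away.

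The genuine gap is exactly the step you flag as the ``main obstacle,'' and as proposed it does not merely need care — the mechanism you name is insufficient. The only trivialisation changes compatible with translation-invariance of the coefficients are $x \mapsto Ax + t(y)$ with $A$ \emph{constant}: a $y$-dependent linear part introduces $x$-dependent coefficients (equivalently, the Lie derivative of the pulled-back form along the fibre translations picks up $dA^{\mu}_{\nu} \wedge \iota_{\partial_{x^\mu}} \varphi$ terms), and base reparametrisations pull back $u$, $h$ and $\varphi_{2,1}$ covariantly, so they kill nothing. Annihilating $\varphi_{2,1}$ therefore means solving $g_\nu + du_{\mu\nu} \wedge dt^{\mu} = 0$ for $\nu = 1, \dots, 4$: twelve first-order equations on the four functions $t^{\mu}$. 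Pointwise this generically determines the candidate derivatives $\partial_a t^{\mu}$, but the resulting $1$-forms need not be closed, and there is no freedom left to repair the integrability conditions; so ``closed $+$ invariant $+$ coassociative fibres'' alone does not yield the normal form, and the precise content of the semi-flat hypothesis (which your sketch never pins down) must be doing essential work. The constancy of $c$ fails for the same reason: $f(y)\,\mathrm{vol}_h + du$ is closed, invariant and positive for \emph{every} positive function $f$ (rescaling the horizontal coframe absorbs $f$ pointwise without disturbing positivity), and once $\varphi_{2,1} = 0$ is imposed the residual gauge is essentially trivial since $du_{\mu\nu} \wedge dt^{\mu} = 0$ forces $dt = 0$ for an immersion $u$ — so $f$ is an invariant that nothing in your argument makes constant. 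Since the converse is half the theorem, the proposal as written proves the forward implication and the maximality criterion but leaves the structure result unestablished; to close it you would need either Baraglia's actual definition and argument or, as in the paper's treatment of the type-$S$ case, structure equations strong enough to exclude the $(2,1)$-term and fix the scale from the outset.
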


\begin{remark}
	It is not difficult to show that any closed $\G_2$-structure with a semi-flat coassociative fibration has special torsion of positive type (see \S\ref{ssect:SpecTors}). This is interesting in light of the fact that semi-flat coassociative fibrations are preserved by the Laplacian flow \cite{LotLamLap19}.
\end{remark}

In light of Theorem \ref{thm:Barag}, it is natural to ask under what conditions does a space-like immersion $u : B \to \R^{3,3}$ give rise to an ERP closed $\G_2$-structure (or, equivalently, a $\U(2)^+$-structure of type $S$). This is answered by the following theorem, the proof of which will contain a proof of the relevant case of Theorem \ref{thm:Barag}.

\begin{thm}\label{thm:TypeSCons}
	Let $\mathcal{Q} \to M$ be a $\mathrm{U}(2)^+$-structure of type $S.$ Then, up to rescaling, the associated space-like maximal submanifold of $\R^{3,3}$ is a maximal submanifold of the quadric $Q \subset \R^{3,3}$ consisting of vectors of norm $-1$ in $\R^{3,3}.$ Conversely, any maximal submanifold of $Q$ endowed with its pseudo-Riemannian metric of signature $(2,3)$ and constant curvature $-1$ gives rise to a $\mathrm{U}(2)^+$-structure of type $S$ by following the construction of Theorem \ref{thm:Barag}.
\end{thm}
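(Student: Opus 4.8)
The plan is to recognize that the structure equations (\ref{eq:TypeScII}) are, after restricting attention to the right forms, exactly the structure equations of a maximal space-like $3$-submanifold of the quadric $Q$, and then to integrate them by appeal to Cartan's Theorem \ref{thm:MaurerCartan}. Since a type $S$ structure is a closed $\G_2$-structure with semi-flat coassociative fibration, Baraglia's Theorem \ref{thm:Barag} already presents it as $\varphi = c\,\mathrm{vol}_h + du$ for a space-like immersion $u \colon B^3 \to \Lambda^2\R^4 \cong \R^{3,3}$, where $B$ is the leaf space of the coassociative fibration $\omega_1=\omega_2=\omega_3=0$; the task is to locate $u(B)$ inside $Q$ and to identify its second fundamental form.

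First I would observe that among the forms appearing in (\ref{eq:TypeSdomo})--(\ref{eq:TypeScII}), the collection $\omega_1,\omega_2,\omega_3$, $\psi_{ij}$, $\rho_{ab}$, together with the semibasic combinations $\sigma_{ai}=S_{aij}\omega_j$, is closed under exterior differentiation: the right-hand sides of (\ref{eq:TypeScII}) and of $d\omega_i = -\psi_{ij}\wedge\omega_j$ involve only these forms and the functions $S_{aij},C_{aijk}$. These seven forms therefore descend along their common $4$-dimensional (coassociative) kernel to a $7$-dimensional quotient of $\mathcal Q$, which I claim is the adapted orthonormal frame bundle of $B \subset Q \subset \R^{3,3}$. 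Assembling $\omega_i$, $\psi_{ij}$, $\rho_{ab}$ and $\sigma_{ai}$ into a single $\mathfrak{so}(3,3)$-valued $1$-form $\Theta$ --- with $\omega_i$ as the solder forms of $B$, $\psi_{ij}$ the tangential $\mathfrak{so}(3)$-connection, $\rho_{ab}$ the normal $\mathfrak{so}(2)$-connection inside $Q$, and $\sigma_{ai}$ the second fundamental form --- I would check that (\ref{eq:TypeSdomo}) and (\ref{eq:TypeScII}) are precisely the components of the Maurer--Cartan equation $d\Theta=-\Theta\wedge\Theta$ on $\SO(3,3)$. The decisive feature is the term $-\omega_i\wedge\omega_j$ in the equation for $d\psi_{ij}$: this is the Gauss equation for a submanifold of an ambient space of constant curvature $-1$, which is what forces the ambient to be $Q$ rather than flat $\R^{3,3}$. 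The condition $S_{aii}=0$ recorded after (\ref{eq:TypeScII}) says exactly that the trace of the second fundamental form vanishes, so $B$ is maximal in $Q$.

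With the Maurer--Cartan equation in hand, Cartan's Theorem \ref{thm:MaurerCartan}, applied on the universal cover, produces a map $F$ into $\SO(3,3)$ pulling back the Maurer--Cartan form to $\Theta$; the column of $F$ recording the base point then descends to a space-like immersion $B\to Q$ with second fundamental form $S_{aij}$, hence maximal. To finish the forward direction I must reconcile this with Baraglia's $u$: the point is that the ERP condition constrains the mean curvature vector of $u(B)$ in the flat space $\R^{3,3}$ to be radial, i.e.\ parallel to $u-p_0$ for a suitable centre $p_0$, with the umbilicity coefficient dictated by the $-\omega_i\wedge\omega_j$ term. This makes $|u-p_0|^2$ constant, so $u(B)$ lies on a quadric; rescaling the $\G_2$-structure (Remark \ref{rmk:rescale}), which preserves the ERP/type $S$ condition, then normalises the radius so that this quadric is exactly $Q$.

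For the converse I would run the argument backwards: starting from a maximal space-like $B^3\subset Q$, its adapted orthonormal frame bundle carries tautological and connection forms obeying (\ref{eq:TypeSdomo}) and (\ref{eq:TypeScII}) with $S_{aii}=0$, and feeding the inclusion $u\colon B\hookrightarrow Q\subset\R^{3,3}$ into Baraglia's construction yields a closed $\G_2$-structure on $M=B\times\R^4$; computing its torsion shows $A=N=0$, so the structure is of type $S$, and the equivalence with the frame-bundle model follows from the moving-frame Theorem \ref{thm:MoE}. I expect the main obstacle to be the reconciliation step in the forward direction: matching Baraglia's abstract immersion $u$ with the base-point column produced by Cartan's theorem, and in particular verifying that the ERP torsion forces the $\R^{3,3}$-mean curvature of $u(B)$ to point along the position vector with exactly the magnitude placing the image on the unit quadric. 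Tracking the identification $\Lambda^2\R^4\cong\R^{3,3}$ through the formula $\varphi=c\,\mathrm{vol}_h+du$, and splitting the full second fundamental form into its $Q$-tangential part $S_{aij}$ and its radial (umbilic) part, is where the real content lies.
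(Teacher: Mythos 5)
Your core argument is the paper's: the paper assembles exactly your $\Theta$ as the $\mathfrak{so}(3,3)$-valued form $\gamma$ of (\ref{eq:TypeSgam}) from $\omega_i$, $\psi_{ij}$, $\sigma_{ai}$, $\rho_{ab}$, verifies $d\gamma = -\gamma\wedge\gamma$ from (\ref{eq:TypeSdomo}) and (\ref{eq:TypeScII}), applies Cartan's Theorem \ref{thm:MaurerCartan} to get $f\colon\mathcal{Q}(B)\to\SO(3,3)$ with $f^{-1}df=\gamma$, and reads off the space-like immersion $B\to Q=\SO(3,3)/\SO(3,2)$ with second fundamental form $S_{aij}$, so that $S_{aii}=0$ is maximality. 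Where you diverge is the link to Baraglia's picture. The paper does \emph{not} invoke Theorem \ref{thm:Barag} and then reconcile: it reproves the relevant case of Baraglia's theorem inside the same computation, by lifting $f$ through the exceptional isogeny $\mathrm{SL}(4,\R)\to\SO(3,3)$ to $\tilde{f}$ with $\tilde{f}^{-1}d\tilde{f}=\mu$ (the matrix of (\ref{eq:TypeSdomfo})), observing that $\tilde{f}\,(\omega_4,\ldots,\omega_7)^{T}$ is closed, and introducing flat coordinates $x_1,\ldots,x_4$ on the coassociative fibres; the Baraglia normal form $\varphi = c\,\mathrm{vol}_h + du$ then drops out directly, with $u$ manifestly the quadric immersion. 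This buys the paper exactly the step you flag as the main obstacle, at no extra cost.

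Your substitute for that step has a genuine soft spot: the inference ``ERP forces the mean curvature of $u(B)$ in flat $\R^{3,3}$ to be radial, \emph{this makes} $|u-p_0|^2$ constant'' is a non sequitur as written. Radiality of $\vec{H}$ alone does not place a submanifold on a quadric (compare self-shrinkers of mean curvature flow, which satisfy a radial mean-curvature equation without lying on spheres), and the existence of a \emph{constant} centre $p_0$ is precisely what must be proved, not assumed. The correct mechanism is the stronger statement you gesture at in your last sentence: the flat second fundamental form must split as the trace-free $Q$-tangential part $S_{aij}$ plus an exactly umbilic part $g_{ij}\,\nu$ along a unit timelike normal field $\nu$, with the coefficient $1$ coming from the $-\omega_i\wedge\omega_j$ term in $d\psi_{ij}$; the Weingarten equation then gives $d(u+\nu)=0$, so $u+\nu$ is the constant centre and $\langle u-p_0,u-p_0\rangle=-1$ after the rescaling of Remark \ref{rmk:rescale}. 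To extract this splitting from the type-$S$ structure equations you must in any case track the identification $\Lambda^2\R^4\cong\R^{3,3}$ through the $\mathrm{SL}(4,\R)$-action on the fibres --- at which point you have essentially reconstructed the paper's isogeny argument, and invoking Theorem \ref{thm:Barag} as a black box has saved nothing. Your converse direction (adapt frames on a maximal $B\subset Q$ so the Maurer--Cartan form takes the shape (\ref{eq:TypeSgam}), feed the inclusion into the Baraglia construction, check $A=N=0$) matches the paper's one-line reversal and is fine.
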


\begin{proof}
	Let
	\begin{equation*}
	-(e^0)^2 + (e^1)^2 + (e^2)^2 + (e^3)^2 - (e^4)^2 - (e^5)^2
	\end{equation*}
	 define the pseudo-Euclidean metric on $\R^{3,3}$ and let $\mathrm{ASO}(3,3)$ denote the corresponding group of rigid motions. Then $\R^{3,3}$ is the homogeneous space $\mathrm{ASO}(3,3) / \mathrm{SO}(3,3),$ and the quadric $Q$ is the orbit of the vector $e^0$ under the action of $\mathrm{SO}(3,3).$ The stabiliser of $e^0$ is the group $\mathrm{SO}(3,2),$ and $Q$ is thus the symmetric space $\SO(3,3)/\SO(3,2).$
	 
	 Let $\mathcal{Q} \to M$ be a $\mathrm{U}(2)^+$-structure of type $S,$ let the manifold $B$ be a leaf of the foliation defined by $\omega_4 = \ldots = \omega_7 = 0$ on $M,$ and let $\mathcal{Q}(B)$ denote the restriction of $\mathcal{Q}$ to $B.$ Now, equations (\ref{eq:TypeSdomo}) and (\ref{eq:TypeScII}) imply that the $\mathfrak{so}(3,3)$-valued matrix
	 \begin{equation}\label{eq:TypeSgam}
	 \gamma = \left(\begin{array}{ccc}
	 0 & \omega_j & 0 \\
	 \omega_i & \psi_{ij} & \sigma_{bi} \\
	 0 & \sigma_{ai} & \rho_{ab}
	 \end{array}\right)
	 \end{equation}
	 restricted to $\mathcal{Q}(B)$ satisfies the Maurer-Cartan equation $d \gamma = -\gamma \wedge \gamma.$ Thus, by Cartan's Theorem on Maps into Lie Groups (\ref{thm:MaurerCartan}), locally there exists a map $f : \mathcal{Q}(B) \to \SO(3,3)$ such that $f^{-1} df = \gamma.$ The map $f$ identifies $\mathcal{Q}(B)$ with the adapted coframe bundle of a space-like immersion $u: B \to \SO(3,3)/\SO(3,2).$ The tensor $S_{aij}$ is identified with the second fundamental form of $u(B),$ and the condition $S_{aii} = 0$ is equivalent to the vanishing of the mean curvature vector of $u(B).$
	 
	 There is an exceptional isogeny $\mathrm{SL}(4, \R) \to \mathrm{SO}(3,3)$ coming from the action of $\mathrm{SL}(4,\R)$ on $\Lambda^2 \R^4.$ Let $\tilde{f}$ be a (possibly only locally defined) lift of $f$ to $\mathrm{SL}(4, \R).$ This can be arranged so that $\widetilde{f}^{-1} d \tilde{f} = \mu,$ and it follows from equation (\ref{eq:TypeSdomfo}) that
	 	\begin{equation}
	 	d \left( \tilde{f} \left( \begin{array}{c}
	 	\omega_4 \\
	 	\omega_5 \\
	 	\omega_6 \\
	 	\omega_7
	 	\end{array}
	 	\right) \right) = 0,
		\end{equation}
		and we may introduce coordinates $x_1, \ldots, x_4$ so that
		\begin{equation}
		\left( \begin{array}{c}
		\omega_4 \\
		\omega_5 \\
		\omega_6 \\
		\omega_7
		\end{array}
		\right) = \widetilde{f}^{-1} \left( \begin{array}{c}
		d x_1 \\
		d x_2 \\
		d x_3 \\
		d x_4
		\end{array}
		\right).
		\end{equation}
		It follows that the $\G_2$-structure $\varphi$ has the form claimed. This proves the first part of the theorem.
		
		The proof of the converse statement is a straightforward reversal of the above steps. The key point is to adapt frames to the maximal space-like immersion so that the Maurer-Cartan form of $\SO(3,3)$ takes the form (\ref{eq:TypeSgam}).
\end{proof}

\begin{remark}
	Two $\mathrm{U}(2)^+$-structures of type $S$ are locally equivalent if and only if the corresponding space-like immersions are locally equivalent up to rigid motion in $Q$.
\end{remark}

\subsubsection{Examples}\label{sssect:TypeSEgs}

We will now use Theorem \ref{thm:TypeSCons} to give examples of ERP closed $\G_2$-structures, and reinterpret the known examples in this context. The computations in this section will also prove useful in \S\ref{ssect:ERPHomog}.

\begin{example}
	As proven in \S\ref{sssect:BryEx}, Bryant's first example \cite{Bry05} of an ERP closed $\G_2$-structure is the unique local model for $\U(2)^{+}$-structures of type $S$ with $S$ identically zero. Since $S$ represents the second fundamental form of the associated maximal immersion $B \to {\SO(3,3)}/{\SO(3,2)},$ for this example the submanifold $B$ is totally geodesic and thus isometric to hyperbolic 3-space.
\end{example}

\begin{example}\label{eg:GJ}
	After Bryant's example, the next discovered example of an ERP closed $\G_2$-structure was given by Lauret \cite{LauretLap}. It is homogeneous under the action of a solvable Lie group. A calculation shows that Lauret's example is of type $S$ and that it has the interesting property that the roots of the tensor $\mathsf{S}$ (see (\ref{eq:U2plBaseTens})), viewed as a homogeneous polynomial in $\eta_1$ and $\eta_2,$ are arranged in a regular tetrahedron inscribed in the Riemann sphere. In fact, this property uniquely characterises Lauret's example.
	
	\begin{prop}
		Suppose that $\mathcal{Q} \to M$ is a $\U(2)^{+}$-structure of type $S$ such that for all points on $M$ the roots of the polynomial $\mathsf{S}$ (see (\ref{eq:U2plBaseTens})) are arranged on a regular tetrahedron inscribed in the Riemann sphere. Then $M$ is locally equivalent to Lauret's example.
	\end{prop}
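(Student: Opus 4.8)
The plan is to exploit the rigidity of the tetrahedral configuration to reduce the structure group of $\mathcal{Q}$ to a finite group, thereby exhibiting $M$ as a local Lie group whose structure constants are completely determined. First I would reformulate the hypothesis invariant-theoretically. The tensor $\mathsf{S}$ of \eqref{eq:U2plBaseTens} is a binary quartic in $\eta_1, \eta_2$, and its four roots lie on a regular tetrahedron inscribed in the Riemann sphere precisely when $\mathsf{S}$ is \emph{equianharmonic}, i.e. when the quadratic $\mathrm{SL}(2,\C)$-invariant $I(\mathsf{S})$ vanishes while $\mathsf{S} \neq 0$. Since the vanishing of $I$ is a $\U(2)$-invariant algebraic condition on the components $S_{abcd}$, it cuts out the $\U(2)$-orbit closure of the tetrahedral quartics. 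The crucial geometric input is that the regular tetrahedra inscribed in the round sphere form a single $\SO(3)$-orbit whose stabiliser is the finite tetrahedral rotation group $A_4 \subset \SO(3)$ of order $12$.

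Next I would adapt frames. The group $\U(2)$ acts on the projectivised roots through its quotient $\SO(3)$ and on the phase of $\mathsf{S}$ through its central $\mathrm{U}(1)$; only the magnitude $|S|$, an honest function on $M$, cannot be normalised by a frame rotation. Using the $\SO(3)$-action to move the roots into a fixed standard tetrahedral position and the $\mathrm{U}(1)$-action to fix the phase of $\mathsf{S}$, I reduce $\mathcal{Q}$ to a subbundle $\mathcal{Q}' \subset \mathcal{Q}$ with finite structure group $A_4$. Since $A_4$ is $0$-dimensional, $\mathcal{Q}'$ is locally identified with $M$ and all of the forms $\omega_i, \psi_{ij}, \rho_{ab}, \sigma_{ai}$ descend to a genuine coframing of $M$; on $\mathcal{Q}'$ we have $S_{aij} = |S|\,(S_0)_{aij}$ with $(S_0)_{aij}$ a fixed, tetrahedrally symmetric constant tensor.

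With frames adapted, I would differentiate the normalisation. Because the structure group is now discrete, the connection forms $\psi_{ij}$ and $\rho_{ab}$ are semibasic, so the last equation of \eqref{eq:TypeScII} reads $d|S|\,(S_0)_{aij} = |S|\,\bigl[\text{$\mathfrak{u}(2)$-action of $(\psi,\rho)$ on } S_0\bigr] + C_{aijk}\,\omega_k$. As $S_0$ has finite stabiliser, the infinitesimal $\mathfrak{u}(2)$-action on it is injective, so decomposing this identity along the radial direction $(S_0)$, the four orbit directions, and the remaining five transverse directions determines $d|S|$ and the connection forms $\psi,\rho$ in terms of the free derivatives $C_{aijk}$ (see Proposition \ref{prop:TypeSexistgen}), and forces the $A_4$-non-invariant components of $C$ to vanish. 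Iterating this---differentiating the resulting relations and invoking $d^2 = 0$ together with the $A_4$-invariance of all surviving quantities---should force $|S|$ to be constant and $C_{aijk}$ to be a determined constant tensor. At that point the structure equations \eqref{eq:TypeSdomo} and \eqref{eq:TypeScII} become Maurer--Cartan equations with constant coefficients, and Cartan's Theorem \ref{thm:MaurerCartan} identifies $M$ locally with a fixed $7$-dimensional Lie group (the scale having been fixed by $k=1/2$ as in this section). Finally, since Lauret's example is itself of type $S$ with tetrahedrally arranged roots (Example \ref{eg:GJ}), it realises this unique local model, so $M$ is locally equivalent to it.

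The main obstacle is the last part of the penultimate step: proving that the reduction genuinely closes up with constant structure constants. The delicate point is to show, using only the finite tetrahedral symmetry, that the free derivatives $C_{aijk}$ and the magnitude $|S|$ are pinned down rather than varying; this amounts to verifying that the $A_4$-invariant subspaces of the successive prolongation spaces are trivial enough to leave no free functions, which requires a careful (though finite) representation-theoretic bookkeeping of how $A_4$ acts on the torsion and its covariant derivatives. An alternative route, which I would keep in reserve, is to argue through the maximal space-like submanifold of the quadric $Q$ provided by Theorem \ref{thm:TypeSCons}, where the tetrahedral condition constrains the second fundamental form $S$ to an equianharmonic normal form; but the direct frame reduction seems more transparent for establishing the claimed rigidity.
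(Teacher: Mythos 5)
Your proposal follows essentially the same route as the paper's proof: normalise $\mathsf{S}$ to a fixed tetrahedral quartic using the fact that the unit-norm tetrahedral elements of $\mathrm{Sym}^4\C^2$ form a single $\U(2)$-orbit, pass to the resulting principal $\mathrm{A}_4$-subbundle $\mathcal{Q}_1 \subset \mathcal{Q}$ on which the connection forms become semibasic, show the structure equations pin down all remaining quantities, and recognise the resulting constant-coefficient Maurer--Cartan equations as those of the $7$-dimensional solvable Lie group carrying Lauret's structure. The ``main obstacle'' you flag in fact dissolves immediately: on $\mathcal{Q}_1$ the equations (\ref{eq:TypeScII}) directly force $\psi_{ij} = 0$, $\rho_{ab} = 0$, and $r = \tfrac{1}{2}$ in one step, so no iterated prolongation or $\mathrm{A}_4$-representation-theoretic bookkeeping is required.
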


\begin{proof}
	Let $\mathcal{Q} \to M$ be a $\U(2)^{+}$-structure of type $S$ such that for all points on $M$ the roots of the polynomial $\mathsf{S}$ (\ref{eq:U2plBaseTens}) are arranged on a regular tetrahedron inscribed in the Riemann sphere. The elements of the unit sphere in $\mathrm{Sym}^4 \C^2$ satisfying this condition comprise a single $\U(2)$ orbit, and it follows that at each point of $M$ there is a coframe $(\tilde{\nu}, \tilde{\eta})$ for which
	\begin{equation}
	\mathsf{S} = r \left( \tilde{\eta}_1^4 - 2 \sqrt{2} \tilde{\eta}_1 \tilde{\eta}_2^3 \right)
	\end{equation}
	for some positive $r$ on $M$. The collection of all such coframes forms a principal $\mathrm{A}_4$-subbundle $\mathcal{Q}_1$ of $\mathcal{Q},$ and we shall work on this subbundle.
	
	On $\mathcal{Q}_1,$ 
	\begin{equation}
	\left( \begin{array}{ccc}
	\sigma_{11} & \sigma_{12} & \sigma_{13} \\
	\sigma_{21} & \sigma_{22} & \sigma_{23}
	\end{array} \right) = \left( \begin{array}{ccc}
	-\sqrt {2}r\omega_{{3}}&r\omega_{{2}}&-\sqrt {2}r\omega_{{1}}-r\omega_{{3}}\\ -\sqrt {2}r\omega_{{2}}&-\sqrt {2}r\omega_{{1}}+r\omega_{{3}}&r\omega_{{2}}
	\end{array}
	\right),
	\end{equation}
	and the equations (\ref{eq:TypeScII}) imply
	\begin{equation}
	\psi_{ij} = 0, \:\:\:\: \rho_{ab} = 0, \:\:\:\: r = \tfrac{1}{2} 
	\end{equation}
	We are now left with structure equations
	\begin{small}
		\begin{align}
		d \omega _1 = d& \omega_2 = d \omega_3 =0, \label{eq:LauretFlat} \\
		d  \left( \begin{array}{c}
		\omega_4 \\
		\omega_5 \\
		\omega_6 \\
		\omega_7 
		\end{array} \right) =& - \frac{1}{2} \left( \begin{array}{cccc}
		- \omega_{{1}} - \omega_{{2}} & - \omega_{{3}} & - \tfrac{1}{\sqrt{2}} \omega_{{2}} - \omega_{{3}} & - \omega_{{2}} - \tfrac{1}{\sqrt{2}} \omega_{{3}} \\
		- \omega_{{3}} & - \omega_{{1}} + \omega_{{2}} & - \omega_{{2}} + \tfrac{1}{\sqrt{2}} \omega_{{3}} & - \tfrac{1}{\sqrt{2}} \omega_{{2}} + \omega_{{3}} \\
		- \tfrac{1}{\sqrt{2}} \omega_{{2}} - \omega_{{3}} & - \omega_{{2}} + \tfrac{1}{\sqrt{2}} \omega_{{3}} & \omega_{{1}} & - \sqrt{2} \omega_{{1}} \\
		- \omega_{{2}} - \tfrac{1}{\sqrt{2}} \omega_{{3}} & - \tfrac{1}{\sqrt{2}} \omega_{{2}} + \omega_{{3}} & - \sqrt{2} \omega_{{1}} & \omega_{{1}}
		 \end{array} \right) \wedge \left( \begin{array}{c}
		\omega_4 \\
		\omega_5 \\
		\omega_6 \\
		\omega_7 
		\end{array} \right).
		\end{align}
	\end{small}
	These are the structure equations of a 7-dimensional solvable Lie algebra $\mathfrak{g}$. It then straightforward to show that the resulting ERP closed $\G_2$-structure on $\G$ is equivalent to Lauret's example on $\R^3 \ltimes \R^4$. \end{proof}

An integration of the Maurer-Cartan form in this case gives that the maximal submanifold $B \to Q$ is given by
\begin{equation}
\begin{aligned}
\mathbb{R}^3 \to Q , \:\:\: (x,y,z) \mapsto \tfrac{1}{\sqrt{3}} \left( \cosh x, \sinh x, \sinh y, \sinh z, \cosh y, \cosh z \right), \nonumber
\end{aligned}
\end{equation}
which can be thought of as an analogue of the Clifford torus in this setting. The induced metric is flat, as is clear from (\ref{eq:LauretFlat}).
\end{example}

Lauret's example suggests that looking for $\U(2)^{+}$-structures of type $S$ where the second fundamental form $S$ has a non-trivial $\U(2)$-stabiliser may be fruitful. The following examples are of this kind.

\begin{example}(\emph{Quadruple root}) We now suppose that $\mathcal{Q} \to M$ is a $\U(2)^{+}$-structure of type $S$ such that for all points on $M$ the polynomial $\mathsf{S}$ (\ref{eq:U2plBaseTens}) has a quadruple root. Similarly to the previous example, at each point on $M$ we may find a coframe $\left( \tilde{\nu}, \tilde{\eta} \right)$ so that
	\begin{equation}
	\mathsf{S} = (r_1 + i r_2) \tilde{\eta_1}^4,
	\end{equation}
	for some real $r_1, r_2$ on $M.$ The collection of all such coframes forms a principal $\mathrm{T}^2$-subbundle $\mathcal{Q}_1$ of $\mathcal{Q},$ and we shall work on this subbundle.
	
	On $\mathcal{Q}_1,$
	\begin{equation}
	\left( \begin{array}{ccc}
	\sigma_{11} & \sigma_{12} & \sigma_{13} \\
	\sigma_{21} & \sigma_{22} & \sigma_{23}
	\end{array} \right) = \left( \begin{array}{ccc}
	0 & r_{{1}} \omega_{{2}} - r_{{2}} \omega_{{3}} & -r_{{2}} \omega_{{2}} -r_{{1}} \omega_{{3}} \\ 0 & r_{{2}} \omega_{{2}} + r_{{1}} \omega_{{3}} & r_{{1}} \omega_{{2}} - r_{{2}} \omega_{{3}}	
	\end{array}
	\right),
	\end{equation}
	and the equations (\ref{eq:TypeScII}) imply
	\begin{equation}
	\psi_{12} = p_1 \omega_2 - p_2 \omega_3, \:\:\:\: \psi_{31} = -p_2 \omega_2 - p_1 \omega_3,
	\end{equation}
	for some functions $p_1, p_2$ on $\mathcal{Q}_1.$ Using equations (\ref{eq:TypeScII}) again, we have
	\begin{equation}\label{eq:TypeSQuaddrdp}
	\begin{aligned}
	d r_1 &= \left( p_{{1}}r_{{1}}-p_{{2}}r_{{2}} \right) \omega_{{1}}+q_{{1}} \omega_{{2}}-q_{{2}} \omega_{{3}}+2\,r_{{2}}\psi_{{23}}-r_{{2}}\rho_{{12}}, \\
	d r_2 & = \left( p_{{1}}r_{{2}}+p_{{2}}r_{{1}} \right) \omega_{{1}}+q_{{2}}
	\omega_{{2}}+q_{{1}}\omega_{{3}}}-2\,r_{{1}}\psi_{{23}}+r_{{1}}\rho_{{1,2}, \\
	d p _1 &=  \left( {p_{{1}}}^{2}-{p_{{2}}}^{2}-1 \right) \omega_{{1}} +u_{{1}}
	\omega_{{2}}+u_{{2}}\omega_{{3}}, \\
	d p_2 &= 2p_{{1}}p_{{2}} \, \omega_{{1}}-u_2 \omega_{{2}}+ u_1 \omega_{{3}},
	\end{aligned}
	\end{equation}
	for some functions $q_1, q_2, u_1, u_2$ on $\mathcal{Q}_1.$ Using these structure equations, it is possible to show that the maximal space-like submanifolds of $Q$ of this type exist locally and depend on 4 functions of 1 variable. We will not study these examples further, except to make the following observation, which will be used in \S\ref{ssect:ERPHomog}.
	
	\begin{prop}
		Any homogeneous 3-dimensional maximal space-like submanifold of $Q$ such that $S$ has a quadruple root is totally umbilic (i.e. $S=0$).
	\end{prop}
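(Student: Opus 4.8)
The plan is to work on the reduced $\mathrm{T}^2$-bundle $\mathcal{Q}_1$ introduced in this example, using the structure equations (\ref{eq:TypeSQuaddrdp}), and to convert the homogeneity hypothesis into the assertion that every scalar invariant descending to the base submanifold is constant. Concretely, a homogeneous maximal submanifold $B \subset Q$ is the orbit of a subgroup of the isometry group $\SO(3,3)$ of $Q$; this group lifts to act on $\mathcal{Q}_1$ preserving the tautological coframing, hence preserving all of the structure functions $r_1, r_2, p_1, p_2, q_i, u_i$. Because the forms $\omega_1, \omega_2, \omega_3, \psi_{23}, \rho_{12}$ form a coframing of the $5$-manifold $\mathcal{Q}_1$, I can read off coefficients freely; and any structure function that is invariant under the residual $\mathrm{T}^2$-action (equivalently, descends to $B$) is forced to be constant by transitivity.

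The first real step is to decide which of the structure functions descend to $B$. Inspecting (\ref{eq:TypeSQuaddrdp}), the differentials $dp_1$ and $dp_2$ contain no connection-form terms (no $\psi_{23}$ or $\rho_{12}$), so $p_1$ and $p_2$ are basic and therefore constant under the homogeneity hypothesis. By contrast $r_1 + i r_2$ is \emph{not} basic: combining the first two lines of (\ref{eq:TypeSQuaddrdp}) gives $d(r_1 + i r_2) = i (r_1 + i r_2)(\rho_{12} - 2\psi_{23})$ modulo the semibasic forms $\omega_1,\omega_2,\omega_3$, so this quantity carries a genuine $\mathrm{U}(1)$-weight and need not be constant. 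What is basic is its modulus: the $\psi_{23}$- and $\rho_{12}$-terms cancel in $2 r_1\, dr_1 + 2 r_2\, dr_2$, so $r_1^2 + r_2^2$ descends to $B$ and is hence constant.

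The conclusion then follows algebraically. Setting $dp_1 = dp_2 = 0$ and reading off the $\omega_1$-coefficients gives $p_1^2 - p_2^2 = 1$ and $p_1 p_2 = 0$ (while the remaining coefficients give $u_1 = u_2 = 0$); over the reals these force $p_2 = 0$ and $p_1 = \pm 1$. Next, setting $d(r_1^2 + r_2^2) = 0$, a short computation shows its $\omega_1$-coefficient equals $2 p_1 (r_1^2 + r_2^2)$. Since $p_1 = \pm 1 \neq 0$, this forces $r_1^2 + r_2^2 = 0$, i.e. $r_1 = r_2 = 0$. Hence $\mathsf{S} = (r_1 + i r_2)\, \tilde{\eta}_1^4$ vanishes identically, so $S = 0$ and the submanifold is totally umbilic (in fact totally geodesic).

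I expect the only genuine obstacle to be conceptual rather than computational: correctly rendering ``homogeneous'' in the moving-frame language and, in particular, recognizing that $r_1 + i r_2$ transforms with a nonzero weight under the residual torus (so it is not itself forced to be constant), while $p_1, p_2$ and $r_1^2 + r_2^2$ are the true basic invariants to which transitivity applies. Once this bookkeeping is settled, the appearance of the nonvanishing factor $p_1$ in front of $r_1^2 + r_2^2$ makes the vanishing of $S$ immediate.
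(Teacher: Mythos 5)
Your proof is correct and follows essentially the same route as the paper: homogeneity forces the basic structure functions to be constant, and the structure equations (\ref{eq:TypeSQuaddrdp}) then force $r_1 = r_2 = 0$. The only (harmless) difference is that you note $p_1, p_2$ are individually basic and conclude $p_2 = 0$, $p_1 = \pm 1$ exactly, whereas the paper works with the invariant $p_1^2 + p_2^2$ and reaches the same vanishing of $r_1^2 + r_2^2$ via the same nonvanishing factor $p_1$.
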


\begin{proof}
	If $B$ is a homogeneous 3-dimensional maximal submanifold of $Q$ such that $P$ has a quadruple root is totally umbilic, then the functions $r_1^2+r_2^2$ and $p_1^2+p_2^2$ (which are well-defined on $B$) must be constant. Then equations (\ref{eq:TypeSQuaddrdp}) imply that $r_1=r_2=0,$ so that $B$ is totally umbilic.
\end{proof}
\end{example}

\begin{example}\label{eg:M2} (\emph{Triple root antipodal to single root}) We now suppose that $\mathcal{Q} \to M$ is a $\U(2)^{+}$-structure of type $S$ such that for all points on $M$ the polynomial $\mathsf{S}$ (\ref{eq:U2plBaseTens}) has a triple root and an antipodal single root. Similarly to the previous examples, at each point on $M$ we may find a coframe $\left( \tilde{\nu}, \tilde{\eta} \right)$ so that
	\begin{equation}
	\mathsf{S} = r \, \tilde{\eta}_1^3 \tilde{\eta}_2,
	\end{equation}
	for some positive real $r$ on $M.$ The collection of all such coframes forms a principal $\mathrm{S}^1$-subbundle $\mathcal{Q}_1$ of $\mathcal{Q},$ and we shall work on this subbundle.
	
	On $\mathcal{Q}_1,$
	\begin{equation}
	\left( \begin{array}{ccc}
	\sigma_{11} & \sigma_{12} & \sigma_{13} \\
	\sigma_{21} & \sigma_{22} & \sigma_{23}
	\end{array} \right) = \left( \begin{array}{ccc}
	2\,r\omega_{{3}}&0&2\,r\omega_{{1}} \\
	-2\,r\omega_{{2}}&-2\,r\omega_{{1}}&0
	\end{array}
	\right),
	\end{equation}
	and the equations (\ref{eq:TypeScII}) imply
	\begin{equation}
	\psi_{12} = \psi_{{31}} = 0, \:\:\:\: \rho_{12} = \psi_{23}, \:\:\: r= \tfrac{1}{2}.
	\end{equation}
	
	We are left with structure equations
	\begin{small}
	\begin{equation}
	\begin{aligned}
	d \omega_1 & = 0, \\
	d \omega_2 & = - \psi_{23} \wedge \omega_3, \\
	d \omega_3 & = \psi_{23} \wedge \omega_2, \\
	d \psi_{23} & = - \omega_2 \wedge \omega_3, \\
	d  \left( \begin{array}{c}
	\omega_4 \\
	\omega_5 \\
	\omega_6 \\
	\omega_7 
	\end{array} \right) &= - \frac{1}{2} \left( \begin{array}{cccc}
     -\omega_{{1}} & 2\,\omega_{{1}} & -\omega_{{3}}-\omega_{{2}} &-\omega_{{2}}+\omega_{{3}} \\ 
     2\,\omega_{{1}} & -\omega_{{1}} & -\omega_{{3}}-\omega_{{2}} & -\omega_{{2}}+
\omega_{{3}} \\
-\omega_{{3}}-\omega_{{2}} & -\omega_{{3}} -\omega_{{2}} & \omega_{{1}} & -2\,\psi_{{23}} \\
-\omega_{{2}}+\omega_{{3}} & -\omega_{{2}}+\omega_{{3}} & 2\,\psi_{{23}} & \omega_{{1}}
	\end{array} \right) \wedge \left( \begin{array}{c}
	\omega_4 \\
	\omega_5 \\
	\omega_6 \\
	\omega_7 
	\end{array} \right).
	\end{aligned}
	\end{equation}
\end{small}
	These are the structure equations of an 8-dimensional Lie algebra $\mathfrak{g}$ isomorphic to the semi-direct product $\left(\R \times \mathfrak{so}(2,1) \right) \ltimes \left( \R \oplus \R^3 \right),$ where the $\R$-factor acts on $\R \oplus \R^3$ by  $a \cdot v = (-3 a v_1, a v_2, av_3, av_4 ),$ and the $\mathfrak{so}(2,1)$ factor acts trivially on $\R$ and via the standard representation on $\R^3.$  Thus, this ansatz gives rise to a unique (up to scaling) solution which is homogeneous with 8-dimensional symmetry group. The 7-manifold $M$ is given by the quotient $\G / \mathrm{S^1}$ by the $\psi_{23}$-action.
	
	An integration of the Maurer-Cartan form in this case gives that the corresponding maximal submanifold $B \to Q$ is given by
	\begin{equation}
	\begin{aligned}
	\R^2 \times \mathrm{S}^1 &\to Q, \\
	\left(x, y, \theta \right) & \mapsto
	 \left( \cosh \left( x \right) \cosh \left( y
	\right), \cosh \left( x \right) \sinh \left( y \right), -\sinh
	\left( x \right) \sin \left( \theta \right) \cosh \left( y \right), \right. \\
	  & \left. \sinh \left( x \right) \cos \left( \theta \right) \cosh \left( y
	\right), \sinh \left( x \right) \cos \left( \theta \right) \sinh
	\left( y \right), \sinh \left( x \right) \sin \left( \theta \right) 
	\sinh \left( y \right) \right).
	\end{aligned}
	\end{equation}
	The Riemannian metric on $B$ is isometric to $\R \times \mathbb{H}^2.$
\end{example}

\begin{example}\label{eg:M3} (\emph{Antipodal double roots}) We now suppose that $\mathcal{Q} \to M$ is a $\U(2)^{+}$-structure of type $S$ such that for all points on $M$ the polynomial $\mathsf{S}$ (\ref{eq:U2plBaseTens}) has antipodal double roots. Similarly to the previous examples, at each point on $M$ we may find a coframe $\left( \tilde{\nu}, \tilde{\eta} \right)$ so that
	\begin{equation}
	\mathsf{S} = 3 r^3 \, \tilde{\eta}_1^2 \tilde{\eta}^2_2,
	\end{equation}
	for some positive real $r$ on $M$ (we use a cube here to simplify some later equations). The collection of all such coframes forms a principal $\mathrm{S}^1$-subbundle $\mathcal{Q}_1$ of $\mathcal{Q},$ and we shall work on this subbundle.
	
	On $\mathcal{Q}_1,$
	\begin{equation}
	\left( \begin{array}{ccc}
	\sigma_{11} & \sigma_{12} & \sigma_{13} \\
	\sigma_{21} & \sigma_{22} & \sigma_{23}
	\end{array} \right) = \left( \begin{array}{ccc}
	-2\,{r}^{3}\omega_{{1}}&{r}^{3}\omega_{{2}} & {r}^{3}\omega_{{3}} \\
    0 & 0 & 0
	\end{array}
	\right),
	\end{equation}
	and the equations (\ref{eq:TypeScII}) imply
	\begin{equation}\label{eq:DoubRootdr}
	\psi_{12} = s \, \omega_2, \:\:\:\: \psi_{{31}} = -s \, \omega_{{3}}, \:\:\:\: \rho_{12} = 0, \:\:\: d r= r s \, \omega_1,
	\end{equation}
	for some function $s.$ In particular, we see that the associated maximal submanifold $B \to Q$ actually lies in a totally geodesic hypersurface $\SO(3,2)/\SO(3,1) \subset Q.$ Using equations  (\ref{eq:TypeScII}) again, we have
	\begin{equation}\label{eq:DoubRootds}
	d s = \left( 2 r^6 + s^2 - 1 \right) \omega_1. 
	\end{equation}
	The structure equations of the associated maximal submanifold $B \to Q$ now read
	\begin{equation}
	\begin{aligned}
	d \omega_1 &= 0, \\
	d \omega_2 &= -\psi_{23} \wedge \omega_3 - s \omega_1 \wedge \omega_2, \\
	d \omega_3 &= \psi_{23} \wedge \omega_2 - s \omega_1 \wedge \omega_3, \\
	d \psi_{23} &= - \left( r^6 - s^2 + 1 \right) \omega_2 \wedge \omega_3,
	\end{aligned}
	\end{equation}
	and the exterior derivatives of these equations are identities.
	
	Equations (\ref{eq:DoubRootdr}) and (\ref{eq:DoubRootds}) imply that
	\begin{equation}
	d \left( \frac{r^6 - s^2 +1}{r^2} \right) = 0,
	\end{equation}
	so there exists a constant $c$ on $B$ such that $s^2 = r^6 - c r^2 +1.$ Thus, from equations (\ref{eq:DoubRootdr}),
	\begin{equation}
	\omega_1 = \frac{dr}{r\sqrt{r^6 - c r^2 +1}}.
	\end{equation}
	Moreover, setting $\eta_i = r \omega_i$ for $i = 2,3,$ we find
	\begin{equation}
	d \eta_2 = -\psi_{23} \wedge \eta_3, \:\:\: d \eta_3  = \psi_{23} \wedge \eta_3, \:\:\: d \psi_{23} = - c \, \eta_2 \wedge \eta_3,
	\end{equation}
	which are the structure equations for a metric of constant curvature $k.$ Conversely, let $\Sigma$ be one of $\mathbb{S}^2,$ $\R^2,$ or $\mathbb{H}^2,$ with a metric of constant curvature $-c.$ Reversing the steps above and constructing the Maurer-Cartan form $\gamma$ (\ref{eq:TypeSgam}) (which will have all zeroes in its final row and column), gives a maximal spacelike immersion of $\Sigma \times I$ into $\mathrm{SO}(3,2)/\mathrm{SO}(3,1),$ where $I$ is any open interval containing no zeroes of $r^6 - c r^2 +1.$ It can be checked that the resulting maximal submanifold $B$ has a cohomogeneity-one action of one of the groups $\SO(2,1), \mathrm{ASO}(2),$ or $\SO(3),$ depending on the sign of $c.$
	
	There are two special cases of particular interest. If $c = 2^{-2/3} \cdot 3,$ then ${r^6 - c r^2 +1}$ factors as $\left( r^2 - 2^{-1/3} \right)^2 \left(r^2 + 2^{2/3} \right),$ and we see that the immersion of $\mathbb{H}^2 \times \left(0, 2^{-1/6} \right)$ constructed in the above paragraph is complete, because
	\begin{equation}
	\int_{0}^{2^{-1/{5}}} \omega_1 = - \infty, \:\:\:\:\: \int_{2^{-1/{5}}}^{2^{-1/6}} \omega_1 = \infty.
	\end{equation}
	Thus, this example gives rise to a complete  and inhomogeneous ERP closed $\G_2$-structure on $M$.
	
	Next, from equations (\ref{eq:DoubRootdr}) the only possibility for $(r,s)$ to be constant on $B$ is if $r=2^{-1/6}$ and $s=0.$ We then have structure equations
	\begin{small}
	\begin{equation}
	\begin{aligned}
	d \omega_1 &= 0, \\
	d \omega_2 &= -\psi_{23} \wedge \omega_3, \\
	d \omega_3 &= \psi_{23} \wedge \omega_2, \\
	d \psi_{23} &= - \tfrac{3}{2} \omega_2 \wedge \omega_3, \\
	d  \left( \begin{array}{c}
	\omega_4 \\
	\omega_5 \\
	\omega_6 \\
	\omega_7 
	\end{array} \right) &= - \frac{1}{2} \left( \begin{array}{cccc}
	-\tfrac{1}{\sqrt {2}} \omega_{{2}} - \omega_{{1}} & 
	\tfrac{1}{\sqrt {2}} \omega_{{3}} -\psi_{{23}} & -\omega_{{3}} & -\sqrt {2} \omega	_{{1}} - \omega_{{2}} \\
	\tfrac{1}{\sqrt {2}} \omega_{{3}} + \psi_{{23}} & \tfrac{1}{\sqrt {2}} \omega_{{2}} - \omega_{{1}} & \sqrt {2} \omega_{{1}} - \omega_{{2}} & \omega_{{3}} \\
	-\omega_{{3}} &	\sqrt {2} \omega_{{1}} - \omega_{{2}} & \tfrac{1}{\sqrt {2}} \omega_{{2}} + \omega_{{1}} & \tfrac{1}{\sqrt {2}} \omega_{{3}} - \psi_{{23}} \\ 
	-\sqrt {2} \omega_{{1}} - \omega_{{2}} & \omega_{{3}} & \tfrac{1}{\sqrt {2}} \omega_{{3}} + \psi_{{23}} & \tfrac{1}{\sqrt {2}} \omega_{{2}} + \omega_{{1}}
	\end{array} \right) \wedge \left( \begin{array}{c}
	\omega_4 \\
	\omega_5 \\
	\omega_6 \\
	\omega_7 
	\end{array} \right).
	\end{aligned}
	\end{equation}
\end{small}
These are the structure equations of an 8-dimensional Lie algebra $\mathfrak{g}$ isomorphic to the semidirect product  $\left(\R \times \mathfrak{sl}(2,\R) \right) \ltimes \left( \R^2 \oplus \R^2 \right),$ where the $\R$-factor acts on $\R^2 \oplus \R^2$ by  $a \cdot v = (a v_1, a v_2, -a v_3, -a v_4 ),$ and the $\mathfrak{sl}(2,\R)$ factor acts on $\R^2 \oplus \R^2$ as two copies of the standard representation. Thus, this ansatz gives rise to a unique (up to scaling) homogeneous solution, with 8-dimensional symmetry group. The 7-manifold $M$ is given by the quotient $\G / \mathrm{S^1}$ by the $\psi_{23}$-action. An integration of the Maurer-Cartan form in this case gives that the corresponding maximal submanifold $B \to Q$ is given by
	\begin{equation}
	\begin{aligned}
	\R^2 \times \mathrm{S}^1 &\to Q, \\
	\left(x, y, \theta \right) & \mapsto
	\tfrac{1}{6} \left( 2\,\cosh x +4\,\cosh y , 2\sqrt {3} \, \sinh x , 2\sqrt {6} \, \sinh y \sin \theta , \right. \\
	  & \left. 2\sqrt {6} \, \sinh y \cos \theta, -2 \sqrt {2} \left( \cosh x - \cosh y \right), 0 \right).
	\end{aligned}
	\end{equation}
	The Riemannian metric on $B$ is isometric to $\R \times \mathbb{H}^2.$
\end{example}

\subsubsection{Laplacian flow}

Recall from \S\ref{sssect:LapFlow} that the Laplacian flow of an ERP $\U(2)^+$-structure simply rescales the leaves of the foliation determined by $\nu = 0.$

\begin{thm}\label{thm:TypeSLapSol}
	Any $\U(2)^+$-structure of type $S$ is locally a steady Laplace soliton. The universal cover of a $\U(2)^+$-structure of type $S$ is a steady Laplace soliton.
\end{thm}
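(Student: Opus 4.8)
The plan is to combine the description of the Laplacian flow of an ERP $\U(2)^+$-structure obtained in \S\ref{sssect:LapFlow} with the flat coassociative fibration furnished by Theorem \ref{thm:TypeSCons}, and to show that the flow moves $\varphi$ purely by an explicit one-parameter group of fibre dilations. Since evolving by diffeomorphisms alone, with no rescaling of $\varphi$, is exactly the condition that $\varphi$ be a \emph{steady} soliton, this will prove the theorem.

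First I would recall from \S\ref{sssect:LapFlow} that under the Laplacian flow an ERP $\U(2)^+$-structure evolves by $\nu(t) = \nu(0)$ and $\eta(t) = e^{6kt}\eta(0)$, so that in the expression (\ref{eq:U2plLapFlow}) the summand $\omega_{123}$ coming from the $\nu^3$-term is fixed while the mixed $\nu\wedge\overline{\eta}\wedge\eta$-term is rescaled by $e^{12kt}$. In particular $\Delta_\varphi \varphi = d\tau$ alters only the mixed part of $\varphi$.

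Next I would use Theorem \ref{thm:TypeSCons} together with Baraglia's Theorem \ref{thm:Barag} to write the type $S$ structure, locally, as $\varphi = c\,\mathrm{vol}_h + du$ on a product $M = B \times \R^4$ with linear coordinates $x_1, \ldots, x_4$ on the flat coassociative fibres, where $u$ is the immersion $B \to \Lambda^2 \R^4$ viewed as the fibrewise-constant $2$-form $\tfrac{1}{2} u_{ij}\, dx_i \wedge dx_j$ and $\mathrm{vol}_h$ is pulled back from $B$. Under these identifications $\mathrm{vol}_h$ is precisely the $\nu^3$-part and $du$ the mixed part of $\varphi$. Consider the fibre dilation $\Phi_s \colon (b,x) \mapsto (b, e^s x)$. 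Because $\mathrm{vol}_h$ is a pullback from $B$ and $\Phi_s$ is vertical, $\Phi_s^* \mathrm{vol}_h = \mathrm{vol}_h$, while $\Phi_s^* du = e^{2s}\, du$; hence $\Phi_s^* \varphi = c\,\mathrm{vol}_h + e^{2s}\,du$, which matches $\varphi(t)$ exactly when $s = 6kt$. Thus $\varphi(t) = \Phi_{6kt}^* \varphi(0)$, so the flow evolves $\varphi$ by diffeomorphisms alone. Differentiating at $t = 0$ and using the diffeomorphism-naturality of $\Delta_\varphi$ gives $\Delta_\varphi \varphi = \mathcal{L}_X \varphi$ with $X = 6k \sum_i x_i\,\partial_{x_i}$ the Euler field generating $\Phi_s$; as no multiple of $\varphi$ is present this is the steady soliton equation (\ref{eq:Ch2LapSol}) with $c = 0$. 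The local statement follows at once, and for the universal cover one notes that the fibre coordinates $x_1,\dots,x_4$ and the product $B\times\R^4$ are produced \emph{globally} on the simply-connected cover by the Cartan-theoretic construction in the proof of Theorem \ref{thm:TypeSCons} (via Theorem \ref{thm:MaurerCartan}), so that $X$ is a globally defined vector field there.

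The main obstacle is conceptual rather than computational: the real content is that the $\eta$-rescaling produced by the flow can be realized by an honest diffeomorphism and with vanishing constant $c$. Both features are special to type $S$. The realizability rests on the presence of a flat $\R^4$-fibration with globally defined linear fibre coordinates, since for a general ERP $\U(2)^+$-structure the $\nu$-directions need not integrate to such a fibration and the dilation $\Phi_s$ would not exist. The steadiness rests on the exact invariance $\mathcal{L}_X \mathrm{vol}_h = 0$ of the base part, rather than a mere rescaling, which is what forces $c = 0$; verifying this invariance together with $\mathcal{L}_X\,du = 2\,du$, and checking that the fibre coordinates globalize over the universal cover, are the points that require care.
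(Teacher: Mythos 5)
Your proposal is correct and follows essentially the same route as the paper: the paper's proof likewise invokes Theorem \ref{thm:TypeSCons} to realize the type $S$ structure locally as a flat $\R^4$-bundle over $B$, takes $X$ to be the fibre-dilation (Euler) field matching the evolution $\eta(t) = e^{6kt}\eta(0)$ of (\ref{eq:U2plLapFlow}) so that $d\tau = \mathcal{L}_X \varphi$ with no $\varphi$-term (hence steady), and obtains the universal-cover statement from triviality of the $\R^4$-bundle over the universal cover of $B$ via the flat $\mathrm{SL}(4,\R)$-connection $\mu$. Your write-up merely makes explicit the pullback computation $\Phi_s^*\left( c\,\mathrm{vol}_h + du \right) = c\,\mathrm{vol}_h + e^{2s}\,du$ and the identification $s = 6kt$, which the paper leaves implicit.
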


\begin{proof}
	By Theorem \ref{thm:TypeSCons}, any structure $\mathcal{Q} \to M$ of type $S$ is locally equivalent to an $\R^4$-bundle over a 3-manifold $B,$ where the $\R^4$-fibres are flat and leaves of the foliation $\nu = 0.$ Restricting to an open set in $M$ consisting of the $\R^4$-bundle over contractible open set in $B,$ let $X$ be the vector field whose flow has the property that it preserves the $\R^4$-fibres and scales each fiber at speed $\exp(6kt)$. Then from the Laplacian evolution equation (\ref{eq:U2plLapFlow}), $d \tau = \mathcal{L}_X \varphi,$ and $\varphi$ is locally a steady Laplacian soliton.
	
	To see that the universal cover of $M$ is a \emph{bona fide} Laplace soliton, note that the pullback of the $\R^4$-bundle over $B$ to the universal cover of $B$ will be trivial (because $\mu$ defines a flat $\mathrm{SL}(4, \R)$-connection), so the vector field $X$ exists globally.
\end{proof}

\subsection{Classification of homogeneous ERP closed $\G_2$-structures}\label{ssect:ERPHomog}

In this section, we shall use the work of the previous sections to classify the homogeneous ERP closed $\G_2$-structures.

We begin by recalling the classification of Lie groups admitting left-invariant ERP closed $\G_2$-structures due to Lauret--Nicolini \cite{LauNicERPLI20}. They find, up to equivalence, five examples on different completely solvable Lie groups, and we shall follow their notation in labeling them $\G_B,$ $\G_{M1},$ $\G_{M2},$ $\G_{M3},$ and $\G_J.$ Lauret--Nicolini note that $\G_B$ is equivalent to Bryant's example (\S\ref{sssect:BryEx}).

\begin{prop}\label{prop:HomogEgs}
	A left-invariant ERP closed $\G_2$-structure is of type $S.$ Furthermore: \begin{itemize}
		\item $(\G_J, \varphi_J)$ is equivalent to the $\G_2$-structure constructed in Example \ref{eg:GJ}. The $\G_2$-automorphism group of this example has dimension 7.
		\item $(\G_{M2}, \varphi_{M2})$ is equivalent to the homogeneous $\G_2$-structure of Example \ref{eg:M2} constructed on $\left( \left(\R \times \mathrm{SO}(2,1) \right) \ltimes \R^4 \right) / \mathrm{S}^1.$ The $\G_2$-automorphism group of this example has dimension 8.
		\item $(\G_{M3}, \varphi_{M3})$ is equivalent to the homogeneous $\G_2$-structure of Example \ref{eg:M3} constructed on $\left( \left(\R \times \mathrm{SL}(2,\R) \right) \ltimes \R^4 \right) / \mathrm{S}^1.$ The $\G_2$-automorphism group of this example has dimension 8.
		\item The $\G_2$-automorphism group of $\left(\G_{M1}, \varphi_{M1} \right)$ has dimension 7.
	\end{itemize}
\end{prop}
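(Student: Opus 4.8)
The plan is to combine the general identities available for ERP structures with the explicit Lauret--Nicolini structure constants, and then to extract the automorphism groups via the moving frame. The one part that can be handled uniformly is the assertion that every left-invariant ERP closed $\G_2$-structure has special torsion of positive type. Since $g_\varphi$ and $\tau$ are left-invariant, the function $|\tau|^2$ is constant, so $d(|\tau|^2)=0$. Substituting $\lambda=1/6$ into Bryant's identity (\ref{eq:Bry469}) gives a relation of the form $(3\lambda-4)\,d(|\tau|^2) = 7\lambda(2\lambda-1)\,{*}_\varphi \tau^3$ in which both numerical coefficients are nonzero, so $d(|\tau|^2)=0$ forces ${*}_\varphi\tau^3=0$, i.e. $\tau^3=0$. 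As $\varphi$ is not torsion-free we have $\tau\neq 0$, so this is exactly the positive type condition, and the induced $\U(2)^+$-structure $\mathcal{Q}\to M$ of \S\ref{ssect:U2plInduced} together with its invariants $A$, $N$, $S$ is available.

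Because any left-invariant ERP structure is equivalent to one of the five Lauret--Nicolini examples $\G_B,\G_J,\G_{M1},\G_{M2},\G_{M3}$, it then suffices to treat each in turn. For each example I would take the explicit left-invariant coframe, adapt it to a $\U(2)^+$-coframe $(\nu,\eta)$ normalizing $\tau$, and read off $A$, $N$, $S$ from the first structure equation (\ref{eq:StructEqU2pl}). The computation should give $A=N=0$ throughout, which proves the general claim that left-invariant ERP structures are of type $S$ and, for the surviving tensor, records the configuration of roots of the quartic $\mathsf{S}$ in (\ref{eq:U2plBaseTens}): a regular tetrahedron for $\G_J$, a triple root antipodal to a simple root for $\G_{M2}$, and antipodal double roots for $\G_{M3}$. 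Each such configuration comprises a single $\U(2)$-orbit in $\mathrm{Sym}^4\C^2$, so matching it against the normal forms used to build Examples \ref{eg:GJ}, \ref{eg:M2}, and \ref{eg:M3} yields the stated local equivalences; the remaining example $\G_B$ has $S\equiv 0$ and is Bryant's structure by \S\ref{sssect:BryEx}.

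To compute the automorphism dimensions I would reduce $\mathcal{Q}$ to the subbundle $\mathcal{Q}_1$ on which $\mathsf{S}$ is in normal form. This is a principal $\mathrm{G}_0$-bundle over $M$, where $\mathrm{G}_0\subset\U(2)$ is the stabilizer of the normalized $\mathsf{S}$, so $\dim\mathcal{Q}_1 = 7 + \dim\mathrm{G}_0$. On $\mathcal{Q}_1$ the equations (\ref{eq:TypeScII}) close up with all torsion functions constant, so the tautological and connection forms satisfy the Maurer--Cartan equations of a Lie group $\G$ with $\dim\G=\dim\mathcal{Q}_1$; by Theorem \ref{thm:MaurerCartan} the forms integrate to identify $\mathcal{Q}_1$ locally with $\G$, and by Theorem \ref{thm:MoE} the left translations of $\G$ descend to $\G_2$-automorphisms of $(M,\varphi)$, with every automorphism detected in this way. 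Hence $\dim\mathrm{Aut}(M,\varphi)=7+\dim\mathrm{G}_0$. For $\G_J$ the tetrahedral configuration has discrete stabilizer $\mathrm{A}_4$, giving dimension $7$; for $\G_{M2}$ and $\G_{M3}$ the stabilizer of $\mathsf{S}$ contains a circle, giving dimension $8$ (the source of the extra automorphisms not visible in the Lie-group presentation); and for $\G_{M1}$ the computation of $\mathsf{S}$ should produce a configuration with discrete stabilizer, giving dimension $7$.

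The main obstacle is the explicit computation of the adapted coframe and of the torsion functions $A$, $N$, $S$ directly from the Lauret--Nicolini structure constants — in particular correctly normalizing $\tau$ to pass to $\mathcal{Q}$ and then correctly identifying the root configuration of $\mathsf{S}$ in each case. On the conceptual side, the step requiring care is the claim that the symmetry group of $(M,\varphi)$ is exactly the group $\G$ obtained by reduction: I would need to verify that no automorphisms are lost in passing up to $\mathcal{Q}_1$ and that none beyond those captured by the moving frame can be gained, so that the dimension count $\dim\mathrm{Aut}=7+\dim\mathrm{G}_0$ is exact rather than merely a lower bound.
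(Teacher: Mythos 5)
Your proposal is correct and follows essentially the same route as the paper: the positive-type step via Bryant's identity (\ref{eq:Bry469}) is exactly how the paper opens the proof of Theorem \ref{thm:ERPHomogClass}, the type-$S$ verification is the same direct computation from the Lauret--Nicolini structure equations, and your reduction to $\mathcal{Q}_1$ with constant torsion functions followed by Maurer--Cartan integration is precisely the mechanism of Examples \ref{eg:GJ}, \ref{eg:M2}, and \ref{eg:M3}, which the paper's proof invokes together with the isotropy bound $\mathrm{H} \leq \mathrm{Stab}_{\U(2)^+}(\mathsf{S}_p)$ to get the exact dimensions $7$ and $8$.
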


\begin{proof}
	A simple calculation using the structure equations provided by Lauret--Nicolini shows that all of the examples in their classification are of type $S.$ Next, we note that any $\G_2$-automorphism fixing a point $p$ must preserve $\mathsf{S}_p.$ For cases $\G_J$ and $\G_{M1},$ the subgroup fixing $\mathsf{S}_p$ is discrete, so these examples have 7-dimensional symmetry groups. For cases $\G_{M2}$ and $\G_{M3},$ the tensor $\mathsf{S}$ has a triple root antipodal to a single root and antipodal double roots respectively, in both cases $\mathsf{S}_p$ has a 1-dimensional stabiliser. The homogeneous examples of these type are classified in Examples \ref{eg:M2} and \ref{eg:M3}, and the statement about equivalence follows.
\end{proof}

\begin{remark}
	Lauret--Nicolini \cite{LauNicERPLI20} have computed the $\G_2$-automorphism group of $(\G_J, \varphi_J),$ but this result is new in the other cases. In particular, the existence of an additional symmetry in the $\G_{M2}$ and $\G_{M3}$ cases is noteworthy.
\end{remark}

The homogeneous classification does not reveal any new examples.

\pagebreak[3]

\begin{thm}\label{thm:ERPHomogClass}
	An ERP closed $\G_2$-structure $\varphi$ admitting a transitive action of diffeomorphisms preserving $\varphi$ is equivalent to one of the following examples:
	\begin{itemize}
		\item Bryant's example with 11-dimensional symmetry group $\left( \mathrm{S}^1 \cdot \mathrm{SL}(2, \mathbb{C}) \right) \ltimes \C^2$ (see \S\ref{sssect:BryEx}).
		\item Lauret's example on $\G_{J}$ (see Example \ref{eg:GJ}).
		\item Lauret--Nicolini's example on $\G_{M1}.$
		\item The homogeneous $\G_2$-structure of Example \ref{eg:M2} constructed on $\left( \left(\R \times \mathrm{SO}(2,1) \right) \ltimes \R^4 \right) / \mathrm{S}^1.$
		\item The homogeneous $\G_2$-structure of Example \ref{eg:M3} constructed on $\left( \left(\R \times \mathrm{SL}(2,\R) \right) \ltimes \R^4 \right) / \mathrm{S}^1.$
	\end{itemize}
\end{thm}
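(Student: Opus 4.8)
The plan is to combine the special-torsion reduction of \S\ref{ssect:SpecTors} with a stratification by the dimension of the isotropy group. Let $\varphi$ be an ERP closed $\G_2$-structure and let $G$ be a Lie group acting transitively by automorphisms, so $M = G/H$; I may assume $\tau \neq 0$, since a torsion-free homogeneous $\G_2$-structure is Ricci-flat and homogeneous, hence flat. First I would note that $|\tau|^2$ is a $G$-invariant function, so it is constant; substituting this into Bryant's identity (\ref{eq:Bry469}) at $\lambda = 1/6$ gives $*_{\varphi}\tau^3 = 0$, whence $\tau^3 = 0$ and $\varphi$ has special torsion of positive type. Therefore $\varphi$ carries the canonical $\U(2)^+$-structure $\mathcal{Q}\to M$ of \S\ref{ssect:U2plInduced}, on which $G$ acts preserving the tautological form, and the torsion functions $A,S,N$ of (\ref{eq:StructEqU2pl}) are defined with constant norms.

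The heart of the argument, and the step I expect to be the main obstacle, is to show that every such homogeneous structure is of \emph{type $S$}, i.e. that $A = N = 0$. The connected isotropy $H_0$ embeds in $\U(2)^+$ and must fix the values $A_p, S_p, N_p$ at a basepoint. Since $\mathsf{A}, \mathsf{S}, \mathsf{N}$ of (\ref{eq:U2plBaseTens}) take values in the $\U(2)$-irreducibles $\mathrm{Sym}^5\C^2$, $\mathrm{Sym}^4\C^2$, $\C^2$ (suitably twisted), none of which contains a nonzero $\U(2)$-invariant, the extreme case $\dim H = 4$ (so $H_0 = \U(2)^+$) immediately forces $A = S = N = 0$ and reduces to Bryant's example by \S\ref{sssect:BryEx}. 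For smaller isotropy I would exploit the constancy of $|\mathsf{A}|^2$ and $|\mathsf{N}|^2$ together with the third-order integrability conditions obtained from $d^2 = 0$ in the full structure equations (\ref{eq:StructEqU2pl})---the homogeneous analogues of the relations that forced $A_1 = A_2 = 0$ in the type $A$ analysis of \S\ref{ssect:TypeA} and produced the obstruction $8|w|^2 + \cdots = 0$ in the type $N$ analysis of \S\ref{ssect:TypeN}---to conclude $A = N = 0$. The most efficient packaging is likely to treat the discrete-isotropy case on its own: when $\dim H = 0$ the action is simply transitive, so on the universal cover $\varphi$ is a left-invariant ERP structure on a Lie group, and the Lauret--Nicolini classification \cite{LauNicERPLI20} already shows it is one of the five examples, all of type $S$. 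Ruling out $A, N \neq 0$ in the remaining positive-dimensional isotropy cases is the part that requires genuine work.

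Once type $S$ is established, I would invoke Theorem \ref{thm:TypeSCons} to realise $\varphi$ as a homogeneous maximal space-like $3$-submanifold $B \subset Q$, and classify by the $\U(2)$-orbit type of the second fundamental form $\mathsf{S}_p$, which is constant by homogeneity. Viewing $\mathsf{S}_p$ as a binary quartic, the configuration of its four roots on the Riemann sphere under the residual $\SO(3)$-action exhausts the possibilities: $\mathsf{S} = 0$ gives Bryant's totally geodesic case (\S\ref{sssect:BryEx}); a regular tetrahedron gives Lauret's example (Example \ref{eg:GJ}); a quadruple root forces $S = 0$ by the quadruple-root Proposition, again Bryant; a triple root antipodal to a single root gives Example \ref{eg:M2}; and antipodal double roots give Example \ref{eg:M3}. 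A positive-dimensional stabiliser of $\mathsf{S}_p$ in the rotation group forces all roots onto the two poles of a rotation axis, which is exactly what distinguishes the triple-plus-single and double-plus-double antipodal configurations, matching the $8$-dimensional symmetry groups of $\G_{M2}$ and $\G_{M3}$ recorded in Proposition \ref{prop:HomogEgs}; the remaining configurations have discrete stabiliser, hence simply transitive $G$, and so by Lauret--Nicolini reduce to $\G_J$ or $\G_{M1}$. Collecting the cases yields precisely the five structures in the statement, completing the classification.
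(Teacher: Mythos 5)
Your overall architecture is the paper's: you deduce special torsion of positive type from (\ref{eq:Bry469}) and the constancy of $|\tau|^2$, place the isotropy inside $\U(2)^+$ acting on the module $W$ containing $(\mathsf{A}_p,\mathsf{S}_p,\mathsf{N}_p)$, delegate the simply transitive stratum to Lauret--Nicolini, and finish the type $S$ case by the root configuration of the binary quartic $\mathsf{S}$ exactly as in \S\ref{sssect:TypeSEgs} (quadruple root forcing $S=0$, triple-plus-antipodal-single giving Example \ref{eg:M2}, antipodal double roots giving Example \ref{eg:M3}, the tetrahedral and generic configurations having discrete stabiliser). However, the step you yourself flag as the main obstacle --- ruling out $A,N\neq 0$ when the isotropy is positive-dimensional --- is genuinely incomplete as sketched, for two reasons.

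First, you are missing the purity-of-type lemma. Your plan invokes the type $A$ analysis of \S\ref{ssect:TypeA} and the type $N$ analysis of \S\ref{ssect:TypeN}, but the structure equations there, in particular (\ref{eq:TypeAStructPre}), were derived under the hypothesis that the other two torsion tensors vanish identically; with $\mathrm{S}^1$-isotropy you must first exclude mixed types before those equations apply. The paper does this with a short representation-theoretic observation: any element of $W$ stabilised by an $\mathrm{S}^1$-subgroup of $\U(2)^+$ lies in exactly one of the three irreducible summands of $W$, and there are no elements of $W$ whose stabiliser is $\SU(2)$ or $\mathrm{T}^2$ --- the latter also disposing of symmetry groups of dimension $9$ and $10$, a case your stratification passes over silently. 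Second, even granted purity, the mechanisms you cite carry no force: the integrability relations of \S\ref{ssect:TypeA} force only $A_2=0$, not $A_1=0$, and the relation $8|w|^2+3N^av_a+3\overline{N^av_a}+u^a_{\bar{b}}\overline{u^b_{\bar{a}}}=0$ of \S\ref{ssect:TypeN} does not force $N=0$; indeed type $A$ and type $N$ structures with nonvanishing torsion exist in abundance (Proposition \ref{prop:typeAexist}, Proposition \ref{prop:TypeNwzeroexistgen}, Theorems \ref{thm:TypeAWeier} and \ref{thm:TypeNwzero}), so no pointwise algebraic identity can kill them. What actually does the work is first-order and uses homogeneity directly: in (\ref{eq:TypeAStructPre}) the term $-ikA_1\,\nu_{1\bar{1}}$ in $dA_1$ makes $|\mathsf{A}|$ strictly monotone along the $\nu_{1\bar{1}}$-direction unless $A=0$, and in the type $N$ reduction the $\mathrm{SL}(2,\C)$-action of \S\ref{ssect:TypeN} rescales $|\mathsf{N}|$, contradicting invariance. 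You do mention constancy of the norms, so half the correct mechanism is present, but as written it is fused with integrability conditions that cannot deliver the conclusion, and it rests on a purity reduction you never establish.
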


\begin{proof}
	We begin by noting that the equation
	\begin{equation}
	d \left( |\tau|^2  \right)= \tfrac{2}{9} *_{\varphi} \tau^3
	\end{equation}
	implies that a homogeneous ERP closed $\G_2$-structure has special torsion of positive type.
	
	Next, we remark that, due to Lauret--Nicolini's results, we need only consider the case where the $\G_2$-automorphism group has dimension greater than 7. Let $\mathrm{H}$ denote the isotropy group of the $\G_2$-automorphism group. Since any $\G_2$-automorphism must fix the $\U(2)^+$-structure $\mathcal{Q},$ $\mathrm{H}$ is a subgroup of $\U(2)^+.$ Furthermore, fixing a point $p \in M,$ we have
	\begin{equation}
	\mathrm{H} \leq \mathrm{Stab}_{\U(2)^+} \left(\mathsf{A}_p, \mathsf{S}_p, \mathsf{Z}_p \right).
	\end{equation}
	Now, $\left(\mathsf{A}_p, \mathsf{S}_p, \mathsf{Z}_p \right)$ is an element of the $\U(2)$-module
	\begin{equation}
	W = \left( \mathrm{Sym}^5_{\C} \C^2 \otimes  \left( \Lambda^2_{\C} \C^2 \right)^{1/2} \right) \oplus \left( \mathrm{Sym}^4_{\C} \C^2 \otimes  \Lambda^2_{\C} \C^2 \right) \oplus \left( \C^2 \otimes  \left( \Lambda^2_{\C} \C^2 \right)^{3/2} \right).
	\end{equation}
	The only element of $W$ with stabiliser $\U(2)^+$ is $(0,0,0),$ and this corresponds to Bryant's example \S\ref{sssect:BryEx}. There are no elements with stabiliser $\mathrm{SU}(2)$ or $\mathrm{T}^2,$ so there are no ERP $\G_2$-structures with symmetry groups of dimension $10$ or $9.$
	
	Next, it is easily checked that any element of $W$ stabilised by an $\mathrm{S}^1$-subgroup of $\U(2)^+$ takes values in exactly one of the three irreducible constituents of  $W.$ Thus, a homogeneous ERP structure is of type $A,$ $N,$ or $S.$
	
	The structure equations (\ref{eq:TypeAStructPre}) imply that a homogeneous structure of type $A$ has $A=0,$ because otherwise the norm of $\mathsf{A}$ will change along the $\nu_{1\bar{1}}$-direction. A homogeneous structure of type $N$ has $N=0$ because otherwise the $\mathrm{SL}(2,\C)$ action defined in \S\ref{ssect:TypeN} does not fix the norm of $\mathsf{N}.$
	
	Next, the elements of $\mathrm{Sym}^4_{\C} \C^2 \otimes  \Lambda^2_{\C} \C^2$ fixed by an $\mathrm{S}^1$-subgroup are exactly those with a quadruple root, a triple root antipodal to a single root, or antipodal double roots. The homogeneous examples satisfying these conditions have been classified in \S\ref{sssect:TypeSEgs}.
\end{proof}

\subsubsection{A new compact example}\label{sssect:Compact}

The Lie group $\G_{M3}$ in the Lauret--Nicolini classification is not unimodular, so it does not admit a lattice. However, by Proposition \ref{prop:HomogEgs}, the group $\left( \R \times \mathrm{SL}(2, \R) \right) \ltimes \R^4$ acts on this example by $\G_2$-automorphisms and this group does admit cocompact lattices, as we now explain.
	
By the standard theory of arithmetic subgroups of $\mathrm{SL}(2,\R)$ \cite[\S 6.2]{MorrisArith}, the subgroup $\mathrm{SL}(1, \mathbb{H}_{\mathbb{Z}}^{3,2})$ of $\mathrm{SL}(2, \R)$ preserving the lattice in $\R^4 \cong \R^2 \oplus \R^2$ spanned by the vectors
\begin{align}
v_1 = \left(\begin{array}{c}
1 \\ 0 \\ 0 \\1 
\end{array}\right), \:\: v_2 = \left(\begin{array}{c}
\sqrt{3} \\ 0 \\ 0 \\ -\sqrt{3},  
\end{array}\right), \:\: v_3 = \left(\begin{array}{c}
0 \\ 2 \\ 1 \\ 0  
\end{array}\right), \:\: v_4 = \left(\begin{array}{c}
0 \\ -2\sqrt{3} \\ \sqrt{3} \\ 0
\end{array}\right),
\end{align}
is a cocompact lattice in $\mathrm{SL}(2,\R).$ Furthermore, the matrix
\begin{equation}
\begin{aligned}
A &= \operatorname{diag} \left( 2 + \sqrt{3}, 2 + \sqrt{3}, 2 - \sqrt{3}, 2 - \sqrt{3} \right) \\
&= \exp \operatorname{diag} \left( \log (2 + \sqrt{3}), \log (2 + \sqrt{3}), -\log (2 + \sqrt{3}), - \log (2 + \sqrt{3}) \right)
\end{aligned}
\end{equation}
and its inverse $A^{-1}$ preserve the lattice generated by $v_1, \ldots, v_4.$ Let $\mathrm{H}_\mathbb{Z}$ denote the discrete cocompact subgroup of $\R \times \mathrm{SL}(2, \R)$ generated by $A$ and $\mathrm{SL}(1, \mathbb{H}_{\mathbb{Z}}^{3,2}),$ then the subgroup $\Gamma = \mathrm{H}_{\mathbb{Z}} \ltimes \operatorname{span}_{\mathbb{Z}}(v_1, v_2, v_3, v_4)$ of $\left( \R \times \mathrm{SL}(2, \R) \right) \ltimes \R^4$ is a cocompact lattice. The left action of $\Gamma$ on $M = \left( \left(\R \times \mathrm{SL}(2,\R) \right) \ltimes \R^4 \right) / \mathrm{S}^1$ is free and properly discontinuous, and preserves the homogeneous ERP structure $\varphi$ on $M.$ Hence the compact quotient $\bar{M} = \Gamma \backslash M$ admits an ERP $\G_2$-structure that pulls back to $M$ to equal $\varphi.$

\section{Quadratic closed $\G_2$-structures with special torsion of negative type}\label{sect:U2mi}

This section is devoted to the study of $\lambda$-quadratic closed $\G_2$-structures with special torsion of negative type.

\subsection{The induced $\U(2)^-$-structure}\label{ssect:U2miInduced}

Let $(M,\varphi)$ be a $\lambda$-quadratic closed $\G_2$-structure such that $\tau$ has stabiliser everywhere conjugate to $\U(2)^-.$ Since $\tau=0$ identically if and only if $\varphi$ is torsion-free, we will assume that $\tau$ is non-vanishing somewhere and restrict to the open dense subset where $\tau \neq 0.$ We may then adapt frames and define a $\U(2)^-$-structure $\mathcal{Q} \subset \mathcal{B}$ by
\begin{equation}
\mathcal{Q} = \left\lbrace u : T_x M \to V \mid u \in \mathcal{B}_x, \right.
 \left.  u^* \left( -2 e_2 \wedge e_3 + e_4 \wedge e_5 + e_6 \wedge e_7 \right) \: \text{is a multiple of} \: {\tau} \right\rbrace.
\end{equation}

Thus, on $\mathcal{Q},$ the torsion 2-form $\tau$ is given by
\begin{equation}
\tau = 4 f \, \left(-2 \, \omega_2 \wedge \omega_3 + \omega_4 \wedge \omega_5 + \omega_6 \wedge \omega_7 \right),
\end{equation}
for some non-vanishing function $f.$

The action of $\U(2)^{-}$ on the standard representation $V \cong \R^7$ of $\G_2$ is reducible: we have $V \cong \langle e_1 \rangle \oplus \langle e_2, e_3 \rangle \oplus \langle e_4, e_5, e_6, e_7 \rangle \cong \R \oplus \C \oplus \C^2.$ We split the tautological $V$-valued 1-form $\omega$ accordingly as $\omega = \alpha + \nu + \eta.$ Explicitly, define complex-valued 1-forms $\eta_a$ and $\nu$ and a real-valued 1-form $\alpha$ on $\mathcal{Q}$ by
\begin{equation}
\begin{aligned}
\alpha & = \omega_1, & \eta_1 &= \omega_4 + i \omega_5,  \\
\nu & = \omega_2 + i \omega_3, & \eta_2 &= \omega_6 + i \omega_7,
\end{aligned}
\end{equation}
Just as in \S\ref{sect:U2pl}, $1 \leq a, b, c, \ldots \leq 2$ will be indices obeying the complex Einstein summation convention, meaning that any pair of barred and unbarred indices is implicitly summed over, and we make use of the $\eps$-symbol with two such indices, defined to be the unique skew-symmetric symbol satisfying $\eps_{12} = 1/2.$

The natural $\mathfrak{g}_2$-valued connection form $\theta$ also splits. There is a $\U(2)^-$-invariant decomposition
\begin{equation}
\mathfrak{g}_2 = \mathfrak{u}(2)^- \oplus \C \oplus \left(\C^2 \otimes \Lambda^2_\C \C^2 \right) \oplus \C^2,
\end{equation}
and $\theta$ decomposes accordingly as $\theta = \kappa + \xi + \gamma + \sigma.$ Explicitly, define complex-valued 1-forms $\kappa_{a \overline{b}} = - \overline{\kappa_{b \bar{a}}},$ $\xi,$ $\gamma_a,$ and $\sigma_{a}$ by
\begin{equation*}
\begin{aligned}
\kappa_{1\bar{1}} &= -i \theta_{45} , & \gamma_{1} & = -\tfrac{1}{2} \left(\theta_{24}+\theta_{35} \right) - \tfrac{i}{2} \left(\theta_{25} - \theta_{34} \right), \\
\kappa_{1\bar{2}} &= \tfrac{1}{2} \left(\theta_{46} + \theta_{57} \right) - \tfrac{i}{2} \left(\theta_{47} - \theta_{56} \right) , & \gamma_{2} & = -\tfrac{1}{2} \left(\theta_{26} + \theta_{37} \right) - \tfrac{i}{2} \left(\theta_{27} - \theta_{36} \right), \\
\kappa_{2\bar{2}} &= -i \theta_{67} , & \sigma_{1} & = \tfrac{1}{2} \left(\theta_{35} -\theta_{24} \right) - \tfrac{i}{2} \left(\theta_{25} + \theta_{34} \right), \\
\xi & =  \tfrac{1}{2} \left(\theta_{57} - \theta_{46} \right) - \tfrac{i}{2} \left(\theta_{47} + \theta_{56} \right) , & \sigma_{2} & = \tfrac{1}{2} \left(\theta_{37} - \theta_{26} \right) - \tfrac{i}{2} \left(\theta_{27} + \theta_{36} \right).
\end{aligned}
\end{equation*}
The $\mathfrak{u}(2)$-valued 1-form $\kappa$ is the connection form associated to the natural connection on the $\U(2)^-$-structure $\mathcal{Q},$ while the 1-forms $\xi,$ $\gamma,$ and $\sigma$ are are semibasic for the projection $\mathcal{Q} \to M.$

\subsubsection{Structure equations}

The first structure equation on $\mathcal{B}$ (\ref{eq:CartanIG2}) restricted to $\mathcal{Q}$ reads
\begin{equation}\label{eq:U2miStructPre}
\begin{aligned}
d \alpha &= -i \, \xi \wedge \nu + i \, \overline{\xi \wedge \nu} - 2 i \, \overline{\eps_{ab}} \sigma_a \wedge \eta_b + 2 i \, \eps_{ab} \overline{\sigma_a \wedge \eta_b}, \\
d \nu &= \kappa_{a\bar{a}} \wedge \nu - 2 i \, \overline{\xi} \wedge \alpha + \overline{\gamma_a} \wedge \eta_a + \sigma_a \wedge \overline{\eta_a} - 4f \, \alpha \wedge \nu + 2if \, \eps_{ab} \overline{\eta_a \wedge \eta_b}, \\
d \eta_a &= -\kappa_{a \bar{b}} \wedge \eta_b + 4 i \, \eps_{ab} \overline{\sigma_b} \wedge \alpha - \gamma_a \wedge \nu - \sigma_a \wedge \overline{\nu} + 2 \eps_{ab} \xi \wedge \overline{\eta_b} - 2 i f \, \eps_{ab} \overline{\nu \wedge \eta_b}.
\end{aligned}
\end{equation}

On $\mathcal{Q},$ the $\G_2$ 3-form $\varphi,$ 4-form ${*}_{\varphi} \varphi,$ and torsion 2-form $\tau$ are given by
\begin{equation}
\begin{aligned}
\varphi &= \tfrac{i}{2} \alpha \wedge \left(\nu \wedge \overline{\nu} + \eta_a \wedge \overline{\eta_a} \right) + \tfrac{1}{2} \overline{\eps_{ab}} \nu \wedge \eta_a \wedge \eta_b + \tfrac{1}{2} \eps_{ab} \overline{\nu \wedge \eta_a \wedge \eta_b}, \\
{*}_{\varphi} \varphi &= \tfrac{i}{2} \alpha \wedge \left( \overline{\eps_{ab}} \nu \wedge \eta_a \wedge \eta_b - \eps_{ab} \overline{\nu \wedge \eta_a \wedge \eta_b} \right) - \tfrac{1}{4} \eta_a \wedge \overline{\eta_a} \wedge \left(\nu \wedge \overline{\nu} + \tfrac{1}{2} \eta_b \wedge \overline{\eta_b} \right), \\
\tau &= 2if \, \left(-2 \, \nu \wedge \overline{\nu} + \eta_a \wedge \overline{\eta_a} \right).
\end{aligned}
\end{equation}

The $\lambda$-quadratic condition (\ref{eq:LamQuad}) reads
\begin{equation}\label{eq:U2miQuad}
\begin{aligned}
\tfrac{7}{16} d \left( 2if \, \left(-2 \, \nu \wedge \overline{\nu} + \eta_a \wedge \overline{\eta_a} \right) \right) =& i f^2 \, \alpha \wedge \left(\left(10 \lambda +3 \right) \nu \wedge \overline{\nu} - \left(11 \lambda - 3 \right) \eta_a \wedge \overline{\eta_a} \right), \\
& + 3 f^2 \, \left( \lambda +1 \right) \left( \overline{\eps_{ab}} \nu \wedge \eta_a \wedge \eta_b - \eps_{ab} \overline{\nu \wedge \eta_a \wedge \eta_b} \right),
\end{aligned}
\end{equation}
while equation (\ref{eq:Bry469}) gives
\begin{equation}\label{eq:U2miBryA}
d f = - 28 \frac{\lambda \left(2 \lambda -1 \right)}{3 \lambda - 4} f^2 \, \alpha,
\end{equation}
and equation (\ref{eq:Bry466}) gives
\begin{equation}\label{eq:U2miBryB}
d \left(f^3 \nu \wedge \overline{\nu} \wedge \eta_a \wedge \overline{\eta_a} \wedge \eta_b \wedge \overline{\eta_b} \right) = - \tfrac{36}{7} \left( 6 \lambda -1 \right) \alpha \wedge \nu \wedge \overline{\nu} \wedge \eta_a \wedge \overline{\eta_a} \wedge \eta_b \wedge \overline{\eta_b}.
\end{equation}

Equations (\ref{eq:U2miQuad} - \ref{eq:U2miBryB}), together with their exterior derivatives, imply that there exist complex-valued functions $B_a,$ $E_a,$ and $F_{ab} = F_{ba}$ on $\mathcal{Q}$ such that
\begin{equation}\label{eq:U2mintors}
\begin{aligned}
\xi &= \tfrac{4i}{7} \tfrac{17 \lambda^2 + 6 \lambda -11}{3 \lambda -4} f \overline{\nu} + i B_a \overline{\eta_a},\\
\gamma_a &= -\tfrac{2}{3} B_a \alpha + E_a \nu + F_{ab} \eta_b, \\
\sigma_a &= 2 i \eps_{ab} \overline{B_b \nu} + \tfrac{2i}{7} \tfrac{32 \lambda^2 + 57 \lambda - 24}{3 \lambda -4} f \, \eps_{ab} \overline{\eta_b}.
\end{aligned}
\end{equation}
Substituting equation (\ref{eq:U2mintors}) into (\ref{eq:U2miStructPre}) and differentiating, the identity $d^2 \eta = 0$ implies
\begin{equation}
B_a \overline{B_b} - \tfrac{1}{2} \delta_{a\bar{b}} B_c \overline{B_c}, \:\:\:\:\: a, b = 1, 2
\end{equation}
and it follows that $B = 0$ on $\mathcal{Q}.$

The structure equations (\ref{eq:U2miStructPre}) may now be rewritten as
\begin{equation}\label{eq:U2miStruct}
\begin{aligned}
d \alpha &= 0, \\
d \nu &= \kappa_{a\bar{a}} \wedge \nu + \overline{E_a \nu} \wedge \eta_a + \tfrac{4}{7} \tfrac{34 \lambda^2 - 9 \lambda + 6}{3 \lambda - 4} f \, \alpha \wedge \nu - \tfrac{8i}{7} \tfrac{\left(\lambda+1\right) \left(8\lambda+1\right)}{3\lambda-4} f \, \eps_{ab} \overline{\eta_a \wedge \eta_b}, \\
d \eta_a &= - \kappa_{a \bar{b}} \wedge \eta_b + F_{ab} \nu \wedge \overline{\eta_b} + \tfrac{2}{7} \tfrac{32 \lambda^2 + 57 \lambda - 24}{3 \lambda - 4} f \, \alpha \wedge \eta_a + \tfrac{40 i}{7} \tfrac{\left(5 \lambda -2 \right) \left( \lambda+1\right)}{3 \lambda -4} f \, \eps_{ab} \overline{\nu \wedge \eta_b}.
\end{aligned}
\end{equation}

\begin{thm}\label{thm:U2miLamVals}
	Let $\left(M, \varphi \right)$ be a $\lambda$-quadratic closed $\G_2$-structure with special torsion of negative type. Then
	\begin{align}
	\lambda \in \left\lbrace -1, -\tfrac{1}{8}, \tfrac{2}{5}, \tfrac{3}{4} \right\rbrace.
	\end{align}
\end{thm}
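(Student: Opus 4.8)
The plan is to read off the restriction on $\lambda$ entirely from the integrability condition $d^2 = 0$ applied to the structure equations (\ref{eq:U2miStruct}), using the already-established evolution (\ref{eq:U2miBryA}) of the torsion norm $f$. First I would abbreviate the four $\lambda$-dependent structure coefficients in (\ref{eq:U2miStruct}) as $p,q,s,t$ (in the order: coefficient of $\alpha\wedge\nu$ in $d\nu$, of $\eps_{ab}\overline{\eta_a\wedge\eta_b}$ in $d\nu$, of $\alpha\wedge\eta_a$ in $d\eta_a$, and of $\eps_{ab}\overline{\nu\wedge\eta_b}$ in $d\eta_a$), and write $df = m f^2\,\alpha$ with $m = -28\tfrac{\lambda(2\lambda-1)}{3\lambda-4}$. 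Because $\alpha$ is closed and $df\wedge\alpha = 0$, the exterior derivatives of (\ref{eq:U2miStruct}) are completely tractable without ever having to compute the curvature $d\kappa$.

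The key step is to differentiate the equation for $d\nu$ and isolate, inside $d^2\nu = 0$, the coefficient of the semibasic $3$-form $\Theta := \alpha\wedge\eps_{ab}\overline{\eta_a\wedge\eta_b}$. The crucial point is that $\Theta$ is \emph{unabsorbable}: it involves only $\alpha$ and $\overline{\eta_1},\overline{\eta_2}$, with no factor of $\nu$, $\overline\nu$, or $\eta_a$, and no $\kappa$-leg. Every term arising from the connection enters $d^2\nu$ only through $d\kappa_{a\bar a}\wedge\nu$ and hence carries a $\nu$; every term produced by differentiating the torsion coefficients $E_a,F_{ab}$ carries a $\nu$ or $\overline\nu$ (or vanishes, since three $\eta$'s wedge to zero in two complex dimensions). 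Thus the coefficient of $\Theta$ receives contributions only from the two ``pure'' terms $pf\,\alpha\wedge\nu$ and $qf\,\eps_{ab}\overline{\eta_a\wedge\eta_b}$ of $d\nu$ — via $df$, via the $\alpha\wedge\overline{\eta_a}$ part ($sf$) of $d\overline{\eta_a}$, and via the cross term $-pf\,\alpha\wedge d\nu$ — and it must vanish identically.

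A short computation then gives that this coefficient equals $q\,f^2\,(m + 2s - p)$. Evaluating the $\lambda$-rational functions,
\begin{equation*}
m + 2s - p = \frac{-20(4\lambda-3)(5\lambda-2)}{7(3\lambda-4)}, \qquad q = -\frac{8i}{7}\,\frac{(\lambda+1)(8\lambda+1)}{3\lambda-4},
\end{equation*}
so that, since $f$ is nowhere zero and the denominator $3\lambda-4$ does not vanish, the condition $q f^2 (m+2s-p) = 0$ reduces to
\begin{equation*}
(\lambda+1)(8\lambda+1)(4\lambda-3)(5\lambda-2) = 0,
\end{equation*}
whose roots are exactly $-1$, $-\tfrac18$, $\tfrac34$, and $\tfrac25$. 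This is the asserted list.

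The main obstacle is not the final factorisation but the bookkeeping that justifies unabsorbability: one must check carefully that neither the curvature $d\kappa$ nor any covariant derivative of $E_a$ or $F_{ab}$ can feed into $\Theta$, which is precisely the observation that all such terms are forced to carry a $\nu$ or $\overline\nu$ factor. Once this is in place the argument is purely algebraic. As a consistency check one can repeat the computation with $d^2\eta_a = 0$; I expect this to yield no further restriction on $\lambda$, only relations determining parts of $\nabla E$, $\nabla F$ and the curvature of $\kappa$ in terms of the remaining torsion.
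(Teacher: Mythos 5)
Your proof is correct and takes essentially the same route as the paper: the paper's proof of Theorem \ref{thm:U2miLamVals} consists precisely of imposing $d^2\nu = 0$ on the structure equations (\ref{eq:U2miStruct}) and recording the resulting identity $\left(8\lambda+1\right)\left(5\lambda-2\right)\left(4\lambda-3\right)\left(\lambda+1\right)f^2 = 0$, which is exactly what your coefficient $q\,f^2\,(m+2s-p)$ of $\alpha\wedge\eps_{ab}\overline{\eta_a\wedge\eta_b}$ yields after the factorisations you state (both of which check out). The paper suppresses the computation entirely, so your unabsorbability bookkeeping merely makes explicit which semibasic component of $d^2\nu=0$ carries the restriction on $\lambda$.
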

\begin{proof}
	Differentiating the structure equations (\ref{eq:U2miStruct}), the identity $d^2 \nu = 0$ implies
	\begin{align}
	\left( 8 \lambda +1 \right)& \left( 5 \lambda -2 \right) \left(4 \lambda -3 \right) \left( \lambda +1 \right) f^2 = 0,
	\end{align}
	and we have assumed that $f$ does not vanish identically.
\end{proof}

Now, since $d \alpha = 0,$ there exists a locally defined function $r$ on $M,$ unique up to the addition of a constant, such that $\alpha_1 = dr.$ Equation (\ref{eq:U2miBryA}) then implies that there exists a constant $k$ with
\begin{equation}
f = \frac{3 \lambda -4}{28 \lambda \left(2 \lambda -1 \right)}\frac{1}{\left( r-k \right)}.
\end{equation}
After altering $r$ by a constant we assume $k=0.$

\begin{thm}\label{thm:U2MiIncomp}
	Let $\left(M, \varphi \right)$ be a $\lambda$-quadratic closed $\G_2$-structure with special torsion of negative type. Then the induced metric $g_{\varphi}$ is incomplete.
\end{thm}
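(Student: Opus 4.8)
The plan is to exploit the fact that the structure equations already present the torsion as a function of a single unit-speed coordinate. First I would observe that, by the structure equations (\ref{eq:U2miStruct}), the $1$-form $\alpha = \omega_1$ is closed, and that it is a unit element of the orthonormal coframe, so any local primitive $r$ with $dr = \alpha$ satisfies $|\nabla r|_{g_{\varphi}} \equiv 1$. Integrating (\ref{eq:U2miBryA}) gives $f = \tfrac{3\lambda-4}{28\lambda(2\lambda-1)}\,r^{-1}$; equivalently $r = C/f$ with $C = \tfrac{3\lambda-4}{28\lambda(2\lambda-1)}$, which shows that $r$ is in fact a \emph{globally} defined smooth function on the locus $\{\tau \neq 0\}$. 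I would then check, as a one-line computation in each case, that the coefficient in (\ref{eq:U2miBryA}) is finite and nonzero for each of the four admissible values $\lambda \in \{-1, -\tfrac{1}{8}, \tfrac{2}{5}, \tfrac{3}{4}\}$ from Theorem \ref{thm:U2miLamVals} (indeed $3\lambda-4$, $\lambda$, and $2\lambda-1$ are all nonzero there), so that $C \neq 0$ and the relation $r = C/f$ is meaningful.

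Next I would invoke the hypothesis of special torsion of negative type: the pointwise stabiliser of $\tau$ is everywhere conjugate to $\U(2)^-$, which forces $\tau$, and hence $f$, to be nowhere vanishing on $M$. Consequently $r = C/f$ is a smooth, nowhere-zero function on all of $M$. Since $M$ is connected and $C \neq 0$, the function $r$ has constant sign; I may assume $r > 0$ everywhere, the case $r < 0$ being entirely symmetric.

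Finally I would run the completeness contradiction. Suppose $g_{\varphi}$ were complete. The vector field $-\nabla r$ has unit norm, and a bounded vector field on a complete Riemannian manifold is complete (via Hopf--Rinow), so its integral curves are defined for all $s \in \R$. Along any such integral curve $\gamma$ one computes $\tfrac{d}{ds}\,r(\gamma(s)) = dr(\gamma'(s)) = -|\nabla r|^2 = -1$, whence $r(\gamma(s)) = r(\gamma(0)) - s \to -\infty$ as $s \to +\infty$, contradicting $r > 0$. Hence $g_{\varphi}$ cannot be complete. The argument is short because the genuine work---putting the torsion into the normal form $f = C/r$ with $\alpha = dr$ a unit closed $1$-form---is already encoded in (\ref{eq:U2miStruct}) and (\ref{eq:U2miBryA}); the only points requiring care, which I regard as the main (if minor) obstacle, are the global ones, namely verifying $C \neq 0$ so that $r = C/f$ is a bona fide function, and using the negative-type hypothesis to guarantee $f \neq 0$ everywhere so that $r$ is globally defined and sign-definite. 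Once these are in place, incompleteness is forced purely by the unit-gradient property of $r$ together with its sign-definiteness.
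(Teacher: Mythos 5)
Your proof is correct, and it reaches the conclusion by a slightly different mechanism than the paper. Both arguments rest on the same normal form extracted from the structure equations: $\alpha$ is a closed unit $1$-form and, by (\ref{eq:U2miBryA}), $f = C/r$ with $dr=\alpha$ and $C = \tfrac{3\lambda-4}{28\lambda(2\lambda-1)} \neq 0$ for the four admissible $\lambda$-values. The paper then finishes by quoting the curvature formula (\ref{eq:G2ScalRic}): $\mathrm{Scal}(g_{\varphi}) = -\tfrac{1}{2}|\tau|^2 = -\tfrac{3(3\lambda-4)^2}{49\lambda^2(2\lambda-1)^2}\,r^{-2}$ blows up at the locus $r=0$, which lies at finite distance since $|\nabla r|\equiv 1$; a complete metric cannot have its scalar curvature diverge along a finite-length curve. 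You avoid curvature entirely: since $f$ is nowhere vanishing under the negative-type hypothesis, $r = C/f$ is globally defined and sign-definite (on connected $M$), yet the integral curves of the unit field $-\nabla r$ — complete if $g_{\varphi}$ were complete — force $r(\gamma(s)) = r(\gamma(0)) - s$ through zero in finite time, a contradiction. Your version is marginally more elementary and makes fully explicit the flow argument that the paper's phrase ``at a finite distance'' compresses; it also handles globality cleanly by defining $r$ directly as $C/f$ rather than as a local primitive of $\alpha$. What the paper's route buys in exchange is the extra geometric information that the incompleteness is caused by a genuine curvature singularity as $r \to 0$, so the metric cannot be smoothly extended past that locus. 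One shared implicit point, which you inherit from the paper rather than introduce: the sign of $f$ (equivalently the coefficient in $\tau = 4f(-2\,\omega_2\wedge\omega_3+\omega_4\wedge\omega_5+\omega_6\wedge\omega_7)$) is taken to be a well-defined function on $M$, which is how the paper also treats it; since your argument only needs $f$ nowhere zero and $M$ connected, this is harmless.
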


\begin{proof}
	The scalar curvature of $g_{\varphi}$ is given by (\ref{eq:G2ScalRic}), so
	\begin{equation}
	\mathrm{Scal}(g_{\varphi}) = -\tfrac{1}{2} | \tau|^2 = -\frac{3 \, \left(3 \lambda -4 \right)^2}{49 \, \lambda^2 \left(2 \lambda - 1\right)^2} \frac{1}{r^2},
	\end{equation}
	and this blows up at $r = 0,$ which is at a finite distance.
\end{proof}

Define complex-valued 1-forms $\varpi$ and $\theta_a$ and complex-valued functions $A_a$ and $H_{ab} = H_{ba}$ on $\mathcal{Q}$ by
\begin{equation}
\begin{aligned}
\varpi & = r^{-\frac{34 \lambda^2 - 9 \lambda +6}{49 \lambda \left( 2 \lambda -1 \right)}} \nu, & A_a & = r^{\frac{32 \lambda^2 + 57 \lambda -24}{98 \lambda \left( 2 \lambda -1 \right)}} E_a, \\
\theta_a & = r^{-\frac{32 \lambda^2 + 57 \lambda -24}{98 \lambda \left( 2 \lambda -1 \right)}} \eta_a, & H_{ab} & = r^{\frac{34 \lambda^2 - 9 \lambda +6}{49 \lambda \left( 2 \lambda -1 \right)}} F_{ab}.
\end{aligned}
\end{equation}
In these new variables the structure equations (\ref{eq:U2miStruct}) become
\begin{equation}\label{eq:U2miStructSix}
\begin{aligned}
d \varpi &= \kappa_{a\bar{a}} \wedge \varpi + \overline{A_a \varpi} \wedge \theta_a - \tfrac{2i}{49} \tfrac{\left(\lambda+1\right) \left(8\lambda+1\right)}{\lambda \left(2 \lambda -1 \right)} r^{-\frac{5 \left( 5 \lambda -2 \right) \left( 4 \lambda -3 \right)}{49 \lambda \left( 2 \lambda -1 \right)}} \, \eps_{ab} \overline{\theta_a \wedge \theta_b}, \\
d \theta_a &= - \kappa_{a \bar{b}} \wedge \theta_b + H_{ab} \varpi \wedge \overline{\theta_b} + \tfrac{5 i}{49} \tfrac{\left(5 \lambda -2 \right) \left( \lambda+1\right)}{\lambda \left(2 \lambda -1 \right)} r^{-\frac{2 \left( 8 \lambda +1 \right) \left( 4 \lambda -3 \right)}{49 \lambda \left( 2 \lambda -1 \right)}} \, \eps_{ab} \overline{\varpi \wedge \theta_b}.
\end{aligned}
\end{equation}

When $\lambda \in \left\lbrace -1, -\frac{1}{8}, \frac{2}{5}, \frac{3}{4} \right\rbrace,$ as is necessary from Theorem \ref{thm:U2miLamVals}, the function $r$ does not appear in the derivatives of $\varpi$ or $\theta_a,$ and the problem of understanding the geometry of $M$ is reduced to understanding the geometry of the $6$-manifold defined by $r = 1.$ Call this $6$-manifold $N,$ and let $\mathcal{P}$ denote the pullback of the $\U(2)^-$-structure $\mathcal{Q}$ to $N$. The bundle $\mathcal{P}$ is a $\U(2)$-subbundle of the principal coframe bundle of $N$, and so defines a $\U(2)$-structure on $N.$ The tautological form on $\mathcal{P}$ is $\left( \varpi, \theta_a \right),$ while the forms $\kappa_{a \bar{b}}$ are the components of the natural $\mathfrak{u}(2)$-valued connection form. This discussion is summarised in the follow result.

\begin{prop}\label{prop:U2miRedToSix}
	Let $\left( M, \varphi \right)$ be a $\lambda$-quadratic closed $\G_2$-structure with special torsion of negative type, where $\lambda$ is necessarily one of $-1,$ $-{1}/{8},$ ${2}/{5},$ or ${3}/{4}.$ Then $M$ is locally equivalent to $ \R^{+} \times N,$ where $N$ is a $6$-manifold endowed with a $\U(2)$-structure $\mathcal{P}$ satisfying structure equations (\ref{eq:U2miStructSix}). The pullback of the $\G_2$ 3-form $\varphi$ to to $\mathcal{Q}$ is
	\begin{equation}
	\begin{aligned} \label{eq:U2miphi}
	\varphi =& \tfrac{i}{2} dr \wedge \left( r^{{\frac {68{\lambda}^{2}-18\lambda+12}{49\lambda \left( 2\lambda-1 \right) }}} \varpi \wedge \overline{\varpi} + r^{{\frac {32 {\lambda}^{2}+57 \lambda-24}{49 \lambda \left( 2 \lambda-1 \right) }}} \theta_a \wedge \overline{\theta_a} \right) \\
	& + \tfrac{1}{2} r^{\frac { 6 \left( \lambda+1 \right)  \left( 11\lambda-3 \right) }{49\lambda \left( 2\lambda-1 \right) }} \left(  \overline{\eps_{ab}} \varpi \wedge \theta_a \wedge \theta_b + \eps_{ab} \overline{\varpi \wedge \theta_a \wedge \theta_b} \right),
	\end{aligned}
	\end{equation}
	where $r$ is the coordinate in the $\R^+$ direction.
	
	Conversely, let $\mathcal{P} \to N$ be a $\U(2)$-structure over a 6-manifold $N$ satisfying structure equations (\ref{eq:U2miStructSix}). Let $M = \R^{+} \times N.$ Then, if $\lambda$ is one of $-1,$ $-{1}/{8},$ ${2}/{5},$ or ${3}/{4},$ the 3-form $\varphi$ on $\R^+ \times \mathcal{P}$ defined by equation (\ref{eq:U2miphi}) is invariant under the $\U(2)$-action on this space, and descends to $M$ to give a $\lambda$-quadratic closed $\G_2$-structure with special torsion of negative type.
\end{prop}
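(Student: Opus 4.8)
The forward direction is essentially packaged from the discussion preceding the statement, and the work is to turn that discussion into a clean local splitting and to read off the formula for $\varphi$. First I would recall that the structure equations (\ref{eq:U2miStruct}) have already been brought to the form (\ref{eq:U2miStructSix}) after the substitutions defining $\varpi,\theta_a,A_a,H_{ab}$, and that $d\alpha=0$ supplies a local function $r$ with $\alpha=dr$ and the explicit profile for $f$ from (\ref{eq:U2miBryA}). The point I would make explicit is that for each $\lambda\in\{-1,-\tfrac18,\tfrac25,\tfrac34\}$ the right-hand sides of (\ref{eq:U2miStructSix}) carry no power of $r$: in the $d\varpi$ equation the factor $(\lambda+1)(8\lambda+1)$ annihilates the offending term at $\lambda=-1,-\tfrac18$, while the exponent, which is proportional to $(5\lambda-2)(4\lambda-3)$, vanishes at $\lambda=\tfrac25,\tfrac34$; in the $d\theta_a$ equation the roles of the two quadratic factors are interchanged, so the same four values are covered. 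Thus on the $\U(2)^-$-bundle $\mathcal{Q}$ the forms $(\varpi,\theta_a,\kappa_{a\bar b})$ together with $\alpha$ give a coframing whose structure equations decouple from $r$.

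I would then let $R$ be the vector field on $\mathcal{Q}$ dual to $\alpha$ in this coframing, so that $R$ annihilates $\varpi$, $\theta_a$ and $\kappa$. Contracting (\ref{eq:U2miStructSix}) with $R$ yields $\mathcal{L}_R\varpi=R\lrcorner d\varpi=0$ and $\mathcal{L}_R\theta_a=0$, since every term on the right is a wedge of forms killed by $R$; the rescaled (and, for admissible $\lambda$, $r$-free) equation for $d\kappa$ arising from the Bianchi identity behaves the same way. Hence the flow of $R$ preserves the coframing transverse to $\alpha$, and together with the closed form $\alpha=dr$ it exhibits $\mathcal{Q}$ locally as $\R^+\times\mathcal{P}$, where $\mathcal{P}$ is the restriction of $\mathcal{Q}$ to the slice $N=\{r=1\}$; projecting $R$ to $e_1$ on the base gives $M\cong\R^+\times N$, with $N$ carrying the $\U(2)$-structure $\mathcal{P}$ whose tautological form is $(\varpi,\theta_a)$ and whose structure equations are exactly (\ref{eq:U2miStructSix}). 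Inverting the definitions of $\varpi$ and $\theta_a$ to write $\nu$ and $\eta_a$ as powers of $r$ times $\varpi,\theta_a$, substituting into the expression for $\varphi$ on $\mathcal{Q}$ recorded above, and collecting the resulting powers of $r$ produces precisely (\ref{eq:U2miphi}).

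For the converse, starting from a $\U(2)$-structure $\mathcal{P}\to N$ satisfying (\ref{eq:U2miStructSix}), I would set $M=\R^+\times N$ and define $\varphi$ by (\ref{eq:U2miphi}) on $\R^+\times\mathcal{P}$. Every term of (\ref{eq:U2miphi}) is a $\U(2)$-invariant combination of $\varpi,\theta_a$ and $dr$, so $\varphi$ is basic for the $\U(2)$-action and descends to $M$; comparing with the model expression for $\varphi$ on $\mathcal{Q}$ shows it is pointwise $\mathrm{GL}(7)$-equivalent to the standard $3$-form $\phi$, hence a genuine $\G_2$-structure, and that its torsion $\tau$ lies pointwise in the orbit $\mathcal{O}^{+}$ with stabiliser $\U(2)^-$, so the torsion is of negative type by construction. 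It then remains to verify $d\varphi=0$ and the $\lambda$-quadratic identity (\ref{eq:LamQuad}). Both are direct computations: differentiating (\ref{eq:U2miphi}) via $d(r^p)=p\,r^{p-1}dr$ together with (\ref{eq:U2miStructSix}), the connection terms $\kappa_{a\bar b}$ cancel by $\U(2)$-invariance, and the explicit $r$-exponents in (\ref{eq:U2miphi}) are exactly those for which the $dr$-terms and the algebraic terms combine to give $d\varphi=0$ and to reproduce (\ref{eq:U2miQuad}), i.e.\ (\ref{eq:LamQuad}).

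The conceptual heart of the argument is the $r$-independence of (\ref{eq:U2miStructSix}) at the four admissible values of $\lambda$; once this is in hand the splitting $M\cong\R^+\times N$ is formal. The genuinely laborious step, and the one I expect to be the main obstacle, is the converse verification that the particular exponents of $r$ in (\ref{eq:U2miphi}) yield a \emph{closed}, $\lambda$-quadratic $\G_2$-structure for each of $\lambda=-1,-\tfrac18,\tfrac25,\tfrac34$. These exponents were reverse-engineered precisely so that differentiation cancels, so the check amounts to running the exterior-derivative computation on (\ref{eq:U2miphi}) and matching the result against (\ref{eq:U2miQuad}); this is routine in principle but requires careful bookkeeping of the $r$-powers case by case.
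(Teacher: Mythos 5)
Your proposal is correct and takes essentially the same route as the paper, which presents this proposition precisely as a summary of the preceding computation: the change of variables $(\nu,\eta_a)\mapsto(\varpi,\theta_a)$ rendering (\ref{eq:U2miStructSix}) free of $r$ at the four admissible $\lambda$-values, the resulting local splitting $M\cong\R^+\times N$ with $\mathcal{P}$ the restriction of $\mathcal{Q}$ to $\{r=1\}$, the exponents in (\ref{eq:U2miphi}) read off by substituting $\nu=r^p\varpi$, $\eta_a=r^q\theta_a$ into the expression for $\varphi$ on $\mathcal{Q}$, and the converse by reversing the computation. One small bookkeeping slip: in the $d\theta_a$ equation the coefficient carries $(5\lambda-2)(\lambda+1)$ while the $r$-exponent carries $(8\lambda+1)(4\lambda-3)$, so the two quadratic factors are not cleanly interchanged relative to the $d\varpi$ equation as you assert --- but since each of $\lambda=-1,\,-\tfrac18,\,\tfrac25,\,\tfrac34$ still annihilates either the coefficient or the exponent in each equation separately, your conclusion is unaffected.
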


We now study the four cases corresponding to the possible values of $\lambda$ in turn. Similarly to \S\ref{sect:U2pl}, we mainly restrict to the cases where at most one of the tensors $A_a,$ $H_{ab}$ are non-zero.

\subsection{The case $\lambda = -1$}\label{ssect:LamMin1}

When $\lambda=-1,$ the structure equations (\ref{eq:U2miStructSix}) read
\begin{equation}\label{eq:StructLamMin1}
\begin{aligned}
d \varpi &= \kappa_{a\bar{a}} \wedge \varpi + \overline{A_a \varpi} \wedge \theta_a, \\
d \theta_a &= - \kappa_{a \bar{b}} \wedge \theta_b + H_{ab} \varpi \wedge \overline{\theta_b}.
\end{aligned}
\end{equation}

Now, since $\U(2)$ is a subgroup of $\SU(3),$ the $\U(2)$-structure $\mathcal{P}$ on $N$ induces an $\SU(3)$-structure on $N.$ The differential forms $\Omega \in \Omega^{1,1} (N)$ and $\Upsilon \in \Omega^{3,0}(N)$  associated to this $\SU(3)$-structure are
\begin{equation}\label{eq:SU3Struct}
\begin{aligned}
\Omega &= \tfrac{i}{2} \left( \varpi \wedge \overline{\varpi} + \theta_a \wedge \overline{\theta_a} \right), \\
\Upsilon &= \eps_{ab} \varpi \wedge \theta_a \wedge \theta_b,
\end{aligned}
\end{equation}
and from equations (\ref{eq:StructLamMin1}), the exterior derivatives of these forms satisfy
\begin{equation}
d \Omega = 0, \:\:\:\:\: d \Upsilon = 0,
\end{equation}
so that the induced $\SU(3)$-structure is Calabi-Yau.

\subsubsection{$A=H=0$}\label{sssect:U2mimin11flat}

When the torsion functions $A$ and $H$ vanish we have the following result.

\begin{prop}\label{prop:lammin1AHzero}
	Let $\mathcal{P} \to N$ be a $\U(2)$-structure satisfying equations (\ref{eq:StructLamMin1}) and the additional conditions $A_a = 0$ and $H_{ab} = 0.$ Then $N$ is locally equivalent to the product $\R^2 \times X$ of Euclidean 2-space with a Ricci-flat K\"ahler 4-manifold $X.$
	
	Conversely, the product $\Sigma \times X$ of a flat 2-manifold $\Sigma$ with a Ricci-flat K\"ahler 4-manifold $X$ carries a $\U(2)$-structure satisfying equations (\ref{eq:StructLamMin1}) and the additional conditions $A_a = 0$ and $H_{ab} = 0.$
\end{prop}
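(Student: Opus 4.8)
The plan is to recognise that, once $A_a = H_{ab}=0$, the structure equations (\ref{eq:StructLamMin1}) say precisely that the $\U(2)$-structure $\mathcal{P} \to N$ is \emph{torsion-free}, and then to run a holonomy-reduction argument. With the torsion terms gone the equations read $d\varpi = \kappa_{a\bar a}\wedge\varpi$ and $d\theta_a = -\kappa_{a\bar b}\wedge\theta_b$; since the right-hand sides are purely $\kappa$-linear with $\kappa$ taking values in $\mathfrak{u}(2)\subset\mathfrak{so}(6)$, the form $\kappa$ is a torsion-free metric connection for $g_N = \varpi\circ\overline\varpi + \theta_a\circ\overline{\theta_a}$ and hence is its Levi-Civita connection. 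Thus $\mathrm{Hol}(g_N)$ is contained in the image of $\U(2)$ in $\SO(6)$ acting on $\C_\varpi \oplus \C^2_\theta$ through $g \mapsto (\det g)\oplus g$, i.e. by the $\det$ representation on the $\varpi$-line and by the standard representation on $\C^2_\theta$.

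First I would reduce the holonomy further using the induced $\SU(3)$-structure. By (\ref{eq:SU3Struct}) the forms $\Omega$ and $\Upsilon$ satisfy $d\Omega = d\Upsilon = 0$, so the $\SU(3)$-structure is torsion-free (Calabi--Yau) and $\mathrm{Hol}(g_N)\subseteq\SU(3)$. Intersecting the two reductions, I would compute $\U(2)\cap\SU(3)$ inside $\SO(6)$: for $g\in\U(2)$ the embedded element has determinant $(\det g)^2$, so membership in $\SU(3)$ forces $\det g = \pm1$, and on the identity component $\det g = 1$. Hence $\mathrm{Hol}^0(g_N)\subseteq\SU(2)$, acting \emph{trivially} on the real $2$-plane $\langle\varpi,\overline\varpi\rangle$ and by the standard representation on the real $4$-plane $\langle\theta_a,\overline{\theta_a}\rangle$.

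With the holonomy representation reducible in this way, the orthogonal splitting $TN = D_1\oplus D_2$, where $D_1$ is spanned by the metric duals of $\varpi,\overline\varpi$ and $D_2$ by those of $\theta_a,\overline{\theta_a}$, is parallel; both distributions are integrable (directly from $d\varpi\equiv 0$ mod $\varpi$ and $d\theta_a\equiv 0$ mod $\theta_1,\theta_2$) with totally geodesic leaves. The local de Rham decomposition theorem then gives a local Riemannian product $N \cong \Sigma^2\times X^4$. The factor $\Sigma$ carries the trivial part of the holonomy, so it is flat; the factor $X$ has holonomy contained in $\SU(2)=\Sp(1)$, so it is Ricci-flat K\"ahler (indeed hyperk\"ahler). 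This establishes the direct statement.

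For the converse I would reverse the construction. Given a flat $2$-manifold $\Sigma$ and a Ricci-flat K\"ahler $4$-manifold $X$, the $\Sp(1)=\SU(2)$ holonomy of $X$ provides, locally, a unitary coframe $\theta_1,\theta_2$ whose Levi-Civita connection $\kappa_{a\bar b}$ is $\mathfrak{su}(2)$-valued (hence trace-free) and satisfies $d\theta_a = -\kappa_{a\bar b}\wedge\theta_b$, while flatness of $\Sigma$ yields a local closed $(1,0)$-form $\varpi$ with $d\varpi = 0$. Since $\kappa_{a\bar a}=0$, the pair $(\varpi,\theta_a)$ together with $\kappa$ is a $\U(2)$-structure on $\Sigma\times X$ solving (\ref{eq:StructLamMin1}) with $A_a = H_{ab}=0$, and Proposition \ref{prop:U2miRedToSix} (at $\lambda=-1$, where (\ref{eq:U2miStructSix}) collapses to (\ref{eq:StructLamMin1})) produces the associated $\lambda$-quadratic closed $\G_2$-structure. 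The remaining verifications are routine. The main point requiring care is the passage from the two group reductions to the holonomy statement, namely justifying that $\kappa$ is the Levi-Civita connection and that $\Omega,\Upsilon$ being closed genuinely forces $\mathrm{Hol}\subseteq\SU(3)$; once this is in place, the splitting is a formal consequence of the local de Rham theorem.
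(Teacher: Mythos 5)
Your overall skeleton (torsion-free $\U(2)$-structure $\Rightarrow$ $\kappa$ is the Levi-Civita connection $\Rightarrow$ holonomy reduction $\Rightarrow$ parallel splitting and local de Rham product, plus the reverse construction for the converse) is the same holonomy argument the paper uses, and the converse direction is fine. But the middle step --- the reduction from $\U(2)$ to $\SU(2)$ --- contains a genuine error. You embed $\U(2)$ into $\SO(6)$ by $g \mapsto (\det g)\oplus g$ on $\C_\varpi\oplus\C^2_\theta$ and conclude that membership in $\SU(3)$ forces $\det g = \pm 1$ because the total determinant is $(\det g)^2$. The embedding is wrong: the structure equations (\ref{eq:StructLamMin1}) have $d\varpi = +\kappa_{a\bar{a}}\wedge\varpi$ but $d\theta_a = -\kappa_{a\bar{b}}\wedge\theta_b$, so $\varpi$ transforms by the character $\det^{-1}$ relative to the standard representation on the $\theta_a$; equivalently, the well-definedness of $\Upsilon = \eps_{ab}\,\varpi\wedge\theta_a\wedge\theta_b$ as a form on $N$ (which you yourself use) forces the structure group to preserve $\varpi\wedge\theta_1\wedge\theta_2$, which your embedding does not. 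With the correct embedding $g\mapsto(\det g)^{-1}\oplus g$ the total determinant is identically $1$, so the image of $\U(2)$ already lies inside $\SU(3)$ and intersecting with $\SU(3)$ yields no reduction whatsoever. As written, your argument therefore never establishes that the holonomy acts trivially on the $\varpi$-plane, i.e.\ never proves the $2$-dimensional factor is flat rather than merely K\"ahler --- which is precisely the nontrivial content of the proposition.

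The gap is repairable without changing your strategy, and the repair is what the paper's terse phrase ``the only possibilities are the trivial group or $\SU(2)$'' encodes. Since the image of $\U(2)$ preserves the splitting $\C_\varpi\oplus\C^2_\theta$, your parallel distributions $D_1\oplus D_2$ and the local de Rham theorem give $N\cong\Sigma\times X$ locally, and the holonomy of a Riemannian product is the \emph{full} direct product $H_1\times H_2$ of the factor holonomies. Containment of $H_1\times H_2$ in the coupled subgroup $\left\lbrace (\det g)^{-1}\oplus g \mid g\in\U(2)\right\rbrace$ then forces, by taking elements of the form $(h_1,1)$ and $(1,h_2)$, that $H_1$ is trivial and $H_2\subseteq\SU(2)$: the coupling of the $\U(1)$-factor to $\det$ is what kills the holonomy on $\Sigma$, not the Calabi--Yau condition. (Alternatively, one can invoke Berger's classification on each de Rham factor.) With that substitution your proof closes up and agrees with the paper's.
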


\begin{proof}
	If $A_a = 0$ and $H_{ab} = 0,$ the $\U(2)$-structure $\mathcal{P}$ is torsion-free, and it follows that the reduced holonomy group of the metric $g_{\varphi}|_N$ is contained in $\U(2).$ The only possibilities are the trivial group or $\SU(2),$ and in both these cases $N$ is locally equivalent to the metric product of $\R^2$ with a Ricci-flat K\"ahler metric. The converse statement is a straightforward computation, which we omit.
\end{proof}

\begin{example}
	By Proposition \ref{prop:lammin1AHzero}, products $\mathrm{T}^2 \times \mathrm{T}^4$ and $\mathrm{T}^2 \times \mathrm{K3}$ provide compact examples of 6-manifolds with $\U(2)$-structures satisfying equations (\ref{eq:StructLamMin1}).
\end{example}

\subsubsection{Type $A$}\label{sssect:U2mimin1TypeA}

Now suppose $\mathcal{P} \to N$ is a $\U(2)$-structure satisfying equations (\ref{eq:StructLamMin1}) and the additional condition $H_{ab} = 0.$ We assume that $A_a$ does not vanish identically on $\mathcal{P}$, as this case was considered in \S\ref{sssect:U2mimin11flat}.

In this case the structure equations (\ref{eq:StructLamMin1}) become
\begin{equation}\label{eq:StructLamMin1A}
\begin{aligned}
d \varpi &= \kappa_{a\bar{a}} \wedge \varpi + \overline{A_a \varpi} \wedge \theta_a, \\
d \theta_a &= - \kappa_{a \bar{b}} \wedge \theta_b,
\end{aligned}
\end{equation}
and the identities $d^2 \varpi = d^2 \theta_a = 0$ imply
\begin{equation}\label{eq:StructTwoLamMin1A}
\begin{aligned}
d \kappa_{a \bar{b}} &= - \kappa_{a \bar{c}} \wedge \kappa_{c \bar{b}} + R_{a\bar{b}c\bar{d}} \overline{\theta_c} \wedge \theta_d, \\
d A_a &= -A_b \kappa_{a \bar{b}} - 2 A_a \kappa_{b \bar{b}} + C_{ab} \overline{\theta_b},
\end{aligned}
\end{equation}
where $C_{ab}$ and $R_{a\bar{b}c\bar{d}}$ are complex-valued functions on $\mathcal{Q}$ satisfying $C_{ab} = C_{ba},$ $R_{a\bar{b}c\bar{d}} = R_{c\bar{b}a\bar{d}} = R_{a\bar{d}c\bar{b}},$ $R_{a\bar{b}c\bar{d}} = \overline{R_{b\bar{a}d\bar{c}}},$ and $R_{a\bar{c}c\bar{b}} = -A_a \overline{A_b}.$

\begin{prop}\label{prop:Lammin1TypeAExist}
	The $\U(2)^-$-structures satisfying equations (\ref{eq:StructLamMin1}) and the additional condition $H_{ab} = 0$ exist locally and depend on 3 functions of 2 variables.
\end{prop}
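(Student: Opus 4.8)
My plan is to establish this by the same route used for Proposition \ref{prop:TypeSexistgen}, namely by recognizing the structure equations (\ref{eq:StructLamMin1A}) and (\ref{eq:StructTwoLamMin1A}) as defining a prescribed coframing problem and applying the Cartan--K\"ahler theorem in the real-analytic category, which entails no loss of generality in view of Corollary \ref{cor:RealAnal}. The data of such a structure is a coframing $(\varpi,\theta_1,\theta_2,\kappa_{1\bar1},\kappa_{1\bar2},\kappa_{2\bar2})$ of the ten-dimensional total space $\mathcal{P}$ together with the \emph{primary invariants} $A_1,A_2$, subject to (\ref{eq:StructLamMin1A}). First I would assemble these into a linear Pfaffian system on the manifold obtained from $\mathcal{P}$ by adjoining the free derivatives as fiber coordinates, with independence condition that $\varpi,\theta_a$ and their conjugates remain independent; the integral manifolds of this system are then exactly the structures sought.

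The next step is to read off the tableau and its free derivatives. Equation (\ref{eq:StructTwoLamMin1A}) shows that $\nabla A_a = C_{ab}\,\overline{\theta_b}$; that is, the covariant derivative of $A$ is constrained to have no $\varpi$-, $\overline{\varpi}$- or $\theta$-component, and its $\overline{\theta}$-coefficients $C_{ab}$ are symmetric. This, together with the curvature functions $R_{a\bar b c\bar d}$ appearing in $d\kappa_{a\bar b}$, constitutes the free derivatives of the system. The Bianchi identities $d^2A_a=0$ and $d^2\kappa_{a\bar b}=0$ impose precisely the symmetry relations recorded after (\ref{eq:StructTwoLamMin1A})---the K\"ahler symmetries of $R_{a\bar b c\bar d}$ and the trace identity $R_{a\bar c c\bar b}=-A_a\overline{A_b}$ tying the Ricci part of the curvature to $A\otimes\overline{A}$.

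With these relations fixed, I would compute the Cartan characters of the tableau of free derivatives and verify Cartan's test, namely that the dimension of the first prolongation equals $s_1+2s_2$. The expected result is that the tableau is involutive with $s_2=3$ the last nonvanishing character and $s_k=0$ for $k\ge3$; the Cartan--K\"ahler theorem then produces local solutions whose general member depends on $3$ functions of $2$ variables. Consistently, the complex characteristic variety of the system should be a curve rather than the finite point set found in the positive-type cases, explaining why no Weierstrass-type integration is available here and why the full Cartan--K\"ahler machinery is needed.

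The main obstacle is the involutivity verification. The free derivatives split into the torsion part $C_{ab}$ and the curvature part $R_{a\bar b c\bar d}$, and the latter is doubly constrained, by its K\"ahler symmetries and by the four real relations $R_{a\bar c c\bar b}=-A_a\overline{A_b}$. One must therefore choose a generic flag and keep track of the Hermitian structure of the tableau, subtracting these curvature relations correctly before extracting the characters; a na\"ive count that ignores the trace identity would overstate the generality. Once the characters are correctly computed, the involutivity, and hence the stated existence and generality, follow as in \cite{BryEDSNotes}.
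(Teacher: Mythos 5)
Your proposal is correct and is essentially the paper's own argument: the published proof is a one-line appeal to the prescribed-coframing methods of \cite{BryEDSNotes} (with details deferred to \cite{Ball19}), which is exactly the Cartan--K\"ahler analysis you set up, with primary invariants $A_a$, free derivatives $C_{ab}$ together with the trace-constrained curvature components $R_{a\bar{b}c\bar{d}}$, and the symmetry and trace relations recorded after (\ref{eq:StructTwoLamMin1A}) supplying the tableau. Your expected characters (last nonvanishing character $s_2=3$, $s_k=0$ for $k\geq 3$) are precisely what the stated generality of $3$ functions of $2$ variables requires, so modulo carrying out the involutivity verification you defer---which the paper likewise leaves to \cite{Ball19}---the two arguments coincide.
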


\begin{proof}
	This is a simple application of the methods of \cite{BryEDSNotes}. For a few more details see \cite{Ball19}.
\end{proof}

From equations (\ref{eq:StructLamMin1A}), there is a Riemannian submersion $N \to X,$ where $X$ is the leaf space of the distribution defined by $\varpi = 0.$ The metric $g_X$ on $X$ is given by
\begin{equation}
g_X = \theta_a \circ \overline{\theta_a},
\end{equation}
and it is K\"ahler, with K\"ahler form $\tfrac{i}{2} \theta_a \wedge \overline{\theta_a}.$ Equations (\ref{eq:StructTwoLamMin1A}) show that $A = \overline{A_a} \theta_a$ is a holomorphic section of $T^{1,0}_{\mathbb{C}} X \otimes \mathrm{K}_{X}^{* \otimes_2},$ where $\mathrm{K}_X$ denotes the canonical bundle of $X,$ and that the Ricci form of $g_X$ is given by
\begin{equation}
d \kappa_{a \bar{a}} = \overline{A_b} \theta_b \wedge A_a \overline{\theta_a}.
\end{equation}

Conversely, if $X, g_{X}$ is a K\"ahler 4-manifold with Ricci form $\zeta \wedge \overline{\zeta},$ where $\zeta$ is a holomorphic section of $T^{1,0}_{\mathbb{C}} X \otimes \mathrm{K}_{X}^{* \otimes_2},$ then it is possible to locally construct a $\U(2)^-$-structure satisfying equations (\ref{eq:StructLamMin1}) and the additional condition $H_{ab} = 0$ on a $\C$-bundle over $X.$ This is done by noting that the $\mathfrak{u}(1,1)$-valued 1-form
\begin{equation}
\mu = \left(\begin{array}{cc}
-\kappa_{a \bar{a}} & \zeta \\
\overline{\zeta} & \kappa_{a \bar{a}}
\end{array}\right)
\end{equation}
must satisfy the Maurer-Cartan equation $d \mu = -\mu \wedge \mu.$ Let $f$ be a map locally integrating $\mu,$ which exists by Cartan's Theorem \ref{thm:MaurerCartan}. Then define 
\begin{equation}
\left(\begin{array}{c}
\varpi \\
\overline{\varpi}
\end{array}\right) = f^{-1} \left( \begin{array}{c}
d z_1 \\
\overline{d z_1}
\end{array}\right)
\end{equation}
on the product of the $\U(2)$-coframe bundle of $X$ with $\C.$

\subsubsection{Type $H$}

Now suppose $\mathcal{P} \to N$ is a $\U(2)$-structure satisfying equations (\ref{eq:StructLamMin1}) and the additional condition $A_{a} = 0.$ We assume that $H_{ab}$ does not vanish identically on $\mathcal{P}$, as this case was considered in \S\ref{sssect:U2mimin11flat}.

Define complex-valued 1-forms $\sigma_{ab}$ on $\mathcal{P}$ by $\sigma_{ab} = - H_{ab} \varpi.$ Then we have
\begin{equation}
d \theta_a = -\kappa_{a \bar{b}} \wedge \theta_b - \sigma_{ab} \wedge \overline{\theta_b},
\end{equation}
and the exterior derivatives of the structure equations (\ref{eq:StructLamMin1}) imply that the $\mathfrak{sp}(2, \R)$-valued 1-form
\begin{equation}\label{eq:MCSp2}
\mu = \left(\begin{array}{cc}
\kappa_{a\bar{b}} & \sigma_{ab} \\
\overline{\sigma_{ab}} & -\kappa_{a\bar{b}}
\end{array}\right)
\end{equation}
satisfies the Maurer-Cartan equation $d \mu = - \mu \wedge \mu.$ Thus, by Cartan's Theorem \ref{thm:MaurerCartan}, there exists a locally defined map $f : \mathcal{P} \to \mathrm{Sp}(2, \R)$ satisfying $\mu = f^{-1} df.$ This map $f$ descends to give a map $\Sigma \to \mathrm{Sp}(2, \R) / \mathrm{U}(2),$ where $\Sigma$ is the leaf space of the (integrable) distribution $\varpi = 0$ on $N.$

\begin{prop}\label{prop:lammin1holcurve}
	The map $\Sigma \to \mathrm{Sp}(2, \R) / \mathrm{U}(2)$ constructed above is a holomorphic map, where $\mathrm{Sp}(2, \R) / \mathrm{U}(2)$ is considered with its $\mathrm{Sp}(2, \R)$-invariant K\"ahler structure.
\end{prop}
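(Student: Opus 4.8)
The plan is to reduce the claim to a statement about the Maurer--Cartan form by working upstairs on $\mathcal{P}$, where everything is written in terms of the coframing $(\varpi, \theta_a)$ and the connection $\kappa.$ Recall that $\bar{f} \colon \Sigma \to \Sp(2,\R)/\U(2)$ is obtained by composing $f \colon \mathcal{P} \to \Sp(2,\R)$ with the projection $\pi_K \colon \Sp(2,\R) \to \Sp(2,\R)/\U(2)$. First I would check that this composite descends to $\Sigma$: along the directions tangent to the $\U(2)$-fibres of $\mathcal{P} \to N$ the tautological forms $\varpi, \theta_a$ vanish and $\kappa$ restricts to the Maurer--Cartan form of $\U(2),$ while along the leaf directions $\varpi = 0$ in $N$ the off-diagonal block $\sigma_{ab} = -H_{ab}\varpi$ vanishes; in both cases $\mu = f^{-1}df$ takes values in $\mathfrak{u}(2),$ so the derivative of $\pi_K \circ f$ annihilates the fibre directions of $\mathcal{P} \to \Sigma$ and $\bar{f}$ is well defined. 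It then suffices to prove that $\bar{f}$ pulls back $(1,0)$-forms to $(1,0)$-forms.

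Next I would recall the Hermitian symmetric structure of $G/K = \Sp(2,\R)/\U(2).$ With respect to the matrix realization (\ref{eq:MCSp2}), the Cartan decomposition is $\mathfrak{sp}(2,\R) = \mathfrak{u}(2) \oplus \mathfrak{m},$ where $\mathfrak{u}(2)$ is the diagonal-block subalgebra and
\begin{equation*}
\mathfrak{m} = \left\lbrace \left( \begin{array}{cc} 0 & s \\ \overline{s} & 0 \end{array} \right) : s_{ab} = s_{ba} \right\rbrace \cong \mathrm{Sym}^2 \C^2 .
\end{equation*}
The $G$-invariant complex structure on $G/K$ is the one whose holomorphic tangent space at the base point is the upper-right block $\{ s_{ab} \},$ and the invariant K\"ahler metric is that induced by the Killing form. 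In terms of the Maurer--Cartan form $\omega_G$ of $G,$ a $1$-form on $G/K$ is of type $(1,0)$ precisely when its pullback to $G$ lies in the span of the entries of the upper-right block of $\omega_G$; this is the standard description of the invariant complex structure via the Harish--Chandra embedding.

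It then remains to compute. Since $f^{-1} df = \mu,$ the pullback by $f$ of the upper-right block of $\omega_G$ is exactly $\sigma_{ab} = -H_{ab}\varpi,$ so $\bar{f}^*$ carries the $(1,0)$-forms of $\Sp(2,\R)/\U(2)$ into the span of $\{ \sigma_{ab} \},$ which is contained in the span of $\varpi.$ On the other hand $\Sigma$ is a Riemann surface whose holomorphic cotangent line is spanned by the descent of $\varpi$: this is well defined because $d\varpi = \kappa_{a\bar{a}} \wedge \varpi$ shows that $\varpi$ is basic up to scale for the foliation $\{\varpi = 0\},$ and any almost complex structure on a surface is integrable. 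Hence $\bar{f}^* \, \Omega^{1,0}(\Sp(2,\R)/\U(2)) \subseteq \Omega^{1,0}(\Sigma),$ which is exactly the condition that $\bar{f}$ be holomorphic.

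The main obstacle is bookkeeping rather than conceptual: one must justify carefully that $\bar{f}$ descends to $\Sigma$ and that $\varpi$ induces a genuine complex structure on the leaf space, and one must fix the orientation of the invariant complex structure on $\Sp(2,\R)/\U(2)$ so that the $\sigma$-block, rather than the $\overline{\sigma}$-block, is the $(1,0)$-part, ensuring the map is holomorphic rather than antiholomorphic. Both points are routine once the symmetric-space conventions are pinned down, and the symmetry $H_{ab} = H_{ba}$ is precisely what makes $\mu$ genuinely $\mathfrak{sp}(2,\R)$-valued with $\mathfrak{m}$-component in $\mathrm{Sym}^2 \C^2.$
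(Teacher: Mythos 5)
Your proposal is correct and follows essentially the same route as the paper: both identify the $(1,0)$-forms on $\mathrm{Sp}(2,\R)/\U(2)$ as those whose pullback under the Maurer--Cartan form lies in the span of the off-diagonal block $\sigma_{ab}$, and then conclude immediately from $\sigma_{ab} = -H_{ab}\varpi$ that $(1,0)$-forms pull back to multiples of $\varpi$, which spans $\Omega^{1,0}(\Sigma)$. The extra care you take with the descent to the leaf space and with fixing the complex-structure convention (holomorphic versus antiholomorphic) is sound but amounts to making explicit what the paper treats as part of the construction preceding the proposition.
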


\begin{proof}
	If we write the Maurer-Cartan form of $\mathrm{Sp}(2, \R)$ in the form (\ref{eq:MCSp2}), the symmetric and antisymmetric forms
	\begin{equation}
	\sigma_{ab} \circ \overline{\sigma_{ab}} \:\:\: \text{and} \:\:\: \tfrac{i}{2} \sigma_{ab} \wedge \overline{\sigma_{ab}},
	\end{equation} 
	are invariant under the $\U(2)$-action generated by the vector fields dual to the $\kappa_{a\bar{b}},$ and descend to $\mathrm{Sp}(2, \R) / \mathrm{U}(2)$ to define the $\mathrm{Sp}(2, \R)$-invariant K\"ahler structure in question. The $(1,0)$-forms of the associated complex structure are the 1-forms whose pullback to $\mathrm{Sp}(2, \R)$ is a linear combination of $\sigma_{{11}},$ $\sigma_{{12}},$ and $\sigma_{22}.$ The result then follows immediately from the equation $\sigma_{ab} = - H_{ab} \varpi.$
\end{proof}

In light of Proposition \ref{prop:lammin1holcurve}, it is natural to try and take a holomorphic curve $\Sigma \to \mathrm{Sp}(2, \R) / \mathrm{U}(2)$ and construct a $\U(2)$-structure on the total space of the associated $\R^4$-bundle coming from the standard 4-dimensional representation of $\mathrm{Sp}(2, \R).$ We now indicate how this can be done.

Let $\Sigma \to \mathrm{Sp}(2, \R) / \mathrm{U}(2)$ be a holomorphic curve. By definition, the image of $\Sigma$ is an integral manifold of the exterior differential system $\mathcal{I} = \langle \sigma_{11} \wedge \sigma_{12} , \sigma_{12} \wedge \sigma_{22}, \sigma_{11} \wedge \sigma_{22} \rangle,$ which is well-defined on $\mathrm{Sp}(2, \R)/\U(2).$ Let $\mathcal{F}(\Sigma)$ denote the pullback of the $\U(2)$-bundle $\mathrm{Sp}(2,\R) \to \mathrm{Sp}(2, \R)/\U(2)$ to $\Sigma.$ We need to find a $(1,0)$-form on $\mathcal{F}(\Sigma)$ such that
\begin{equation}\label{eq:U2mivarpi}
d \varpi = \kappa_{a\bar{a}} \wedge \varpi.
\end{equation}
Let $z$ be a holomorphic coordinate on $\Sigma.$ Then $\varpi$ will be of the form $e^{F} d z$ for some function $F.$ Integrating the equation
\begin{align}
\frac{\partial F}{\partial \overline{z}} d \overline{z} - \frac{\partial F}{\partial z} d z = \kappa_{a\bar{a}}
\end{align}
to find $F,$ we find that $\varpi$ satisfies equation (\ref{eq:U2mivarpi}) as desired. Then on $\mathcal{F}(\Sigma)$ there will exist function $H_{ab}=H_{ba}$ with $\sigma_{ab} = -H_{ab} \varpi,$ and we obtain a $\U(2)$-structure on the total space of the associated $\R^4$-bundle satisfying equations (\ref{eq:StructLamMin1}) and $A_{a} = 0$ as required.

\subsection{The case $\lambda= -1/8$}

When $\lambda=-1/8,$ the structure equations (\ref{eq:U2miStructSix}) read
\begin{equation}\label{eq:StructLamMin18}
\begin{aligned}
d \varpi &= \kappa_{a\bar{a}} \wedge \varpi + \overline{A_a \varpi} \wedge \theta_a, \\
d \theta_a &= - \kappa_{a \bar{b}} \wedge \theta_b + H_{ab} \varpi \wedge \overline{\theta_b} - 3 i \eps_{ab} \overline{\varpi \wedge \theta_b}.
\end{aligned}
\end{equation}

As in \S\ref{ssect:LamMin1}, the $\U(2)$-structure $\mathcal{P}$ induces a $\SU(3)$-structure with associated differential forms $\Omega$ and $\Upsilon$ given by equation (\ref{eq:SU3Struct}). We have
\begin{equation}
\begin{aligned}
d \Omega &= -3 \Re \left( \eps_{ab} \varpi \wedge \theta_a \wedge \theta_b \right), \\
d \Upsilon &= \tfrac{3i}{2} \varpi \wedge \overline{\varpi} \wedge \theta_a \wedge \overline{\theta_a},
\end{aligned}
\end{equation}
so this $\SU(3)$-structure is never torsion-free.

\subsubsection{$A=H=0$}\label{sssect:U2mimin1by81flat}

When the torsion functions $A$ and $H$ vanish we have the following result.

\begin{prop}\label{prop:lammin1by8AHzero}
	Let $\mathcal{P} \to N$ be a $\U(2)$-structure satisfying equations (\ref{eq:StructLamMin18}) and the additional conditions $A_a = 0$ and $H_{ab} = 0.$ Then $N$ is locally equivalent to the twistor space $\mathcal{T}X$ of a Ricci-flat K\"ahler 4-manifold $X,$ endowed with a particular $\U(2)$-structure.
	
	Conversely, the twistor space $\mathcal{T}X$ of a Ricci-flat K\"ahler 4-manifold $X$ carries a $\U(2)$-structure satisfying equations (\ref{eq:StructLamMin18}) and the additional conditions $A_a = 0$ and $H_{ab} = 0.$
\end{prop}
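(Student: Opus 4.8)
The plan is to prove both directions by the method of equivalence, identifying the total space $\mathcal{P}$ of the $\U(2)$-structure with the oriented orthonormal coframe bundle of a Riemannian $4$-manifold. Setting $A_a = H_{ab} = 0$ in (\ref{eq:StructLamMin18}) leaves
\[
d\varpi = \kappa_{a\bar{a}} \wedge \varpi, \qquad d\theta_a = -\kappa_{a\bar{b}} \wedge \theta_b - 3i\,\eps_{ab}\,\overline{\varpi \wedge \theta_b}.
\]
First I would observe that the distribution $\{\theta_1 = \theta_2 = 0\}$ is Frobenius integrable, since both terms of each $d\theta_a$ lie in the ideal generated by $\theta_b, \overline{\theta_b}$; its $2$-dimensional leaves fibre $N$ over a $4$-manifold $X$, and the symmetric form $g_X = \theta_a \circ \overline{\theta_a}$ is invariant under the fibre rotation and descends to a metric on $X$. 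The key move is to read the displayed equations as Cartan's first structure equation of $X$: writing $\mathfrak{so}(4) = \mathfrak{u}(2) \oplus \mathfrak{m}$ with $\mathfrak{m} \cong \C$ the orthogonal complement, the tautological forms are the $\theta_a$, the $\mathfrak{u}(2)$-part of the connection is $\kappa_{a\bar{b}}$, and the $\mathfrak{m}$-part is $\varpi$, the term $-3i\,\eps_{ab}\,\overline{\varpi \wedge \theta_b}$ being exactly the action of the $\mathfrak{m}$-component on the standard representation. Thus $H_{ab} = 0$ is precisely the torsion-free condition, so $(\kappa, \varpi)$ is the Levi-Civita connection and $\mathcal{P} \cong \mathcal{F}_{\SO(4)}(X)$. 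Since $\U(2) \subset \SO(4)$ is the stabiliser of a compatible complex structure and $\SO(4)/\U(2) \cong \mathbb{CP}^1$, we get $N = \mathcal{P}/\U(2) = \mathcal{F}_{\SO(4)}(X)/\U(2) = \mathcal{T}X$, the $\mathbb{CP}^1$-fibres being the twistor lines; restricting $d\varpi = \kappa_{a\bar{a}}\wedge\varpi$ and the second structure equation to a fibre shows it carries the round metric of the constant curvature fixed by the coefficient $-3i$.

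Next I would differentiate once more. The identities $d^2\varpi = d^2\theta_a = 0$ produce a second structure equation $d\kappa_{a\bar{b}} = -\kappa_{a\bar{c}}\wedge\kappa_{c\bar{b}} + R_{a\bar{b}}$, where $R$ represents the Riemann curvature of $X$. Decomposing $R$ into its irreducible $\SO(4)$-components (scalar, trace-free Ricci, self-dual Weyl $W^{+}$, anti-self-dual Weyl $W^{-}$), the shape of $d\varpi$ forced by $A_a = 0$ together with the Bianchi identities eliminates every component except $W^{-}$. Hence $X$ is anti-self-dual, scalar-flat and Ricci-flat, which in real dimension four means $X$ is hyperk\"ahler, equivalently Ricci-flat K\"ahler. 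This is the direct analogue of Proposition \ref{prop:lammin1AHzero}, the difference being that the $-3i$ term obstructs the product splitting and replaces it by the twistor fibration, finishing the forward direction.

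For the converse, given a Ricci-flat K\"ahler (hence hyperk\"ahler) $4$-manifold $X$ I would start from $\mathcal{F}_{\SO(4)}(X)$ and define $\theta_a, \kappa_{a\bar{b}}, \varpi$ by the $\mathfrak{so}(4) = \mathfrak{u}(2)\oplus\mathfrak{m}$ decomposition of its tautological and Levi-Civita forms, rescaling $\varpi$ so that the $\mathfrak{m}$-action on $\theta$ has coefficient $-3i$. Torsion-freeness of Levi-Civita gives the $d\theta_a$ equation with $H_{ab} = 0$, while $W^{+} = 0$ and $\mathrm{Ric} = 0$ (both consequences of the hyperk\"ahler condition) give $d\varpi = \kappa_{a\bar{a}}\wedge\varpi$, i.e. $A_a = 0$; these forms are $\U(2)$-basic and descend to the desired $\U(2)$-structure on $\mathcal{T}X$, as in the reconstruction arguments using Theorem \ref{thm:MaurerCartan} elsewhere in this section. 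The main obstacle I anticipate is the representation-theoretic bookkeeping: matching Bryant's $\U(2)^{-}\subset\G_2$ frame conventions of \S\ref{ssect:U2miInduced} to the standard decomposition $\mathfrak{so}(4) = \mathfrak{su}(2)_{+}\oplus\mathfrak{su}(2)_{-}$, checking that the $\mathfrak{m}$-part of Levi-Civita reproduces precisely the factor $-3i$, and verifying that $A_a = H_{ab} = 0$ together with the Bianchi identities isolates exactly the $W^{-}$ curvature of a hyperk\"ahler metric rather than merely that of a scalar-flat anti-self-dual one.
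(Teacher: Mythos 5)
Your proof is correct and follows essentially the same route as the paper's: interpret $\mathcal{P}$ as the $\SO(4)$-coframe bundle of the leaf space $X$ of $\{\theta_a = 0\}$, with $(\kappa_{a\bar{b}}, \varpi)$ as the Levi-Civita form under $\mathfrak{so}(4) = \mathfrak{u}(2) \oplus \mathfrak{m}$ and $H_{ab}=0$ as torsion-freeness, identify $N = \mathcal{F}_{\SO(4)}(X)/\U(2) = \mathcal{T}X$, read the curvature restriction off $d\varpi = \kappa_{a\bar{a}} \wedge \varpi$, and reverse the steps for the converse. Your curvature discussion (only $W^{-}$ survives, so $X$ is anti-self-dual and Ricci-flat, hence locally hyperk\"ahler and Ricci-flat K\"ahler) merely spells out the step the paper compresses into the one-line assertion that the first structure equation forces $g_X$ to be Ricci-flat.
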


\begin{proof}
	If $A_a=0$ and $H_{ab} = 0,$ the structure equations (\ref{eq:StructLamMin18}) become
	\begin{equation}\label{eq:StructLamMin18AHzero}
	\begin{aligned}
	d \varpi &= \kappa_{a\bar{a}} \wedge \varpi, \\
	d \theta_a &= - \kappa_{a \bar{b}} \wedge \theta_b - 3 i \eps_{ab} \overline{\varpi \wedge \theta_b}.
	\end{aligned}
	\end{equation}
	We see that there is a Riemannian submersion $N \to X,$ where $X$ is the leaf space of the distribution $\theta = 0$ on $N$ endowed with the metric $g_X = \theta_a \circ \overline{\theta_a}.$ The Levi-Civita form of $X$ is given by the components of $\kappa_{a\bar{b}}$ and $\varpi.$ It follows from the first equation of (\ref{eq:StructLamMin18AHzero}) that $g_X$ is Ricci-flat. From the appearance of $\varpi$ in the second equation of (\ref{eq:StructLamMin18AHzero}) we may identify $N$ with the total space of the twistor space $\mathcal{T}(X).$
	
	The converse follows by reversing the steps in the argument just given.
\end{proof}

\begin{example}
	By Proposition \ref{prop:lammin1by8AHzero}, twistor spaces $\mathcal{T} \, \mathrm{T}^4$ and $\mathcal{T} \, \mathrm{K3}$ provide compact examples of 6-manifolds with $\U(2)$-structures satisfying equations (\ref{eq:StructLamMin18}).
\end{example}

\subsubsection{Type $A$}

There are no $\U(2)$-structures satisfying equations (\ref{eq:StructLamMin18}) and the additional conditions $H_{ab} = 0, A_a \neq 0.$

\begin{prop}
	Let $\mathcal{P} \to N$ be a $\U(2)$-structure satisfying equations  (\ref{eq:StructLamMin18}). If $H_{ab}=0,$ then $A_a=0$
\end{prop}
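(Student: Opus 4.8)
The plan is to impose $H_{ab}=0$ directly in the structure equations (\ref{eq:StructLamMin18}) and then extract the obstruction from the single identity $d^2\theta_a=0$, without developing the full second-order structure. With $H_{ab}=0$ the equations read
\begin{equation*}
d\varpi = \kappa_{a\bar a}\wedge\varpi + \overline{A_a}\,\overline{\varpi}\wedge\theta_a, \qquad d\theta_a = -\kappa_{a\bar b}\wedge\theta_b - 3i\,\eps_{ab}\,\overline{\varpi}\wedge\overline{\theta_b},
\end{equation*}
and the torsion $A_a$ enters only through $d\overline{\varpi} = -\kappa_{c\bar c}\wedge\overline\varpi + A_c\,\varpi\wedge\overline{\theta_c}$ (using that $\kappa_{a\bar a}$ is purely imaginary and $\overline{\kappa_{a\bar b}}=-\kappa_{b\bar a}$).

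First I would compute $d^2\theta_a$ and organise its terms into those that carry a connection $1$-form $\kappa_{a\bar b}$ (which serve only to pin down the curvature $d\kappa_{a\bar b}=-\kappa_{a\bar c}\wedge\kappa_{c\bar b}+\Phi_{a\bar b}$ and are irrelevant to us) and the purely semibasic $3$-forms. I would then work modulo the differential ideal $\mathcal{I}=\langle\theta_1,\theta_2\rangle$ generated by the unbarred semibasic $(1,0)$-forms. The key simplification is that the curvature enters the semibasic part only through $-\Phi_{a\bar b}\wedge\theta_b$, which carries an explicit factor $\theta_b$ and hence lies in $\mathcal{I}$; the same is true of the double-$\eps$ contribution $9\,\eps_{ab}\eps_{bc}\,\overline\varpi\wedge\varpi\wedge\theta_c$. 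Thus, whatever the undetermined second-order curvature data turn out to be, they cannot affect the component of $d^2\theta_a$ transverse to $\mathcal{I}$.

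The only purely semibasic term of $d^2\theta_a$ lying outside $\mathcal{I}$ comes from differentiating $-3i\,\eps_{ab}\,\overline\varpi\wedge\overline{\theta_b}$ and substituting the $A$-part of $d\overline\varpi$, producing
\begin{equation*}
-3i\,\eps_{ab}\,A_c\,\varpi\wedge\overline{\theta_c}\wedge\overline{\theta_b}.
\end{equation*}
Since everything else in $d^2\theta_a$ is either vertical or lies in $\mathcal{I}$, the identity $d^2\theta_a=0$ forces this expression to vanish on its own. Reading off the coefficient of $\varpi\wedge\overline{\theta_1}\wedge\overline{\theta_2}$ for $a=1$ and for $a=2$ (where $\eps_{12}=1/2$) gives $A_1=0$ and $A_2=0$ respectively, hence $A_a=0$.

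The routine part is the sign and coefficient bookkeeping in $d^2\theta_a$; the only real content, and the step I would take most care over, is the type argument showing that neither the curvature $d\kappa_{a\bar b}$ nor any other second-order quantity can contribute to the $\varpi\wedge\overline{\theta_c}\wedge\overline{\theta_b}$ component. It is worth remarking that this obstruction is produced precisely by the extra term $-3i\,\eps_{ab}\,\overline{\varpi\wedge\theta_b}$ in $d\theta_a$, which is absent when $\lambda=-1$; this accounts for the contrast with Proposition \ref{prop:Lammin1TypeAExist}, where type $A$ structures do exist.
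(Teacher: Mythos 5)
Your proof is correct and follows essentially the same route as the paper, which simply asserts that ``a straightforward calculation'' with $d^2\varpi = d^2\theta_a = 0$ forces $A_a = 0$; your version executes that calculation, and in fact sharpens it slightly by showing $d^2\theta_a = 0$ alone suffices, with the coefficient of the $\kappa$-free monomial $\varpi\wedge\overline{\theta_1}\wedge\overline{\theta_2}$ yielding $-\tfrac{3i}{2}A_a = 0$. One minor quibble: the ideal $\mathcal{I} = \langle\theta_1,\theta_2\rangle$ you quotient by is the algebraic ideal (pointwise, in the exterior algebra spanned by the coframe), not the differential ideal, though this does not affect the argument since the extracted monomial lies in neither.
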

\begin{proof}
	If $H_{ab}=0,$ a straightforward calculation yields that the equations $d^2 \varpi = d^2 \theta_a = 0$ imply $A_a=0.$
\end{proof}

\subsubsection{Type $H$}

Now suppose $\mathcal{P} \to N$ is a $\U(2)$-structure satisfying equations (\ref{eq:StructLamMin18}) and the additional condition $A_{a} = 0.$ We assume that $H_{ab}$ does not vanish identically on $\mathcal{P}$, as this case was considered in \S\ref{sssect:U2mimin1by81flat}.

It will be useful to use real notation. Let $1 \leq a, b \leq 3$ and $1 \leq i, j \leq 2$ be indices with the specified ranges, and define real-valued 1-forms $\varpi_i$ by $\varpi = \varpi_1 + i \varpi_2,$ real-valued functions $S_{aij}$ by
\begin{equation}
\begin{aligned}
S_{111} &= 2 \Im H_{12}, & S_{211} &= \Re H_{11} + \Re H_{22}, & S_{311} &= - \Im H_{11} + \Im H_{22}, \\
S_{112} &= 2 \Re H_{12}, & S_{212} &= - \Im H_{11} - \Im H_{22}, & S_{312} &= - \Re H_{11} + \Re H_{22}, \\
S_{122} &= -2 \Im H_{12}, & S_{222} &= - \Re H_{11} - \Re H_{22}, & S_{322} &= \Im H_{11} - \Im H_{22},
\end{aligned}
\end{equation}
Next, define real-valued 1-forms $\gamma_{ai}$, $\psi_{ab} = -\psi_{ba}$ and $\rho_{ij} = -\rho_{ji}$ by
\begin{equation}
\begin{aligned}
\psi_{23} &=  - i \left( \kappa_{1\bar{1}} - \kappa_{2\bar{2}} \right), & \gamma_{ai} &= S_{aij} \varpi_j, \\
\psi_{31} &= -2 \Re \kappa_{1\bar{2}}, & \rho_{12} &= -i \left( \kappa_{1\bar{1}} + \kappa_{2\bar{2}} \right), \\
\psi_{12} &= 2 \Im \kappa_{1\bar{2}}. & &
\end{aligned}
\end{equation}

Then the identities $d^2 \varpi = 0,$ $d^2 \theta = 0,$ and $d^2 \kappa = 0,$ together with the assumption that $H$ does not vanish identically, imply that the $\mathfrak{so}(3,3)$-valued 1-form
\begin{equation}
\mu = \left(\begin{array}{ccc}
0 & 0 & 3 \varpi_i \\
0 & \psi_{ab} & \gamma_{ai} \\
3 \varpi_j & \gamma_{aj} & \rho_{ij}
\end{array}\right)
\end{equation}
satisfies the Maurer-Cartan equation $d \mu = -\mu \wedge \mu.$

The proof of the following theorem follows the same argument as the proof of Theorem \ref{thm:TypeSCons}, so we omit it.

\begin{thm}
	Each $\U(2)^{+}$-structure satisfying structure equations (\ref{eq:StructLamMin18}) with $A_a=0$ and $H_{ab}$ not identically zero is locally equivalent to the $\R^4$-bundle over a timelike maximal immersion of a surface $\Sigma$ in $\SO(3,3)/\SO(3,2)$ associated to the flat $\mathfrak{sl}(4, \R)$-connection arising from the isomorphism $\mathfrak{so}(3,3) \cong \mathfrak{sl}(4, \R).$.
	
	Conversely, given a maximal timelike surface $\Sigma$ in $SO(3,3)/SO(3,2)$ one may construct a 6-manifold $N$ carrying a $\U(2)^{+}$-structure satisfying structure equations (\ref{eq:StructLamMin18}) with $A_a=0$ on the $\R^4$-bundle over $\Sigma$ associated to the flat $\mathfrak{sl}(4, \R)$-connection arising from the isomorphism $\mathfrak{so}(3,3) \cong \mathfrak{sl}(4, \R).$
\end{thm}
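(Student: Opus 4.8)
The plan is to run the proof of Theorem \ref{thm:TypeSCons} essentially verbatim, with the roles of the base and fibre dimensions interchanged: here the base is a $2$-dimensional \emph{timelike} surface $\Sigma$ with a $3$-dimensional spacelike normal bundle, whereas in the type $S$ case the base was a $3$-dimensional spacelike $B$ with $2$-dimensional timelike normal bundle. First I would fix the pseudo-Euclidean structure on $\R^{3,3} = \Lambda^2 \R^4$ exactly as in the proof of Theorem \ref{thm:TypeSCons}, so that $Q = \SO(3,3)/\SO(3,2)$ is the quadric of vectors of norm $-1$, and I would take as given the computation made just above the statement, namely that the $\mathfrak{so}(3,3)$-valued form $\mu$ satisfies the Maurer--Cartan equation $d\mu = -\mu\wedge\mu$.

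The first substantive step is to observe that, when $A_a = 0$, the structure equations (\ref{eq:StructLamMin18}) show the Pfaffian system cut out by $\theta_1,\theta_2$ is Frobenius-integrable, since every term of $d\theta_a$ lies in the differential ideal generated by the $\theta_b$ and $\overline{\theta_b}$. Thus $N$ is foliated by $2$-dimensional leaves tangent to the $\varpi$-directions. I would fix such a leaf $\Sigma$, restrict the $\U(2)$-bundle $\mathcal{P}$ to it to obtain $\mathcal{P}(\Sigma)$, and apply Cartan's Theorem on Maps into Lie Groups (Theorem \ref{thm:MaurerCartan}) to the restricted form $\mu$ to produce a local map $f \colon \mathcal{P}(\Sigma) \to \SO(3,3)$ with $f^{-1}df = \mu$. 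Reading off the block structure of $\mu$ identifies $\mathcal{P}(\Sigma)$ with the adapted orthonormal coframe bundle of the immersion $u \colon \Sigma \to Q$ it defines: the entries $3\varpi_i$ are the tangential displacement, $\rho_{ij}$ the induced connection on $\Sigma$, $\psi_{ab}$ the normal connection, and $\gamma_{ai} = S_{aij}\varpi_j$ the second fundamental form. Because $\psi = (\psi_{ab})$ is assembled from the compact $\mathfrak{u}(2)$-form $\kappa$, it is $\mathfrak{so}(3)$-valued, so the $3$-dimensional normal bundle is spacelike and $\Sigma$ is therefore timelike; moreover the relations $S_{aii} = 0$, which follow directly from the definitions of $S_{aij}$ in terms of $H_{ab}$, say precisely that the mean curvature vector of $u$ vanishes, i.e. $u$ is maximal.

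To pass from the immersion back to the promised $\R^4$-bundle I would invoke the exceptional isogeny $\mathrm{SL}(4,\R) \to \SO(3,3)$ coming from the action on $\Lambda^2 \R^4$, lifting $f$ locally to $\tilde f \colon \mathcal{P}(\Sigma) \to \mathrm{SL}(4,\R)$ with $\tilde f^{-1} d\tilde f$ the corresponding $\mathfrak{sl}(4,\R)$-form. Since this form is flat, the associated standard $\R^4$-bundle over $\Sigma$ is flat, and its total space carries the reconstructed $\U(2)$-structure, recovering $N$ as a flat $\R^4$-bundle over $\Sigma$. The converse is simply the reversal of these steps: starting from a maximal timelike $\Sigma \subset Q$, one adapts frames so that the Maurer--Cartan form of $\SO(3,3)$ takes the shape of $\mu$, builds the flat $\R^4$-bundle via the isogeny, and verifies that the induced coframing satisfies (\ref{eq:StructLamMin18}) with $A_a = 0$.

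The main obstacle, and the one place where the bookkeeping genuinely departs from Theorem \ref{thm:TypeSCons}, is the causal-character accounting. One must confirm that $\mu$ is valued in $\mathfrak{so}(3,3)$ for the correct split signature, so that the off-diagonal $3\varpi_i$ entries pair $e^0$ with tangent directions of the right sign; getting this right is exactly what forces the normal bundle to be spacelike and hence $\Sigma$ to be timelike, rather than spacelike as in the type $S$ setting. The secondary point to check is that the normalising constant $3$ reproduces the coefficient $-3i$ of (\ref{eq:StructLamMin18}) on the nose, so that the reconstructed structure lands precisely in the $\lambda = -1/8$ family and not merely up to rescaling. Once the signs are pinned down and the identification $S_{aii}=0 \Leftrightarrow$ vanishing mean curvature is recorded, the remaining verifications are the same bundle-theoretic manipulations already carried out for type $S$.
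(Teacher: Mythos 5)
Your proposal is correct and follows essentially the same approach as the paper: indeed, the paper omits its proof of this theorem, stating that it follows the same argument as Theorem \ref{thm:TypeSCons}, and your plan---Frobenius integrability of the system $\theta_1 = \theta_2 = 0$ when $A_a = 0$, Cartan's Theorem \ref{thm:MaurerCartan} applied to $\mu$ on a leaf, identification of $3\varpi_i$, $\rho_{ij}$, $\psi_{ab}$, $\gamma_{ai} = S_{aij}\varpi_j$ with the immersion data, the trace identities $S_{aii} = 0$ (which do follow from the definitions of $S_{aij}$ in terms of $H_{ab}$) giving maximality, and the exceptional isogeny $\mathrm{SL}(4,\R) \to \SO(3,3)$ producing the flat $\R^4$-bundle---is precisely that argument transposed to the $(1,3,2)$ block structure. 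Your flagged ``causal-character accounting'' is exactly the one point where the template changes from the type $S$ case, and checking it as you propose pins down that the tangent plane is timelike and the normal bundle spacelike, completing the proof.
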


\subsection{The case $\lambda= 2/5$}

When $\lambda=2/5,$ the structure equations (\ref{eq:U2miStructSix}) read
\begin{equation}\label{eq:StructLam2by5}
\begin{aligned}
d \varpi &= \kappa_{a\bar{a}} \wedge \varpi + \overline{A_a \varpi} \wedge \theta_a + 3 i \eps_{ab} \overline{\theta_a \wedge \theta_b}, \\
d \theta_a &= - \kappa_{a \bar{b}} \wedge \theta_b + H_{ab} \varpi \wedge \overline{\theta_b}.
\end{aligned}
\end{equation}

As in \S\ref{ssect:LamMin1}, the $\U(2)$-structure $\mathcal{P}$ induces a $\SU(3)$-structure with associated differential forms $\Omega$ and $\Upsilon$ given by equation (\ref{eq:SU3Struct}). We have
\begin{equation}
\begin{aligned}
d \Omega &= -3 \Re \Upsilon, \\
d \Upsilon &= -\tfrac{3i}{2} \theta_a \wedge \overline{\theta_a} \wedge \theta_b \wedge \overline{\theta_b}
\end{aligned}
\end{equation}
so this $\SU(3)$-structure is never torsion-free.

\subsubsection{$A=H=0$}\label{sssect:U2mi2by51flat}

When the torsion functions $A$ and $H$ vanish we have the following result.

\begin{prop}\label{prop:lam2by5AHzero}
	Let $\mathcal{P} \to N$ be a $\U(2)$-structure satisfying equations (\ref{eq:StructLam2by5}) and the additional conditions $A_a = 0$ and $H_{ab} = 0.$ Then $N$ is locally equivalent to a particular (described in the proof) $\U(2)$-structure on product $\R^2 \times X$ of Euclidean 2-space with a Ricci-flat K\"ahler 4-manifold $X.$
	
	Conversely, any product of this form carries a $\U(2)$-structure satisfying equations (\ref{eq:StructLam2by5}) and the additional conditions $A_a = 0$ and $H_{ab} = 0.$
\end{prop}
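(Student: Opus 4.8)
The plan is to analyze the structure equations (\ref{eq:StructLam2by5}) under the hypotheses $A_a = 0$ and $H_{ab} = 0$, which reduce them to a system whose integrability conditions expose a canonical splitting of $N$. Setting both torsion functions to zero, the structure equations become
\begin{equation*}
d \varpi = \kappa_{a\bar{a}} \wedge \varpi + 3 i \eps_{ab} \overline{\theta_a \wedge \theta_b}, \qquad
d \theta_a = - \kappa_{a \bar{b}} \wedge \theta_b.
\end{equation*}
The key observation is that the forms $\theta_a$ satisfy a \emph{closed} subsystem: their exterior derivatives involve only the $\kappa_{a\bar{b}}$ and the $\theta_b$ themselves, with no appearance of $\varpi$. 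This means the distribution $\theta_1 = \theta_2 = 0$ is integrable and, moreover, that the forms $\theta_a$ and the connection $\kappa_{a\bar{b}}$ define a torsion-free $\U(2)$-structure on the 4-dimensional leaf space $X$ of this distribution. As in the $\lambda = -1$ type-$A$ discussion of \S\ref{sssect:U2mimin1TypeA}, this identifies $X$ with a K\"ahler 4-manifold carrying the metric $g_X = \theta_a \circ \overline{\theta_a}$.

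The next step is to extract the curvature content from the identities $d^2 \theta_a = 0$ and $d^2 \varpi = 0$. Differentiating the $d\theta_a$ equation gives an expression for $d\kappa_{a\bar{b}}$ purely in terms of the $\theta_c$, and in particular $d\kappa_{a\bar{a}}$ is the Ricci form of $g_X$. Differentiating the $d\varpi$ equation, the term $3i \eps_{ab} \overline{\theta_a \wedge \theta_b}$ must be consistent, and I expect the resulting identity to force $d\kappa_{a\bar{a}} = 0$, i.e.\ the Ricci form of $g_X$ vanishes, so that $g_X$ is Ricci-flat K\"ahler. Once $\kappa_{a\bar{a}}$ is closed, the $\varpi$ equation reads $d\varpi = (\text{closed 1-form}) \wedge \varpi + (\text{pullback from } X)$, and I would integrate this to produce a flat $\R^2$-factor: locally $\kappa_{a\bar{a}} = i\,ds$ for a real function $s$, and a suitable rescaling $\tilde{\varpi} = e^{\text{(function of }s)}\varpi$ satisfies $d\tilde{\varpi} = 3i \eps_{ab}\overline{\theta_a \wedge \theta_b}$, so that $\Re \tilde{\varpi}$ and $\Im \tilde\varpi$ (after accounting for the $\theta$-dependent correction) descend to flat coordinates on an $\R^2$-factor. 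The phrase ``a particular $\U(2)$-structure'' in the statement signals that the $\varpi$-direction is \emph{not} a metric product factor in the naive sense—it is twisted over $X$ through the $\eps_{ab}\overline{\theta_a \wedge \theta_b}$ term—so the description in the proof must record this twisting explicitly.

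The main obstacle will be the honest verification that the $d^2\varpi = 0$ identity genuinely forces Ricci-flatness of $X$ rather than merely some weaker condition, and correctly disentangling the inhomogeneous term $3i\eps_{ab}\overline{\theta_a \wedge \theta_b}$ when integrating for the $\R^2$-factor. Unlike the $\lambda = -1$ case (Proposition \ref{prop:lammin1AHzero}), where $A = H = 0$ makes $\mathcal{P}$ outright torsion-free and one simply invokes the holonomy classification, here the surviving term means $\mathcal{P}$ has nonzero intrinsic torsion even when $A = H = 0$; hence the local product structure is not immediate and must be produced by explicit integration. The converse direction is then a routine computation: starting from $\R^2 \times X$ with $X$ Ricci-flat K\"ahler, one defines $\theta_a$ from the K\"ahler structure on $X$, sets $\varpi = dz + (\text{correction})$ adapted to the twisting, and checks directly that (\ref{eq:StructLam2by5}) holds with $A = H = 0$; I would state this and omit the details, exactly as the analogous converses in \S\ref{ssect:LamMin1} are omitted.
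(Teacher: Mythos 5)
Your proposal matches the paper's proof essentially step for step: the paper likewise reduces to (\ref{eq:StructLam2by5AHzero}), extracts from $d^2\varpi = d^2\theta_a = 0$ the curvature identity (\ref{eq:StructLam2by5AHzeroCurv}) with $R_{a\bar{c}c\bar{b}} = 0$ (so $d\kappa_{a\bar{a}} = 0$ and the leaf space $X$ of $\theta = 0$ is Ricci-flat K\"ahler), and then integrates exactly the twisting you flag, writing $\varpi = dz + 3i\zeta$ with $d\zeta = \eps_{ab}\overline{\theta_a \wedge \theta_b}$, with the converse by reversal. The only small refinement worth noting is that the paper normalizes the central connection $\kappa_{a\bar{a}} = i\,ds$ by multiplying \emph{both} $\varpi$ and $\theta_a$ by suitable powers of $e^{is}$ (so the primitive $\zeta$ genuinely lives on $X$), rather than rescaling $\varpi$ alone as you sketch, but this is precisely the ``$\theta$-dependent correction'' you anticipated.
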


\begin{proof}
	If $A_a = 0$ and $H_{ab} = 0,$ the structure equations (\ref{eq:StructLam2by5}) become
	\begin{equation}\label{eq:StructLam2by5AHzero}
	\begin{aligned}
	d \varpi &= \kappa_{a\bar{a}} \wedge \varpi + 3 i \eps_{ab} \overline{\theta_a \wedge \theta_b}, \\
	d \theta_a &= - \kappa_{a \bar{b}} \wedge \theta_b.
	\end{aligned}
	\end{equation}
	
	The identities $d^2 \varpi = d^2 \theta_a = 0$ imply
	\begin{equation}\label{eq:StructLam2by5AHzeroCurv}
	d \kappa_{a \bar{b}} = - \kappa_{a \bar{c}} \wedge \kappa_{c \bar{b}} + R_{a\bar{b}c\bar{d}} \overline{\theta_c} \wedge \theta_d,
	\end{equation}
	where $R_{a\bar{b}c\bar{d}}$ are complex-valued functions on $\mathcal{Q}$ satisfying $R_{a\bar{b}c\bar{d}} = R_{c\bar{b}a\bar{d}} = R_{a\bar{d}c\bar{b}},$ $R_{a\bar{b}c\bar{d}} = \overline{R_{b\bar{a}d\bar{c}}},$ and $R_{a\bar{c}c\bar{b}} = 0.$ In particular, $d \kappa_{a \bar{a}} = 0,$ so locally $\kappa_{a\bar{a}} = i \, d s$ for some real valued function $s.$ By multiplying $\varpi$ and $\theta_a$ by suitable powers of $e^{is}$ we may assume that $\kappa_{2\bar{2}} = -\kappa_{1\bar{1}}$ in the structure equations (\ref{eq:StructLam2by5AHzero}).
	
	There is a Riemannian submersion $N \to X,$ where $X$ is the leaf space of $\theta = 0,$ equipped with the K\"ahler metric $g_X = \theta_a \circ \overline{\theta_a}.$ This metric is Ricci-flat by equation (\ref{eq:StructLam2by5AHzeroCurv}). Finally, $d \left(\eps_{ab} \overline{\theta_a \wedge \theta_b} \right) = 0$ on $X,$ so there exists a locally defined 1-form on $X$ $\zeta$ with $d \zeta = \eps_{ab} \overline{\theta_a \wedge \theta_b}.$ Thus, on $N$ we may introduce a complex coordinate $z$ such that $\varpi = d z + 3 i \zeta.$ This proves the first part of the theorem. The converse follows by reversing the construction just described.
\end{proof}

\begin{example}
	If $X$ is a hyperk\"ahler 4-manifold with $\left[ \tfrac{1}{2\pi} \omega_J \right]$ and $\left[ \tfrac{1}{2\pi} \omega_K \right]$ integral classes, then the proof of Proposition \ref{prop:lam2by5AHzero} implies that associated $\mathrm{T}^2$-bundle over $X$ is an example of a compact 6-manifold $N$ with a $\U(2)$-structure satisfying equations (\ref{eq:StructLam2by5}) and $A_a = 0$ and $H_{ab} = 0.$ One just needs to take the real and imaginary parts of $\varpi$ to be connection forms with curvature the appropriate multiples of $\omega_J$ and $\omega_K.$
\end{example}

\subsubsection{Type A}

Now suppose $\mathcal{P} \to N$ is a $\U(2)$-structure satisfying equations (\ref{eq:StructLam2by5}) and the additional condition $H_{ab} = 0.$ We assume that $A_a$ does not vanish identically on $\mathcal{P}$, as this case was considered in \S\ref{sssect:U2mi2by51flat}.

In this case the structure equations (\ref{eq:StructLam2by5}) become
\begin{equation}\label{eq:StructLam2by5A}
\begin{aligned}
d \varpi &= \kappa_{a\bar{a}} \wedge \varpi + \overline{A_a \varpi} \wedge \theta_a + 3 i \eps_{ab} \overline{\theta_a \wedge \theta_b}, \\
d \theta_a &= - \kappa_{a \bar{b}} \wedge \theta_b,
\end{aligned}
\end{equation}
and the identities $d^2 \varpi = d^2 \theta_a = 0$ imply
\begin{equation}\label{eq:StructTwoLam2by5A}
\begin{aligned}
d \kappa_{a \bar{b}} &= - \kappa_{a \bar{c}} \wedge \kappa_{c \bar{b}} + R_{a\bar{b}c\bar{d}} \overline{\theta_c} \wedge \theta_d, \\
d A_a &= -A_b \kappa_{a \bar{b}} - 2 A_a \kappa_{b \bar{b}} + C_{ab} \overline{\theta_b},
\end{aligned}
\end{equation}
where $C_{ab}$ and $R_{a\bar{b}c\bar{d}}$ are complex-valued functions on $\mathcal{Q}$ satisfying $C_{ab} = C_{ba},$ $R_{a\bar{b}c\bar{d}} = R_{c\bar{b}a\bar{d}} = R_{a\bar{d}c\bar{b}},$ $R_{a\bar{b}c\bar{d}} = \overline{R_{b\bar{a}d\bar{c}}},$ and $R_{a\bar{c}c\bar{b}} = -A_a \overline{A_b}.$

These structure equations are almost identical to those of \S\ref{sssect:U2mimin1TypeA}, and because of this an almost identical argument to that of Proposition \ref{prop:Lammin1TypeAExist} yields the following result.

\begin{prop}\label{prop:Lam2by5TypeAExist}
	The $\U(2)^-$-structures satisfying equations (\ref{eq:StructLam2by5}) and the additional condition $H_{ab} = 0$ exist locally and depend on 3 functions of 2 variables.
\end{prop}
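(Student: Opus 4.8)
The plan is to treat this as a prescribed coframing problem on the $\U(2)$-bundle $\mathcal{P} \to N$ and to apply Cartan's existence theory exactly as in Proposition \ref{prop:Lammin1TypeAExist}. Here the tautological data is the coframe $(\varpi, \theta_a)$ together with the $\mathfrak{u}(2)$-valued natural connection $\kappa_{a\bar{b}}$; the primary invariant is the torsion function $A_a$ (valued in $\C^2$), and the free derivatives are the functions $C_{ab} = C_{ba}$ (valued in $\mathrm{Sym}^2 \C^2$) appearing in (\ref{eq:StructTwoLam2by5A}). The goal is to run the Cartan--K\"ahler machinery of \cite{BryEDSNotes} on this setup and read off the generality from the last nonzero Cartan character.

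The observation that makes the argument short is that the structure equations (\ref{eq:StructLam2by5A}) and (\ref{eq:StructTwoLam2by5A}) differ from their $\lambda = -1$ counterparts (\ref{eq:StructLamMin1A}) and (\ref{eq:StructTwoLamMin1A}) only by the single term $3i\,\eps_{ab} \overline{\theta_a \wedge \theta_b}$ in the expression for $d\varpi$. This term is a fixed algebraic expression in the tautological forms, of zeroth order in the torsion, so it does not enter the linearization that determines the symbol of the system: the tableau of free derivatives is literally the same as the one analyzed for Proposition \ref{prop:Lammin1TypeAExist}. In particular it is involutive, with the same Cartan characters, the last nonzero being $s_2 = 3$, which yields the stated dependence on 3 functions of 2 variables.

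First I would verify that the modification introduces no new integrability conditions, by applying $d^2 = 0$ to (\ref{eq:StructLam2by5A}). Differentiating the extra term and reducing via the $d\theta_a$ equation, one checks that $d\!\left(3i\,\eps_{ab}\overline{\theta_a \wedge \theta_b}\right)$ lands in the span already accounted for by (\ref{eq:StructTwoLam2by5A}), so the system closes at the same order with the same symmetry and trace relations on $C_{ab}$ and $R_{a\bar{b}c\bar{d}}$ (in particular $C_{ab}=C_{ba}$ and $R_{a\bar{c}c\bar{b}} = -A_a\overline{A_b}$). With the tableau and its prolongation unchanged, the Cartan--K\"ahler theorem then gives local existence, and the generality is exactly that of the $\lambda=-1$ case. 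Real-analyticity, required to invoke Cartan--K\"ahler, is not a genuine restriction: the underlying $\lambda$-quadratic system is elliptic modulo diffeomorphism (Proposition \ref{prop:RCharVar}), so its solutions are real-analytic in harmonic coordinates (Corollary \ref{cor:RealAnal}).

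The main obstacle is precisely this compatibility check: one must confirm that $d^2 = 0$ applied to the modified $d\varpi$ equation produces only relations already present in the $\lambda = -1$ analysis and imposes no additional constraint on the free derivatives $C_{ab}$. This is the single point where the phrase ``almost identical'' could conceivably fail, and it is worth isolating the computation of $d\!\left(\eps_{ab}\overline{\theta_a \wedge \theta_b}\right)$ to see that the new contributions are absorbed by the existing curvature terms $R_{a\bar{b}c\bar{d}}\,\overline{\theta_c} \wedge \theta_d$. Once this is confirmed, the remainder of the proof is a verbatim transcription of the argument for Proposition \ref{prop:Lammin1TypeAExist}, and the statement follows.
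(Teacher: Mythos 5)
Your proposal is correct and takes essentially the same route as the paper: the paper proves Proposition \ref{prop:Lam2by5TypeAExist} precisely by observing that the structure equations (\ref{eq:StructLam2by5A}), (\ref{eq:StructTwoLam2by5A}) are almost identical to (\ref{eq:StructLamMin1A}), (\ref{eq:StructTwoLamMin1A}) and repeating the prescribed-coframing argument of Proposition \ref{prop:Lammin1TypeAExist} via the methods of \cite{BryEDSNotes}. Your added care in checking that $d^2=0$ applied to the extra zeroth-order term $3i\,\eps_{ab}\overline{\theta_a \wedge \theta_b}$ produces no new integrability conditions and leaves the tableau of free derivatives $C_{ab}$ unchanged is exactly the content of the paper's phrase ``almost identical,'' made explicit.
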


We will not say more about the structures of this type, except to note that the calculations after Proposition \ref{prop:Lammin1TypeAExist} may be repeated in this case too.

\subsection{The case $\lambda= 3/4$}\label{ssect:Lam3by4}

When $\lambda=3/4,$ the structure equations (\ref{eq:U2miStructSix}) read
\begin{equation}\label{eq:StructLam3by4}
\begin{aligned}
d \varpi &= \kappa_{a\bar{a}} \wedge \varpi + \overline{A_a \varpi} \wedge \theta_a - \tfrac{4i}{3} \eps_{ab} \overline{\theta_a \wedge \theta_b}, \\
d \theta_a &= - \kappa_{a \bar{b}} \wedge \theta_b + H_{ab} \varpi \wedge \overline{\theta_b} + \tfrac{5i}{6} \eps_{ab} \overline{\varpi \wedge \theta_b}.
\end{aligned}
\end{equation}

As in \S\ref{ssect:LamMin1}, the $\U(2)$-structure $\mathcal{P}$ induces a $\SU(3)$-structure with associated differential forms $\Omega$ and $\Upsilon$ given by equation (\ref{eq:SU3Struct}). We have
\begin{equation}
\begin{aligned}
d \Omega &= -3 \Re \Upsilon, \\
d \Upsilon &= -\tfrac{3i}{2} \theta_a \wedge \overline{\theta_a} \wedge \theta_b \wedge \overline{\theta_b}
\end{aligned}
\end{equation}
so this $\SU(3)$-structure is never torsion-free. However, the rescaled $\SU(3)$-structure defined by
\begin{equation}
\begin{aligned}
\tilde{\Omega} &= \tfrac{i}{2} \left( \tfrac{25}{36} \, \varpi \wedge \overline{\varpi} + \tfrac{10}{9} \theta_a \wedge \overline{\theta_a} \right), \\
\tilde{\Upsilon} &= \tfrac{25}{27} \, \eps_{ab} \varpi \wedge \theta_a \wedge \theta_b,
\end{aligned}
\end{equation}
is \emph{nearly K\"ahler}, meaning that
\begin{equation}
d \tilde{\Omega} = 3 \Re \tilde{\Upsilon}, \:\:\:\:\: d \Im \tilde{\Upsilon} = -2 \tilde{\Omega} \wedge \tilde{\Omega}.
\end{equation}

\subsubsection{$A = H = 0$}

If the torsion functions $A$ and $H$ vanish identically, we have the following result.

\begin{prop}
	Let $\mathcal{P} \to N$ be a $\U(2)$-structure on a $6$-manifold $N$ satisfying structure equations (\ref{eq:StructLam3by4}) and the additional conditions $A_a = 0$ and  $H_{ab} = 0.$ Then $N$ is locally equivalent to the twistor space $\mathbb{T} X$ of an anti-self-dual Einstein 4-manifold $X$ with scalar curvature ${80}/{9}$ endowed with a particular $\U(2)$-structure, such that the fibres have constant curvature $25/9.$
	
	Conversely, the twistor space $\mathcal{T} (X)$ of an anti-self-dual Einstein 4-manifold $X$ with scalar curvature ${80}/{9}$ carries a $\U(2)$-structure satisfying structure equations (\ref{eq:StructLam3by4}) and the additional conditions $A_a = 0$ and  $H_{ab} = 0.$
\end{prop}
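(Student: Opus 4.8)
The plan is to treat this exactly like the $\lambda=-1/8$ and $\lambda=2/5$ analogues (Propositions \ref{prop:lammin1by8AHzero} and \ref{prop:lam2by5AHzero}): read a twistor fibration off the reduced structure equations, then recover the base geometry from the curvature identities. Setting $A_a=H_{ab}=0$ in (\ref{eq:StructLam3by4}) gives
\begin{equation}\label{eq:StructLam3by4AHzeroPlan}
\begin{aligned}
d \varpi &= \kappa_{a\bar{a}} \wedge \varpi - \tfrac{4i}{3} \eps_{ab} \overline{\theta_a \wedge \theta_b}, \\
d \theta_a &= - \kappa_{a \bar{b}} \wedge \theta_b + \tfrac{5i}{6} \eps_{ab} \overline{\varpi \wedge \theta_b}.
\end{aligned}
\end{equation}
First I would observe that every term on the right of the $d\theta_a$ equation lies in the differential ideal generated by $\theta_1,\theta_2,\overline{\theta_1},\overline{\theta_2}$, so the rank-$2$ distribution $\theta_1=\theta_2=0$ is integrable. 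Its leaf space is locally a $4$-manifold $X$; the projection $N\to X$ is a Riemannian submersion with $g_X=\theta_a\circ\overline{\theta_a}$ descending to $X$, the fibres being the integral surfaces of $\theta=0$ with induced metric $\varpi\circ\overline{\varpi}$. The $\mathfrak{su}(2)$ part of $\kappa_{a\bar{b}}$ is the pullback of the Levi-Civita connection of $X$, and the appearance of $\varpi$ in the second line of (\ref{eq:StructLam3by4AHzeroPlan}) is precisely what exhibits $N$ as an $S^2$-bundle of compatible almost complex structures over $X$, i.e. as the twistor space.

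The heart of the argument is the curvature computation. I would differentiate (\ref{eq:StructLam3by4AHzeroPlan}) and impose $d^2\varpi=d^2\theta_a=0$ to solve for $d\kappa_{a\bar{b}}$, obtaining $d\kappa_{a\bar{b}}=-\kappa_{a\bar{c}}\wedge\kappa_{c\bar{b}}+R_{a\bar{b}c\bar{d}}\,\overline{\theta_c}\wedge\theta_d+(\text{terms in }\varpi\wedge\overline{\varpi})$, where $R_{a\bar{b}c\bar{d}}$ inherits the symmetries of a curvature tensor. Decomposing this $\mathfrak{u}(2)$-valued curvature into its $\mathfrak{u}(1)$ (trace) and $\mathfrak{su}(2)$ (trace-free) parts and matching against the decomposition of the Riemann tensor of $X$ into $\mathrm{Scal}$, $\mathrm{Ric}_0$, $W^+$ and $W^-$, the $d^2=0$ relations should force $\mathrm{Ric}_0=0$ and the self-dual Weyl tensor $W^+=0$ (the anti-self-dual Einstein condition) and pin the scalar curvature to $80/9$, while $W^-$ remains unconstrained and encodes the freedom in choosing $X$. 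The same identities read off on the fibre directions fix the curvature of $\varpi\circ\overline{\varpi}$ to the constant $25/9$, consistent with $25/9=4\,(5/6)^2$, the square of the coefficient of the twistor term in (\ref{eq:StructLam3by4AHzeroPlan}). This identifies $(N,\mathcal{P})$ with $\mathcal{T}X$ carrying its canonical $\U(2)$-structure.

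For the converse I would reverse these steps: starting from an anti-self-dual Einstein $4$-manifold $X$ with scalar curvature $80/9$, form the sphere bundle $\mathcal{T}X$ of compatible almost complex structures, use the Levi-Civita connection to split $T(\mathcal{T}X)$ into horizontal and vertical summands, and define $\theta_a$ (horizontal, the pulled-back coframe) and $\varpi$ (vertical, normalized so the fibres have curvature $25/9$). Verifying (\ref{eq:StructLam3by4}) with $A=H=0$ is then a direct computation using the Einstein and anti-self-duality equations together with the scalar-curvature normalization.

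I expect the main obstacle to be the bookkeeping in the curvature step: decomposing $R_{a\bar{b}c\bar{d}}$ into its $W^+\oplus W^-\oplus\mathrm{Ric}_0\oplus\mathrm{Scal}$ pieces with the correct orientation and chirality conventions, and extracting the exact constants $80/9$ and $25/9$ rather than merely their qualitative form. A cleaner route — given that the excerpt has already produced the nearly-K\"ahler structure $(\tilde{\Omega},\tilde{\Upsilon})$ just before the statement — is to invoke the Atiyah--Hitchin--Singer/Friedrich--Kurke characterization of nearly-K\"ahler twistor spaces, which identifies such spaces with twistor spaces of positive-scalar-curvature anti-self-dual Einstein $4$-manifolds; this reduces the problem to matching normalizations rather than deriving the curvature conditions from scratch.
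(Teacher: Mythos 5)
Your proposal is correct and follows essentially the same route as the paper's proof: reduce the structure equations, observe that the distribution $\theta_1=\theta_2=0$ is integrable, read off the Riemannian submersion $N \to X$ with $g_X = \theta_a \circ \overline{\theta_a}$, extract the anti-self-dual Einstein condition with scalar curvature $80/9$ and fibre curvature $25/9$ from the curvature content of the structure equations, identify $N$ with the twistor space via the appearance of $\varpi$ in the $d\theta_a$ equation, and obtain the converse by reversing the construction. The one refinement worth noting is that the paper treats $(\kappa_{a\bar{b}}, \varpi)$ \emph{jointly} as the full $\mathfrak{so}(4)$-valued Levi-Civita form of $X$ --- not merely the $\mathfrak{su}(2)$-part of $\kappa$ as you state --- so the $d\varpi$ equation is already a second structure equation for $g_X$ and the constants fall out directly, without the heavier $W^+ \oplus W^- \oplus \mathrm{Ric}_0 \oplus \mathrm{Scal}$ bookkeeping and without the nearly-K\"ahler shortcut, which on its own would not pin down the specific $\U(2)$-structure asserted in the statement.
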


\begin{proof}
	If $A_a=0$ and $H_{ab} = 0,$ the structure equations (\ref{eq:StructLam3by4}) become
	\begin{equation}\label{eq:StructLam3by4AHzero}
	\begin{aligned}
	d \varpi &= \kappa_{a\bar{a}} \wedge \varpi - \tfrac{4i}{3} \eps_{ab} \overline{\theta_a \wedge \theta_b}, \\
	d \theta_a &= - \kappa_{a \bar{b}} \wedge \theta_b + \tfrac{5i}{6} \eps_{ab} \overline{\varpi \wedge \theta_b}.
	\end{aligned}
	\end{equation}
	We see that there is a Riemannian submersion $N \to X,$ where $X$ is the leaf space of the distribution $\theta = 0$ on $N$ endowed with the metric $g_X = \theta_a \circ \overline{\theta_a}.$ The Levi-Civita form of $X$ is given by the components of $\kappa_{a\bar{b}}$ and $\varpi.$ It follows from the first equation of (\ref{eq:StructLamMin18AHzero}) that $g_X$ is anti-self-dual Einstein with scalar curvature $80/9$. From the appearance of $\varpi$ in the second equation of (\ref{eq:StructLam3by4AHzero}) we may identify $N$ with the total space of the twistor space $\mathcal{T}(X).$ The metric $\varpi \circ \overline{\varpi}$ on the fibres has constant curvature $25/9.$ This proves the first statement. The converse follows by reversing the steps in the argument just given.
\end{proof}

\subsubsection{Types $A$ and $H$}

There are no $\U(2)$-structures satisfying equations (\ref{eq:StructLam3by4}) and the additional conditions $H_{ab} = 0, A_a \neq 0$ or $A_a=0, H_{ab} \neq 0$.

\begin{prop}
	Let $\mathcal{P} \to N$ be a $\U(2)$-structure satisfying equations (\ref{eq:StructLam3by4}).
	\begin{enumerate}
		\item If $H_{ab}=0$ then $A_a=0.$
		\item If $A_a=0$ then $H_{ab}=0.$
	\end{enumerate}
\end{prop}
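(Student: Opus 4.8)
The plan is to prove both implications by the same mechanism used for the analogous result in the $\lambda=-1/8$ case, namely by imposing the second-order integrability conditions $d^2\varpi=0$ and $d^2\theta_a=0$ on the structure equations (\ref{eq:StructLam3by4}) with the relevant torsion function set to zero, and then isolating a single component of definite bidegree. The essential feature of $\lambda=3/4$ is that \emph{both} of the fixed intrinsic-torsion constants $-\tfrac{4i}{3}$ and $\tfrac{5i}{6}$ appearing in (\ref{eq:StructLam3by4}) are nonzero; it is precisely the coupling of the surviving torsion function to one of these constants that produces an obstruction with no other possible source.

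For part (1), I would assume $H_{ab}=0$ and differentiate the equation for $d\theta_a$. The function $A_a$ enters $d^2\theta_a$ only through the fixed term $\tfrac{5i}{6}\eps_{ab}\overline{\varpi\wedge\theta_b}$, after substituting the $A$-term of $d\overline{\varpi}$ obtained from (\ref{eq:StructLam3by4}). I would then isolate the component of $d^2\theta_a$ of type $\varpi\wedge\overline{\theta}\wedge\overline{\theta}$, meaning the unique $(1,0)$-factor equal to $\varpi$ and two $(0,1)$-factors. A bidegree count shows that neither the a priori unknown curvature $d\kappa_{a\bar b}$ nor the derivatives $dA_a$ can contribute to this component: wedging a $2$-form against $\theta_b$ can only produce a term whose $(1,0)$-factor is $\theta_b$, never $\varpi$, and all curvature contributions carry an explicit $\kappa$-factor. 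The surviving coefficient is a nonzero multiple of $\eps_{ab}A_c$, whose vanishing gives $A_1=A_2=0$.

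For part (2), I would assume $A_a=0$ and differentiate the equation for $d\varpi$. Here $H_{ab}$ enters $d^2\varpi$ only through the fixed term $-\tfrac{4i}{3}\eps_{ab}\overline{\theta_a\wedge\theta_b}$, via the $H$-term of $d\overline{\theta_a}$. Isolating the component of $d^2\varpi$ of type $\overline{\varpi}\wedge\theta\wedge\overline{\theta}$, and again noting that the bidegree rules out any contribution from $d\kappa_{a\bar b}$ or from derivatives of $H_{ab}$, I expect the surviving coefficient to be a nonzero multiple of $\eps_{ab}\overline{H_{ac}}$, whose vanishing forces $H_{11}=H_{12}=H_{22}=0$, that is, $H_{ab}=0$.

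Since each computation is short, the only real obstacle is the bookkeeping: one must check carefully that the selected bidegree component receives no contribution from the undetermined connection curvature or from the first derivatives $dA_a$, $dH_{ab}$, so that the resulting identity is purely algebraic. It is worth remarking that this is exactly where $\lambda=3/4$ differs from $\lambda=-1$ and $\lambda=2/5$: in those cases one of the two constants $-\tfrac{4i}{3}$, $\tfrac{5i}{6}$ degenerates to zero, the corresponding coupling term disappears, and Type~$A$ (respectively Type~$H$) structures genuinely exist, consistent with the results of \S\ref{ssect:LamMin1} and the case $\lambda=2/5$.
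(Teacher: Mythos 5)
Your proposal is correct and follows essentially the same route as the paper, whose proof is the one-line observation that the identities $d^2\varpi = d^2\theta = 0$ applied to (\ref{eq:StructLam3by4}) force the remaining torsion function to vanish; your bidegree bookkeeping (isolating the $\varpi\wedge\overline{\theta}\wedge\overline{\theta}$ component of $d^2\theta_a$ in part (1) and the $\overline{\varpi}\wedge\theta\wedge\overline{\theta}$ component of $d^2\varpi$ in part (2), each yielding a nonzero multiple of $\eps_{ab}A_c$, resp.\ $\eps_{ab}\overline{H_{ac}}$) is exactly the computation the paper leaves implicit, and it checks out. Your closing remark about why $\lambda=3/4$ is special is also apt, up to the minor point that for $\lambda=-1$ both constants vanish rather than one.
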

\begin{proof}
	If $H_{ab}=0,$ then the equations $d^2 \varpi = d^2 \theta = 0$ imply $A_a=0,$ while if $A_a=0,$ then the equations $d^2 \varpi = d^2 \theta = 0$ imply $H=0.$
\end{proof}

\section{Laplace solitons}\label{sect:LapSols}

In this section we present two explicit examples of complete, steady, gradient solitons for the Laplacian flow (\ref{eq:LapFlow}) with special torsion of negative type. These solitons arise via an ansatz motivated by the results of \S\ref{sect:U2mi}. The $\G_2$-structures considered in this section are of the form
\begin{equation}
\varphi = \tfrac{i}{2} dr \wedge \left( F(r)^2 \varpi \wedge \overline{\varpi} + G(r)^2 \theta_a \wedge \overline{\theta_a} \right) + F(r) G(r)^2 \Re \left( \eps_{ab} \varpi \wedge \theta_a \wedge \theta_b \right),
\end{equation}
where $\varpi$ and $\theta_a$ are the components of the tautological 1-form for a $\U(2)$-structure $\mathcal{P} \to N$ over a 6-manifold satisfying structure equations (\ref{eq:StructLamMin18}) or (\ref{eq:StructLam2by5}).

The gradient Laplace soliton system is not involutive, and determining the local generality of gradient Laplace solitons is an open problem. The examples presented below both have a local generality of 2 functions of 3 variables, which hence gives a lower bound for the generality of the gradient Laplace soliton system.

\subsection{Example 1}\label{ssect:LapSolEg1}

Let $(X, g_X, \Omega)$ be a Ricci-flat anti-self-dual 4-manifold. Let $\mathcal{R} \to X$ be the induced $\SO(4)$-structure bundle of $X$ consisting of coframes $u : T_p X \to \R^4$ such that
\begin{equation}
\begin{aligned}
g_X &= u^* \left((e^1)^2 + (e^2)^2 + (e^3)^2 + (e^4)^2\right)
\end{aligned}
\end{equation}
and let $\eta = \left(\eta_1, \eta_2, \eta_3, \eta_4 \right)$ denote the $\R^4$-valued tautological 1-form on $\mathcal{R}.$

On $\mathcal{R},$ we have Cartan's first structure equation,
\begin{equation}\label{eq:ASDRFStruct1}
d \left( \begin{array}{c}
\eta_1 \\
\eta_2 \\
\eta_3 \\
\eta_4
\end{array} \right) = {\left(\begin {array}{cccc}
	 0 & \zeta_{{1}} +\xi_{{1}} & \zeta_{{2}}-\xi_{{2}} & -\zeta_{{3}}-\xi_{{3}} \\
	 -\zeta_{{1}}-\xi_{{1}} & 0 & -\zeta_{{3}}+\xi_{{3}} & -\zeta_{{2}}-\xi_{{2}} \\ 
	 -\zeta_{{2}}+\xi_{{2}} & \zeta_{{3}}-\xi_{{3}} & 0 & \zeta_{{1}}-\xi_{{1}}	\\
	 \zeta_{{3}} + \xi_{{3}} & \zeta_{{2}}+\xi_{{2}} & -\zeta_{{1}}+\xi_{{1}} &0 
	 \end {array} \right)} \wedge \left( \begin{array}{c}
\eta_1 \\
\eta_2 \\
\eta_3 \\
\eta_4
\end{array} \right).
\end{equation}
where $\xi_1,$ $\xi_2,$ $\xi_3,$ $\zeta_1,$ $\zeta_2,$ and $\zeta_3$ are the components of the $\mathfrak{so}(4)$-valued Levi-Civita form on $\mathcal{R},$ written in a way to reflect the isomorphism $\mathfrak{so}(4) \cong \mathfrak{su}(2)^+ \oplus \mathfrak{su}(2)^-.$ As a consequence of the assumption that $g_X$ is anti-self-dual Ricci-flat, Cartan's second structure equation reads
\begin{equation}\label{eq:ASDRFStruct2}
\begin{aligned}
d \left(\begin{array}{c}
\zeta_1 \\
\zeta_2 \\
\zeta_3
\end{array}\right) &= -2 \, \left(\begin{array}{c}
\zeta_2 \wedge \zeta_3 \\
\zeta_3 \wedge \zeta_1 \\
\zeta_1 \wedge \zeta_2
\end{array}\right) \\
d \left(\begin{array}{c}
\xi_1 \\
\xi_2 \\
\xi_3
\end{array}\right) &= -2 \, \left(\begin{array}{c}
\xi_2 \wedge \xi_3 \\
\xi_3 \wedge \xi_1 \\
\xi_1 \wedge \xi_2
\end{array}\right) + \left(\begin{array}{ccc}
W_{11} & W_{12} & W_{13} \\
W_{12} & W_{22} & W_{23} \\
W_{13} & W_{23} & W_{33}
\end{array}\right) \left(\begin{array}{c}
\eta_1 \wedge \eta_2 - \eta_3 \wedge \eta_4 \\
\eta_3 \wedge \eta_1 + \eta_4 \wedge \eta_2 \\
\eta_4 \wedge \eta_1 + \eta_2 \wedge \eta_3
\end{array}\right),
\end{aligned}
\end{equation}
for functions $W_{ij}$ on $\mathcal{R}$ satisfying $W_{11} + W_{22} + W_{33} = 0,$ representing the Weyl tensor of $g_X.$

Now, let $N$ denote the twistor space on $X,$ the unit sphere bundle of $\Lambda^2_+ T^* X.$ This is the quotient of $\mathcal{R}$ by $\U(2)^- \subset \SO(4),$ and thus $\mathcal{R}$ is naturally a $\U(2)$-structure on $N$. The components of the tautological form of this $\U(2)$-structure are $\zeta_2,$ $\zeta_3,$ $\eta_1,$ $\eta_2,$ $\eta_3,$ and $\eta_4.$ The forms $\xi_1, \zeta_1, \zeta_2,$ and $\zeta_3$ are the natural connection forms. It is easy to see that the forms
\begin{equation}
\begin{aligned}
\mathrm{vol}_{F} &= \zeta_2 \wedge \zeta_3, \\
\Omega_{X} &= \eta_1 \wedge \eta_2 + \eta_3 \wedge \eta_4, \\
\Gamma &= \zeta_3 \wedge \left(\eta_3 \wedge \eta_1 + \eta_2 \wedge \eta_4 \right) + \zeta_2 \wedge \left( \eta_4 \wedge \eta_1 + \eta_3 \wedge \eta_2 \right),
\end{aligned}
\end{equation}
descend to $N.$

Now, on the product $\R \times N,$ with coordinate $r$ in the $\R$-direction consider a $\G_2$-structure of the form
\begin{equation}\label{eq:ASDRFG2}
\varphi = d r \wedge \left(f(r)^2 \mathrm{vol}_F + g(r)^2 \Omega_{X} \right) + f(r) g(r)^2 \, \Gamma,
\end{equation}
where $f(r)$ and $g(r)$ are non-vanishing functions. A calculation using the structure equations (\ref{eq:ASDRFStruct1}) and (\ref{eq:ASDRFStruct2}) yields that $\varphi$ is closed if and only if
\begin{equation}\label{eq:ASDRFClosedCond}
g'(r) = - \frac{g(r)\left(f'(r) + 2 \right)}{2 \, f(r)},
\end{equation}
which we assume from now on. In this case, the torsion 2-form is given by
\begin{equation}\label{eq:ASDRFtau}
\tau = \left(f'(r) + 2 \right) \left(2 \, f(r) \, \mathrm{vol}_F - \frac{g(r)^2}{f(r)} \, \Omega_{X} \right).
\end{equation}

\begin{remark}\label{rmk:ASDRFTorsFree}
	We see from equation (\ref{eq:ASDRFtau}) that the condition for $\varphi$ to define a torsion-free $\G_2$-structure is that $f'(r) = -2,$ from which it follows that $g'(r) = 0.$ This solution corresponds to the torsion-free $\G_2$-structure on the metric product $X \times \R^3,$ which has reduced holonomy a subgroup of $\SU(2).$
\end{remark}

Let $V = v(r) \, \partial_r$ be a vector field in the $\R$-direction. Another calculation using the structure equations (\ref{eq:ASDRFStruct1}) and (\ref{eq:ASDRFStruct2}) shows that the condition that $\left( \varphi, V \right)$ define a Laplace soliton, namely
\begin{equation}
d \tau = c \, \varphi + \mathcal{L}_V \, \varphi,
\end{equation}
is equivalent to the ODE system
\begin{subequations}\label{eq:ASDRFODE}
	\begin{align}
	f'(r) &= v(r) \, \left( f(r) + 2 \right) + \tfrac{1}{2} c \, f(r)^2, \label{eq:ASDRFODEa} \\
	v'(r) &= \frac{v(r) \, \left( f(r) \, v(r) + 2 \right) + c \left( 3 c \, f(r)^2 - 12 \, f(r) v(r) - 14 \right)}{6 \, f(r)}.
	\end{align}
\end{subequations}

We assume from now on that $c = 0,$ i.e.\ that $\varphi$ is a steady Laplace soliton. In this case it is possible to solve the system (\ref{eq:ASDRFODE}) explicitly. We have that
\begin{equation}
\frac{d}{dr} \left( f(r)^2 \, v(r) \right) = 0,
\end{equation}
so there exists a constant $k_1$ (which we shall assume non-zero) such that
\begin{equation}\label{eq:ASDRFv}
v(r) = - \frac{2\, k_1}{f(r)^2}.
\end{equation}
Then, after a simple integration (assuming $f(r) > k_1,$ the other case being similar), equation (\ref{eq:ASDRFODEa}) implies
\begin{equation}
-\frac{f(r)}{2} - \frac{k_1}{2} \log(f(r) - k_1 ) = r + k - \tfrac{1}{2} k_1,
\end{equation}
for some constant $k.$ This equation can be inverted by means of the Lambert W function, and we have (after shifting $r$ so we may assume $k_2 = 0$)
\begin{equation}\label{eq:ASDRFf}
f (r) = k_1 \left( \LamW \left( -\frac{1}{k_1}  \exp {\frac{2\,r}{k_1}} \right) + 1 \right).
\end{equation}
Next, equation (\ref{eq:ASDRFClosedCond}) together with (\ref{eq:ASDRFODEa}) implies that
\begin{equation}
\frac{d}{dr} \left(\frac{f(r)-k_1}{k_1 \, f(r) g(r)^2} \right) = 0,
\end{equation}
so
\begin{equation}\label{eq:ASDRFg}
g(r)^2 = \frac{f(r) - k_1}{k_1 k_2 f(r)},
\end{equation}
for some constant $k_2$ (with the same parity as $k_1$).

\begin{thm}
The 2-parameter family of closed $\G_2$-structures $\varphi$ defined by equation (\ref{eq:ASDRFG2}) and the equations (\ref{eq:ASDRFf}) and (\ref{eq:ASDRFg}) define steady gradient Laplace solitons on the manifold $M = \R \times N,$ where $N$ is the twistor space of an anti-self-dual Ricci-flat 4-manifold $X$. The metric $g_{\varphi}$ on $M$ is complete if and only if the metric on $X$ is. 
\end{thm}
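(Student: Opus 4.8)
The plan is to treat the soliton property and the completeness statement separately. The ODE system (\ref{eq:ASDRFODE}) was derived precisely as the condition that the pair $(\varphi, V)$ with $V = v(r)\,\partial_r$ satisfy $d\tau = c\,\varphi + \mathcal{L}_V\varphi$; so once we set $c=0$ and take $f$, $g$, $v$ to be the explicit solutions (\ref{eq:ASDRFf}), (\ref{eq:ASDRFg}), (\ref{eq:ASDRFv}), the pair $(\varphi, V)$ is a steady Laplace soliton by construction. The only additional content in the first sentence is the word \emph{gradient}. To obtain it I would first read off the induced metric: matching $\varphi$ in (\ref{eq:ASDRFG2}) to the standard $\G_2$ $3$-form in an adapted orthonormal coframe $(e^1,\dots,e^7)$ with $e^1 \propto dr$, with $e^2,e^3$ proportional to $\zeta_2,\zeta_3$, and with $e^4,\dots,e^7$ proportional to $\eta_1,\dots,\eta_4$, the three scaling relations forced respectively by the $\mathrm{vol}_F$, $\Omega_X$, and $\Gamma$ terms collapse to show that $dr$ is \emph{unit} and orthogonal to $N$. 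Concretely,
\begin{equation*}
g_\varphi = dr^2 + f(r)^2\,g_{\mathrm{fib}} + g(r)^2\,\pi^* g_X,
\end{equation*}
where $g_{\mathrm{fib}}$ is the round metric on the twistor $\mathrm{S}^2$-fibre spanned by $\zeta_2,\zeta_3$ and $\pi : M \to X$ is the projection. Since $\partial_r$ is then a unit field orthogonal to the level sets of $r$, the dual $1$-form is $V^\flat = v(r)\,dr$, which is exact; hence $V = \nabla_{g_\varphi} h$ for $h(r) = \int^r v(s)\,ds$, and $\varphi$ is a steady \emph{gradient} soliton.

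For completeness I would first record the qualitative behaviour of the explicit solution. For $k_1, k_2 > 0$ the functions $f$ and $g$ are smooth and strictly positive on all of $\R$; the implicit relation defining $f$ shows that $r \mapsto f$ is a decreasing bijection $\R \to (k_1, \infty)$, with $f \to \infty$ and $g^2 \to (k_1 k_2)^{-1}$ as $r \to -\infty$, and $f \to k_1$, $g^2 \to 0$ as $r \to +\infty$. The decisive structural fact is $g_{rr} \equiv 1$: any $g_\varphi$-rectifiable curve obeys $\int |\dot r|\,dt \le \operatorname{length} < \infty$, so $r$ has finite total variation and every finite-length curve is confined to a slab $\{a \le r \le b\}$. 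On such a slab $f$ and $g$ are pinched between positive constants, so $g_\varphi$ is uniformly equivalent to the product-type metric $dr^2 + g_{\mathrm{fib}} + \pi^* g_X$; the latter is a Riemannian submersion over $X$ with compact $\mathrm{S}^2$-fibre, hence complete whenever $g_X$ is. Thus, if $g_X$ is complete, every finite-length curve remains in a compact set and Hopf--Rinow yields completeness of $g_\varphi$.

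For the converse, if $g_X$ is incomplete I would take a finite-length $g_X$-curve $x(t)$, $t \in [0,T)$, leaving every compact subset of $X$, lift it horizontally into $N$ at a fixed value $r = r_0$, and regard the result as a curve in $M = \R \times N$. Its $g_\varphi$-length equals $g(r_0)\,\operatorname{length}_{g_X}(x) < \infty$, yet it escapes every compact subset of $M$; hence $g_\varphi$ is incomplete. This gives the \emph{only if} direction.

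The routine verifications are the coframe matching that produces the displayed form of $g_\varphi$ and the structure-equation computations already invoked for (\ref{eq:ASDRFClosedCond}) and (\ref{eq:ASDRFtau}). I expect the genuine obstacle to be the completeness \textbf{if}-direction: one must argue carefully that a finite-length curve cannot run off to $r = \pm\infty$, which is exactly where the warping factors degenerate — this is precisely what $g_{rr} \equiv 1$ buys — and that the slab metric, a compact-fibre submersion over $X$, inherits completeness from $g_X$. The asymptotic blow-up of $f$ and collapse of $g$ are harmless precisely because they are attained only at infinite $r$-distance.
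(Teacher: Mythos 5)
Your proof is correct, and on the soliton side it coincides with the paper's: the paper likewise takes the soliton equation to hold by construction of the ODE system (\ref{eq:ASDRFODE}) with $c=0$, and exhibits the potential $h(r) = -2k_1 \int f(r)^{-2}\,dr$, which is exactly your $h(r) = \int^r v(s)\,ds$ after substituting (\ref{eq:ASDRFv}); the identification $g_\varphi = dr^2 + f(r)^2\,g_{\mathrm{fib}} + g(r)^2\,\pi^* g_X$, which makes $V^\flat = v(r)\,dr$ exact, is used only implicitly there (e.g.\ in the volume-form computation), so your explicit coframe matching is a worthwhile addition (modulo a harmless permutation/sign adjustment of the $e^i$ to hit the standard $\phi$). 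Where you genuinely go beyond the paper is completeness. The paper's argument is a one-line observation --- $|f(r)| > |k_1| > 0$, so neither $f$ nor $g$ vanishes and both are defined on all of $\R$ --- which is really just the key estimate; your slab argument ($g_{rr} \equiv 1$ confines any finite-length curve to $\{a \le r \le b\}$, where $g_\varphi$ is uniformly equivalent to the complete compact-fibre submersion metric $dr^2 + g_{\mathrm{fib}} + \pi^* g_X$, whence Hopf--Rinow applies) is the honest way to convert that estimate into completeness, and correctly isolates why the collapse $g^2 \to 0$ and blow-up $f \to \infty$ at $r = \pm\infty$ are harmless. Moreover, the paper's proof addresses only the \emph{if} direction of the stated equivalence; your horizontal lift of a finite-length divergent $g_X$-curve to a fixed level $r = r_0$, which has $g_\varphi$-length $g(r_0)\operatorname{length}_{g_X}(x) < \infty$ yet leaves every compact set of $M$ (the projection $N \to X$ being proper), supplies the \emph{only if} direction that the paper leaves to the reader. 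One minor remark: your restriction $k_1, k_2 > 0$ corresponds to the branch $f > k_1$ that the paper singles out (noting the other case is similar, with $k_2$ required to share the sign of $k_1$ so that $g^2 > 0$ in (\ref{eq:ASDRFg})); strictly speaking your bijectivity claim $f : \R \to (k_1,\infty)$, read off from the implicit relation rather than from the $\LamW$-formula (\ref{eq:ASDRFf}), is what guarantees the global domain the paper's proof invokes.
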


\begin{proof}
	The fact that $\varphi$ defines a steady Laplace soliton follows from the calculations above. It is easy to see that the vector field $V = \operatorname{grad} h$ for
	\begin{equation}
	h(r) = - 2 k_1 \int \frac{1}{f(r)^2} d r,
	\end{equation}
	so $\varphi$ is a gradient soliton.
	
	Finally, completeness of the metric $g_{\varphi}$ when $X$ is complete follows from the fact that $\left| f(r) \right| > \left| k_1 \right| > 0,$ so neither $f(r)$ nor $g(r)$ vanish on $\R,$ and their domain is all of $\R.$
\end{proof}

Now assume that $X$ is a compact anti-self-dual Ricci-flat 4-manifold, for instance a K3 surface or a 4-torus. The volume form $\mathrm{vol}_M$ of the steady Laplace soliton $\varphi$ on $M = \R \times N$ just constructed is
\begin{equation}
\begin{aligned}
\mathrm{vol}_M &= f(r)^2 g(r)^4 \, dr \wedge \mathrm{vol}_F \wedge \eta_1 \wedge \eta_2 \wedge \eta_3 \wedge \eta_4, \\
& = \frac{1}{k_1^2 k_2} \left(f(r) - k_1 \right)^2 \, dr \wedge \mathrm{vol}_F \wedge \eta_1 \wedge \eta_2 \wedge \eta_3 \wedge \eta_4.
\end{aligned}
\end{equation}

Now, as $r \to + \infty,$ $f(r)$ approaches $k_1$ exponentially quickly, so the volume of the $r$-positive end of $M$ is finite. In contrast, as $r \to - \infty,$ $f(r)$ is asymptotic to a linear function with gradient $-2$, so the volume growth at the $r$-negative end is cubic in $r$ (which is the arc length parameter along $\R$). In fact, from the asymptotics of $f(r)$ we see that as $r \to - \infty$ the closed $\G_2$-structure $\varphi$ is asymptotic to the torsion-free $\G_2$-structure on $X \times \R^3$ described in Remark \ref{rmk:ASDRFTorsFree}.

\subsection{Example 2}\label{ssect:LapSolEg2}

Let $X$ be a hyperk\"ahler 4-manifold with triple of 2-forms $\left(\Omega_I, \Omega_J, \Omega_K \right)$ such that $\left[ \tfrac{1}{2\pi} \Omega_J \right]$ and $\left[ \tfrac{1}{2\pi} \Omega_K \right]$ are integral classes. Let $\mathcal{R} \to X$ be the torsion-free $\SU(2)$-structure induced by the hyperk\"ahler structure on $X,$ consisting of coframes for which
\begin{equation}
\begin{aligned}
\Omega_I &= u^* \left(e^1 \wedge e^2 + e^3 \wedge e^4 \right), \\
\Omega_J &= u^* \left(e^1 \wedge e^3 + e^4 \wedge e^2 \right), \\
\Omega_K &= u^* \left(e^4 \wedge e^1 + e^3 \wedge e^2 \right).
\end{aligned}
\end{equation}

Let $N$ be the total space of the $\mathrm{T^2}$ bundle classified by $\left[ \tfrac{1}{2\pi} \Omega_J \right]$ and $\left[ \tfrac{1}{2\pi} \Omega_K \right]$, and let $\pi_1$ and $\pi_2$ be connection forms with curvature $\Omega_J$ and $\Omega_K.$ Then, the pullback of $\mathcal{R}$ to $N$ has a coframe $\left(\pi_1, \pi_2, \eta_1, \ldots \eta_4, \xi_1, \xi_2, \xi_3 \right)$ satisfying structure equations of the form
\begin{equation}\label{eq:HKa}
\begin{aligned}
d \left( \begin{array}{c}
\eta_1 \\
\eta_2 \\
\eta_3 \\
\eta_4
\end{array} \right) &= {\left(\begin {array}{cccc}
	0 & \xi_{{1}} & -\xi_{{2}} & -\xi_{{3}} \\
	-\xi_{{1}} & 0 & \xi_{{3}} & -\xi_{{2}} \\ 
	\xi_{{2}} & -\xi_{{3}} & 0 & -\xi_{{1}}	\\
	\xi_{{3}} & \xi_{{2}} & \xi_{{1}} &0 
	\end {array} \right)} \wedge \left( \begin{array}{c}
\eta_1 \\
\eta_2 \\
\eta_3 \\
\eta_4
\end{array} \right), \\ 
d \pi_1 &= \eta_1 \wedge \eta_3 + \eta_4 \wedge \eta_2, \\
d \pi_2 &= \eta_4 \wedge \eta_1 + \eta_3 \wedge \eta_2,
\end{aligned}
\end{equation}
and
\begin{equation}\label{eq:HKb}
d \left(\begin{array}{c}
\xi_1 \\
\xi_2 \\
\xi_3
\end{array}\right) = -2 \, \left(\begin{array}{c}
\xi_2 \wedge \xi_3 \\
\xi_3 \wedge \xi_1 \\
\xi_1 \wedge \xi_2
\end{array}\right) + \left(\begin{array}{ccc}
W_{11} & W_{12} & W_{13} \\
W_{12} & W_{22} & W_{23} \\
W_{13} & W_{23} & W_{33}
\end{array}\right) \left(\begin{array}{c}
\eta_1 \wedge \eta_2 - \eta_3 \wedge \eta_4 \\
\eta_3 \wedge \eta_1 + \eta_4 \wedge \eta_2 \\
\eta_4 \wedge \eta_1 + \eta_2 \wedge \eta_3
\end{array}\right).
\end{equation}

Now, on the product $\R \times N,$ with coordinate $r$ in the $\R$-direction, consider the 3-form
\begin{equation}\label{eq:HKG2}
\varphi = d r \wedge \left(f(r)^2 \pi_1 \wedge \pi_2 + g(r)^2 \Omega_I \right) - f(r)g(r)^2 \pi_2 \wedge \Omega_J + f(r) g(r)^2 \pi_1 \wedge \Omega_K,
\end{equation} 
which defines a $\G_2$-structure on $\R \times N$ away from the zeroes of $f(r)$ and $g(r).$ A calculation using the structure equations (\ref{eq:HKa}) and (\ref{eq:HKb}) yields that $\varphi$ is closed if and only if
\begin{equation}\label{eq:HKclosed}
g'(r) = - \frac{f'(r) g(r)^2 + f(r)^2}{2 \, f(r) g(r)},
\end{equation}
which we assume from now one. In this case, the torsion 2-form is given by
\begin{equation}\label{eg:HKtau}
\tau = \left( f'(r) g(r)^2 - f(r)^2 \right) \left( \frac{2}{g(r)^2} \pi_1 \wedge \pi_2 - \frac{1}{f(r)} \Omega_I \right).
\end{equation}

\begin{remark}
	From equation (\ref{eg:HKtau}), the condition that $\varphi$ define a torsion-free $\G_2$-structure is the equation $f'(r) = f(r)^2/g(r)^2.$ The solutions to the system consisting of this equation and equation (\ref{eq:HKclosed}) are $f(r)= k^{\frac{2}{3}} / r^{\frac{1}{3}},$ $g(r) = \sqrt{3} k^{\frac{1}{3}} r^{\frac{1}{3}},$ for a constant $k.$ This defines a $\G_2$-structure on $N \times (0, + \infty)$ which is incomplete as $r \to 0,$ but is forward complete.
\end{remark}

As in the previous section, let $V = v(r) \, \partial_r$ be a vector field in the $\R$-direction. A calculation using the structure equations (\ref{eq:HKa}) and (\ref{eq:HKb}) shows that the condition that $( \varphi, V )$ defines a steady Laplace soliton, namely
\begin{equation}
d \tau = \mathcal{L}_V \, \varphi,
\end{equation}
is equivalent to the ODE system
\begin{subequations}
	\begin{align}
	f' (r) &= \frac{f(r)\left(g(r)^2 \, v(r) + 2 f(r) \right)}{2 g(r)^2}, \\
	g'(r) &= - \frac{g(r)^2 \, v(r) + 4 f(r)}{4 g(r)} , \\
	v' (r) &=  \frac{v(r) \left(g(r)^2 \, v(r) + 4 \, f(r)\right)}{2 \, g(r)^2}.
	\end{align}
\end{subequations}

This system can be solved explicitly. We have that
\begin{equation}
\frac{d}{dr} \left( g(r)^2 \, v(r) \right) = 0,
\end{equation}
so there exists a constant $k_1$ (which we shall assume non-zero) such that
\begin{equation}
v(r) = - \frac{2 \, k_1}{g(r)^2}.
\end{equation}
We then have
\begin{equation}
\frac{d}{dr} \left(g(r)^2 f(r) \left(f(r) - k_1 \right) \right) = 0.
\end{equation}
We assume that $0 < f(r) < k_1,$ so there exists a constant $k_2$ with
\begin{equation}\label{eq:HKg}
g(r) = \frac{k_2}{\sqrt{f(r)\left(k_1 - f(r) \right)}}.
\end{equation}
Then a simple integration yields that (after possibly shifting $r$ by a constant)
\begin{equation}\label{eq:HKf}
\frac{2 k_2^2}{k_1^3} \left(\log f(r) - \log (k_1 - f(r)) \right) + \frac{k_2^2}{k_1^2} \left(\frac{1}{k_1 - f(r)} - \frac{1}{f(r)} \right) = r.
\end{equation}
This equation defines $f(r)$ as a function of $r,$ since the left hand side,considered as a function of $f,$ is invertible on $\left(0, k_1 \right).$

\begin{thm}
	The 2-parameter family of closed $\G_2$-structures $\varphi$ defined by equation (\ref{eq:HKG2}) and the equations (\ref{eq:HKf}) and (\ref{eq:HKg}) define complete steady gradient Laplace solitons on the manifold $M = \R \times N.$
\end{thm}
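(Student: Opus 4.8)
The plan is to verify the three assertions—steady soliton, gradient, and complete—in turn, most of the analytic content having been front-loaded into the explicit integration above. For the soliton property I would note that $f,g$ satisfy the closedness condition (\ref{eq:HKclosed}), so $\varphi$ is a closed $\G_2$-structure and $\Delta_\varphi\varphi = d\tau$; since $f$, $g$, and $v = -2k_1/g(r)^2$ solve the three displayed ODEs, which the preceding computation shows to be equivalent (given closedness) to $d\tau = \mathcal{L}_V\varphi$, the pair $(\varphi,V)$ with $V = v(r)\,\partial_r$ is a steady Laplace soliton. For the gradient property I would read the induced metric off (\ref{eq:HKG2}),
\begin{equation*}
g_\varphi = dr^2 + f(r)^2\bigl(\pi_1^2 + \pi_2^2\bigr) + g(r)^2\bigl(\eta_1^2 + \eta_2^2 + \eta_3^2 + \eta_4^2\bigr),
\end{equation*}
in which $dr$ is a unit covector, so that $\operatorname{grad} h = h'(r)\,\partial_r$ for any $h=h(r)$. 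Taking $h(r) = \int v(r)\,dr = -2k_1\int g(r)^{-2}\,dr$ (a smooth function wherever $f,g$ are) then gives $V = \operatorname{grad} h$, so the soliton is of gradient type.

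The substantive point is completeness, which hinges on the range of $r$ together with the positivity of $f,g$. Under the standing assumption $0 < f < k_1$, equation (\ref{eq:HKg}) gives $g = k_2/\sqrt{f(k_1-f)}$, so $f$ and $g$ are finite and strictly positive—and hence $g_\varphi$ positive definite—at every finite $r$. To show that $r$ exhausts $\R$ I would examine the left-hand side of (\ref{eq:HKf}) as $f$ traverses $(0,k_1)$: the $\log$ and reciprocal terms force it to diverge, with opposite signs, as $f\to 0^+$ and as $f\to k_1^-$, and it is strictly monotonic in between, so it defines a diffeomorphism $(0,k_1)\to\R$. Consequently $f$ reaches the ends of its interval only in the limits $r\to\pm\infty$, i.e.\ at infinite distance.

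Completeness then follows by Hopf--Rinow. Because the coefficient of $dr^2$ is exactly $1$, the coordinate $r$ is $1$-Lipschitz on $(M,g_\varphi)$, so any bounded subset lies in a slab $[-R,R]\times N$ on which $f$ and $g$ are continuous and bounded away from $0$; when $X$—hence $N$—is complete, in particular when $X$ is a $\mathrm{K3}$ surface or $\mathrm{T}^4$, such a slab is compact (or, if $X$ is merely complete, one compares $g_\varphi$ with a fixed complete metric on $N$ along short paths staying in the slab), and so closed bounded subsets of $M$ are compact. Hence $g_\varphi$ is complete.

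The hard part will be this last step, and within it the verification that $r$ ranges over the whole real line: one must read off the divergence of the transcendental relation (\ref{eq:HKf}) at both endpoints $f=0$ and $f=k_1$ precisely enough to be sure that the degenerations of the metric there—the collapse of the $\mathrm{T}^2$-fibre as $f\to 0$ and the unbounded growth of the base factor as $f\to k_1$—are confined to infinite distance, so that no incompleteness is introduced at either end.
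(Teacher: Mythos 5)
Your proposal is correct and follows essentially the same route as the paper: the soliton property is quoted from the preceding ODE reduction, your gradient potential $h(r)=\int v(r)\,dr=-\tfrac{2k_1}{k_2^2}\int f(r)\bigl(k_1-f(r)\bigr)\,dr$ coincides with the paper's via $g(r)^2=k_2^2/\bigl(f(r)(k_1-f(r))\bigr)$, and completeness rests on $0<f(r)<k_1$ with $r$ exhausting $\R$. Your only additions are to spell out details the paper leaves implicit, namely the strict monotonicity and endpoint divergence of the left-hand side of (\ref{eq:HKf}) (which the paper asserts just before the theorem) and the Hopf--Rinow step using the $1$-Lipschitz coordinate $r$ together with completeness of $N$.
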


\begin{proof}
	The fact that $\varphi$ defines a steady Laplace soliton follows from the calculations above. It is easy to see that the vector field $V$ is the gradient of the function $h$ defined by
	\begin{equation}
	h(r) = - \frac{2 k_1}{k_2^2} \int f(r) \left(k_1 - f(r) \right) dr. 
	\end{equation}
	
	Finally, completeness of the metric $g_{\varphi}$ follows from the fact that the function $f(r)$ defined by equation (\ref{eq:HKf}) is defined on all of $\R$ and satisfies $0 < f(r) < k_1.$
\end{proof}

As $r \to + \infty,$ $f(r)$ is asymptotic to $k_1 - k_2^2/(k_1^2 r),$ and as $r \to - \infty,$ $f(r)$ is asymptotic to $-k_2^2/(k_1^2 r).$ The volume form induced by $\varphi$ on $M$ is
\begin{equation}
\mathrm{vol}_M = \frac{k_2^2}{\left(k_1 - f(r)\right)^2} d r \wedge \pi_1 \wedge \pi_2 \wedge \mathrm{vol}_X, 
\end{equation}
where $\mathrm{vol}_X$ is the volume form associated to the hyperk\"ahler structure on $X.$ Thus, we see that the volume growth is cubic at the $r$-positive end and linear at the $r$-negative end of $\R \times N.$

\bibliography{QuadClosedU2PlusRefs}

\end{document}